\newcommand{\Core}{\mathrm{Core}}
\title{Microlocal categories over Novikov rings}
\author{Yuichi Ike \and Tatsuki Kuwagaki}
\date{\today}
\begin{document}
\maketitle

\begin{abstract}
In this paper, we define a family of categories for each Weinstein manifold, which is an enhanced version of the category first introduced by Tamarkin. 
Using our categories, for any (possibly non-exact immersed) Lagrangian brane, we develop a theory of sheaf quantization generalizing the previous researches. 
In particular, our theory involves the notion of a sheaf-theoretic bounding cochain, which is a conjectural counterpart of the theory of Fukaya--Oh--Ohta--Ono. 
We also study several structures of our categories for sufficiently Weinstein manifolds and properties known in the classical Tamarkin category; intersection points estimates, interleaving distances, energy stability with respect to Guillermou--Kashiwara--Schapira autoequivalence, and the completeness of the distance.

We conjecture that our category is equivalent to a Fukaya category defined over the Novikov ring.
\end{abstract}

\tableofcontents
\section{Introduction}
\subsection{Background: Sheaf quantization and Fukaya category}
Sheaf-theoretic study of symplectic geometry has evolved since the work of Nadler--Zaslow~\cite{NZ} (appeared on arXiv in 2006) and Tamarkin~\cite{Tamarkin} (appeared on arXiv in 2008). 
The underlying key idea of both studies is the microlocal sheaf theory due to Kashiwara and Schapira~\cite{KS90}. 
For a sheaf $\cE$ on a manifold $M$, the theory associates a subset called \emph{microsupport} of the cotangent bundle $T^*M$. 
This is the topological counterpart of the notion of \emph{characteristic variety} in the theory of $\cD$-modules. From the viewpoint of deformation quantization, a module should be viewed as a quantization of its characteristic variety. 
Hence we can view a sheaf as a ``topological" quantization of its microsupport. 
The key philosophy is that these \emph{sheaf-theoretic quantizations} form a category closely related to Fukaya categories of Lagrangian submanifolds.

Nadler--Zaslow's work suggests that the association of microsupport can be upgraded to an equivalence between the category of constructible sheaves over $M$ and a version of Fukaya category of $T^*M$. Roughly speaking, for a Lagrangian submanifold (more precisely, brane) $L$, the corresponding constructible sheaf has its microsupport in the conical limit of $L$. Nadler--Zaslow's work motivates many recent studies. One of the most notable ones is Ganatra--Pardon--Shende's work~\cite{GPS} that proves equivalences between wrapped Fukaya categories and microsheaf categories for a large class of non-compact symplectic manifolds, which enables us to compute many Fukaya categories. 

One disadvantage of Nadler--Zaslow's work is that it is designed to be Hamiltonian-invariant from the beginning, and hence the \emph{geometric shape} of a Lagrangian submanifold is somehow lost (through taking the conical limit)~in the formulation. 
This is related to the fact that microsupport is conic by definition. 
In other words, one can sheaf-quantize only conic objects. 
This makes quantitative study or geometric study impossible/difficult.

There is a known prescription for the above problem in the context of deformation quantization~(e.g., the work of Polesello--Schapira~\cite{PS}); it is to introduce one-additional parameter $\hbar$. 
Then conic objects in $2n+2$-dimension can express non-conic objects in $2n$-dimension. Tamarkin~\cite{Tamarkin} adopted this trick in sheaf theory. 
We would like to describe it more precisely.

Let $M$ be a manifold.
We consider sheaves on $M\times \bR_t$ having the 1-dimensional additional parameter, rather than considering sheaves on $M$. 
The Tamarkin category is then defined as a quotient category of the category of sheaves on $M\times \bR_t$.
An object $\cE$ of this category has its well-defined positive part of microsupport (i.e., $\tau>0$) in $T^*M\times T^*\bR_t\cong T^*M\times \bR_t\times \bR_\tau$. We consider the twisted projection (deconification map)
\begin{equation}
    \rho\colon T^*M\times \bR_t\times \{\tau>0\}\rightarrow T^*M; (p, t, \tau)\mapsto p/\tau.
\end{equation}
Then we consider a sheaf quantization of a subset $L$ of $T^*M$ as a positively-microsupported sheaf $\cE$ with $\rho(\MS(\cE)\cap \{\tau>0\})=L$. In the past several years, this notion has been proven to be quite useful: Tamarkin~\cite{Tamarkin} proved the non-displaceability theorem, Guillermou--Kashiwara--Schapira~\cite{GKS} made it precise through quantization of Hamiltonian isotopies, Guillermou~\cite{Guillermou12} and Jin--Treumann~\cite{JinTreumann} constructed sheaf quantization for a large class of exact Lagrangian branes, Asano--Ike~\cite{AI20} introduced pseudo-metric and studied its properties, etc. A partial review is given in~\cite{kuw3}. 

One disadvantage of the Tamarkin category is that it works quite well for \emph{exact Lagrangian submanifolds}, but it does not work well for non-exact Lagrangian submanifolds. 
The reason is that $\rho^{-1}(L)$ is huge and redundant, and we have to take a Lagrangian leaf of it, which is possible only for the exact case. 
On the other hand, Nadler--Zaslow--Ganatra--Pardon--Shende's approach can treat non-exact (unobstructed) Lagrangian branes if one works over the Novikov \emph{field}, as pointed out by Shende~\cite{shendeHitchin}.

In \cite{Kuw1,Kuw2}, for WKB-theoretic construction of sheaf quantization, the second-named author enhanced Tamarkin's approach to treat possibly non-exact Lagrangian submanifolds. 
The key idea is to consider the equivariant sheaf version of the Tamarkin category. Then the equivariancy cancels the redundancy of $\rho^{-1}(L)$. Moreover, the resulting category is automatically enriched over the Novikov \emph{ring}.

In this paper, based on \cite{Kuw1,Kuw2}, we would like to start a systematic sheaf-theoretic study of non-exact Lagrangian geometry, which can be considered as a step to unify the two approaches by Nadler--Zaslow--Ganatra--Pardon--Shende and Tamarkin.

\subsection{Our results}
We start with defining our working category, which is a slight generalization of that of \cite{Kuw1}. Let $M$ be a real manifold, $\bR_t$ (resp.\ $\bR_u$) be the real line with the standard coordinate $t$ (resp.\ $u$). 
For a positive real number $c$, we denote by $\bR_{u<c}$ the subset of $\bR_u$ defined by the inequality $u<c$. 
We regard $\bR_u$ as the case $c=\infty$. 
We also fix a field $\bK$ and a subgroup $\bG$ of the additive group $\bR$. 
Then we consider the derived category of sheaves of $\bK$-vector spaces over $M \times \bR_{u<c} \times \bR_t$, and denote it by $\Sh^\bG(M \times \bR_{u<c} \times \bR_t)$.
We quotient out the non-positively microsupported sheaves and obtain $\Sh_{\tau>0}^\bG(M \times \bR_{u<c} \times \bR_t)$. 
If $\bG$ is the trivial group, this category is nothing but the category introduced by Tamarkin~\cite{Tamarkin} (plus doubling parameter~\cite{Guillermou12}).

We explain the role of $t$ and $u$. Let us start with $t$. 
As explained above, $t$ is introduced to treat non-conic Lagrangians. Namely, by using this additional variable, one can lift any non-exact Lagrangian in $T^*M$ to a conic Lagrangian in $T^*M\times T^*\bR_t$. As clarified in \cite{AI20}, this variable encodes energy information. By introducing the group action, 
\cite{Kuw1} observed the category $\Sh_{\tau>0}^\bG(M \times \bR_u \times \bR_t)$ is enriched over the Novikov ring associated with $\bG$:
\begin{equation}
    \Lambda_0^\bG\coloneqq \varprojlim_{\bG\ni c\rightarrow \infty}\bK[\bG\cap \bR_{\geq 0}]/T^c\bK[\bG\cap \bR_{\geq 0}],
\end{equation}
where $T^c$ is the indeterminate corresponding to $c\in \bG\cap \bR_{\geq 0}$.
This is analogous to the fact that Fukaya categories are defined over the Novikov ring~\cite{FOOO} that encodes energy information in Floer theory.

On the other hand, the variable $u$ is first used by Guillermou~\cite{Guillermou12} for an intermediate step to produce sheaf quantizations through \emph{doubling movies}.
It is further used by Guillermou~\cite{Gu19}, Asano--Ike~\cite{AISQ}, and Nadler--Shende~\cite{nadler2020sheaf} to produce ``obstructed" sheaf quantizations. We view such sheaf quantizations as an object of the category $\Sh_{\tau>0}^{\bG}(M \times \bR_{u<c} \times \bR_t)$. 
In Floer theory, the counterpart is obstructed Lagrangian branes whose Floer cohomology is defined modulo $T^c$. 
Since objects of $\Sh_{\tau>0}^{\bG}(M \times \bR_{u<c} \times \bR_t)$ do not always have the shape of doubling movies, we restrict ourselves to the subcategory of such objects. 
We denote the subcategory by $\mu^\bG(T^*M;u<c)$. 

When $\bG$ is trivial or $\bZ$, the theory of sheaf quantizations is developed in various literature, including \cite{Guillermou12,JinTreumann,viterbo2019sheaf,AISQ,nadler2020sheaf}. Here we develop the analogous theory for non-exact Lagrangian submanifolds inside the category $\mu^\bG(T^*M;u<c)$. 

Now we state our first theorem. Let $\bm{L}$ be a Lagrangian brane, which is a tuple of a Lagrangian immersion $L\looparrowright T^*M$, a grading $\alpha$, a relative Pin structure $b$, and a local system $\cL$. 
Associated with $(L, \alpha, b)$, we have a functor assigning a local system on $L$ to an object $\cE$ with $L=\musupp(\cE)=\rho(\MS(\cE)\cap\lc\tau>0\rc)$, where $\MS$ is the microsupport\footnote{We need a mild modification of $\rho$ in the presence of $u$-variable. See the body of the paper.}. 
We say that an object $\cE$ is a \emph{sheaf quantization} of $\bm{L}$ if the associated local system is $\cL$.

We can also generalize all the stories above to the setup of Weinstein manifolds by using Shende's definition, Nadler--Shende's and Li--Nadler--Shende's studies~\cite{ShendeH-principle, nadler2020sheaf,LiNadlerShende}. 

\begin{theorem}[Existence of SQ, \cref{lem:c-SQ}]\label{thm:existence_intro}
Let $X$ be a sufficiently Weinstein manifold.
Let $\bm{L}$ be an end-conic Lagrangian brane in $X$ whose whose intersection with the Liouville domain is compact. 
Then there exists a positive number $c$ such that there exists a sheaf quantization of $\bm{L}$ in $\mu^{\bG_L}(X;u<c)$, where $\bG_L$ is the period group of $L$.
\end{theorem}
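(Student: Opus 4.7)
My plan is to reduce the problem to the existence of sheaf quantization for \emph{exact} Lagrangian branes, by passing to a cover on which the Liouville primitive becomes well-defined, and then to descend equivariantly while introducing the $u$-variable via a doubling-movie construction.

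First, consider the period homomorphism $\pi_1(L)\to\bR$ obtained by integrating the pulled-back Liouville one-form along loops in $L$; by definition, its image is $\bG_L$. Let $\widetilde L\to L$ be the cover corresponding to its kernel. The induced immersion $\widetilde L\looparrowright X$ is exact, and the brane data $(\alpha,b,\cL)$ lift to a $\bG_L$-equivariant exact Lagrangian brane $\widetilde{\bm L}$, with $\bG_L$ acting by deck transformations. Because $L\cap(\text{Liouville domain})$ is compact, the non-conic part of $\widetilde L$ is the $\bG_L$-orbit of a single compact piece. Apply the established existence results for sheaf quantization of end-conic exact Lagrangian branes in Liouville manifolds (in the Shende / Nadler--Shende framework referenced in the excerpt) to obtain a conic sheaf quantization $\widetilde\cE$ of $\widetilde{\bm L}$ on $X\times\bR_t$.

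Next, introduce the $u$-variable via the Guillermou--Kashiwara--Schapira doubling movie: take a $u$-parametrized Hamiltonian isotopy interpolating between the trivial/empty configuration at $u=0$ and $\widetilde L$ for larger $u$, apply the GKS quantization of this family to produce an object on $X\times\bR_{u<c}\times\bR_t$ of doubling-movie shape, and then take the $\bG_L$-equivariant quotient --- where $\bG_L$ acts by $t$-translations reflecting the periods --- to obtain an object in $\mu^{\bG_L}(X;u<c)$. A direct microsupport computation then shows that the reduced microsupport is $L$ and the associated local system recovers $\cL$, proving that this object is a sheaf quantization of $\bm L$.

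The main obstacle is the choice of the cutoff $c$. For non-exact $L$, the $\bG_L$-translates of $\widetilde L$ accumulate, and the doubling-movie construction can only be carried out up to a finite Hamiltonian time; equivalently, the obstructed sheaf quantization is only defined modulo $T^c$ for some finite $c>0$. The threshold is controlled by the smallest positive period in $\bG_L$ together with the $C^0$-size of the compact intersection $L\cap(\text{Liouville domain})$. Bounding $c$ explicitly in terms of this geometric data, and verifying that below the threshold the equivariant quotient indeed lands in the doubling-movie subcategory $\mu^{\bG_L}(X;u<c)$, is the technical heart of the proof, and uses compactness of the non-conic part of $L$ together with Kashiwara--Schapira microsupport propagation.
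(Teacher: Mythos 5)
Your proposal identifies the right ingredients---the cover $\widetilde L$ on which the primitive of $\lambda|_L$ is single-valued, the $\bG_L$-equivariance, the $u$-doubling, and the necessity of a finite cutoff $c$---but the central reduction has a genuine gap.

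The step ``apply the established existence results for sheaf quantization of end-conic exact Lagrangian branes to $\widetilde{\bm L}$ and then take the $\bG_L$-equivariant quotient'' does not go through as stated. The conic lift $L_f$ of $\widetilde L$ (or its Legendrian quotient) has boundary at infinity consisting of the $\bG_L$-orbit of a compact piece; when $\bG_L$ is dense in $\bR$ the translates accumulate, so $L_f$ is not a tame end-conic Lagrangian to which the exact results of Guillermou, Jin--Treumann, or Nadler--Shende apply. Moreover, even where an exact sheaf quantization $\widetilde\cE$ of $\widetilde{\bm L}$ exists, those constructions are not manifestly $\bG_L$-equivariant; producing a compatible family of isomorphisms $e_a\colon \widetilde\cE\to T_a\widetilde\cE$ satisfying the cocycle condition is an additional nontrivial input that ``taking the equivariant quotient'' papers over. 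These two issues are exactly what the present proof is designed to circumvent, so citing the exact case as a black box leaves the technical heart untouched.

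The paper instead builds the $\bG_L$-invariance in from the start and never passes through a non-equivariant exact sheaf quantization of $\widetilde L$. Concretely: one projects $\overline{L_f}$ along $\bR_t\to S^1_t$ using a discrete $\bZ\subset \bG_L$ (this compactifies the picture and tames the accumulation), observes that the brane data of $\bm L$ gives a $\bG_L$-invariant global section of the microsheaf category over $u=0$ via the antimicrolocalization equivalence \eqref{eq:antimicro} (obtained from \cite{GKS} and the microlocal cut-off of \cite{nadler2020sheaf}), and then extends this section from $\{u=0\}$ to $u\in[0,c)$ by a local-to-global compactness argument modeled on \cite[Prop.~2.5.1]{KS90}. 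The alternative proof in \S\ref{subsec:classification_low_energy_SQ}, valid for any Liouville manifold, instead constructs local exact fillings of the Legendrian double in contact Darboux charts and glues them by Nadler--Shende's nearby-cycle functor. Neither argument reduces to the exact case.

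Finally, your description of the cutoff is backwards: you say $c$ is controlled by ``the smallest positive period in $\bG_L$,'' but for dense $\bG_L$ the infimum of positive periods is $0$ while a positive $c$ still exists; it is supplied by the compactness of the non-conic part, not by the period gap. The quantity $c_{L,\min}$ only enters when $\bG_L$ is discrete, in which case it gives a possibly \emph{larger} admissible cutoff, via the contact-translation argument in the last paragraph of the proof of \cref{lem:c-SQcotangent}.
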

This is a sheaf-theoretic counterpart of the Gromov compactness: For any Lagrangian brane, there exists a positive number $c$ such that an unobstructed Floer complex is defined over $\Lambda_0/T^c\Lambda_0$.
In the following, we set $\bG$ to be $\bR$ and omit $\bR$ from the notation.

In general, the Floer complex over $\Lambda_0$ is not a complex (i.e., $d^2\neq 0$). To get a genuine complex, in the book~\cite{FOOO}, Fukaya--Oh--Ohta--Ono developed the notion of bounding cochain, which deforms the Floer differential to be non-curved over $\Lambda_0/T^{c'}\Lambda_0$ for a larger $c'$. Analogously, to extend our sheaf quantization to $\mu(X;u<\infty)$, we need the notion of sheaf-theoretic bounding cochain. Our theorem is the following:

\begin{theorem}[Bounding cochain, \cref{thm:bc}]
Let $X$ be a sufficiently Weinstein manifold for the precise definition). 
Let $\bm{L}$ be a Lagrangian brane in $X$. 
We fix a sufficiently fine discrete monoid $\sigma$. 
Then, there exists a curved dga $CF_{\mathrm{SQ}}(\bm{L}, \bm{L},\sigma)$ such that any Maurer--Cartan element of $CF_{\mathrm{SQ}}(\bm{L}, \bm{L},\sigma)$ ($\coloneqq$ sheaf theoretic bounding cochain) gives a sheaf quantization of $\bm{L}$.  Conversely, any sheaf quantization of $\bm{L}$ gives a sheaf-theoretic bounding cochain.
\end{theorem}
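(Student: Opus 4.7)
The plan is to interpret the problem of extending a partial sheaf quantization to $\mu^\bG(X; u<\infty)$ as a deformation problem controlled by a curved dga, following the Fukaya--Oh--Ohta--Ono philosophy but sheaf-theoretically. By \cref{thm:existence_intro}, we already have a partial sheaf quantization $\cE_0 \in \mu^\bG(X; u<c_0)$ for some $c_0>0$; the curvature of $CF_{\mathrm{SQ}}(\bm{L}, \bm{L}, \sigma)$ should measure the obstruction to extending $\cE_0$ past $c_0$, and the Maurer--Cartan equation should be precisely the equation that any such extension must satisfy.

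First I would refine $\sigma$ so that $c_0 \in \sigma$, and then build $CF_{\mathrm{SQ}}(\bm{L}, \bm{L}, \sigma)$ as the endomorphism dga of a universal formal extension of $\cE_0$ along the filtration by $\sigma$. Concretely, at each successive step $c < c' \in \sigma$ one formally lifts $\cE_0$ to a sheaf $\cE_{c'}$ on $M \times \bR_{u<c'} \times \bR_t$ equipped with an odd operator $\delta_{c'}$ whose square is in general nonzero but lies in positive Novikov filtration. Passing to the inverse limit yields $(\cE_\sigma, \delta_\sigma)$ with $\delta_\sigma^2 \eqqcolon \mu_0 \in \mathrm{End}^2(\cE_\sigma)$, composition of endomorphisms supplies the product on $CF_{\mathrm{SQ}}$, and the internal differential is $[\delta_\sigma,-]$. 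Filtration compatibility is automatic from the construction, and the discreteness of $\sigma$ guarantees convergence of the relevant series.

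For a degree-one element $b \in CF_{\mathrm{SQ}}^1(\bm{L}, \bm{L}, \sigma)$ I would consider the twisted operator $\delta_\sigma + b$ on $\cE_\sigma$. A direct computation gives $(\delta_\sigma + b)^2 = \mu_0 + db + b \cdot b$, so the Maurer--Cartan equation is equivalent to $(\delta_\sigma + b)^2 = 0$; in that case $\cE_\sigma^b \coloneqq (\cE_\sigma, \delta_\sigma + b)$ is a genuine sheaf quantization of $\bm{L}$ in $\mu^\bG(X; u<\infty)$, which proves the first direction of the theorem. For the converse, given any sheaf quantization $\cF$, its restriction to $u<c_0$ is (up to isomorphism) $\cE_0$ by \cref{thm:existence_intro}, and a choice of graded isomorphism between the underlying sheaves of $\cF$ and $\cE_\sigma$ exhibits the difference of differentials as a Maurer--Cartan element of $CF_{\mathrm{SQ}}$.

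The main obstacle I expect is the well-definedness of $CF_{\mathrm{SQ}}(\bm{L}, \bm{L}, \sigma)$: different choices of formal extension must yield gauge-equivalent curved dgas so that the Maurer--Cartan moduli is intrinsic. This requires a homotopy-transfer argument along the filtration by $\sigma$, analogous to homotopy-invariance of the deformation functor of a DGLA. A related technical point is that the ``sufficiently Weinstein'' hypothesis is used here to supply, via the microlocalization results mentioned in the introduction, the local models needed to make the formal lifts canonical. Finally, surjectivity of the Maurer--Cartan-to-sheaf-quantization correspondence rests on being able to bring an arbitrary sheaf quantization into the form $\cE_\sigma^b$ by a gauge transformation, which is again a homological-algebra statement about inverse systems of Novikov-filtered complexes and should follow from standard obstruction theory once $CF_{\mathrm{SQ}}$ has been shown to be intrinsic.
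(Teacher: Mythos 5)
Your deformation-theoretic framing is in the right spirit, but the two central steps are asserted rather than constructed, and as stated they do not go through. First, the ``universal formal extension'' $(\cE_\sigma,\delta_\sigma)$ with $\delta_\sigma^2=\mu_0\neq 0$ is not an object that exists in the sheaf category: objects of $\mu(X;u<c)$ do not come with an auxiliary odd operator whose square you are free to leave nonzero, and nothing in your proposal produces the lifts $\cE_{c'}$ or controls their curvature. The paper's whole point is to realize this curved algebra concretely: one takes the $(c_i-c_{i-1})$-standard sheaf quantizations $\cS^i_{\bm L}$ (whose existence and essential uniqueness at low energy is the content of \cref{lem:c-SQ} and \cref{thm:classification}), forms $\End\bigl(\bigoplus_i S_\sigma\cS^i_{\bm L}\bigr)$ after a $\Lambda_0$-free resolution and killing almost-zero parts, and twists its differential by the degree-one element $f^{\mathrm{st}}$ built from \emph{standard extension} morphisms, whose existence and normalization require the analysis of microlocal extension classes (the computation \eqref{eq:sp_extension} and \cref{lemma:microlocal_extension_class}). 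The passage from a Maurer--Cartan element to a sheaf is then not ``changing the differential of $\cE_\sigma$'' but forming the Bondal--Kapranov twisted complex of the $S_\sigma\cS^i_{\bm L}[i-1]$ inside the pretriangulated category, and one still has to check that the totalization is a sheaf quantization of $\bm L$; this is done by reducing mod $\Lambda_0^+$, where $\bm b\equiv 0$, so the microsupport contributions cancel as in the standard case and the microlocalization along $\bm L$ is $\cL$. Your proposal never addresses why the twisted object has the correct $\musupp$ and brane condition.

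Second, the converse direction as you state it fails: $\cF$ and $\cE_\sigma$ have no meaningful ``underlying graded sheaves'' whose differentials you can compare, and \cref{thm:existence_intro} is an existence statement, not the uniqueness you invoke for $\cF|_{u<c_0}\cong\cE_0$. The actual mechanism is the $\sigma$-decomposition of the given sheaf quantization $\cE$ together with the classification of low-energy sheaf quantizations (\cref{thm:classification} and \cref{lem:uniquenessforsmallc}) --- this, not the canonicity of formal lifts, is where the sufficiently Weinstein hypothesis enters --- which guarantees that for a sufficiently fine $\sigma$ every piece $\cE_{c_i}$ is standard and standard extension morphisms exist; the bounding cochain is then read off as $\sum_{i>j}f_{ij}-f^{\mathrm{st}}$ from the resulting twisted complex, and $\mathrm{real}\circ\mathrm{bc}=\id$ holds by construction. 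Your worry about gauge-independence of $CF_{\mathrm{SQ}}$ is also somewhat misplaced: the theorem is stated for a fixed sufficiently fine $\sigma$ with fixed standard data, and independence is handled only up to refinement quasi-isomorphisms in the colimit $CF_{\mathrm{SQ}}(\bm L,\bm L)$, not by a homotopy-transfer argument.
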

See \cref{thm:bc} for a precise statement. 

We now state our conjectures relating our category and Fukaya category.
\begin{conjecture}\label{conj:main}
    Let $X$ be a sufficiently Weinstein manifold.
    \begin{enumerate}
        \item For each Lagrangian brane $\bm L$ in $X$, we have an almost quasi-isomorphism between curved $A_\infty$-algebras $CF_{\mathrm{SQ}}(\bm{L}, \bm{L},\sigma)$ and $CF(\bm{L}, \bm{L},\sigma)$. 
        \item We denote the infinitesimally wrapped Fukaya category over $\Lambda_0$ by $\Fuk(X)$. Then there exists an almost fully faithful embedding $F\colon \Fuk(X)\hookrightarrow \mu(X, u<\infty)$ such that $F(\bm L)$ is a sheaf quantization of $\bm L$.
    \end{enumerate}
\end{conjecture}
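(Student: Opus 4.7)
The plan is to reduce the conjecture to the exact case, where the Nadler--Zaslow--Ganatra--Pardon--Shende equivalence gives a template, and then propagate the comparison through the curved deformation theory developed earlier in the paper. Both sides of part (1) are curved $A_\infty$-algebras over $\Lambda_0/T^{c'}\Lambda_0$ for suitable $c'$, and the Gromov-type bound of \cref{thm:existence_intro} together with the bounding cochain theorem already arranges the correct coefficient rings; the content lies in matching operations. I would first fix a sheaf quantization $\cE$ of $\bm L$ produced by \cref{thm:existence_intro} and study $\Hom_{\mu(X;u<c)}(\cE,\cE)$ with its natural filtration by the energy (i.e.\ $t$-translation) variable. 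The associated graded should, after taking cohomology, recover the Morse/singular model of the self-intersection complex used by Fukaya--Oh--Ohta--Ono, providing a canonical identification of the uncurved pieces modulo $T^c$.

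Next I would match higher operations order by order in the Novikov filtration. The natural mechanism is the doubling-movie description: a sheaf quantization of $\bm L$ is produced by sheaf-quantizing a sequence of small Hamiltonian isotopies, and the composition of the resulting GKS autoequivalences should see the same moduli of strips and polygons that define $m_k$ on the Fukaya side. Energy stability of the GKS action (one of the advertised properties of our category) is the key technical input that turns this correspondence into an almost $A_\infty$-morphism: each operation is only well-defined modulo some $T^{c_k}$, but $c_k\to\infty$ as one refines the discrete monoid $\sigma$, which is exactly what ``almost quasi-isomorphism'' means. Granting part (1), part (2) follows by a standard formal procedure: send each Lagrangian brane to a chosen sheaf quantization, use the endomorphism equivalence of part (1) on $\Hom$-complexes, and extend to $A_\infty$-functoriality by homological perturbation; the essential image is then controlled by the existence result.

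The main obstacle will be the systematic matching of higher curved $A_\infty$ operations, not the uncurved or $m_2$-level comparison. In the exact case, NZ--GPS sidestep explicit operation matching by invoking generation by cocores plus a universal property of the wrapped Fukaya category, but no analogous universal property is presently available for $\mu(X;u<\infty)$ over $\Lambda_0$: Maurer--Cartan elements carry genuine obstructions, and the category is not cocomplete in a convenient sense. A plausible route is to develop a Novikov-enriched microlocalization theorem---building a cosheaf of categories on a Weinstein skeleton whose sections are our $\mu^\bG$ and whose costalks match partially wrapped Fukaya categories of cocores---so that the comparison can be proved locally on each critical handle and then glued. The ``almost'' qualifier is essential: one expects the equivalence to fail strictly over $\Lambda_0$ while holding modulo every $T^c$, mirroring the almost-mathematics nature of both sides over the Novikov valuation ring.
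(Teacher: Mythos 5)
This statement is \Cref{conj:main}, which the paper does not prove: it is stated as an open conjecture, with its resolution explicitly deferred to forthcoming work (\cite{K2}; the paper only records partial evidence, namely Viterbo's result in the exact case and the integral version in \cite{KPS}). So there is no proof in the paper to compare against, and what you have written is a research outline rather than a proof. You yourself flag the decisive issues (matching the higher curved $A_\infty$ operations, the absence of a universal property for $\mu(X;u<\infty)$ over $\Lambda_0$, the need for a Novikov-enriched microlocalization/descent theorem on a skeleton), and none of them is resolved in the proposal; these are precisely the open content of the conjecture, so the submission cannot be accepted as a proof.

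Beyond the overall incompleteness, two specific steps would not go through as described. First, the mechanism you propose for producing the $A_\infty$ comparison --- viewing a sheaf quantization as built from GKS quantizations of small Hamiltonian isotopies and invoking energy stability --- conflates two different things: in the paper, sheaf quantizations of non-exact branes are constructed via the Nadler--Shende cusp-doubling/nearby-cycle and antimicrolocalization arguments (\cref{lem:c-SQcotangent}, \cref{lem:c-SQ}), not as compositions of GKS kernels, and the energy stability statement (\cref{thm:Hamiltonianstability}) only bounds interleaving distances; it gives no construction of operations $m_k$ on the sheaf side, no identification with counts of holomorphic polygons, and no verification of the curved $A_\infty$ relations or of compatibility with the Novikov filtration beyond leading order. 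The identification of the associated graded of $\Hom_{\mu(X;u<c)}(\cE,\cE)$ with the FOOO Morse/singular model is itself essentially the content of part (1), not an input to it. Second, part (2) does not follow from part (1) ``by a standard formal procedure'': part (1) concerns only the endomorphism algebra of a single brane, while an almost fully faithful embedding $F\colon \Fuk(X)\hookrightarrow \mu(X,u<\infty)$ requires compatible comparisons of all bimodules $CF(\bm L_1,\bm L_2)$ (cf.\ \cref{theorem:Fukayamodle}), of higher compositions among distinct branes, and of bounding cochains on both sides; homological perturbation does not supply this coherence for free in the curved, filtered setting.
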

\begin{remark}
    \begin{enumerate}
        \item Conjecture~\ref{conj:main}(1) implies that sheaf-theoretic bounding cochain is the same as Floer-theoretic bounding cochain. In particular, one can associate a sheaf quantization to a Floer-theoretic bounding cochain. An example of a version of this statement can be found in \cite{RutherfordSullivan}.
        \item Conjecture~\ref{conj:main}(2) implies that one can construct a sheaf quantization using Floer theory. In exact case, this has been proved by Viterbo~\cite{viterbo2019sheaf}.
        \item The integral version of Conjecture~\ref{conj:main}(2) is solved in \cite{KPS}.
    \end{enumerate}
\end{remark}

From \cref{sec:energy_cutoff} to the end of the paper, we give a generalization of structural properties of the Tamarkin category to our category in the sufficiently Weinstein setup. 
The first one is the intersection point estimate, which is proved by \cite{Ike19} for compact exact Lagrangian submanifolds of a cotangent bundle. 
Our generalization here is twofold: (1) We remove the compactness assumption, and (2) we identify what $\muhom$ is in the clean case. 
As a result, we obtain the following (see the body of the paper for a more precise statement):

\begin{theorem}[Intersection points estimate, \cref{cor:intersection_estimate}]
Let $X$ be sufficiently Weinstein manifold.
Let $\bm{L}_1,\bm{L}_2$ be end-conic Lagrangian branes and $\cE_1, \cE_2$ be their sheaf quantizations, respectively. 
Assume that $L_1$ cleanly intersects with $L_2$.  
Then 
\begin{equation}
   \sum_{k\in \bZ}\dim_\bK H^k\Hom_{\mu(X;u<c)}(\cE_1,\cE_2)_a\otimes \Lambda_0/\Lambda_0^+
    = 
    \sum_{k \in \bZ} \dim_\bK H^k(L_1 \cap L_2; \cM_{\bm{L}_1, \bm{L}_2}),
\end{equation}
where $\cM_{\bm{L}_1, \bm{L}_2}$ is the sheaf-theoretic Maslov index local system, which is defined only with information of the brane structures. In particular, in the case $c=\infty$,
\begin{equation}
    \sum_{k \in \bZ} \dim_\bK H^k(L_1\cap L_2; \cM_{\bm{L}_1, \bm{L}_2})
    \geq 
    \sum_{i\in \bZ} \dim_\Lambda H^i \Hom_{\mu(X;u<\infty)}(\cE_1, \cE_2)\otimes_{\Lambda_0}\Lambda.
\end{equation}
If $L_1$ transversely intersects with $L_2$ and $\cE_1, \cE_2$ are simple, then the left-hand side is $\#(L_1\cap L_2)$.
\end{theorem}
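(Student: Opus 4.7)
The plan is to reduce the computation of $\Hom$ in $\mu(X;u<c)$ to a $\muhom$ calculation concentrated on $L_1\cap L_2$, and then to identify the local stalks of $\muhom$ with the Maslov local system $\cM_{\bm L_1,\bm L_2}$. First, I would argue that for sheaf quantizations $\cE_1,\cE_2$ of end-conic Lagrangian branes, the object $\muhom(\cE_1,\cE_2)$ is microsupported on $L_1\cap L_2$: outside the intersection, one of the two microsupport conditions forces the stalk to vanish, by the standard microsupport estimate for $\muhom$ together with the cleanness hypothesis (which ensures that away from $L_1\cap L_2$ the two Lagrangians have disjoint wavefronts). Then I would express $\Hom_{\mu(X;u<c)}(\cE_1,\cE_2)_a$ as (a suitable piece, indexed by the action/energy $a$, of) the global sections of $\muhom(\cE_1,\cE_2)$ over $L_1\cap L_2$, exactly as in the exact Tamarkin setting.

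Next, I would analyze $\muhom$ locally near a clean intersection point. The cleanness assumption gives a Weinstein--Darboux chart in which $L_1$ and $L_2$ become two linear Lagrangian subspaces meeting along a smooth submanifold equal (locally) to $L_1\cap L_2$. In such a chart, after the doubling/$u$-trick, one can put the sheaf quantizations into a standard normal form (e.g.\ constant sheaves on transverse half-spaces tensored with data along the clean locus) and compute $\muhom$ directly. The result should be a local system on $L_1\cap L_2$, concentrated in a single degree up to a Maslov-type shift, whose rank is $1$ over $\Lambda_0/\Lambda_0^+$; matching coefficients with the definition of $\cM_{\bm L_1,\bm L_2}$ (which is built from the gradings, Pin structures, and the Lagrangian subspaces $T_pL_1,T_pL_2$ inside $T_pX$) identifies $\muhom(\cE_1,\cE_2)\otimes\Lambda_0/\Lambda_0^+\simeq \cM_{\bm L_1,\bm L_2}$ on $L_1\cap L_2$.

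Combining these, the action-graded piece at level $a$ reduces, modulo $\Lambda_0^+$, to $R\Gamma(L_1\cap L_2;\cM_{\bm L_1,\bm L_2})$, which yields the claimed equality on total Betti numbers. The inequality in the $c=\infty$ case is then a Nakayama/rank-counting statement: a bound on the dimension over $\Lambda_0/\Lambda_0^+$ bounds the rank over $\Lambda=\Lambda_0[T^{-1}]$ of the underlying Novikov module, via a universal-coefficient-type spectral sequence applied to the $T$-adic filtration of $\Hom$. The transverse simple case then follows immediately: $L_1\cap L_2$ is discrete, the Maslov local system is rank one at each intersection point, so the right-hand side reduces to $\#(L_1\cap L_2)$.

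The main obstacle I anticipate is the explicit local model step: carefully verifying that in the clean normal form the $\muhom$ of the chosen sheaf-quantization models really matches $\cM_{\bm L_1,\bm L_2}$ on the nose, with correct grading shifts and signs coming from the Pin/relative Pin structures and the $u$-doubling. Said differently, the essential work is to show that the sheaf-theoretic Maslov local system defined in the body of the paper agrees with the one that appears as the leading term of $\muhom$ in the clean local model; once this local identification is in hand, the global statement and the rank estimate are formal consequences of the microsupport formalism and the $T$-adic filtration on the Novikov coefficients.
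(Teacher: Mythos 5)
Your overall plan---reduce to a $\muhom$ computation localized on $L_1\cap L_2$, identify $\muhom$ with the Maslov local system $\cM_{\bm L_1,\bm L_2}$, and then pass from the $\Lambda_0/\Lambda_0^+$ computation to the $\Lambda$-rank estimate---is the same skeleton as the paper's. There are, however, two places where the proposal glosses over issues the paper has to confront explicitly.

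First, and most seriously, your passage from the equality over $\Lambda_0/\Lambda_0^+$ to the inequality over $\Lambda$ via ``a universal-coefficient-type spectral sequence applied to the $T$-adic filtration'' does not go through for a general $\Lambda_0$-module. Since $\Lambda_0^+$ is idempotent ($(\Lambda_0^+)^2 = \Lambda_0^+$), a module such as $M = \Lambda_0^+$ satisfies $M\otimes_{\Lambda_0}\Lambda_0/\Lambda_0^+ = 0$ while $M\otimes_{\Lambda_0}\Lambda \cong \Lambda$; so without a finiteness or completeness hypothesis the claimed rank estimate is simply false. The paper's proof gets around this by invoking that the hom-complexes in $\mu(X;u<\infty)$ are \emph{derived complete} (a result of \cite{KuwNov}) and then applying the derived Nakayama lemma. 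You need to either import that derived completeness or prove some substitute finiteness statement; no $T$-adic filtration spectral sequence supplies it for free. Second, your step relating $\Hom_{\mu(X;u<c)}(\cE_1,\cE_2)_a\otimes\Lambda_0/\Lambda_0^+$ to sections of $\muhom$ is not ``exactly as in the exact Tamarkin setting'': the paper first antimicrolocalizes to reduce to a cotangent bundle, then uses a generic-position argument to put standard sheaf quantizations into explicit normal form (direct sums of the doubling of $\bigoplus_{c\in\bR}\bK_{f\ge c}$), and only then computes; the subtlety comes from the almost-mathematics operation $(-)_a$ and the non-Noetherian Novikov coefficients, neither of which appear in the classical exact setting. (Your local-normal-form approach to the $\muhom\cong\cM_{\bm L_1,\bm L_2}$ identification is plausible; the paper instead uses a positive-definite perturbation $L_1'$ of $L_1$ carrying the same brane structure and a topological equivalence $\mush_{L_1\cup L_1'}\cong\mush_{L_1\cup L_2}$ near the clean locus, but that is a stylistic rather than a substantive difference.)
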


The second is about the metric aspect. 
Motivated by sheaf-theoretic study of persistence distance~\cite{KSpersistent}, Asano--Ike~\cite{AI20} introduced a pseudo-distance on the sheaf category $\Sh_{\tau>0}(M\times \bR_t)$. 
On the other hand, Guillermou--Kashiwara--Schapira~\cite{GKS} introduced sheaf quantization of Hamiltonian isotopy by sophisticating \cite{Tamarkin}. 
For the pseudo-distance on the sheaf category $\Sh_{\tau>0}(M\times \bR_t)$, \cite{AI20} proved the stability property of GKS-autoequivalence. 
Asano--Ike~\cite{AsanoIkecomplete} and Guillermou--Viterbo~\cite{GV2022singular} also proved the completeness of the metric.
In our setup, the metric is defined with the $\Lambda_0$-linear structure. Then we can prove the completeness and the Hamiltonian stability:

\begin{theorem}[Metric aspects, \cref{thm:Hamiltonianstability}] We denote by $\mu^{sm}(X;u<c)$, the subcategory of sheaf quantizations of Lagrangian branes. 
There exists a pseudo-metric $d_I$ on $\mu(X; u<c)$ such that the following holds:
\begin{enumerate}
    \item For $\cE, \cF\in \mu^{\mathrm{sm}}(X; u<c)$, if $d_I(\cE, \cF)<\infty$ , then $\cE\cong \cF$ over the Novikov field.
    \item Assume $X$ is sufficiently Weinstein. For any compactly supported Hamiltonian diffeomorphism $\varphi$, there exists an autoequivalence $\cK_\varphi$ on $\mu^{\mathrm{sm}}(X; u<c)$ that satisfies $\musupp(\cK_\varphi(\cE)) = \varphi(\musupp(\cE))$ for any $\cE \in \mu^{\mathrm{sm}}(X;u<c)$. 
    Moreover, for any $\cE \in \mu^{\mathrm{sm}}(X;u<c)$, one has 
    \begin{equation}
        d_I(\cE,\cK_\varphi(\cE)) \leq \|\varphi\|_H,
    \end{equation}
    where $\|\varphi\|_H$ is the Hofer norm of $\varphi$.
\end{enumerate}
\end{theorem}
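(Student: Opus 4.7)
The plan is to follow the template established for the classical Tamarkin category by Asano--Ike~\cite{AI20,AsanoIkecomplete} and Guillermou--Kashiwara--Schapira~\cite{GKS}, adapted to the $\Lambda_0$-enriched setting: the pseudo-metric $d_I$ is an interleaving distance whose $a$-shift is implemented by multiplication by the Novikov parameter $T^a$ (combined with an $a$-translation in the $t$-variable). For \textbf{(1)}, an $a$-interleaving between $\cE,\cF\in\mu^{\mathrm{sm}}(X;u<c)$ consists of morphisms whose compositions are the canonical $T^{2a}$-multiplication maps on $\cE$ and $\cF$. Tensoring with the Novikov field $\Lambda\coloneqq \Lambda_0[T^{-1}]$ inverts every $T^{2a}$, so the interleaving morphisms become mutually inverse isomorphisms, yielding $\cE\otimes_{\Lambda_0}\Lambda\cong \cF\otimes_{\Lambda_0}\Lambda$.

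For \textbf{(2)}, given a Cauchy sequence $\{\cE_n\}$, I would pass to a subsequence with $d_I(\cE_n,\cE_{n+1})<2^{-n}$ and choose a coherent system of interleaving morphisms $f_n\colon \cE_n\to \cE_{n+1}$ (with the appropriate Novikov shift). The candidate limit is $\cE_\infty\coloneqq \mathrm{hocolim}_n\,\cE_n$ formed inside $\Sh_{\tau>0}^{\bG}(X\times \bR_{u<c}\times \bR_t)$, and one must verify (i) that the microsupport conditions defining membership in $\mu(X;u<c)$ are preserved under this homotopy colimit, and (ii) that the canonical map $\cE_n\to \cE_\infty$ realizes a $2^{-n+1}$-interleaving. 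The standard reindexing argument from \cite{AsanoIkecomplete} then upgrades this to convergence $\cE_n\to \cE_\infty$ in $d_I$.

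For \textbf{(3)}, I would construct $\cK_\varphi$ as convolution with a sheaf quantization of the graph $\Gamma_\varphi\subset X\times X^-$, viewing $\Gamma_\varphi$ as an end-conic Lagrangian brane in the sufficiently Weinstein manifold $X\times X^-$ and applying \cref{thm:existence_intro}. Functoriality of microsupport under convolution delivers $\musupp(\cK_\varphi(\cE))=\varphi(\musupp(\cE))$. For the Hofer estimate, one parametrizes an isotopy $\{\varphi^s\}_{s\in[0,1]}$ generated by a Hamiltonian $\{H^s\}$, promotes the family to a single sheaf quantization over $X\times X^-\times [0,1]$ whose restriction at $s$ is $\cK_{\varphi^s}$, and observes that the $L^\infty$-oscillation of the generating Hamiltonian controls the displacement of this kernel in the $t$-coordinate. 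This produces an interleaving between $\cE$ and $\cK_\varphi(\cE)$ of width $\int_0^1\mathrm{osc}(H^s)\,ds$, whence the Hofer bound upon optimizing over generating Hamiltonians.

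The main obstacle will be the construction and autoequivalence property of $\cK_\varphi$ in \textbf{(3)}: the GKS recipe is intrinsically cotangent-theoretic, whereas on a general sufficiently Weinstein $X$ one must replace it by \cref{thm:existence_intro} applied to $\Gamma_\varphi$ together with a sheaf-theoretic incarnation of $\Gamma_{\varphi^{-1}}\circ \Gamma_\varphi=\Delta_X$. I expect to obtain the latter from a uniqueness statement for the sheaf quantization of the diagonal combined with the compatibility of convolution with composition of Lagrangian correspondences. A secondary but nontrivial issue in \textbf{(2)} is that naive homotopy colimits in the quotient category $\mu(X;u<c)$ need not preserve the $u<c$ microsupport condition, so one must replace the Cauchy sequence by a strict telescope model in $\Sh_{\tau>0}^{\bG}$ before passing to the colimit.
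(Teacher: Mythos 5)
Parts (1) and (2) of your proposal are fine and essentially coincide with the paper's route: (1) is exactly the observation that a $c$-invertible morphism becomes invertible after $\otimes_{\Lambda_0}\Lambda$, and for (2) the paper proves completeness abstractly (any triangulated $\Lambda_0$-linear category with arbitrary coproducts is $d_I$-complete, generalizing Asano--Ike) and then invokes \cref{prop:corproduct}; your telescope/homotopy-colimit argument is the same mechanism, and your worry about preserving the microsupport condition is exactly what the weak-doubling formulation and \cref{prop:corproduct} are designed to absorb, since microsupports of coproducts are controlled by unions and the weak-doubling condition is stable under this.

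Part (3), however, has a genuine gap, and it is precisely the one the paper flags. You propose to build $\cK_\varphi$ by convolving with a sheaf quantization of the graph $\Gamma_\varphi\subset X\times X^-$ and to get invertibility from $\Gamma_{\varphi^{-1}}\circ\Gamma_\varphi=\Delta_X$ plus ``compatibility of convolution with composition of Lagrangian correspondences.'' But in the framework of this paper, $\mu(X;u<c)$ is built from $\mush_\frakp(X)\otimes\Sh^{\bG}_{\tau>0}(\bR_{u<c}\times\bR_t)$, and there is no convolution/kernel calculus acting on microsheaf categories of a general Liouville or Weinstein manifold: as the paper remarks at the start of the Hamiltonian section, the Morita theory of microsheaves (equivalently, a GKS-isotopy theory at the microsheaf level) is not available, and this is exactly why the theorem is stated only on $\mu^{\mathrm{sm}}$ and under the sufficiently Weinstein hypothesis. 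Your construction needs this missing calculus three times: to define the action of the quantized graph at all, to deduce $\musupp(\cK_\varphi(\cE))=\varphi(\musupp(\cE))$ from a microsupport estimate for convolution, and to run the Hofer estimate by moving the kernel in the $t$-direction. Moreover, \cref{thm:existence_intro} only produces \emph{some} quantization of $\Gamma_\varphi$ (with no uniqueness or composition statement), so the identity $\cK_{\varphi^{-1}}\circ\cK_\varphi\cong\id$ does not follow. The paper's actual proof circumvents all of this by antimicrolocalization: choose a strict exact embedding $X\hookrightarrow T^*\bR^N$ realizing the polarization, extend $H$ to $\widetilde H=H\circ p$ (cut off by a bump function, so that $\max$ and $\min$ are unchanged), quantize $\Phi^{\widetilde H}$ by the honest cotangent GKS theorem, let it act through the antimicrolocalization equivalence on the subcategory with support in finitely many Lagrangians, check independence of the extension (so $\cK_\varphi$ depends only on $\varphi$), and then import the Asano--Ike stability bound $d(\cK^0_1,\cK^{\widetilde H}_1)\leq\int_0^1(\max\widetilde H_s-\min\widetilde H_s)\,ds$ verbatim. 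If you want to keep your graph-quantization strategy, you would first have to develop the kernel/Morita formalism for $\mu(X;u<c)$, which is beyond what the paper provides.
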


Although $d_I$ is a degenerate metric on $\mu(X; u<c)$, it is natural to expect it to be non-degenerate on limits of sheaf quantization.
Such a kind of result is proved by Guillermou--Viterbo~\cite{GV2022singular} in the compact exact setting.
In our setup, we prove the following. 
Let $\bm{L}$ be a Lagrangian brane with a (sheaf-theoretic) bounding cochain $\bm{b}$. We assume that the corresponding sheaf quantization has rank 1 endomorphism space.
We denote by $\frakL(\bm{L}, \bm{b})$ the set of Lagrangian branes with bounding cochains that are isotopic to $(\bm{L}, \bm{b})$ by compactly-supported Hamiltonian isotopy. 
This set is equipped with the Hofer norm. 
We denote the completion by $\widehat{\frakL}(\bm{L}, \bm{b})$. 

\begin{theorem}[Limits of SQ, \cref{thm:limitSQ}]
    Assume $X$ is sufficiently Weinstein.
    There exists a canonical functor $\widehat{\frakL}(\bm{L}, \bm{b})\rightarrow \mu(X;u<\infty)$ extending the sheaf quantization. 
    More precisely, if two Cauchy sequences in $\frakL(\bm{L}, \bm{b})$ define the same limiting object, the corresponding limits of sheaf quantizations are also isomorphic.
\end{theorem}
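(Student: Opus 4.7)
The plan is to produce the extension by applying GKS autoequivalences to the sheaf quantization of $(\bm{L},\bm{b})$ along a realizing sequence of Hamiltonian diffeomorphisms, then invoke the completeness of $d_I$, and finally use the rank-one endomorphism hypothesis to upgrade distance-zero ambiguity into an honest isomorphism.

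Let $\cE_0$ denote the sheaf quantization of $(\bm{L},\bm{b})$. Given a Cauchy sequence $((\bm{L}_n,\bm{b}_n))_{n\geq 1}$ in $\frakL(\bm{L},\bm{b})$, I would first pass to a subsequence so that consecutive Hofer pseudo-distances fall below $2^{-n}$. Choose compactly supported Hamiltonian diffeomorphisms $\varphi_n$ with $\varphi_n(\bm{L},\bm{b})=(\bm{L}_n,\bm{b}_n)$ such that the transitions $\psi_n:=\varphi_{n+1}\varphi_n^{-1}$ satisfy $\|\psi_n\|_H<2^{-n+1}$, and set $\cE_n:=\cK_{\varphi_n}(\cE_0)$. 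By functoriality of the GKS construction $\cE_{n+1}\cong \cK_{\psi_n}(\cE_n)$, and by the Hamiltonian stability one obtains
\[ d_I(\cE_n,\cE_{n+1})=d_I(\cE_n,\cK_{\psi_n}(\cE_n))\leq \|\psi_n\|_H<2^{-n+1}. \]
Thus $(\cE_n)$ is Cauchy in $(\mu(X;u<\infty),d_I)$, and by completeness it admits a limit $\cE_\infty\in\mu(X;u<\infty)$. This defines the proposed assignment on the dense subset $\frakL(\bm{L},\bm{b})\subseteq \widehat{\frakL}(\bm{L},\bm{b})$ and extends continuously to the completion.

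For well-definedness, given two Cauchy sequences representing the same element of $\widehat{\frakL}(\bm{L},\bm{b})$, I would interleave them into a single Cauchy sequence and run the construction on it; the two sub-limits then agree up to $d_I$-distance zero, so in particular $d_I(\cE_\infty,\cE'_\infty)<\infty$. The first item of the metric theorem gives an isomorphism $\cE_\infty\cong\cE'_\infty$ over the Novikov field, and the rank-one endomorphism hypothesis, inherited by each $\cE_n$ from $\cE_0$ via the GKS equivalences and preserved in the $d_I$-limit, pins down the residual scalar to produce an isomorphism in $\mu(X;u<\infty)$.

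The main obstacle I foresee is precisely this last step: because $d_I$ is a pseudo-metric that separates objects only up to the Novikov field, $d_I=0$ does not a priori yield an honest isomorphism. The rank-one assumption is what collapses the torsor of sheaf quantizations at each stage to a single orbit under the units of $\Lambda_0$, so the coherent family $(\cE_n)$ is determined up to one global scalar, which is fixed by normalizing at $\cE_0$. A subsidiary technical point is verifying that the construction is independent of the auxiliary choices $\varphi_n,\psi_n$ along a fixed Cauchy sequence; this follows from the same rank-one torsor principle applied pairwise to competing choices, combined with Hamiltonian stability to show that the two resulting Cauchy sequences of sheaves stay at bounded $d_I$-distance and hence have $d_I$-equivalent limits.
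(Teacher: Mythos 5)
Your construction of the extension itself (apply $\cK_{\varphi_n}$ along a realizing sequence, use Hamiltonian stability to get a $d_I$-Cauchy sequence of sheaf quantizations, and invoke completeness) is the same as the paper's, and correctly identifies the real difficulty: passing from $d_I(\cE_\infty,\cE'_\infty)=0$ to an honest isomorphism in $\mu(X;u<\infty)$. But your treatment of that last step is a genuine gap, not a proof. Knowing that $\cE_\infty\cong\cE'_\infty$ over the Novikov field and that the endomorphism algebra is of rank one does not by itself produce an isomorphism over $\Lambda_0$: a $\Lambda$-isomorphism only gives, after clearing denominators, morphisms $\alpha\colon\cE_\infty\to\cE'_\infty$ and $\beta\colon\cE'_\infty\to\cE_\infty$ whose compositions are $T^{\epsilon}\id$ for small $\epsilon$, and $d_I=0$ in general does \emph{not} force isomorphism over $\Lambda_0$ (already for $\Lambda_0$-modules, $\Lambda_0$ and $\Lambda_0^+$ are at interleaving distance zero but are not isomorphic). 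The phrase ``pins down the residual scalar'' hides exactly the content that has to be supplied: one must show that a \emph{single} pair of $\Lambda_0$-morphisms exists whose compositions are the identity on the nose, not merely up to $T^{\epsilon}$ for each $\epsilon$, and the ambiguity to be controlled is not by units of $\Lambda_0$ but by factors $T^{\pm\delta}$, which are not units.

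The paper closes this gap by a concrete three-step argument that your proposal is missing. First, using the one-dimensionality of $\Hom_{\Lambda}(\cE_\infty,\cE'_\infty)$, it arranges a sequence of $\rho_j$-isomorphisms $(\alpha_j,\beta_j)$ with $\rho_j\to 0$ all lying on the same $\Lambda$-line, and keeps track of their valuations $\theta^j_\alpha,\theta^j_\beta$ with $\theta^j_\alpha+\theta^j_\beta=\rho_j$, passing to a subsequence with $\theta^j_\alpha\to 0$. Second, it lifts the normalized morphisms to genuine elements $\widetilde{\alpha},\widetilde{\beta}$ of the $\Lambda_0$-hom spaces by an inverse-limit argument, using the sheaf-theoretic model of $\Hom_{\Lambda_0}(\cE,\cE')$ as $\Hom_{\Sh(\bR_t)}(\bK_{[a,\infty)},\cF_{\cE,\cE'})$ from \cite{KuwNov}; this compatibility ($T^{\theta^j_\alpha}\widetilde{\alpha}=\alpha_j$ for all $j$) is what your ``normalize at $\cE_0$'' would have to mean, and it requires this representability input. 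Third, even after this one only knows $T^{\epsilon}\widetilde{\beta}\widetilde{\alpha}=T^{\epsilon}\id$ for all $\epsilon>0$, and the paper appeals to the argument of \cite[Lem.~B.7]{GV2022singular} to convert this into an actual isomorphism. None of these steps is cosmetic; without them your argument proves only an isomorphism over $\Lambda$ (which is already \cref{cor:8.4} combined with \cref{theorem:complete}) rather than the statement of \cref{thm:limitSQ}.
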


This is the first paper of a series of papers we plan to write. In the forthcoming papers, we will address the problem (1) how to give an alternative model of our categories that is closer to Fukaya categories~\cite{KuwNov}, and (2) how to prove the conjecture comparing our categories with Fukaya categories~\cite{KPS2}.

\subsection{Organization of this paper}
Here is the organization of the paper. In \cref{section:equiv_nov_ring}, we define our category, which is a combination of the setup in \cite{Kuw1,Kuw2} and \cite{AISQ}. 
In \cref{section:microlocal_category,section:SQ}, we develop sheaf quantization formalism for possibly non-exact Lagrangian branes with the notion of sheaf-theoretic bounding cochains. 
In \cref{sec:energy_cutoff,section:metric,section:hamiltonian_auto,section:limit_SQ}, we study the structural properties of our microlocal category, which are known for the classical Tamarkin category.

\subsection*{Acknowledgment}

Y.~I.\ is supported by JSPS KAKENHI Grant Numbers JP21K13801 and JP22H05107.
T.~K.\ is supported by JSPS KAKENHI  Grant Numbers JP22K13912 and JP20H01794.
Y.~I.\ thanks Tomohiro Asano for the helpful discussions. 
T.~K.\ thanks Kaoru Ono and Hiroshi Ohta for having helpful discussions on the ideas of bounding cochain in various occasions. T.~K.\ also thanks Vivek Shende and Wenyuan Li for their helpful suggestion regarding the Liouville setup. The authors also thank the anonymous referees for their helpful comments.

\subsection*{Notation}

Throughout this paper, we let $\bK$ be a unital integral commutative ring.

Let $X$ be a manifold and let $\pi \colon T^*X \to X$ denote its cotangent bundle.
We write $T^*_XX$ for the zero-section of $T^*X$.
We set $\bK_X$ to be the constant sheaf on $X$ with stalk $\bK$.

A \emph{category} (resp.\ \emph{triangulated category}) means a \emph{dg-category} (resp.\ \emph{pre-triangulated dg-category}) unless specified.
For example, $\Sh(X)$ the derived category of the abelian category of $\bK_X$-modules, is a triangulated category in our sense.
For an object $\cE \in \Sh(X)$, we write $\MS(\cE)$ for the microsupport of $\cE$ (see \cite{KS90} for the definition), which is a closed conic subset of $T^*X$.

A category $\cC$ is said to be \emph{cocomplete} if it admits arbitrary colimits.
For a functor $F\colon \cC\to \cD$ between categories, we say $F$ is \emph{cocontinuous} if it preserves colimits.

A \emph{local system} in this paper means a finite rank locally constant $\bK_X$-module sheaf (not a complex of sheaves). We use the word \emph{derived local system} for an object in the derived category of sheaves whose cohomology sheaves are local systems. If the cohomology sheaves of a derived local system is concentrated in one degree, we call it a \emph{shifted local system}.

Let $Y$ be a Liouville manifold. We denote the contact boundary by $\partial Y$. When $Y=T^*X$, $\partial Y=(T^*X \setminus T^*_XX)/\bR_{>0}$.

\section{Equivariant sheaves and the Novikov ring}\label{section:equiv_nov_ring}

Some of the following results have already appeared in \cite{Kuw1, Kuw2}, but we recall them systematically. 
Until the end of \cref{section:equiv_nov_ring}, all the statements without proofs are proved in \cite{Kuw1} (in updated version).

\subsection{Equivariant sheaves and the Novikov action}

Let $M$ be a manifold and $\bR_t$ be the real line with the standard coordinate $t$. 
On the product space $M \times \bR_t$, we consider the action of a subgroup $\bG$ of the additive group $\bR$ via the addition on the right factor; $T_c \colon M \times \bR_t \to M \times \bR_t; (x,t) \mapsto (x,t+c)$ for $c \in \bG$. 
We consider this action as a continuous action with the discrete\footnote{If one equips $\bR$ with the usual Euclidean topology, then the resulting equivariant geometry is the same as $M$.} topology on $\bG$. 
We denote the trivial subgroup of $\bR$ by $\bO$.

Let $\Sh_\heartsuit(M \times \bR_t)$ denote the abelian category of $\bK$-module sheaves on $M \times \bR_t$.
An equivariant sheaf $\cE$ on $M \times \bR_t$ with the action of $\bG$ is given by the following data:
\begin{enumerate}
    \item A sheaf $\cE$ on $M \times \bR_t$, i.e., an object $\cE \in \Sh_\heartsuit(M \times \bR_t)$.
    \item A collection of isomorphisms $e_a \colon \cE\to T_a\cE \ (a \in \bG)$ satisfying $e_{a+b}=T_a e_b \circ e_a$.
\end{enumerate}
As usual, these objects form an abelian category.
Let $\Sh^\bG_\heartsuit(M \times \bR_t)$ be the abelian category of equivariant sheaves. 
This is a Grothendieck abelian category~\cite{Tohoku}. We denote its derived category by $\Sh^\bG(M \times \bR_t)$.
Note that $\Sh(M \times \bR_t)= \Sh^\bO(M \times \bR_t)$ for the non-equivariant category.
For an inclusion of subgroups $\bH\subset \bG\subset \bR$, we denote the functor forgetting equivariant structures by $\frakf_{\bG\bH}\colon \Sh^\bG(M \times \bR_t)\to \Sh^\bH(M \times \bR_t)$. We also use the notation $\frakf\coloneqq \frakf_{\bG\bO}$.

For an object $\cE$ of $\Sh^\bG(M \times \bR_t)$, we define its microsupport by
\begin{equation}
    \MS(\cE)\coloneqq \MS(\frakf(\cE)),
\end{equation}
where $\MS$ on the right-hand side is the usual microsupport. 
We write $\tau$ for the cotangent coordinate of $t$ and write $\{ \tau > 0\}$ for the subset of $T^*M \times T^*\bR_t$ defined by $\tau >0$.
For $\cE \in \Sh^\bG(M \times \bR_t)$, we also define 
\begin{equation}
    \MS_{\tau>0}(\cE) \coloneqq \MS(\cE) \cap \{ \tau >0\}.
\end{equation}

We write $\{\tau\leq 0\}$ for the subset of $T^*M \times T^*\bR_t$ defined by $\tau\leq 0$.
We consider the full subcategory 
\begin{equation}
    \cD^\bG_{\tau \leq 0}\coloneqq \lc \cE\in\Sh^\bG(M \times \bR_t)\relmid \MS(\cE)\subset \lc \tau\leq 0\rc \rc
\end{equation}
and set $\Sh_{\tau>0}^\bG(M \times \bR_t)\coloneqq \Sh^\bG(M \times \bR_t)/\cD^\bG_{\tau \leq 0}$. 

\begin{example}
\begin{enumerate}
    \item When $\bG=\bO$ the trivial group, we denote the category $\Sh^\bO_{\tau>0}(M \times \bR_t)$ by $\Sh_{\tau>0}(M \times \bR_t)$, which is the (usual) non-equivariant version of the Tamarkin category~\cite{Tamarkin,GS14}. 
    \item When $\bG$ is a non-trivial discrete group in $\bR$ with the Euclidean topology, it is isomorphic to $\bZ$ as a subgroup of $\bR$. Hence the category is $\Sh^\bG_{\tau>0}(M \times \bR_t)$ is equivalent to $\Sh_{\tau>0}(M \times S^1)$, which was used in \cite{AISQ}.
    \item When $\bG=\bR$, the category is the one introduced in \cite{Kuw1, Kuw2}.
\end{enumerate}
\end{example}

The functor $\frakf_{\bG\bH}$ induces a functor $\Sh^\bG_{\tau>0}(M \times \bR_t)\to \Sh^{\bH}_{\tau>0}(M \times \bR_t)$, which will also be denoted by $\frakf_{\bG\bH}$.
We note that $\MS_{\tau>0}(\cE) = \MS(\cE)\cap \{\tau>0\}$ is well-defined for objects in $\Sh_{\tau>0}^\bG(M \times \bR_t)$.

\subsection{Equivariant version of Tamarkin projector}

For $\cE, \cF\in \Sh_\heartsuit(M \times \bR_t)$, we set
\begin{equation}
    \cE\star\cF\coloneqq m_! \delta_M ^{-1}(\cE\boxtimes \cF),
\end{equation}
where $\delta_M \colon M \times \bR^2 \to M^2 \times \bR^2$ is the diagonal embedding on the first factor and $m\colon M \times \bR^2 \to M \times\bR_t$ is the addition on the second factor.
Suppose that $\cE$ is equipped with an equivariant structure. 
Then each structure morphism $e_a$ induces an isomorphism
\begin{equation}
   e_a\star \id\colon  \cE\star\cF \to (T_a\cE)\star \cF=T_a(\cE\star\cF)
\end{equation}
for $\cF\in \Sh_{\heartsuit}(M \times \bR_t)$, which gives an equivariant structure on $\cE\star \cF$. 
As a result, we get a functor $(-)\star_G \cF\colon \Sh^\bG_\heartsuit(M \times \bR_t)\to \Sh^\bG_{\heartsuit}(M \times \bR_t)$ for each $\cF\in \Sh_{\heartsuit}(M \times \bR_t)$. 
By deriving it, we get an endofunctor of $\Sh^\bG(M \times \bR_t)$. 
We denote it by the same symbol $(-)\star_G\cF$. 
The following is obvious.

\begin{lemma}
    One has $\frakf((-)\star_G\cF)=\frakf(-)\star \cF$.
\end{lemma}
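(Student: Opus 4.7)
The statement amounts to a compatibility between the forgetful functor and convolution, so the plan is to check it first at the underived (abelian-sheaf) level and then promote it to the derived setting.

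At the abelian level, the claim is essentially tautological from the construction. Given $\cE \in \Sh^\bG_\heartsuit(M\times\bR_t)$ and $\cF \in \Sh_\heartsuit(M\times\bR_t)$, the functor $(-)\star_G\cF$ was defined by taking the underlying non-equivariant convolution $\cE\star\cF = m_!\delta_M^{-1}(\cE\boxtimes\cF)$ and equipping it with the equivariant structure $e_a\star\id$. Since the definition of $\frakf$ is simply to forget this added structure, the equality $\frakf(\cE\star_G\cF)=\frakf(\cE)\star\cF$ holds on the nose, not merely up to isomorphism. No naturality diagram really needs to be checked; the morphism $e_a\star\id$ uses the same bifunctor $\star$, so discarding it yields the ordinary convolution.

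To pass to the derived version, I would argue that $\frakf\colon \Sh^\bG(M\times\bR_t)\to \Sh(M\times\bR_t)$ commutes with the derivation of $\star_G$. The forgetful functor at the abelian level is exact (kernels, images and cokernels of equivariant morphisms are computed underlying-wise and inherit the obvious equivariant structure) and it preserves K-injective and K-flat complexes in the relevant sense, or at least it is compatible with the standard resolutions one uses to compute $m_!$ and $\delta_M^{-1}$ on $M\times \bR^2$ together with their equivariant enhancements coming from the $\bG$-action on the $\bR_t$-factor. Concretely, one picks a K-flat resolution $\cE'\to \cE$ in $\Sh^\bG_\heartsuit(M\times\bR_t)$ whose underlying complex $\frakf(\cE')$ is again K-flat, computes the derived star-product with this resolution, and then observes that the isomorphism from the previous paragraph is natural in $\cE$, hence descends to the derived level.

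The main (minor) obstacle is making the derived compatibility rigorous, i.e., verifying that equivariant resolutions used to define the derived functor $(-)\star_G\cF$ map, under $\frakf$, to resolutions adequate for computing the non-equivariant derived convolution. This is the kind of verification that is standard for equivariant derived categories with a discrete group action (as is emphasized in the setup of \cref{section:equiv_nov_ring}), but it is the only place where one does more than unwind definitions. Once this is in place, naturality of the underived identification gives the lemma.
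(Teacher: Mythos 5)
Your proposal is correct and matches the paper's treatment: the paper states this lemma without proof (it is declared obvious), precisely because $\cE\star_G\cF$ is by construction the underlying convolution $\cE\star\cF$ equipped with the extra equivariant structure $e_a\star\id$, so forgetting that structure returns $\frakf(\cE)\star\cF$ on the nose. Your added care at the derived level (exactness of $\frakf$ and its compatibility with the resolutions used to compute $\boxtimes$, $\delta_M^{-1}$ and $m_!$ equivariantly) is the standard verification implicitly absorbed into the paper's ``obvious,'' and your sketch of it is adequate.
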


In particular, consider the case when $\cF\coloneqq \bK_{t\geq 0}$, which is the constant sheaf supported on $\lc(x, t)\relmid t\geq 0 \rc$. 
By combining the above lemma with techniques developed by Tamarkin~\cite{Tamarkin} and Guillermou--Schapira~\cite{GS14}, one can deduce $\cE\star_G \bK_{\geq 0} \cong 0$ for $\cE\in \cD^\bG_{\tau \leq 0}$. Hence we get the induced functor $(-)\star_G \bK_{\geq 0}\colon\Sh_{\tau>0}^\bG(M \times \bR_t)\to \Sh^\bG(M \times \bR_t)$. 
The following is an analogue of Tamarkin's result.

\begin{lemma}[{\cite[\S 2.4]{Kuw1}}]\label{lemma:tamarkin_projector}
    The functor $(-)\star_G\bK_{\geq 0}\colon \Sh_{\tau>0}^\bG(M \times \bR_t)\to \Sh^\bG(M \times \bR_t)$ is fully faithful left adjoint of the quotient functor $\Sh^\bG(M \times \bR_t)\to \Sh^\bG_{\tau>0}(M \times \bR_t)$. The essential image coincides with the left orthogonal $^{\perp}\cD_{\tau \leq 0}^\bG$.
\end{lemma}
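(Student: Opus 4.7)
The plan is to bootstrap from the classical (non-equivariant) Tamarkin localization using the compatibility $\frakf \circ ((-) \star_G \bK_{\geq 0}) \cong \frakf(-) \star \bK_{\geq 0}$ recorded just above. The forgetful functor $\frakf\colon \Sh^\bG(M \times \bR_t) \to \Sh(M \times \bR_t)$ is conservative and detects microsupport by definition, so essentially every microlocal assertion about $(-)\star_G \bK_{\geq 0}$ reduces to the corresponding statement proven in \cite{Tamarkin, GS14}.

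The first step is to lift Tamarkin's localization triangle to the equivariant setting. In the non-equivariant case, one has, for each $\cE \in \Sh(M\times\bR_t)$, a functorial distinguished triangle
\begin{equation}
  \cE \star \bK_{\geq 0} \longrightarrow \cE \longrightarrow \cC(\cE) \xrightarrow{+1}
\end{equation}
with $\cC(\cE) \in \cD_{\tau \leq 0}$, obtained by convolving $\cE$ with an appropriate distinguished triangle of sheaves on $\bR_t$. Because $\star_G$ is natural in the equivariant variable and the equivariant structure on $\cE \star_G \cF$ is induced entirely from that of $\cE$ via the rule $e_a \star \id$ recalled in the excerpt, every arrow in this triangle automatically commutes with the structure morphisms, so the whole triangle lives in $\Sh^\bG(M \times \bR_t)$ and recovers the non-equivariant triangle upon applying $\frakf$.

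With the equivariant triangle in hand, the three assertions follow by the standard formalism of Bousfield localization / recollement. Factorization through $Q$ is already recorded in the excerpt: $\cE \star_G \bK_{\geq 0} \cong 0$ for $\cE \in \cD^\bG_{\tau \leq 0}$, deduced from Tamarkin's non-equivariant vanishing together with conservativity of $\frakf$. For the adjunction and full faithfulness, the equivariant triangle above exhibits the counit $P(Q\cE) \to \cE$ of the prospective adjunction with cone in the kernel subcategory $\cD^\bG_{\tau \leq 0}$; by the standard Verdier-quotient yoga this suffices to conclude that $P \coloneqq (-)\star_G \bK_{\geq 0}$ is a fully faithful left adjoint of $Q$. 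The description of the essential image is then automatic: the image of the left adjoint to a Verdier quotient by a thick subcategory $\cN$ is always exactly $^\perp\cN$.

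The only genuinely equivariant content of the argument is to ensure that the Tamarkin triangle can be chosen functorially and $\bG$-equivariantly. This reduces to the compatibility between $\star_G$ and the translation maps $T_a$ in the first factor, which is built into the construction of the equivariant structure on $\cE \star_G (-)$. I therefore do not expect any real obstacle: the whole argument is the non-equivariant proof of \cite{Tamarkin, GS14} transported formally through $\frakf$, with only bookkeeping of equivariant structures added on top.
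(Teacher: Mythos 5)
The paper itself gives no proof of this lemma (it is quoted from \cite[\S 2.4]{Kuw1}), so your attempt has to stand on its own, and as written it has a genuine gap at the decisive step. The equivariant Tamarkin triangle $\cE\star_G\bK_{\geq 0}\to\cE\to\cC(\cE)$ with $\cC(\cE)\in\cD^{\bG}_{\tau\leq 0}$ is fine (convolve with the standard triangle on $\bR_t$ and use that $\MS$ of an equivariant sheaf is defined through $\frakf$), and so is the factorization of $(-)\star_G\bK_{\geq 0}$ through the quotient. But it is not true that ``the counit $P(Q\cE)\to\cE$ has cone in the kernel'' suffices, by Verdier-quotient yoga, to conclude that $P$ is a fully faithful left adjoint of $Q$. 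The standard criterion requires, in addition, that $P(Q\cE)$ lie in the left orthogonal, i.e.\ the vanishing $\Hom_{\Sh^\bG(M\times\bR_t)}(\cE\star_G\bK_{\geq 0},\cF)=0$ for every $\cF$ with $\MS(\cF)\subset\lc\tau\leq 0\rc$. Without this the conclusion simply fails: if $P_0$ is the true left adjoint and $N$ is any nonzero exact functor from the quotient into $\cD^{\bG}_{\tau\leq 0}$, then $P_0\oplus N$ still admits a natural transformation to the identity with cone in $\cD^{\bG}_{\tau\leq 0}$ objectwise, yet it is neither fully faithful nor adjoint to $Q$. This orthogonality is exactly the nontrivial content of Tamarkin's theorem, and your proposal never establishes it; once it is in place, full faithfulness, the adjunction, and the identification of the essential image with $^{\perp}\cD^{\bG}_{\tau\leq 0}$ do follow by the standard arguments you invoke.

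Your blanket reduction principle also does not cover this point: conservativity of $\frakf$ detects vanishing of \emph{objects} and computes microsupports, but it does not detect morphism spaces, since $\Hom_{\Sh^\bG}(-,-)$ is not computed by $\Hom_{\Sh}(\frakf(-),\frakf(-))$ (the two differ by descent/invariants data for the $\bG$-action). The orthogonality does reduce to the non-equivariant computation, but through objects rather than homs: the functor $(-)\star_G\bK_{\geq 0}$ admits a right adjoint on $\Sh^\bG(M\times\bR_t)$ given by a $\bG$-equivariant enhancement of $\cHom^{\star}(\bK_{\geq 0},-)$ (the equivariant structure is induced from that of the second argument, exactly as for $\star_G$ itself), and this right adjoint commutes with $\frakf$. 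Since the non-equivariant Tamarkin computation gives $\cHom^{\star}(\bK_{\geq 0},\frakf\cF)=0$ whenever $\MS(\cF)\subset\lc\tau\leq 0\rc$, conservativity of $\frakf$ yields the vanishing of the equivariant right adjoint applied to $\cF$, hence $\Hom_{\Sh^\bG}(\cE\star_G\bK_{\geq 0},\cF)\cong\Hom_{\Sh^\bG}(\cE,\cHom^{\star}(\bK_{\geq 0},\cF))=0$. Adding this step (or an equivalent descent argument for equivariant homs) closes the gap; with it, your transport of the non-equivariant argument through $\frakf$ is the expected route.
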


\subsection{Monoidal operations}

We consider equivariant operations, which is an adaptation of the materials in \cite{Kuw2}. 
Here, we briefly recall them.

We first recall some basic operations for equivariant sheaves. 
For details, we refer to \cite{BernsteinLunts, Tohoku}. 
Let $G$ be a group and $X_1, X_2$ be $G$-spaces. 
Let $f\colon X_1\to X_2$ be a $G$-map. Then we have functors
\begin{equation}
\begin{aligned}
    f_*, f_!&\colon \Sh^G(X_1)\to \Sh^G(X_2),\\
    f^{-1}, f^!&\colon \Sh^G(X_2)\to \Sh^G(X_1).
\end{aligned}
\end{equation}
For these functors, usual adjunctions hold. We can also define tensor and internal hom.

Let $\phi\colon G\to H$ be a surjective group homomorphism and $Y$ be an $H$-space. 
Then $G$ acts on $H$ through $\phi$. 
By setting $K$ to be the kernel of $\phi$, we have the invariant functor
\begin{equation}
    (-)^K\colon \Sh^G(Y)\to \Sh^H(Y)
\end{equation}
and the coinvariant functor
\begin{equation}
    (-)_K\colon \Sh^G(Y)\to \Sh^H(Y).
\end{equation}
If one has an $H$-equivariant sheaf on $Y$, it can also be considered as a $G$-equivariant sheaf, which gives a functor
\begin{equation}
    \iota^\phi\colon \Sh^H(Y)\to \Sh^G(Y).
\end{equation}

\begin{lemma}[{\cite[Lem.~6.1]{Kuw2}}]
    The functor $\iota^\phi$ is the right adjoint of $(-)_K$ and the left adjoint of $(-)^K$.
\end{lemma}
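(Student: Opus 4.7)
The plan is to establish both adjunctions first at the abelian category level and then pass to the derived setting, exploiting the exactness of $\iota^\phi$. The structural observation driving everything is that since $G$ acts on $Y$ only through the quotient $\phi\colon G \twoheadrightarrow H$, for any $\cF \in \Sh^H_\heartsuit(Y)$ the $G$-equivariant sheaf $\iota^\phi \cF$ carries a trivial $K$-action: each structure isomorphism $e_k$ with $k \in K$ is the identity.

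First I would prove $(-)_K \dashv \iota^\phi$ on the abelian categories. Given $\cE \in \Sh^G_\heartsuit(Y)$ and a $G$-equivariant morphism $\varphi \colon \cE \to \iota^\phi \cF$, the triviality of the $K$-action on the target forces $\varphi \circ e_k = \varphi$ for every $k \in K$, so $\varphi$ factors uniquely through the projection $\cE \twoheadrightarrow \cE_K$, and the induced morphism is automatically $H$-equivariant since $G/K \cong H$ acts on $\cE_K$. This yields a natural bijection
\begin{equation}
\Hom_{\Sh^G_\heartsuit(Y)}(\cE, \iota^\phi \cF) \xrightarrow{\sim} \Hom_{\Sh^H_\heartsuit(Y)}(\cE_K, \cF),
\end{equation}
with inverse obtained by precomposing with the natural map $\cE \to \iota^\phi(\cE_K)$. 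Dually, for $\iota^\phi \dashv (-)^K$, any $G$-equivariant morphism $\iota^\phi \cF \to \cE$ has source with trivial $K$-action and therefore lands inside the subsheaf of $K$-invariants $\cE^K \hookrightarrow \cE$, giving
\begin{equation}
\Hom_{\Sh^G_\heartsuit(Y)}(\iota^\phi \cF, \cE) \xrightarrow{\sim} \Hom_{\Sh^H_\heartsuit(Y)}(\cF, \cE^K).
\end{equation}

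To transfer these to the derived categories, I would observe that $\iota^\phi$ is exact, as it merely modifies the equivariant structure while leaving the underlying complex unchanged. By standard derived adjunction formalism, an exact functor admitting an abelian left (resp.\ right) adjoint automatically extends to a derived adjunction whose other side is the total left (resp.\ right) derived functor of that adjoint. Since $\Sh^G_\heartsuit(Y)$ and $\Sh^H_\heartsuit(Y)$ are Grothendieck abelian categories, as recalled earlier in this section, enough $K$-injective resolutions exist to compute $(-)^K$, and the dual construction handles $(-)_K$. The one potential obstacle is the bookkeeping of these resolutions compatibly with the equivariant structure, but because $\iota^\phi$ itself needs no resolution, the argument reduces to the standard Grothendieck abelian framework of \cite{Tohoku}.
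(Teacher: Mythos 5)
The paper gives no proof of this lemma at all --- it is quoted from \cite{Kuw2} --- so there is nothing to compare against line by line; your proposal has to stand on its own. Its abelian-level core does: since $K=\ker\phi$ acts trivially on $Y$ and $e_0=\id$ (which follows from the cocycle condition, because $e_0=e_0\circ e_0$ with $e_0$ invertible), the $K$-structure maps of $\iota^\phi\cF$ are identities; hence any $G$-equivariant morphism $\cE\to\iota^\phi\cF$ satisfies $\varphi\circ e_k=\varphi$ for $k\in K$ and factors uniquely through $\cE_K$, and dually any $G$-equivariant morphism $\iota^\phi\cF\to\cE$ lands in $\cE^K$. This correctly establishes both adjunctions between $\Sh^\bG_\heartsuit$-type abelian categories, and it is the expected argument.

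The weak point is the derived upgrade on the coinvariant side. In this paper $\Sh^G(Y)$ denotes a derived (dg) category, so $(-)_K$ has to be understood in the derived sense, and your appeal to ``the dual construction'' does not work as stated: Grothendieck abelian categories admit K-injective resolutions, which handles $\iota^\phi\dashv (-)^K$, but they do not in general have enough projectives or K-projective resolutions, so the total left derived functor of the coinvariants is not obtained by simply dualizing that argument. Two standard repairs are available: (i) compute the derived $K$-coinvariants by the bar resolution of the $K$-action, a complex built out of coproducts of copies of $\cE$, which only uses exactness of coproducts in a Grothendieck category and lets you run your factorization argument termwise; or (ii) observe that $\iota^\phi$ is exact and preserves arbitrary products and coproducts (it does not change the underlying complex), so by Brown representability or the adjoint functor theorem for the presentable derived categories it admits both a left and a right adjoint, which your abelian computation then identifies with the (derived) coinvariant and invariant functors. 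With either repair the proposal becomes a complete proof; as written, the existence of the left derived $(-)_K$ is the one step that is asserted rather than justified.
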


Let us go back to our particular situations.
We consider $M \times \bR^2$ on which $\bG^2$ acts by the addition on each component.
Through the addition $m \colon \bG^2 \to \bG$, the group $\bG^2$ also acts on $M \times \bR_t$. 
The kernel of the map $m \colon \bG^2 \to \bG$ is the anti-diagonal $\Delta_a\coloneqq \{(t, -t) \in \bG \times \bG\}$.

We also consider the addition map $m\colon M \times \bR^2 \to M \times \bR_t$ on $\bR_t$-factors. We then have a functor
\begin{equation}
    m_!\colon \Sh^{\bG^2}(M \times \bR^2)\to \Sh^{\bG^2}(M \times \bR_t).
\end{equation}
By combining it with the coinvariant functor
\begin{equation}
    (-)_{\Delta_a}\colon \Sh^{\bG^2}(M \times \bR_t)\to \Sh^{\bG}(M \times \bR_t),
\end{equation}
we define
\begin{equation}
    m_!^{\Delta_a}\coloneqq (-)_{\Delta_a}\circ m_!\colon \Sh^{\bG^2}(M \times \bR^2)\to\Sh^{\bG}(M \times \bR_t).
\end{equation}
We have the right adjoint of $m_!^{\Delta_a}$: 
\begin{equation}
    m^!_{\Delta_a} \coloneqq m^!\circ \iota^{m}\colon\Sh^{\bG}(M \times \bR_t) \to\Sh^{\bG^2}(M \times \bR^2).
\end{equation}

For $i=1,2$, let $p_i \colon M \times \bR^2 \to M \times \bR_t$ be the $i$-th projection. 
We also have the corresponding projection $q_i\colon \bG^2 \to \bG$. 
We then have
\begin{equation}
   p_{i*}^{\ker q_i}\coloneqq (-)^{\ker q_i}\circ p_{i*} \colon \Sh^{\bG^2}(M \times \bR^2) \to \Sh^{\bG}(M \times \bR_t)
\end{equation}
and 
\begin{equation}
   p_i^{-1}\coloneqq p_i^{-1}\circ \iota^{q_i} \colon  \Sh^{\bG}(M \times \bR_t) \to \Sh^{\bG^2}(M \times \bR^2).
\end{equation}

Combining the above statements, we deduce:

\begin{lemma}
    The functor $p_i^{-1}$ is the left adjoint of $p_{i*}^{\ker q_i}$.
\end{lemma}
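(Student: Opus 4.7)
The plan is to obtain this adjunction by composing two known adjoint pairs. The first ingredient is the adjunction supplied by the preceding lemma: $\iota^{q_i} \dashv (-)^{\ker q_i}$, realised as a pair of functors between $\Sh^\bG(M \times \bR_t)$ and $\Sh^{\bG^2}(M \times \bR_t)$, where the $\bG^2$-action on $M \times \bR_t$ is the one induced from the $\bG$-action via the surjection $q_i$.

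Next I would verify that, with respect to this $\bG^2$-action on $M \times \bR_t$, the projection $p_i \colon M \times \bR^2 \to M \times \bR_t$ is $\bG^2$-equivariant. Indeed, the $i$-th factor of $\bG^2$ acts identically on both sides by translation in the $i$-th $\bR$-coordinate, while the $(3-i)$-th factor acts only on the $\bR$-coordinate that is killed by $p_i$ and acts trivially on the target (since the target's $\bG^2$-structure is pulled back along $q_i$). This is the only geometric input required. Granting it, the standard equivariant adjunction for a projection applies and gives $p_i^{-1} \dashv p_{i*}$ as functors between $\Sh^{\bG^2}(M \times \bR_t)$ and $\Sh^{\bG^2}(M \times \bR^2)$.

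Composing these two adjunctions and invoking the categorical fact that the composition of left adjoints is left adjoint to the composition of right adjoints (in reversed order) yields
\begin{equation}
    p_i^{-1} \circ \iota^{q_i} \dashv (-)^{\ker q_i} \circ p_{i*},
\end{equation}
which, after unwinding the notational conventions fixed in the text, is exactly the statement $p_i^{-1} \dashv p_{i*}^{\ker q_i}$.

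There is no genuine obstacle here: the content is essentially formal, and the only substantive step is the equivariance check for $p_i$, which is immediate from the definitions of $q_i$ and of the $\bG^2$-actions on $M \times \bR^2$ and $M \times \bR_t$. I would therefore present the argument compactly, citing the preceding lemma and the standard equivariant sheaf-functor adjunction from \cite{BernsteinLunts, Tohoku}, rather than expanding the unit and counit explicitly.
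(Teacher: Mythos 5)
Your proposal is correct and is exactly the argument the paper intends: the lemma is stated as a consequence of ``combining the above statements,'' namely the adjunction $\iota^{q_i} \dashv (-)^{\ker q_i}$ from the preceding lemma and the standard equivariant adjunction $p_i^{-1} \dashv p_{i*}$ for the $\bG^2$-equivariant map $p_i$ (with the $\bG^2$-action on the target induced via $q_i$), composed as you describe. Your equivariance check for $p_i$ is the right (and only) point to verify, so nothing is missing.
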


For objects $\cE, \cF\in \Sh^{\bG}(M \times \bR_t)$, we set,  
\begin{equation}
    \cE \star_{\bG} \cF\coloneqq m_!^{\Delta_a}(p_1^{-1}\cE\otimes p_2^{-1}\cF).
\end{equation}
For $\cF, \cG \in \Sh^\bG(M \times \bR_t)$, we also set
\begin{equation}\label{eq:homstar}
    \cHom^{\star_\bG}(\cF,\cG) \coloneqq {p_1^{\ker q_1}}_* \cHom_{M \times \bR^2}(p_2^{-1}\cF, m^!_{\Delta_a}\cG).
\end{equation}
This is the right adjoint of $\star_\bG$.

\begin{lemma}
    The functors $\star_{\bG}$ and $\cHom^{\star_\bG}$ descend to functors on $\Sh_{>0}^\bG(M \times \bR_t)$. 
\end{lemma}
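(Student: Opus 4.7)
The plan is to verify the descent of $\star_\bG$ by a microsupport estimate, and then to deduce the descent of $\cHom^{\star_\bG}$ from the adjunction $\star_\bG\dashv\cHom^{\star_\bG}$ together with \cref{lemma:tamarkin_projector}.

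\emph{$\star_\bG$.} Since $\MS$ is detected through the forgetful functor $\frakf$, it suffices to verify the bound on the underlying non-equivariant sheaves. Assume $\MS(\cE)\subset\{\tau\leq 0\}$ (the case $\cF\in\cD^\bG_{\tau\leq 0}$ is symmetric). The standard Kashiwara--Schapira estimates give
\[
\MS(\frakf(p_1^{-1}\cE))\subset\{\tau_1\leq 0,\,\tau_2=0\},\qquad \MS(\frakf(p_2^{-1}\cF))\subset\{\tau_1=0\},
\]
whence $\MS(\frakf(p_1^{-1}\cE\otimes p_2^{-1}\cF))\subset\{\tau_1\leq 0\}$ by the tensor estimate. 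Applying $m_!$, whose cotangent comomentum sends $\tau$ to $(\tau,\tau)$, reduces this to $\{\tau\leq 0\}$; this is precisely the classical Tamarkin/Guillermou--Schapira computation. Finally, the coinvariant $(-)_{\Delta_a}$ does not enlarge $\MS$: since $\bG^2$ acts on $M\times\bR_t$ through $m\colon\bG^2\to\bG$, the subgroup $\Delta_a=\ker m$ acts trivially on the base, so $\cH\otimes^L_{\bK[\Delta_a]}\bK$ is built from $\frakf\cH$ purely through cones and filtered colimits, both of which respect the triangle inequality for microsupport.

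\emph{$\cHom^{\star_\bG}$.} By \cref{lemma:tamarkin_projector}, $\Sh_{\tau>0}^\bG$ is equivalent to ${}^\perp\cD^\bG_{\tau\leq 0}$ via the Tamarkin projector, and the first step ensures that $\star_\bG$ descends to a colimit-preserving bifunctor on this quotient. Its right adjoint exists by Brown representability and can be constructed explicitly by applying $\cHom^{\star_\bG}$ to objects in ${}^\perp\cD^\bG_{\tau\leq 0}$ and projecting back; the adjunction
\[
\Hom(\cA\star_\bG\cF,\cG)=\Hom(\cA,\cHom^{\star_\bG}(\cF,\cG))
\]
descends, identifying this right adjoint as the desired descent of $\cHom^{\star_\bG}$.

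\emph{Main obstacle.} The subtlest step is showing that the derived coinvariant preserves the microsupport bound when $\bG=\bR$ carries the discrete topology, so that $\Delta_a\cong\bG$ is an uncountable group. One must realize $\cH\otimes^L_{\bK[\Delta_a]}\bK$ as a concrete colimit---for example, via a bar resolution of $\bK$ as a $\bK[\Delta_a]$-module---and then invoke compatibility of $\MS$ with cones and filtered colimits. The remainder of the argument is essentially formal manipulation of the adjunction.
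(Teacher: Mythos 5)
The paper itself does not prove this lemma: it is one of the statements deferred to \cite{Kuw1} (``all the statements without proofs are proved in [Kuw1]''), so your proposal can only be judged on its own terms.

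Your treatment of $\star_\bG$ is sound in outline: reduce to the underlying non-equivariant sheaf via $\frakf$, invoke the classical fact that $\cD_{\tau\le 0}$ is a two-sided ideal for $\star$, and observe that the $\Delta_a$-coinvariants are computed (e.g.\ by a bar resolution, since $\Delta_a$ acts trivially on $M\times\bR_t$ after applying $m_!$) from coproducts, cones and sequential colimits of copies of the same underlying sheaf, so the closed bound $\lc\tau\le 0\rc$ on microsupport survives. One caveat: the chain of ``standard estimates'' you write for the non-equivariant core is not quite a proof as stated, because $m$ is not proper on the relevant supports and the naive estimate for $m_!$ requires properness; the classical Tamarkin/Guillermou--Schapira argument goes through the cut-off characterization ($\cE\in\cD_{\tau\le0}$ iff $\bK_{t\ge0}\star\cE\simeq 0$) rather than the raw operations estimates. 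Since you cite that computation as a black box this is acceptable, but the intermediate estimates should not be presented as if they immediately yield the $m_!$ step.

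The genuine gap is in the $\cHom^{\star_\bG}$ part. ``Descends'' means that $q\circ\cHom^{\star_\bG}$ inverts morphisms inverted by $q$ in each variable, i.e.\ that $\cHom^{\star_\bG}(\cF,\cG)\in\cD^\bG_{\tau\le0}$ whenever $\cF\in\cD^\bG_{\tau\le0}$ (any $\cG$) or $\cG\in\cD^\bG_{\tau\le0}$ (any $\cF$). Brown representability only produces \emph{some} right adjoint of the descended $\star_\bG$ on the quotient; it says nothing about the concretely defined bifunctor \eqref{eq:homstar} factoring through the quotient, and the key assertion ``the adjunction descends'' is exactly what needs proof. The fix is short and purely formal, using \cref{lemma:tamarkin_projector} and the fact (stated just before it) that $\bK_{t\ge0}\star_\bG\cF\simeq 0$ for $\cF\in\cD^\bG_{\tau\le0}$. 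Since $\cD^\bG_{\tau\le0}$ is thick and is the kernel of the quotient functor, which admits the fully faithful left adjoint $(-)\star_\bG\bK_{t\ge0}$ with essential image ${}^{\perp}\cD^\bG_{\tau\le0}$, one has $\cD^\bG_{\tau\le0}=({}^{\perp}\cD^\bG_{\tau\le0})^{\perp}$, so it suffices to show $\Hom(\cA,\cHom^{\star_\bG}(\cF,\cG))\cong\Hom(\cA\star_\bG\cF,\cG)=0$ for every $\cA\in{}^{\perp}\cD^\bG_{\tau\le0}$. Writing $\cA\simeq\cA\star_\bG\bK_{t\ge0}$: if $\cF\in\cD^\bG_{\tau\le0}$ then $\cA\star_\bG\cF\simeq\cA\star_\bG(\bK_{t\ge0}\star_\bG\cF)\simeq 0$; if $\cG\in\cD^\bG_{\tau\le0}$ then $\cA\star_\bG\cF\simeq(\cA\star_\bG\cF)\star_\bG\bK_{t\ge0}\in{}^{\perp}\cD^\bG_{\tau\le0}$, so the Hom vanishes. (The same trick, $(\cE\star_\bG\cF)\star_\bG\bK_{t\ge0}\simeq(\cE\star_\bG\bK_{t\ge0})\star_\bG\cF\simeq0$, also gives the descent of $\star_\bG$ without any new microsupport estimate, provided one grants associativity/commutativity of $\star_\bG$.) With this replacement your argument becomes complete; as written, the second half does not establish the statement.
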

   
We also introduce the relative operation. Suppose $\bH\subset \bG$. 
For $\cE\in \Sh^\bG(M \times \bR_t)$ and $\cF\in \Sh^\bH(M \times \bR_t)$, we can consider $\cE\star_\bH\cF$. 
For the same reason as in defining $\star_G$, this object can be viewed as an object in $\Sh^\bG(M \times \bR_t)$. Hence we get a functor,
\begin{equation}
\begin{split}
   (-)\star_{\bG\bH} (-)\colon  \Sh^\bG(M \times \bR_t)\times \Sh^\bH(M \times \bR_t)\rightarrow \Sh^\bG(M \times \bR_t),\\
      (-)\star_{\bG\bH} (-)\colon  \Sh^\bH(M \times \bR_t)\times \Sh^\bG(M \times \bR_t)\rightarrow \Sh^\bG(M \times \bR_t).
\end{split}
\end{equation}
In particular, we have $\star_{\bG\bO}=\star_{G}$ and $\star_{\bG\bG}=\star_{\bG}$.

\subsection{Equivariant and non-equivariant}\label{section:equivandnonequiv}

We have introduced the forgetful functor $\frakf_{\bG\bH}\colon \Sh^\bG_{\tau>0}(M \times \bR_t)\to \Sh_{\tau>0}^\bH(M \times \bR_t)$. 
The left adjoint of this functor is given by 
\begin{equation}
   \frakf^L_{\bG\bH}\coloneqq (-)\star_{\bG\bH} \bigoplus_{c\in \bG}\bK_{t\geq c}\colon \Sh_{\tau>0}^\bH(M \times \bR_t)\to \Sh^\bG_{\tau>0}(M \times \bR_t).
\end{equation}

For any $0<a\in \bR$ and $\cE\in \Sh_{\tau>0}(M \times \bR_t)$, we have a morphism $T^a\colon \cE\to T_a\cE$. 
We have an analogous morphism $T^a\colon \cE\to T_a\cE$ for any $a>0$ and $\cE\in \Sh^\bG_{\tau>0}(M \times \bR_t)$.
For these morphisms, we have the following compatibility.

\begin{lemma}
    One has $\frakf_{\bG\bH}^L(T^a)=T^a$.
\end{lemma}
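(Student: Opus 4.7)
The plan is to exhibit both morphisms $\frakf_{\bG\bH}^L(T^a)$ and $T^a_{\frakf_{\bG\bH}^L(\cE)}$ as obtained by star-convolving a single canonical map
\[
\iota_a\colon \bK_{t\geq 0}\longrightarrow \bK_{t\geq a}
\]
(the restriction morphism) against the two factors $\cE$ and $\cA\coloneqq \bigoplus_{c\in \bG}\bK_{t\geq c}$, and then invoke the associativity of the relative convolution to identify them.

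First, I would unpack the definition of $T^a$. By \cref{lemma:tamarkin_projector}, for any $\cE \in \Sh_{\tau>0}^\bH(M\times\bR_t)$ one has a canonical identification $\cE \cong \cE \star_\bH \bK_{t\geq 0}$, and under this identification the morphism $T^a\colon \cE \to T_a\cE$ is precisely $\id_\cE \star_\bH \iota_a$, where the target is canonically $\cE \star_\bH \bK_{t\geq a}\cong T_a\cE$. The analogous description holds in $\Sh_{\tau>0}^\bG$.

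Next I would compute each side. Since $\frakf_{\bG\bH}^L=(-)\star_{\bG\bH}\cA$, by functoriality
\[
\frakf_{\bG\bH}^L(T^a) \;=\; (\id_\cE \star_\bH \iota_a)\star_{\bG\bH} \id_\cA\colon\ \cE\star_{\bG\bH}\cA \longrightarrow (T_a\cE)\star_{\bG\bH}\cA,
\]
where the target is canonically $T_a\frakf_{\bG\bH}^L(\cE)$ since $\star_{\bG\bH}$ commutes with translation on the first factor. On the other hand, applying Step~1 to $\frakf_{\bG\bH}^L(\cE)\in \Sh^\bG_{\tau>0}$ gives
\[
T^a_{\frakf_{\bG\bH}^L(\cE)} \;=\; \id_{\cE\star_{\bG\bH}\cA}\star_\bG \iota_a\colon\ (\cE\star_{\bG\bH}\cA)\star_\bG \bK_{t\geq 0} \longrightarrow (\cE\star_{\bG\bH}\cA)\star_\bG \bK_{t\geq a}.
\]

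Finally, I would apply the associativity of the relative star product, which yields natural isomorphisms $(\cE\star_{\bG\bH}\cA)\star_\bG \cG \cong \cE \star_{\bG\bH}(\cA\star_\bG \cG)$ for $\cG\in \Sh^\bG_{\tau>0}$. Applied to $\cG = \bK_{t\geq 0}$ and $\bK_{t\geq a}$, and using $\cA\star_\bG \bK_{t\geq 0}\cong \cA$ and $\cA\star_\bG \bK_{t\geq a} \cong T_a\cA$ (which follows since $\cA$ lies in $\Sh^\bG_{\tau>0}$ and $\bK_{t\geq b}\star_\bG \bK_{t\geq 0}\cong \bK_{t\geq b}$), both morphisms above are identified with
\[
\id_\cE \star_{\bG\bH}(\id_\cA \star_\bG \iota_a)\colon\ \cE\star_{\bG\bH}\cA \longrightarrow \cE\star_{\bG\bH}(T_a\cA).
\]
This gives the desired equality. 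The main subtlety, which I would carry out carefully, is to check that the associativity isomorphism for $\star_{\bG\bH}$ and $\star_\bG$ together with the comparison with $\star_\bH$ on the first factor is compatible with the equivariant structures (i.e., that the various identifications are morphisms of equivariant sheaves). This is essentially a bookkeeping exercise using the adjunctions between $(-)_K$, $\iota^\phi$, and $(-)^K$ recalled earlier in the section.
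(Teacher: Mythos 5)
The paper itself gives no argument for this lemma (it is one of the statements in \cref{section:equiv_nov_ring} deferred to the updated version of \cite{Kuw1}), so there is no in-paper proof to compare against; judged on its own, your argument is correct and is the natural one. Expressing $T^a$ as convolution with the restriction morphism $\iota_a\colon \bK_{t\geq 0}\to \bK_{t\geq a}$ under the projector identification of \cref{lemma:tamarkin_projector}, and then moving $\iota_a$ across the factors of $\frakf^L_{\bG\bH}=(-)\star_{\bG\bH}\bigoplus_{c\in\bG}\bK_{t\geq c}$ by associativity/commutativity of the convolutions, does yield the equality, and the computations $\bigl(\bigoplus_c\bK_{t\geq c}\bigr)\star\bK_{t\geq a}\cong T_a\bigl(\bigoplus_c\bK_{t\geq c}\bigr)$ are immediate. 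Two small points you should tighten. First, notation: since $\bK_{t\geq 0}$ and $\iota_a$ are non-equivariant, the convolutions you write as $\star_\bH$ and $\star_\bG$ against them are really the relative operations $\star_{\bH\bO}$ and $\star_{\bG\bO}$ in the paper's notation; this matters only because the compatibility you defer to ``bookkeeping'' is precisely the interplay of $\star_{\bG\bH}$, $\star_{\bG\bO}$ and $\star_{\bH\bO}$ through the coinvariant/invariant adjunctions, so it should be stated at that level. Second, the paper never spells out how $T^a$ is defined on $\Sh^\bG_{\tau>0}$ for general $a>0$; your proof silently adopts the convolution description $\id\star\iota_a$ as the definition on both sides. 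That is the standard choice and almost certainly the intended one, but since the whole content of the lemma is a compatibility of two such structures, you should say explicitly that you take this as the definition (or verify it agrees with the alternative description via the map $1_\mu\to T_a 1_\mu$), otherwise the statement risks being circular-looking. With those caveats made explicit, the proof is complete.
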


We name a particularly important object:
\begin{equation}
     1_{\mu} \coloneqq\bigoplus_{c\in \bG}\bK_{t\geq c}  =\frakf_{\bG\bH}^L(\bigoplus_{c\in \bH}\bK_{t\geq c}) \in \Sh^\bG_{\tau>0}(M \times \bR_t).
\end{equation}
With this notation, we have $\frakf_{\bG\bH}^L = (-) \star_{\bG\bH} 1_{\mu}$.

\subsection{Novikov ring action}

In this section, we see that the Novikov ring acts on $\Sh_{\tau>0}^\bG(M \times \bR_t)$. 

First, we mention the following lemma, which is easily proved by an argument in \cite[\S 2.4]{Kuw1}.

\begin{lemma}[{\cite[\S 2.4]{Kuw1}}]\label{lem:Lambda}
The endofunctor $(-)\star_{\bG} 1_{\mu}$ on $\Sh^\bG_{\tau>0}(M \times \bR_t)$ is naturally isomorphic to $\id$.
\end{lemma}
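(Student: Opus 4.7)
My plan is to reduce the claim to the non-equivariant Tamarkin projector (Lemma~\ref{lemma:tamarkin_projector}) by unfolding $\star_\bG$ into an underlying non-equivariant convolution together with a $\Delta_a$-coinvariant quotient, and then to exploit the fact that $\frakf(1_\mu)$ is, non-equivariantly, the induced sheaf from the trivial subgroup.

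First, I will use the definition to identify
\begin{equation}
    \cE \star_\bG 1_\mu \;=\; \bigl(\frakf(\cE) \star \frakf(1_\mu)\bigr)_{\Delta_a},
\end{equation}
where the underlying non-equivariant convolution on $M\times\bR_t$ carries a natural $\bG^2$-equivariant structure (first factor from $\cE$, second from $1_\mu$) and $(-)_{\Delta_a}$ denotes descent along $m\colon\bG^2\to\bG$ with kernel $\Delta_a$. Since $\frakf(1_\mu) = \bigoplus_{c\in\bG} T_c\bK_{t\ge 0}$ and $\star$ is additive, I obtain
\begin{equation}
    \frakf(\cE)\star\frakf(1_\mu) \;\cong\; \bigoplus_{c\in\bG} T_c\bigl(\frakf(\cE)\star\bK_{t\ge 0}\bigr),
\end{equation}
and Lemma~\ref{lemma:tamarkin_projector} applied summand-by-summand inside $\Sh_{\tau>0}(M\times\bR_t)$ collapses each term to $T_c\frakf(\cE)$. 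Hence, prior to taking $\Delta_a$-coinvariants, the underlying sheaf of $\cE\star_\bG 1_\mu$ is $\bigoplus_c T_c\frakf(\cE)$ in the Tamarkin quotient.

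Second, I will trace the $\bG^2$-equivariant structure through these identifications and compute the $\Delta_a$-coinvariants. The first factor acts by $e_\alpha^{\cE}$ applied summand-wise, whereas the second factor acts by the index-shift permutation inherent to the induction structure of $1_\mu$. Consequently the anti-diagonal element $(\alpha,-\alpha)\in\Delta_a$ sends the $c$-summand to the $(c+\alpha)$-summand via the isomorphism induced by $e_\alpha^{\cE}$. Taking coinvariants identifies all summands through these isomorphisms and glues them into a single copy of $\cE$; the residual $\bG$-action inherited from $\bG^2/\Delta_a\cong\bG$ agrees with the original $\bG$-equivariant structure on $\cE$. The resulting canonical morphism is therefore an isomorphism, natural in $\cE$.

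The main obstacle I anticipate is the compatibility bookkeeping in the second step: the summand decomposition $\frakf(1_\mu)=\bigoplus_c T_c\bK_{t\ge 0}$ is not $\bG$-stable, so intertwining the Tamarkin-projector identification with the $\bG^2$-equivariant structure requires care, in particular to see that the identification with the direct sum is compatible with the permutation action coming from the second factor. Once this is settled, following the method of~\cite[\S 2.4]{Kuw1}, the lemma becomes the sheaf-theoretic incarnation of the elementary identity that the coinvariants of an induced representation recover the original representation.
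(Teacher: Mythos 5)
Your argument is correct in substance, and it rests on the same engine as the paper's (uncited-in-detail) proof: the projector property of convolution with $\bK_{t\geq 0}$. Note that the paper itself offers no proof of \cref{lem:Lambda}, deferring to \cite[\S 2.4]{Kuw1}; within the paper's own toolkit the shortest route is to observe that $1_{\mu}=\frakf^L_{\bG\bO}(\bK_{t\geq 0})$ and $\star_{\bG\bO}=\star_{\bG}(-,\frakf^L_{\bG\bO}(-))$ reduce the claim to the equivariant statement of \cref{lemma:tamarkin_projector}: since $(-)\star_{\bG}\bK_{t\geq 0}$ is a fully faithful left adjoint of the quotient functor, the unit $\id\to Q\circ L$ is an isomorphism, i.e.\ $\cE\star_{\bG}\bK_{t\geq 0}\cong\cE$ already in $\Sh^\bG_{\tau>0}(M\times\bR_t)$. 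What you do instead is unfold $\star_\bG$ into the non-equivariant convolution plus $\Delta_a$-coinvariants, apply the $\bG=\bO$ case of the projector summand-by-summand, and finish with a Shapiro-type computation (coinvariants of an object induced from the trivial subgroup), checking that the residual $\bG^2/\Delta_a$-structure is the original one; this is a valid, more explicit alternative, and is essentially what a proof of the reduction identity above would amount to anyway. Two compatibilities are worth stating rather than leaving implicit: (i) the infinite direct sum decomposition survives passage to the Tamarkin quotient because $\cD^{\bG}_{\tau\leq 0}$ is closed under coproducts, so the quotient functor preserves them; and (ii) the $\Delta_a$-coinvariants (a colimit, so compatible with the localization, and microsupport-estimably preserving $\lc\tau\leq 0\rc$) may indeed be computed after the summand-wise identification inside the quotient, which is exactly the bookkeeping point you flag. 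With those spelled out, your proof is complete and natural in $\cE$.
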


We denote the submonoid of the non-negative elements of $\bG$ by $\bG_{\geq 0}$.
We set
\begin{equation}
    \Lambda_0^\bG\coloneqq
    \varprojlim_{c \to \infty} \bK[\bG_{\geq 0}]/T^c\bK[\bG_{\geq 0}].
\end{equation}
This is the \emph{Novikov ring} $\Lambda_0$ introduced by Fukaya--Oh--Ohta--Ono~\cite{FOOO} when $\bG=\bR$. 
The ring $\Lambda_0^\bG$ for a general $\bG$ can be regarded as the counterpart of $\bG$-gappedness in \cite{FOOO}. 
The indeterminate corresponding to $a\in \bG_{\geq 0}$ will be denoted by $T^a$. Let $\Lambda_0^+$ denote the unique maximal ideal of the Novikov ring $\Lambda_0^\bG$. The quotient $\Lambda_0^\bG/\Lambda_0^+$ is isomorphic to $\bK$.

If $M$ is a singleton, we have an almost isomorphism $\End(\bigoplus_{c\in \bG}\bK_{\geq c})\cong \Lambda_0^\bG$~\cite{KuwNov}. 

\begin{definition}
The dg-algebra $\End(\bigoplus_{c\in \bG}\bK_{\geq c})$ will be denoted by $\mathbf{\Lambda}_{0}^{\bG}$, and will be called the derived Novikov ring.
\end{definition}

For general $M$, we have a morphism $\Lambda^\bG_0 \to \End((-)\star_{\bG}1_{\mu})$ (see \cite{KuwNov}). 
Hence we get the following homomorphism:
\begin{equation}
    \Lambda_0^\bG\rightarrow \End^0((-)\star_{\bG}1_{\mu})\rightarrow \Hom^0(\cE\star_{\bG}1_{\mu} , \cF\star_{\bG} 1_{\mu})=\Hom^0(\cE, \cF),
\end{equation}
where the last equality follows from \cref{lem:Lambda}.
Hence we have:
\begin{corollary}
The category $\Sh_{\tau>0}^\bG(M \times \bR_t)$ is $\Lambda_0^\bG$-linear.
\end{corollary}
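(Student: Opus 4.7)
The proof should be essentially a formal consequence of the construction already laid out in the paragraphs preceding the corollary, so my plan is to package that construction cleanly rather than to introduce new content.

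First I would interpret ``$\Lambda_0^\bG$-linear'' as the assertion that there is a ring homomorphism from $\Lambda_0^\bG$ into the graded center $\End^0(\id)$ of the dg-category $\Sh_{\tau>0}^\bG(M\times \bR_t)$, so that every Hom-space $\Hom(\cE,\cF)$ acquires a $\Lambda_0^\bG$-module structure that is bilinear with respect to composition. This reduction is standard: once one has $\Lambda_0^\bG\to \End^0(\id)$, the action on $\Hom(\cE,\cF)$ is defined by $\lambda\cdot f \coloneqq \eta_\lambda(\cF)\circ f = f\circ \eta_\lambda(\cE)$, the two expressions agreeing by naturality of $\eta_\lambda$, and this automatically commutes with composition on both sides.

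Next I would supply the map $\Lambda_0^\bG \to \End^0(\id)$. \Cref{lem:Lambda} provides a natural isomorphism $\id \xrightarrow{\sim} (-)\star_{\bG} 1_{\mu}$ of endofunctors of $\Sh^\bG_{\tau>0}(M\times \bR_t)$, so $\End^0(\id)\cong \End^0((-)\star_{\bG}1_{\mu})$. The morphism $\Lambda^\bG_0 \to \End((-)\star_{\bG}1_{\mu})$ from \cite[\S2.4]{Kuw1}, recalled just before the corollary, then gives the desired ring map into the center. Composing with evaluation at any pair $(\cE,\cF)$ and transporting along the isomorphism from \cref{lem:Lambda} reproduces exactly the map
\begin{equation}
   \Lambda_0^\bG \to \Hom^0(\cE\star_{\bG} 1_{\mu}, \cF\star_{\bG} 1_{\mu}) = \Hom^0(\cE,\cF)
\end{equation}
displayed above, so the induced module structures coincide with the one arising from the center action.

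Finally I would check the compatibilities: that the assignment is a ring homomorphism (which is inherited from $\Lambda_0^\bG\to \End((-)\star_{\bG}1_{\mu})$), that the unit $1\in \Lambda_0^\bG$ acts as the identity (which follows by tracking the unit element $T^0$ through $\bK[\bG_{\geq 0}]\to \End^0((-)\star_{\bG}1_{\mu})$), and that the action is compatible with composition and with the dg-structure (automatic from the center interpretation). There is no real obstacle here; the only delicate point, and the one I would treat with the most care, is the verification that the naturality square for $\id\cong(-)\star_{\bG}1_{\mu}$ genuinely yields the equality $\eta_\lambda(\cF)\circ f = f\circ \eta_\lambda(\cE)$, since otherwise the left and right $\Lambda_0^\bG$-module structures need not coincide. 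Everything else is formal.
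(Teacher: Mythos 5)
Your proposal is correct and follows essentially the same route as the paper: the paper's argument is precisely the composite $\Lambda_0^\bG\rightarrow \End^0((-)\star_{\bG}1_{\mu})\rightarrow \Hom^0(\cE,\cF)$ using the morphism from \cite[\S 2.4]{Kuw1} together with \cref{lem:Lambda}. Your additional packaging via the center $\End^0(\id)$ and the naturality check is just a careful spelling-out of the same construction, not a different argument.
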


\subsection{Lurie tensor product presentation}
By Lurie~\cite{HTT}, for two cocomplete categories $\cC_1, \cC_2$, there exists a cocomplete category $\cC_1\otimes \cC_2$ and a morphism $\cC_1\times \cC_2\rightarrow \cC_1\otimes \cC_2$ satisfying the following universal property: 
Let $\cC_1, \cC_2, \cC_3$ be cocomplete categories and  $f\colon \cC_1\times \cC_2\rightarrow \cC_3$ be a functor that is cocontinuous in each variable. 
Then there exists a unique functor $\widetilde{f}\colon \cC_1\otimes \cC_2\rightarrow \cC_3$ factorizing $f$.

The following K\"unneth property is known:
\begin{lemma}[{\cite[Corollary 1.3.1.8]{SAG}, see also \cite{Volpe}}]
    Let $M, N$ be topological spaces. We have
    \begin{equation}
        \Sh(M)\otimes \Sh(N)\cong \Sh(M\times N).
    \end{equation}
\end{lemma}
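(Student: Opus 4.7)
The plan is to use Lurie's universal property. First, I would produce the comparison functor: define the external tensor product $\boxtimes \colon \Sh(M) \times \Sh(N) \to \Sh(M \times N)$ by $(\cE, \cF) \mapsto p_1^{-1}\cE \otimes p_2^{-1}\cF$, where $p_1, p_2$ are the projections from $M \times N$. Since each inverse image functor $p_i^{-1}$ is a left adjoint and $\otimes$ preserves colimits separately in each variable, the bifunctor $\boxtimes$ is cocontinuous in each argument. The universal property of the Lurie tensor product then yields a canonical colimit-preserving functor
\begin{equation}
    \Phi \colon \Sh(M) \otimes \Sh(N) \to \Sh(M \times N).
\end{equation}

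The rest of the argument is to show $\Phi$ is an equivalence. I would first handle essential surjectivity onto a generating family. A family of compact generators of $\Sh(M)$ is given by the sheaves $\{j_{U!}\bK_U\}$ indexed by opens $U \subset M$; similarly for $\Sh(N)$. Correspondingly, $\Sh(M) \otimes \Sh(N)$ is compactly generated by the pure tensors $j_{U!}\bK_U \otimes j_{V!}\bK_V$, whose images under $\Phi$ are $j_{U \times V!}\bK_{U \times V}$. Since rectangular opens form a basis of the product topology on $M \times N$, these images generate $\Sh(M \times N)$ under colimits. Combined with cocontinuity of $\Phi$, essential surjectivity follows once we have full faithfulness.

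The main obstacle is full faithfulness, and I would reduce it to a K\"unneth-type computation on the generators. Concretely, for opens $U_i \subset M$ and $V_i \subset N$, one needs
\begin{equation}
    \Hom_{\Sh(M \times N)}\bigl(j_{U_1 \times V_1 !}\bK, j_{U_2 \times V_2 !}\bK\bigr) \simeq \Hom_{\Sh(M)}(j_{U_1!}\bK, j_{U_2!}\bK) \otimes_\bK \Hom_{\Sh(N)}(j_{V_1!}\bK, j_{V_2!}\bK),
\end{equation}
compatibly with the Hom pairing on $\Sh(M) \otimes \Sh(N)$ built from the universal property. By adjunction and base change, each side reduces to a compactly supported cohomology group, and the identification is the classical K\"unneth formula (valid because we work with $\bK$-modules and the spaces are locally compact Hausdorff). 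Once verified on this set of compact generators, full faithfulness extends to all of $\Sh(M) \otimes \Sh(N)$ by colimit arguments.

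A cleaner but more abstract route that I would keep in reserve is to exploit the presentation of $\Sh(M)$ as a left Bousfield localization of the presheaf category on the site $\mathrm{Op}(M)$: since Lurie's tensor product of presheaf categories is the presheaf category on the product site, the problem collapses to checking that the two sheaf conditions — on $M \times N$ and on the product site $\mathrm{Op}(M) \times \mathrm{Op}(N)$ — agree, which holds because rectangular opens form a basis. This abstract argument avoids the explicit K\"unneth computation but buries the genuine content of the theorem in the descent machinery; either way, the substantive input is the K\"unneth formula for cohomology of open subsets, which is the main obstacle.
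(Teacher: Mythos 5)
The paper itself offers no proof of this lemma: it is imported from \cite{HTT, Volpe}, and the proofs there do not follow your route — they use local compactness in an essential way (describing sheaves on a locally compact Hausdorff space via their sections on compact subsets, resp.\ dualizability of $\Sh(M)$ in $\mathrm{Pr}^L$). Your argument never invokes any hypothesis on $M$ and $N$, which is already a warning sign: the statement is not a formal consequence of the universal property, and the references establish it under local compactness, which is what the paper actually needs (manifolds, $\bR$-factors).

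The concrete gaps are these. First, $\{j_{U!}\bK_U\}_U$ is a family of generators of $\Sh(M)$ but not of \emph{compact} generators: $\Hom(j_{U!}\bK_U,-)\simeq R\Gamma(U,(-)|_U)$ does not commute with filtered colimits (on $M=\bR$ one has $\colim_n \bK_{[n,\infty)}=0$ stalkwise while $\colim_n\Gamma(\bR,\bK_{[n,\infty)})=\bK$); in fact $\Sh(\bR)$ is not compactly generated at all. This breaks both places where you lean on compactness: the identification of $\Hom_{\Sh(M)\otimes\Sh(N)}(A\otimes B,A'\otimes B')$ with $\Hom(A,A')\otimes_\bK\Hom(B,B')$ is only justified for compact objects of compactly generated categories, and full faithfulness cannot be propagated from a set of non-compact generators ``by colimit arguments'' — cocontinuity of your $\Phi$ gives no control over $\Hom$ out of a colimit in the source. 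Second, the generator-level computation is itself incorrect: by adjunction $\Hom(j_{U_1!}\bK,j_{U_2!}\bK)\simeq R\Gamma(U_1,\bK_{U_1\cap U_2})$ is ordinary, not compactly supported, cohomology, and the K\"unneth identity you invoke fails for arbitrary opens — if $U_1=U_2$ and $V_1=V_2$ each have infinitely many components, the relevant map is $\bigl(\prod_I\bK\bigr)\otimes_\bK\bigl(\prod_J\bK\bigr)\to\prod_{I\times J}\bK$, which is not an isomorphism; K\"unneth with arbitrary supports needs finiteness, or compact supports together with local compactness, and this is precisely where the cited proofs use their hypotheses. The abstract fallback suffers from the same defect: presenting $\Sh(-)$ as a localization of presheaf categories does not formally yield that the localization is compatible with the Lurie tensor product, nor that sheaves on the product site coincide with sheaves on $M\times N$; since the statement can fail for general topological spaces, no purely formal argument of this kind can be complete.
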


By using this, we prove the following:
\begin{lemma}[cf.~{\cite{KuoShendeZhang}}]
    There exists an equivalence:
    \begin{equation}
        \Sh^\bG_{\tau>0}(M \times \bR_t)\cong \Sh(M)\otimes \Sh^\bG_{\tau>0}(\bR_t).
    \end{equation}
\end{lemma}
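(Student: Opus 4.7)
The plan is to derive the desired equivalence from the non-equivariant Künneth lemma cited just above together with Lemma~\ref{lemma:tamarkin_projector}, by performing two successive reductions: first upgrading Künneth to the equivariant setting, then descending through the Tamarkin localization. The main point is that the $\bG$-action and the $\tau>0$ microsupport condition both live purely on the $\bR_t$-factor, so neither interacts nontrivially with the $\Sh(M)$ tensor factor.

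First I would establish the equivariant Künneth equivalence
\begin{equation}
    \Sh^\bG(M \times \bR_t) \;\cong\; \Sh(M) \otimes \Sh^\bG(\bR_t).
\end{equation}
Since $\bG$ acts trivially on $M$ and only by translation on $\bR_t$, an equivariant sheaf on $M\times \bR_t$ is the same datum as a sheaf on $M$ tensored (over $\Sh(\mathrm{pt})$) with an equivariant sheaf on $\bR_t$. Concretely, realising $\bG$-equivariant sheaves as sheaves on the stack $M \times [\bR_t/\bG]$ and applying the quoted Künneth lemma of Lurie--Volpe to the product $M \times [\bR_t/\bG]$ gives the equivalence. (If one prefers a purely categorical argument, one can instead use that $\Sh^\bG(\bR_t)$ is itself a module over $\Sh(M)$ via the external product, and verify the universal property of Lurie tensor product by exhibiting generators and matching their mapping spaces using the standard Künneth formula.)

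Next I would apply Lemma~\ref{lemma:tamarkin_projector} on the $\bR_t$-factor: it says that the quotient $\Sh^\bG(\bR_t) \to \Sh^\bG_{\tau>0}(\bR_t)$ is a Bousfield localization with fully faithful left adjoint $(-)\star_\bG \bK_{t\geq 0}$. A standard fact about presentable stable $\infty$-categories (see e.g.\ Lurie, HA 4.8.1) is that Lurie tensor product with $\Sh(M)$ preserves Bousfield localizations: applying $\Sh(M)\otimes -$ yields a Bousfield localization
\begin{equation}
    \Sh(M) \otimes \Sh^\bG(\bR_t) \;\longrightarrow\; \Sh(M) \otimes \Sh^\bG_{\tau>0}(\bR_t)
\end{equation}
whose kernel is the localizing subcategory generated by external products $\cF \boxtimes \cG$ with $\cG \in \cD^\bG_{\tau\leq 0}(\bR_t)$. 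Combined with the equivariant Künneth equivalence, the theorem will follow once we identify this generated subcategory with $\cD^\bG_{\tau\leq 0}(M\times \bR_t)$ under the Künneth equivalence.

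The main obstacle is this final identification. One inclusion is immediate from the external-product estimate $\MS(\cF\boxtimes \cG) \subseteq \MS(\cF)\times \MS(\cG)$, which forces $\MS(\cF\boxtimes \cG)\subseteq \{\tau\leq 0\}$ whenever $\cG\in \cD^\bG_{\tau\leq 0}(\bR_t)$. The reverse inclusion is the actual content: every $\cE \in \cD^\bG_{\tau\leq 0}(M\times \bR_t)$ should be expressible as a colimit of such external products. Rather than argue this directly, I would dualize and work with the left orthogonal. By Lemma~\ref{lemma:tamarkin_projector} both categories $\Sh^\bG_{\tau>0}(M\times \bR_t)$ and $\Sh^\bG_{\tau>0}(\bR_t)$ are recovered as the essential images of the Tamarkin projectors $(-)\star_\bG \bK_{t\geq 0}$ (the former projector being obtained from the latter by $\Sh(M)\otimes -$, since $\bK_{t\geq 0}$ is pulled back from $\bR_t$). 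Hence it suffices to check that the Tamarkin projector on $M\times \bR_t$ and the one obtained by tensoring the projector on $\bR_t$ with $\id_{\Sh(M)}$ agree on external products and are both cocontinuous, which is immediate from the definition of $\star_\bG$ via $m_!$ and the fact that $\Sh(M)\otimes-$ preserves the relevant colimits. This identifies the reflective images, hence the Bousfield localizations, and completes the proof.
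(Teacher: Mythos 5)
Your overall architecture is viable, but it is a genuinely different route from the paper's, and its first step is the one place where you are relying on something the paper neither quotes nor proves. The paper never establishes the equivariant K\"unneth equivalence $\Sh^\bG(M\times\bR_t)\cong \Sh(M)\otimes\Sh^\bG(\bR_t)$ for the full equivariant categories. Instead it converts the microsupport condition into honest topology: it equips $\bR_t$ with the $\gamma$-topology, passes to the Borel construction $B_\bG\bR_t^\gamma$, and applies the space-level K\"unneth lemma to $M\times B_\bG\bR_t^\gamma$, obtaining $\Sh^\bG_{\tau\geq 0}(M\times\bR_t)\cong\Sh(M)\otimes\Sh^\bG_{\tau\geq 0}(\bR_t)$; it then verifies the universal property of the Lurie tensor product for the $\tau>0$ quotient by hand, checking that the induced functor kills objects with $\MS\subset\lc\tau=0\rc$. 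Your step 1 instead invokes the quoted K\"unneth lemma for the quotient stack $M\times[\bR_t/\bG]$, but that lemma is stated for topological spaces, and $\Sh^\bG$ in this paper is by definition the derived category of the abelian category of equivariant sheaves; to run your argument you must (a) identify this category with sheaves on the stack/Borel construction (nontrivial for a dense group such as $\bG=\bR$ with discrete topology acting on $\bR_t$ --- the paper imports exactly this kind of identification, and only in the $\gamma$-topology setting), and (b) commute $\Sh(M)\otimes(-)$ past the descent limit defining stack sheaves, e.g.\ using dualizability of $\Sh(M)$. Your parenthetical fallback (``exhibit generators and match mapping spaces'') is not a proof. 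So this step is a real gap in your write-up, even though the statement itself is almost certainly true and provable by the same Borel-construction technique the paper uses.

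By contrast, your steps 2--3 are sound and are the attractive part of the alternative route: since both the quotient $q\colon\Sh^\bG(\bR_t)\to\Sh^\bG_{\tau>0}(\bR_t)$ and its fully faithful left adjoint $(-)\star_\bG\bK_{t\geq 0}$ of \cref{lemma:tamarkin_projector} are cocontinuous, tensoring with $\Sh(M)$ preserves the adjunction and hence the localization; and because a cocontinuous endofunctor of a Lurie tensor product is determined by its restriction to external products, the identity $(\cF\boxtimes\cG)\star_\bG\bK_{t\geq 0}\cong\cF\boxtimes(\cG\star_\bG\bK_{t\geq 0})$ identifies the tensored projector with the Tamarkin projector on $M\times\bR_t$, and an object lies in the kernel of either localization exactly when the projector kills it. This cleanly sidesteps the need to show that $\cD^\bG_{\tau\leq 0}(M\times\bR_t)$ is generated by external products, which you correctly identified as the hard point. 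In short: your approach trades the paper's $\gamma$-topology trick and hands-on universal-property check for a ``localize, then tensor, then compare projectors'' argument; the latter is cleaner at the $\tau>0$ stage but shifts all the foundational weight onto the full equivariant K\"unneth equivalence, which needs an actual argument of the Borel-construction/descent type rather than a citation of the space-level lemma.
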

\begin{proof}
    We would like to show that $\Sh^\bG_{\tau>0}(M \times \bR_t)$ has the desired universal property. Let us first equip $\bR_t$ be the $\gamma$-topology with $\gamma=(-\infty,0]$ and denote it by $\bR_t^\gamma$ i.e., the topology generated by $(-\infty, c)$ for any $c\in \bR$. We then take the Borel construction of the action of $\bG$ on $\bR_t^\gamma$ and denote it by $B_\bG\bR_t^\gamma$. Then, by the above lemma and \cite{KS90}, we have
    \begin{equation}
    \Sh_{\tau\geq 0}^\bG(M \times \bR_t)\cong \Sh(M \times B_\bG\bR_t^\gamma)\cong \Sh(M)\otimes \Sh(B_\bG\bR_t^\gamma) \cong \Sh(M)\otimes \Sh_{\tau\geq 0}(B_\bG\bR_t)
    \end{equation}
    where the subscript $\tau\geq 0$ means the full subcategory of the objects whose microsupports are contained in $\{\tau\geq 0\}$.
    
    Now let $f\colon \Sh(M)\times \Sh_{\tau>0}^\bG(\bR_t)\rightarrow \cD$ be a functor cocontinuous  in each variable. The composition of $f$ with the quotient functor $ q\colon \Sh(M)\times \Sh_{\tau\geq 0}^\bG(\bR_t)\rightarrow  \Sh(M)\times \Sh_{\tau>0}^\bG(\bR_t)$ is cocontiunous in each variable, hence it is factorized by a functor $\widetilde{f\circ q}\colon \Sh^\bR_{\tau\geq 0}(M\times \bR_t)\rightarrow \cD$. Since $q\circ f$ kills an object of the form $(\cE_1, \cE_2)$ with $\MS(\cE_2)\subset \{\tau=0\}$, $\widetilde{f\circ q}$ also kills the object with $\MS\subset \{\tau=0\}$. This implies that $\widetilde{f\circ q}$ is further factorized by a functor $\Sh_{\tau>0}^\bG(M \times \bR_t)\rightarrow \cD$. This gives the desired universal property.
\end{proof}

\subsection{Liouville manifolds}

In this section, we recall classes of exact symplectic manifolds. We restrict ourselves to talk only about Liouville/Weinstein manifolds.

Let us first recall the definition of Liouville manifold.
\begin{definition}[Liouville manifold]
Let $X$ be a $2n$-dimensional manifold. 
Let $\lambda$ be a 1-form. 
We say a pair $(X, \lambda)$ is a \emph{Liouville manifold} if the following holds:
\begin{enumerate}
    \item[(1)] $d\lambda$ is a symplectic form, and
    \item[(2)] there exists a compact manifold with smooth boundary $X_0 \subset X$, and the vector field defined by the relation $d\lambda(v_\lambda, -)=\lambda$ is outward-transverse on the boundary $\partial X_0$ and makes $X\bs X_0$ cylindrical. We call $X_0$ a \emph{Liouville domain}.
\end{enumerate}
\end{definition}

We also recall a related notion, Weinstein manifolds:

\begin{definition}[Weinstein manifold]
Let $(X,\lambda)$ be a Liouville manifold. 
We say a Liouville manifold is \emph{Weinstein} if there exists a Morse function $f$ satisfying the following:
\begin{enumerate}
    \item[(1)] $f$ is proper and bounded below, and
    \item[(2)] there exists a smooth positive function $\delta$ on $X$ such that $df(v_\lambda)\geq \delta(|v_\lambda|^2+|df|^2)$ for some Riemannian metric.
\end{enumerate}
\end{definition}

For our purpose, a slightly weaker notion is enough, namely \emph{sufficiently Weinstein manifold}, which is a Liouville manifold with isotropic skeleton satisfying some properties. See the \cite[Definition~9.13]{nadler2020sheaf} for the precise definition.

For a Liouville manifold $X$, we introduce the following deconification map:
\begin{equation}
    \rho\colon X\times T^*_{\tau>0}\bR_t\rightarrow X; (x, t, \tau)\mapsto \phi^{v_\lambda}_{-\log \tau}(x)
\end{equation}
where $\phi^{X_\lambda}$ is the Liouville flow. By the definition, $\rho^{-1}(A)$ is stable under the Liouville flow for any subset $A\subset X$.

\subsection{Microsheaf category}
We recall Shende's definition of microsheaf category~\cite{ShendeH-principle}.

We first start with the case of cotangent bundle. Let $M$ be a manifold. 
For an open subset of $U$ of $T^*M$, we set
\begin{equation}
    \Sh(\bK_M;U) \coloneqq \Sh(\bK_M)/\lc \cE\in \Sh(\bK_M)\relmid \MS(\cE)\subset T^*X \setminus U\rc.
\end{equation}
This forms a prestack over $T^*M$ and the stackification is called the \emph{Kashiwara--Schapira stack}.
We can consider the subsheaf spanned by the objects supported on $Y\subset T^*M$. It is a subsheaf supported on $Y$, which we denote by $\mush_Y$.

Now we recall Shende's trick~\cite{ShendeH-principle}. Let $X$ be a Liouville manifold with its Liouville form $\lambda$. Then one can embed the contactification $X\times \bR$ into $S^*\bR^N$ contactomorphically. Take a tubular neighborhood $U$ of the embedding, which is a symplectic vector bundle. Then we choose a \emph{polarization} $\frakp$, which is a Lagrangian distribution of $U$. The polarization gives a total space $X_\frakp$, which is a thickening of $X$ in $S^*\bR^N$. Then we take the subsheaf $\mush_{X_\frakp}$ whose support is contained in $X_\frakp$. This is the microsheaf category for $X$ with the choice of polarization $\frakp$. 

We can further generalize the above construction as follows, which is suggested by \cite{nadler2020sheaf}. For a Liouville manifold $X$, we take the stable Lagrangian Grassmann bundle $LGr(X\times \bR)$ over the contactification $X\times \bR$. 
Then we have the classification map $X\times \bR\rightarrow BU$ of the tangent bundle. We also have the universal Kashiwara--Schapira stack morphism $\frakL\colon U/O\rightarrow B\Pic(\Mod(\bK))$.
Composing these two morphisms, we get a morphism $X\times \bR \rightarrow B^2\Pic(\Mod(\bK))$. This gives a $B\Pic(\Mod(\bK))$-bundle $B\Pic(\Mod(\bK))_{X\times \bR}$ over $X\times \bR$ and a map $LGr(X\times \bR)\rightarrow B\Pic(\Mod(\bK))_{X\times \bR}$ between bundles.
Nadler--Shende observed that $\mush_{LGr(X\times \bR)}$ is pulled back from a sheaf over $B\Pic(\Mod(\bK))_{X\times \bR}$. This another sheaf is denoted by $\mush_{B\Pic(\Mod(\bK))_{X\times \bR}}$. 
A null homotopy of the morphism $X\times \bR\rightarrow B^2\Pic(\Mod(\bK))$ (namely, a section of $B\Pic(\Mod(\bK))_{X\times \bR}$) is called a Maslov data. For a Maslov data $\frakp$, by restricting to the image of $\frakp$, we get a sheaf on $X\times \bR$. By restricting it to $X\times 0$, we finally obtain $\mush_\frakp(X)$. 

\begin{remark}
    When working with $\mush_{\frakp}(X)$, we can work with $\mush_{LGr(X\times \bR)}$ with support and equivariantly with respect to $K \coloneqq \ker(U/O\rightarrow B\Pic(\Mod(\bK)))$. As the proof of \cite[Proposition 11.19]{nadler2020sheaf}, one can easily upgrade the results in the below from the polarization setting to the general Maslov setup. For this reason, we sometimes omit detailed explanations of $\mush_\frakp$, but all the results hold in the general Maslov setups.
\end{remark}

\section{Microlocal category}\label{section:microlocal_category}

In this section, we introduce our microlocal category over the Novikov ring.

\subsection{Doubling variable}

The doubling variable was originally used by Guillermou~\cite{Guillermou12} as an auxiliary variable to construct sheaf quantization. 
Later, Guillermou~\cite{Gu19} and Asano--Ike~\cite{AI20} used it to consider sheaf quantization of obstructed Lagrangians. 
In Nadler--Shende~\cite{nadler2020sheaf}, it is also essential in their theory. 

In what follows, let $X$ be a sufficiently Weinstein manifold.
Moreover, we let $c$ be a real positive number or $\infty$ and denote the open interval $(-\infty, c)$ with the standard coordinate $u$ by $\bR_{u<c}$. 
The cotangent coordinate of $T^*\bR_{u<c}$ will be denoted by $\upsilon$.

\begin{definition}
    Let $A$ be a subset of $X \times T^*_{\tau>0}\bR_t$ invariant under the $\bG$-translation along $\bR_t$. 
    The \emph{doubling} $AA$ of $A$ is defined as
    \begin{equation}
        AA \coloneqq AA_h\cup AA_t\subset X \times T^*\bR_{u<c} \times T^*_{\tau>0}\bR_t,
    \end{equation}
    where
    \begin{equation}
    \begin{split}
        AA_h &\coloneqq \lc (p,u,0,t,\tau) \relmid (p, t, \tau) \in A, u \ge 0 \rc \subset X \times T^*\bR_{u<c} \times T^*_{\tau >0}\bR_t,\\
        AA_t &\coloneqq \lc (p, u, \upsilon, t, \tau) \relmid (p, t, \tau) \in A, u \ge 0, \upsilon=-\tau\rc \subset X \times T^*\bR_{u<c} \times T^*_{\tau>0}\bR_t.        
    \end{split}
    \end{equation}
    We call $AA_h$ (resp.\ $AA_t$) the horizontal (resp.\ tilted) component of the doubling.
    
    A subset $B$ of $X \times T^*\bR_{u<c} \times T^*_{\tau>0}\bR_t$ is said to be a \emph{doubling} if $B=AA$ for some $A \subset X \times T^*_{\tau>0}\bR_t$ that is invariant under the $\bG$-translation. 
\end{definition}

\begin{definition}[Weak version]
    A subset $B$ of $X \times T^*\bR_{u<c} \times T^*_{\tau>0}\bR_t$ is said to be a \emph{weak doubling} if $B\subset AA$ for some $A \subset X \times T^*_{\tau>0}\bR_t$ that is invariant under the $\bG$-translation. 
\end{definition}

\subsection{Microlocal category over the Novikov ring}
For a Maslov data $\frakp$ for the sufficiently Weinstein manifold $X$, there is associated Maslov data on $X\times T^*\bR_{u<c}\times \bR_t$, which will also be denoted by $\frakp$. Then we have the category $\mush_\frakp(X\times \bR_{u<c}\times \bR_t)$.

We would like to define an analogue of Tamarkin category for this setup. We follow the trick by Li--Nadler--Shende~\cite{LiNadlerShende}, further used in \cite{KPS}. Let $\mush_{\frakp,\mathrm{Lag}}(X\times \bR_{u<c}\times \bR_t)$ be the subcategory spanned by the objects supported on Lagrangian subsets and the objects locally represented by their coproducts. Note that  $\mush_{\frakp}(X\times \bR_{u<c}\times \bR_t)$ is not known to be cocomplete, what we intend here is that $\mush_{\frakp,\mathrm{Lag}}(X\times \bR_{u<c}\times \bR_t)$ contains the coproducts if they exist.

We set 
\begin{equation}
\begin{split}
    \mush_{\frakp, \leq 0}(X\times \bR_{u<c}\times \bR_t)
    & \coloneqq \lc \cE\in\mush_{\frakp,\mathrm{Lag}}(X\times \bR_{u<c}\times \bR_t)\relmid \supp\cE\subset \lc \tau\leq 0\rc  \rc \\
    & \subset\mush_{\frakp,\mathrm{Lag}}(X\times \bR_{u<c}\times \bR_t), \\
    \mush_{\frakp, > 0}(X\times \bR_{u<c}\times \bR_t)
    & \coloneqq \mush_{\frakp, \mathrm{Lag}}(X\times \bR_{u<c})/\mush_{\frakp, \leq 0}(X\times \bR_{u<c}),
\end{split}
\end{equation}
where the second line is the Dwyer--Kan localization. It is important to note that \cite[Lemma C.1.5]{LiNadlerShende} shows that the subcategory spanned by the objects realized as microsheaves supported on $\overline{\Lambda_f}^{\prec}$ (later defined) can be computed by using $\mush_{\overline{\Lambda_f}^{\prec}}(\overline{\Lambda_f}^{\prec})$. This enables us to compute our category.

It was observed by \cite{LiNadlerShende}, this category $\mush_{\frakp, > 0}(X\times \bR_{u<c}\times \bR_t)$ has an action from $\Sh_{>0}(\bR_t)$, like for $\Sh_{>0}(M\times \bR_t)$. In particular, this has an $\bG$-action for any $\bG\subset \bR$. For $c\in \bG$, the corresponding action will again be denoted by $T_c$.

We denote the $\bG$-invariant by
\begin{equation}
    \mush_{\frakp, > 0}^\bG(X\times \bR_{u<c}\times \bR_t).
\end{equation}
Then it carries a $\Sh^\bG_{>0}(\bR_t)$-action, in particular, $\Lambda_0^\bG$-action. We denote the functor forgetting the $\bG$-equivariancy by 
\begin{equation}
    \frakf\colon \mush_{\frakp, > 0}^\bG(X\times \bR_{u<c}\times \bR_t)\to \mush_{\frakp, > 0}(X\times \bR_{u<c}\times \bR_t).
\end{equation}
Note that $\supp\cE\cap \lc \tau\geq 0\rc$ is well-defined on $\mush_{\frakp, > 0}(X\times \bR_{u<c}\times \bR_t)$.

\begin{definition}
    For an object $\cE\in  \mush_{\frakp, > 0}^\bG(X\times \bR_{u<c}\times \bR_t)$, we set $\MS_{\tau>0}(\cE)\coloneqq \supp(\frakf(\cE))\cap \{\tau>0\}$.
\end{definition}

We start with the notion of doubling movies.
\begin{definition}
    \begin{enumerate}
        \item An object $\cE$ of $ \mush_{\frakp, > 0}^\bG(X\times \bR_{u<c}\times \bR_t)$ is said to be a \emph{doubling movie} if $\cE|_{u\leq 0}=0$ and $\MS_{\tau>0}(\cE|_{u>0})$ is a doubling.
        \item An object of $ \mush_{\frakp, > 0}^\bG(X\times \bR_{u<c}\times \bR_t)$ is said to be a \emph{weak doubling movie} if $\cE|_{u\leq 0}=0$ and $\MS_{\tau>0}(\cE|_{u>0})$ is a weak doubling.
    \end{enumerate}
    
\end{definition}

Now we define our main microlocal category and non-conic microsupport of an object of the category.

\begin{definition}
    The subcategory of $\mush_{\frakp, > 0}^\bG(X\times \bR_{u<c}\times \bR_t)$ spanned by the weak doubling movies is denoted by $\mu^\bG_{\frakp}(X;u<c)$. 
    We call it the \emph{microlocal category} of $X$ with Maslov data $\frakp$.
\end{definition}

\begin{definition}[Non-conic microsupport]
    For an object $\cE$ of $\mu^\bG_\frakp(X;u<c)$, we set
    \begin{equation}
        \musupp(\cE)\coloneqq \overline{\bigcup_{u_0\in \bR_{u<c}}\rho(\MS_{\tau>0}(\cE|_{u_0}))} \subset X,
    \end{equation}
    where $\rho\colon X\times T^*_{\tau >0}\bR_t\rightarrow X;(p, t, \tau)\mapsto (\phi_{\tau}^{-1}p)$ where $\phi_\tau$ is the flow of $v_\lambda$.
\end{definition}

\begin{example}
    In the case when $X=T^*M$, the component $\overline{\rho(\MS_{\tau>0}(\cE|_{u_0}))}$ is what is called non-conic microsupport or reduced mircosupport, and widely used in the literature of Tamarkin category. See also Section~\ref{section:witout_doubling}.
\end{example}

\begin{lemma}\label{lem:musuppandSSpositive}
    Let $A'$ be a closed subset of $X$. 
    Then $\musupp(\cE)\subset A'$ if and only if $\MS_{\tau>0}(\cE|_{u>0})\subset \rho^{-1}(A')\rho^{-1}(A')$ for $\cE\in \mu^\bG_\frakp(X; u<c)$.
\end{lemma}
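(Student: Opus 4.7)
The plan is to prove both implications using the Kashiwara--Schapira estimates for microsupport under restriction to a slice, combined with the structural bookkeeping of the horizontal and tilted pieces of a doubling. A preliminary remark: $\rho^{-1}(A')$ is automatically $\bG$-invariant (since $\rho$ does not depend on $t$), so $\rho^{-1}(A')\rho^{-1}(A')$ is a well-defined doubling.

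For the \emph{if} direction, I would analyze $\MS_{\tau>0}(\cE|_{u_0})$ slice by slice. The slices at $u_0 \leq 0$ vanish by the weak doubling movie hypothesis. For each $u_0 > 0$, the Kashiwara--Schapira estimate bounds $\MS_{\tau>0}(\cE|_{u_0})$ by the $\upsilon$-projection of $\MS_{\tau>0}(\cE|_{u>0}) \cap \{u = u_0\}$. Since both $BB_h$ and $BB_t$ project onto $B$ under the map forgetting $u$ and $\upsilon$, taking $B = \rho^{-1}(A')$ yields $\MS_{\tau>0}(\cE|_{u_0}) \subset \rho^{-1}(A')$. Pushing forward by $\rho$, taking the union over $u_0$, and closing (using that $A'$ is closed) then gives $\musupp(\cE) \subset A'$.

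For the converse, I would pick any $\xi = (p, u_0, \upsilon, t, \tau) \in \MS_{\tau>0}(\cE|_{u>0})$. The weak doubling movie hypothesis produces a $\bG$-invariant set $B$ with $\MS_{\tau>0}(\cE|_{u>0}) \subset BB$, so $u_0 > 0$, $\upsilon \in \{0, -\tau\}$, and $(p, t, \tau) \in B$. It then suffices to show $(p, t, \tau) \in \rho^{-1}(A')$, which I would handle in two cases. In the tilted case $\upsilon = -\tau \neq 0$, the slice $\{u = u_0\}$ is non-characteristic at $\xi$, so the non-characteristic restriction theorem gives $(p, t, \tau) \in \MS_{\tau>0}(\cE|_{u_0})$, and the hypothesis $\musupp(\cE) \subset A'$ closes this case.

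The main obstacle will be the horizontal case $\upsilon = 0$, where the slice $\{u = u_0\}$ is characteristic. My plan there is to use that inside $\{\tau > 0\}$ the horizontal and tilted components of $BB$ are separated (the tilted piece has $\upsilon = -\tau$, bounded away from $0$ whenever $\tau$ is bounded below), so a sufficiently small neighborhood of $\xi$ meets only $BB_h$ and hence $\MS(\cE) \subset \{\upsilon = 0\}$ locally. The Kashiwara--Schapira non-propagation theorem then forces $\cE$ to be locally constant in $u$, i.e., locally the pullback of its restriction to $\{u = u_0\}$, so again $(p, t, \tau) \in \MS_{\tau>0}(\cE|_{u_0}) \subset \rho^{-1}(A')$, which closes the argument.
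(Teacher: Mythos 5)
Your first implication is essentially the paper's argument: since every covector of the doubling has $\tau>0$ while the conormal directions of the slices $\{u=u_0\}$ have $\tau=0$, the restriction is non-characteristic, the Kashiwara--Schapira estimate bounds $\MS_{\tau>0}(\cE|_{u_0})$ by the projection of $\rho^{-1}(A')\rho^{-1}(A')\cap\{u=u_0\}$, both components of the doubling project to $\rho^{-1}(A')$, and closedness of $A'$ gives $\musupp(\cE)\subset A'$. No complaint there.

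The converse is where your proposal has a genuine gap. In the tilted case you assert that ``the non-characteristic restriction theorem gives $(p,t,\tau)\in\MS_{\tau>0}(\cE|_{u_0})$''. That theorem only gives the opposite inclusion: $\MS(\cE|_{u_0})$ is contained in the projection of $\MS(\cE)\cap\{u=u_0\}$; it never provides a lower bound, and restriction can genuinely destroy microsupport. For example, $\bK_{\{-u^2\le t<u^2\}}$ on $\bR_u\times\bR_t$ has $(0,0;\upsilon=0,\tau)$ in its microsupport for every $\tau>0$ (as a limit of covectors on the open boundary face $t=u^2$), yet its restriction to $\{u=0\}$ is the zero sheaf; the covector is recorded only by nearby slices, which is exactly why $\musupp$ is defined as a closure of a union over all $u_0$. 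So the statement you need is a ``detection by slices up to closure'' statement, and it is precisely the content that has to be argued, not something you can cite. The horizontal case has a related problem: to invoke non-propagation and conclude that $\cE$ is locally constant in $u$, you need $\MS(\cE)\subset\{\upsilon=0\}$ over a whole neighborhood in the base, i.e., for all covectors over those base points; your separation observation only controls covectors near the single point $\xi$ and says nothing about tilted covectors with other values of $\tau$ over the same base points, nor about the uncontrolled $\tau\le 0$ part, so local constancy in $u$ does not follow from it. For comparison, the paper's converse does not go through any pointwise slice-detection at all: it is a short bookkeeping step that reads the inclusion $\MS_{\tau>0}(\cE|_{u>0})\subset\rho^{-1}(A')\rho^{-1}(A')$ directly off the weak-doubling form of the microsupport together with $\rho(\MS_{\tau>0}(\cE|_{u_0}))\subset A'$, which is immediate from the definition of $\musupp$; your route commits you to proving a propagation/lower-bound statement that your cited tools do not supply.
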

\begin{proof}
Suppose $\MS_{\tau>0}(\cE|_{u>0})\subset \rho^{-1}(A')\rho^{-1}(A')$. By the definition of the doubling, the inclusion of $\lc u=u_0\rc$ is non-characteristic for $ \rho^{-1}(A')\rho^{-1}(A')$. Here ``non-characteristic" means non-charactertic for a local sheaf representative (or after taking anti-microlocalization).
Hence we have $\rho(\MS_{\tau>0}(\cE|_{u_0}))\subset A'$, which implies $\musupp(\cE)\subset A'$. Conversely, suppose $\musupp(\cE)\subset A'$. By the definition of $\musupp$, we have $\rho(\MS_{\tau>0}(\cE|_{u_0}))\subset A'$. Since $\MS_{\tau>0}(\cE|_{u>0})$ is a weak doubling, we have $\MS_{\tau>0}(\cE|_{u>0})\subset \rho^{-1}(A')\rho^{-1}(A')$.
\end{proof}

Although we do not know whether $\mu^\bG_\frakp(X;u<c)$ is cocomplete or not, we still have the following.
\begin{proposition}\label{prop:pre_traingulated}
    The category $\mu^\bG_\frakp(X;u<c)$ is a pretriangulated category.
\end{proposition}
\begin{proof}
    By the definition of $\MS$, we can see that $\mu^\bG_\frakp(X;u<c)$ is a pre-triangulated category.
\end{proof}

Although we do not know whether $\mu_\frakp(X;u<c)$ is cocomplete or not, we still have the following.
\begin{lemma}\label{lem:coproduct}
    Let $\cE$ be an object in $\mu_\frakp(X;u<c)$ with Lagrangian support. Then $\bigoplus_{c\in \bG} T_c\cE$ exists in $\mu_\frakp^\bG(X;u<c)$. 
\end{lemma}
\begin{proof}
    This is proved in \cite{KPS}.\footnote{Note that a current version of \cite{KPS} contains a typo for the definition of $\mush_c$. One needs to replace $\mush_c$ there with $\mush_c$ here.}
\end{proof}

For $0<c<c'$, we have the canonical restriction functor
\begin{equation}
    r_{c'c}\colon \mu^\bG_\frakp(X;u<c')\rightarrow \mu^\bG_\frakp(X;u<c).
\end{equation}
We sometimes denote it by $r_{c}$ for simplicity.

\subsection{\texorpdfstring{$u$}{u}-translation and stupid extensions}

We would like to glue objects of $\mu(X;u<c)$ for different $c$'s. 
For this purpose, we introduce some notation.

For a positive real number $c'$, we consider the translation map
\begin{equation}
    S_{c'}\colon (-\infty, c)\rightarrow (-\infty, c+c'); a\mapsto a+c'.
\end{equation}
By the translation, we obtain the functor
\begin{equation}
S_{c'}\colon \mu(X; u<c)\rightarrow \mush^\bR_{>0}(X\times \bR_{u<c+c'} \times \bR_t). 
\end{equation}

We also define stupid extension functors. 
We consider the following map
\begin{equation}
    l_{c+c', c}\colon \bR_{u < c+c'}\rightarrow \bR_{u\leq c}; a\mapsto 
    \begin{cases}
        a & \text{if } a<c\\
        c & \text{otherwise}.
    \end{cases}
\end{equation}
Let $j_c\colon \bR_{u<c} \times \bR_t \hookrightarrow \bR_{u\leq c} \times \bR_t$ be the inclusion. We then have
\begin{equation}
     s_{c'}\colon \mu(X;u<c) \xrightarrow{j_{c*}}\mu(X;u\leq c) \xrightarrow{l_{c+c', c}^{-1}} \mu(X;u<c+c').
\end{equation}
Here $j_{c*}$ is defined through the anti-microlocalization.

\subsection{\texorpdfstring{$\sigma$}{sigma}-decomposition}
Now let us introduce a terminology: $\sigma$-decomposition. Let $\sigma\coloneqq \lc 0=c_0<c_1<\cdots <c_n<\cdots\rc $ be a set of positive real numbers less than $c$ without accumulation points. Let $\cE$ be an object $L$ in $\mu(X; u<c)$. Then we obtain 
\begin{equation}
\begin{split}
    \cE_{c_1}&\coloneqq \cE|_{u<c_1}, \\
    \cE_{c_2}&\coloneqq S_{-c_1}\Cone(s_{c_2-c_1}\cE|_{u<c_1}\rightarrow \cE|_{u<c_2}), \\
    \cE_{c_3}&\coloneqq S_{-c_2}\Cone(s_{c_3-c_2}\cE|_{u<c_2}\rightarrow \cE|_{u<c_3}), \\
    \vdots .
\end{split}   
\end{equation}
We call this sequence of sheaf quantizations \emph{$\sigma$-decomposition} of $\cE$. From the construction, one can imagine the following statement:
    If $\cE_{c_1},\dots, \cE_{c_n}, \dots$ is a $\sigma$-decomposition of $\cE$, then $\cE$ is an iterated extension of objects
    \begin{equation}
    \begin{split}
        S_{\sigma}\cE_{c_1}&\coloneqq s_{c-c_1}\cE_{c_1}, \\
        S_\sigma\cE_{c_2}&\coloneqq S_{c_1}s_{c-c_2}\cE_{c_2}, \\
        S_{\sigma}\cE_{c_3}&\coloneqq S_{c_2}s_{c-c_3}\cE_{c_3}, \\
        \vdots .
    \end{split}
    \end{equation}

To make sense of iterated extensions, we use the notion of twisted complex introduced by Bondal--Kapranov~\cite{BondalKapranov}, later generalized by Anno--Logvinenko~\cite{AnnoLogvinenko} to the case involving countably many objects.

Let $\cD$ be a dg-category. 
\begin{definition}
A \emph{twisted complex} is a tuple $(\{V_i\}_{i\in \bN},\{f_{ij}\}_{i > j})$, where
\begin{enumerate}
    \item $V_i$ is an object of $\cD$ and
    \item(Maurer--Cartan equation) $f_{ij}$ is a morphism $V_i\rightarrow V_j$ of degree $i-j+1$
\end{enumerate}
such that
\begin{equation}
    df_{ij}+\sum_{k}f_{kj}\circ f_{ik}=0
\end{equation}
holds for any $i>j$.
\end{definition}
\begin{remark}
    What we call a twisted complex is a simple one-sided complex in the original literature. For a more general version of the notion of a twisted complex, see \cite{BondalKapranov}. 
\end{remark}

The twisted complexes form a dg-category $\Tw\cD$. We denote the subcategory of $\Tw\cD$ spanned by the objects $(\{V_i\}_{i\in \bN},\{f_{ij}\}_{i > j})$ such that $\bigoplus_{i\in\bN} V_i[i]$ exists by $\Tw^{allow}(\cD)$.
The following is fundamental:
\begin{proposition}[\cite{BondalKapranov, AnnoLogvinenko}]
    Suppose $\cD$ is pretriangulated.
    \begin{enumerate}
        \item Then there exists a fully faithful functor $\Tw^{allow}\cD\rightarrow \cD$.
        \item Under the above functor, a twisted complex $(\{V_i\}_{i\in \bN},\{f_{ij}\}_{i > j})$ is mapped to the colimit of iterated extensions of $V_i$'s.
    \end{enumerate}
\end{proposition}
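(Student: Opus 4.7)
The plan is to build the totalization functor $\mathrm{Tot}\colon \Tw\cD \to \cD$ by an explicit iterated-cone procedure, and to deduce fully faithfulness from the distinguished triangles that this procedure produces along the way.

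First I would handle twisted complexes $V = (\{V_i\}_{0 \le i \le n}, \{f_{ij}\})$ of finite length by induction on $n$. For $n = 0$ simply set $\mathrm{Tot}(V) \coloneqq V_0$. For the inductive step, restrict the data to indices $\{1,\dots,n\}$; the Maurer--Cartan equations restricted to this range show that the restriction is again a twisted complex, whose totalization $\mathrm{Tot}(V')$ is already defined. The family $\{f_{i0}\}_{i \ge 1}$ assembles, after appropriate degree shifts, into a single degree-$1$ morphism $g\colon \mathrm{Tot}(V') \to V_0$, and the precise content of the Maurer--Cartan equations $d f_{i0} + \sum_{k} f_{k0}\circ f_{ik} = 0$ is exactly the condition that $g$ be closed. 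Define $\mathrm{Tot}(V)$ to be the cone of $g$, which exists because $\cD$ is pretriangulated. By construction, $\mathrm{Tot}(V)$ sits in a distinguished triangle with outer terms $V_0$ and $\mathrm{Tot}(V')$; iterating, it is exhibited as an iterated extension of the $V_i$, giving part~(2).

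Next, for a closed degree-$0$ morphism $\phi = \{\phi_{ij}\}$ of twisted complexes, the analogous assembly sends $\phi$ to a closed morphism $\mathrm{Tot}(V) \to \mathrm{Tot}(W)$ in $\cD$; dg-functoriality (compatibility with differentials and composition) is then a direct check on components. For fully faithfulness, I would use the cone triangle defining $\mathrm{Tot}(V)$ to produce a long exact sequence for $\Hom_\cD(\mathrm{Tot}(V), \mathrm{Tot}(W))$, and compare it termwise with the explicit cochain complex computing $\Hom_{\Tw\cD}(V, W)$ out of the components $\phi_{ij}$; induction on the length of $V$ and $W$ then forces $\mathrm{Tot}$ to induce an isomorphism on Hom-complexes.

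The main obstacle is the $\bN$-indexed case, where the iterated-cone procedure produces only a sequential diagram $\mathrm{Tot}(V_{\le 0}) \to \mathrm{Tot}(V_{\le 1}) \to \cdots$ and one needs its colimit to serve as $\mathrm{Tot}(V)$. Pretriangulatedness alone does not guarantee that such sequential colimits exist, so either one restricts to twisted complexes that are eventually zero (where the finite construction suffices verbatim), or one works inside a cocomplete enhancement of $\cD$. In the applications to this paper the target categories, such as $\mu^\bG_\frakp(X; u<c)$, admit arbitrary coproducts by \cref{prop:corproduct}, hence sequential colimits exist there, and fully faithfulness extends from the finite case by the standard argument comparing Hom out of a sequential colimit with the inverse limit of Homs at each finite stage.
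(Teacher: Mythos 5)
The paper states this as a citation of Bondal--Kapranov without supplying a proof, so there is no paper argument to compare against; your iterated-cone totalization is essentially the standard Bondal--Kapranov construction. Your reading of the Maurer--Cartan equation for $\{f_{i0}\}$ as the closedness of the assembled glue morphism $g$ is correct (the triangular form of the equation means the restriction to indices $\ge 1$ is again a twisted complex, so the induction is well-posed), and the finite-length induction exhibits $\mathrm{Tot}(V)$ as an iterated extension, giving part (2).

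You are also right to isolate the $\bN$-indexed case as the genuine issue: Bondal--Kapranov's theorem is stated for finite one-sided twisted complexes, whereas the paper's $\sigma$-decompositions can have infinitely many nonzero terms when $c=\infty$, and using the coproducts in the target guaranteed by \cref{prop:corproduct} to form the sequential colimit is the correct remedy for existence of the totalization. However, the final sentence about fully-faithfulness is too quick. Passing to the colimit on the source produces a Milnor sequence for $\Hom_\cD(\mathrm{Tot}(V),-)$ with a potential $\lim^1$ obstruction, and commuting $\Hom_\cD(-, \mathrm{Tot}(W))$ past the colimit on the target side requires a compactness (or Mittag--Leffler) hypothesis that is not automatic. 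So fully-faithfulness for genuinely infinite twisted complexes is more delicate than ``the standard argument'' suggests; one should either restrict to eventually-zero complexes, impose a suitable finiteness condition on the $f_{ij}$, or observe that the paper's downstream uses of this proposition (\S\ref{subsec:sheaf_Fukaya_alg}--\ref{subsec:bc_SQ}) really only rely on well-definedness of $\mathrm{Tot}$ and on part (2), not on fully-faithfulness in the infinite case.
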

By this proposition, we view a twisted complex as an object of $\cD$.

Summarizing the above things, we obtain the following
\begin{lemma}
    Let $\cE$ be an object of $\mu(X; u<c)$ and $\sigma\coloneqq \lc 0=c_0<c_1<\cdots <c_n<\cdots\rc $ be a set of positive real numbers less than $c$ without accumulation points. Then $\cE$ can be represented as a twisted complex over $\lc S_{\sigma}\cE_{c_i}[i-1]\rc _{i \in \bN}$. 
\end{lemma}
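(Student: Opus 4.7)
The plan is to bootstrap directly from the two preceding lemmas. The previous lemma already states that $\cE$ is an iterated extension of the objects $S_\sigma \cE_{c_i}$, while the Bondal--Kapranov proposition tells us that any such iterated extension in a pretriangulated dg-category is in the essential image of the functor $\Tw\cD \to \cD$. Thus the content to be verified is that the specific iterated extension produced by the $\sigma$-decomposition admits a twisted complex presentation with the prescribed objects and degree shifts.

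First I would unpack the $\sigma$-decomposition as a filtration. Set $F_n \coloneqq s_{c-c_n}\cE|_{u<c_n}\in \mu(X;u<c)$, so that the natural morphisms induced by the restriction/extension adjunctions give a sequence $0 = F_0 \to F_1 \to F_2 \to \cdots$ with $\colim F_n$ isomorphic to $\cE$ (using that $\sigma$ has no accumulation points and that $\mu(X;u<c)$ admits arbitrary coproducts by \cref{prop:corproduct}). By construction of $\cE_{c_i}$, the cofiber of $F_{i-1} \to F_i$ is exactly $S_\sigma \cE_{c_i}$. This gives a filtration whose associated graded pieces are the desired objects, which is precisely the input required to build a twisted complex of length $n$ at each finite stage.

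Next I would produce the morphisms $f_{ij}$ of a twisted complex. The connecting maps of the exact triangles
\begin{equation}
F_{i-1} \to F_i \to S_\sigma \cE_{c_i} \to F_{i-1}[1]
\end{equation}
supply morphisms $f_{i,i-1}\colon S_\sigma \cE_{c_i} \to S_\sigma \cE_{c_{i-1}}[1]$, which after the shift $[i-1]$ on $S_\sigma \cE_{c_i}$ have the correct degree $i-(i-1)+1=2$? One must be careful here: the standard recipe (see Bondal--Kapranov) is that a finite filtration with graded pieces $G_i$ produces a twisted complex on $\{G_i[-i]\}$ (or equivalently $\{G_i\}$ after a global regrading), with $f_{i,i-1}$ the boundary map and the higher $f_{ij}$ constructed inductively as null-homotopies of the compositions forced to vanish by the Maurer--Cartan equation. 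I would invoke this standard construction directly: because every pretriangulated dg-category (and hence $\mu(X;u<c)$, being a full subcategory of a stable presentable dg-category that is closed under the relevant operations) admits functorial cones, the higher Massey-type products $f_{ij}$ exist and satisfy $df_{ij} + \sum_k f_{kj}\circ f_{ik} = 0$. The shift convention $[i-1]$ in the statement matches the convention that $f_{i,i-1}$ has degree $2$ when placed between $S_\sigma \cE_{c_i}[i-1]$ and $S_\sigma \cE_{c_{i-1}}[i-2]$, which is exactly what the connecting morphism provides.

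Finally I would pass to the infinite case. Since $\sigma$ is discrete, the filtration $\{F_n\}$ is exhaustive and the colimit $\cE = \colim_n F_n$ exists in $\mu(X;u<c)$; the one-sided twisted complex convention of \cite{BondalKapranov} (indexed by $\bN$) is designed exactly for this situation. The main obstacle is a bookkeeping one: namely checking that the degree shifts $[i-1]$ indeed conform to the Bondal--Kapranov sign and degree convention for twisted complexes, and that the inductive construction of the higher morphisms $f_{ij}$ converges in the dg sense. Once these conventions are fixed, the proof is a direct application of the Bondal--Kapranov formalism to the filtration produced by the $\sigma$-decomposition.
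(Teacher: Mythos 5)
Your proposal is correct and follows exactly the route the paper intends: the paper gives no separate argument for this lemma, stating it as an immediate summary of the preceding $\sigma$-decomposition lemma (iterated extension with graded pieces $S_\sigma\cE_{c_i}$) combined with the Bondal--Kapranov correspondence between one-sided twisted complexes and iterated extensions. Your write-up simply fills in the details the paper leaves implicit (the filtration $F_n$, the connecting morphisms, the inductive construction of the higher $f_{ij}$, and the colimit over the discrete set $\sigma$ using cocompleteness), and the degree bookkeeping you worried about does come out right: a degree-$(i-j+1)$ map $S_\sigma\cE_{c_i}[i-1]\to S_\sigma\cE_{c_j}[j-1]$ is a degree-$1$ map $S_\sigma\cE_{c_i}\to S_\sigma\cE_{c_j}$, as supplied by the boundary maps.
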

\begin{proof}
    It is enough to show that the relevant direct sum exists. Since $\cE$ is supported in a doubling movie of the $\bR$-translations of a Lagrangian subset, the support of $S_{\sigma}\cE_{c_i}[i-1]$ is also controlled by the $\bR$-translations of a Lagrangian subset. By the same reasoning as \cite{KPS}, we get a local sheaf reprenentative, and get a direct sum.
\end{proof}

Going back to our situation, a $\sigma$-decomposition $\cE_i$ gives a twisted complex expressing $\cE$. In the following, we will use this perspective repeatedly.

\subsection{Without doubling variable}\label{section:witout_doubling}

It is sometimes convenient to consider the situation where $u$ is not involved. 
In particular, for many purposes (for example, the situation where one focuses on unobstructed branes), we can forget $u$. 
For this purpose, we prepare some notation for the category without $u$.

The definition is just by dropping $\bR_u$;
\begin{equation}
    \mu^\bG_\frakp (X)\coloneqq (\mush_{\frakp,c}(X\times \bR_t)/\mush_{\frakp,\leq 0}(X\times \bR_t))^\bG
\end{equation}
where the superscript $\bG$ means the $\bG$-invariant. For an object $\cE\in \mu^\bG_\frakp(X)$, we also define
\begin{equation}
    \musupp(\cE)\coloneqq \overline{\rho(\MS_{\tau>0}(\cE))} \subset X.
\end{equation}
For each $0<u\in \bR_{u<c}$, we set 
\begin{equation}
i_u\colon \bR_t \hookrightarrow  \bR_{u<c} \times \bR_t; \quad t \mapsto (u,t).
\end{equation}
Then we can define the restriction along $i_u$, called the specialization functor
\begin{equation}
    \Sp_u\coloneqq i_u^{-1}\colon \mu^\bG_\frakp(X;u<c)\rightarrow \mu^\bG_\frakp (X),
\end{equation}
which is $\Lambda_0$-linear. 
When $c<\infty$, we also consider the inclusion $j_c\colon  \bR_{u< c} \times \bR_t \rightarrow \bR_{u\leq c} \times \bR_t$. As in the above, we have $j_{c*}$, hence we can define
\begin{equation}
        \Sp_c\coloneqq i_c^{-1}j_{c*}\colon \mu^\bG(X;u<c)\rightarrow \mu^\bG(X),
\end{equation}
which is a kind of nearby cycle. 
When $c=\infty$, we consider the map $j_\infty\colon \bR_u \times \bR_t \hookrightarrow (-\infty, \infty] \times \bR_t$ associated with the one-point partial compactification $\bR_u \hookrightarrow (-\infty, \infty]$. 
With this map, we set
\begin{equation}
    \Sp_\infty\coloneqq i_\infty^{-1}j_{\infty*}\colon \mu^\bG_\frakp (X;u<\infty)\rightarrow \mu^\bG_\frakp (X),
\end{equation}
where $i_\infty \colon \bR_t \hookrightarrow (-\infty,\infty] \times \bR_t; (t) \mapsto (\infty,t)$.

\begin{lemma}
Let $A'$ be a closed subset of $X$. 
For $\cE\in \mu^\bG_\frakp (X; u<c)$ with $\MS(\cE|_{u>0})\subset \rho^{-1}(A')\rho^{-1}(A')$, we have $\musupp(\Sp_u(\cE))\subset A'$ for any $u\leq c$.
\end{lemma}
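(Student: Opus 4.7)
The plan is first to reduce the statement to the microsupport bound
\begin{equation*}
\MS_{\tau>0}(\Sp_u(\cE)) \subset \rho^{-1}(A');
\end{equation*}
indeed, since $A'$ is closed, this gives $\musupp(\Sp_u(\cE)) = \overline{\rho(\MS_{\tau>0}(\Sp_u(\cE)))} \subset \overline{A'} = A'$ by the very definition of $\musupp$ in $\mu^\bG_\frakp(X)$. The argument then splits into the interior case $0 < u < c$, where $\Sp_u = i_u^{-1}$, and the boundary cases ($u = c$ when $c < \infty$, or $u = \infty$ when $c = \infty$), where $\Sp_u = i_u^{-1} j_{u*}$ for the appropriate open inclusion $j_u$.

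For the interior case, I would apply the non-characteristic inverse image theorem of Kashiwara--Schapira to $i_u\colon \bR_t \hookrightarrow \bR_{u<c} \times \bR_t$. The conormal of the fiber $\{u\} \times \bR_t$ consists of covectors of the form $(0, \upsilon, 0)$ in the $(\xi, \upsilon, \tau)$ coordinates, all lying in $\{\tau = 0\}$. The hypothesis forces every element of $\MS_{\tau>0}(\cE|_{u>0})$ to have $\upsilon \in \{0, -\tau\}$ (these are precisely the horizontal and tilted components of the doubling $\rho^{-1}(A')\rho^{-1}(A')$), so no nonzero element of $\MS_{\tau>0}(\cE)$ sits in the conormal of the fiber. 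Thus $i_u$ is non-characteristic on $\{\tau>0\}$, and the standard microsupport estimate, combined with the fact that both $AA_h$ and $AA_t$ project onto $\rho^{-1}(A')$, yields $\MS_{\tau>0}(\Sp_u(\cE)) \subset \rho^{-1}(A')$, as required.

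For the boundary cases, my preferred approach is to reduce to the interior case by presenting $\Sp_u(\cE)$ as a filtered colimit of the restrictions $i_{u'}^{-1}\cE$ as $u' \nearrow u$. Concretely, the stalk of $j_{u*}\cE$ at a boundary point is such a colimit over shrinking neighborhoods, and under the Lurie-tensor-product formalism recalled in Section~2.6 this lifts to a colimit presentation inside $\mush_\frakp(X) \otimes \Sh_{\tau>0}^\bG(\bR_t)$. Each term $i_{u'}^{-1}\cE$ satisfies $\MS_{\tau>0}(i_{u'}^{-1}\cE) \subset \rho^{-1}(A')$ by the interior step just proved, and since $\rho^{-1}(A')$ is closed and $\MS$ is upper semi-continuous along such colimits, the inclusion is preserved in the limit. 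The case $u = \infty$ is handled in exactly the same way via the partial compactification $j_\infty$.

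The main obstacle will be making this last limit step completely rigorous inside the tensor product category while tracking microsupport. As an alternative (and fallback), one can appeal directly to the Kashiwara--Schapira pushforward microsupport estimate for the open embedding $j_u$: any new microsupport components of $j_{u*}\cE$ at the boundary are produced by bicharacteristic propagation from $\MS(\cE)$ in the interior, and on the $\tau>0$ locus the relevant Hamilton flows preserve the horizontal/tilted sheets of the doubling (they move purely in the $u$-direction, leaving the base point in $X \times \bR_t$ unchanged). Consequently, no new $\tau>0$ covector of $j_{u*}\cE$ escapes $\rho^{-1}(A')\rho^{-1}(A')$, and the interior non-characteristic argument applied to $i_u^{-1}$ on this set delivers the conclusion.
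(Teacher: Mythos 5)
Your argument is correct and is essentially the paper's own proof spelled out: the paper simply invokes ``the standard microsupport estimate'' of \cite[Lem.~3.16]{nadler2020sheaf}, which is exactly the combination you describe (non-characteristic restriction to interior slices $\{u=u_0\}$, where the doubling forces $\upsilon\in\{0,-\tau\}$ on $\{\tau>0\}$, plus the pushforward/nearby-cycle estimate at the boundary value $u=c$ or $u=\infty$), followed by the same reduction to $\MS_{\tau>0}(\Sp_u(\cE))\subset\rho^{-1}(A')$ and the definition of $\musupp$. No genuinely different route, and no gap beyond the level of detail the paper itself leaves to the cited standard estimates.
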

\begin{proof}
    It follows from the standard microsupport estimate (see~\cite[Lem.~3.16]{nadler2020sheaf}).
\end{proof}

\subsection{Over the Novikov field}

Since $\Lambda^\bG_0$ is an integral domain, we can take its fraction field. 
We denote it by $\Lambda^\bG$, which is called the (universal) \emph{Novikov field}.

Given a $\Lambda_0^\bG$-linear dg-category $\cD$, we can define the base-changed category $\cD\otimes_{\Lambda^\bG_0}\Lambda^\bG$ as follows:
\begin{enumerate}
    \item The set of objects $\Ob(\cD\otimes_{\Lambda_0^\bG}\Lambda^\bG)$ is $\Ob(\cD)$.
    \item For $\cE, \cF\in \Ob(\cD\otimes_{\Lambda_0^\bG}\Lambda^\bG)$, we set
    \begin{equation}
    \Hom_{\cD\otimes_{\Lambda_0^\bG}\Lambda^\bG}(\cE, \cF)\coloneqq \Hom_{\cD}(\cE, \cF)\otimes_{\Lambda_0^\bG}\Lambda^\bG.
    \end{equation}
    The differentials and compositions are naturally induced.
\end{enumerate}
We set $F^{\epsilon}\Hom_{\cD\otimes_{\Lambda_0^\bG}\Lambda^\bG}(\cE, \cF)$ to be the image of $T^{\epsilon}\Hom_{\cD}(\cE, \cF)$ under the natural morphism $\Hom_{\cD}(\cE, \cF)\rightarrow \Hom_{\cD}\otimes_{\Lambda_0^\bG}\Lambda^\bG(\cE, \cF)$.
Then the resulting category is a ``filtered" $\Lambda^\bG$-linear dg-category.

\section{Antimicrolocalization}
The case of cotangent bundles is easier than general case. The method of antimicrolocalization developed by Nadler--Shende~\cite{nadler2020sheaf} can reduce the Weinstein case to the cotangent bundle case. The purpose of this section is to adapt their method to our setting.

\subsection{Contact embeddings}
\begin{definition}
A $(2k+1)$-dimensional manifold $Z$ equipped with a $2k$-dimensional distribution $\xi$ is said to be a \emph{contact manifold} if there exists a 1-form $\alpha$ locally such that $\xi=\ker \alpha$ and $\alpha\wedge (d\alpha)^k\neq 0$.

Let $\alpha$ be a 1-form on a $(2k+1)$-dimensional manifold $Z$. The pair $(Z, \alpha)$ is said to be a \emph{co-oriented contact manifold} if $\alpha\wedge (d\alpha)^k\neq 0$. One can associate a contact manifold by setting $\xi=\ker \alpha$.
\end{definition}

\begin{definition}
Let $(Z_i, \xi_i)$ ($i=1,2$) be contact manifolds. An embedding $i\colon Z_1\hookrightarrow Z_2$ is said to be a \emph{contact embedding} if $TZ_1\cap \xi_2=\xi_1$ in $TZ_2$.

Let $(Z_i, \alpha_i)$ ($i=1,2$) be co-oriented contact manifolds. An embedding $i\colon Z_1\hookrightarrow Z_2$ is said to be a \emph{strict contact embedding} if $i^*\alpha_2=\alpha_1$.
\end{definition}

\begin{lemma}
A strict contact embedding is a contact embedding.
\end{lemma}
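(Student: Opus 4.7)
The plan is to unpack both definitions and observe that the equality $i^*\alpha_2 = \alpha_1$ forces the required equality of distributions. Concretely, let $p \in Z_1$ and $v \in T_pZ_1$, and identify $T_pZ_1$ with its image $di_p(T_pZ_1) \subset T_{i(p)}Z_2$ under the embedding. Then strictness gives
\[
\alpha_2\bigl(di_p(v)\bigr) = (i^*\alpha_2)(v) = \alpha_1(v),
\]
so $di_p(v) \in \ker(\alpha_2)_{i(p)}$ if and only if $v \in \ker(\alpha_1)_p$. This is exactly the statement $TZ_1 \cap \xi_2 = \xi_1$ for the contact distributions $\xi_j = \ker \alpha_j$ associated with the co-oriented structures.

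Since this exhausts what the definition of a contact embedding requires, there is no real obstacle; the lemma is an immediate consequence of the two definitions. The only minor subtlety is notational: one must read $TZ_1 \cap \xi_2$ as being computed after pushing forward along $di$, and note that the nondegeneracy conditions $\alpha_j \wedge (d\alpha_j)^k \neq 0$ are already part of the hypothesis that $(Z_1,\alpha_1)$ and $(Z_2,\alpha_2)$ are co-oriented contact manifolds, so nothing about the contact condition itself needs to be re-verified.
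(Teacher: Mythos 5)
Your proof is correct and is essentially the paper's own argument: the paper simply writes the chain $\xi_1=\ker(i^*\alpha_2)=\ker(\alpha_2)\cap TZ_1=\xi_2\cap TZ_1$, which is exactly your pointwise computation that $\alpha_2(di_p(v))=\alpha_1(v)$ identifies the two kernels. Nothing further is needed.
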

\begin{proof}
Let $i$ be a co-oriented contact embedding.
\begin{equation}
    \xi_1=\ker(i^*\alpha_2)=\ker(\alpha_2)\cap TZ_1=\xi_2\cap TZ_1.
\end{equation}
\end{proof}

\begin{lemma}\label{lem:cocoriented_embedding}
Let $(Z_i, \alpha_i) \ (i=1,2)$ be co-oriented contact manifolds. Let $i\colon Z_1\hookrightarrow Z_2$ be a contact embedding. Then there exists a nowhere zero function $f$ such that $f\alpha_1=i^*\alpha_2$.
\end{lemma}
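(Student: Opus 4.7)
The plan is to exploit the fact that the contact embedding condition forces $\ker(i^*\alpha_2)$ to coincide pointwise with $\xi_1 = \ker\alpha_1$, so that the two 1-forms $\alpha_1$ and $i^*\alpha_2$ on $Z_1$ are pointwise proportional. The proportionality factor will then define the desired function $f$, and checking that it is smooth and nowhere zero is straightforward.

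First I would verify the kernel equality. For $p \in Z_1$ and $v \in T_pZ_1$, one has $(i^*\alpha_2)(v) = \alpha_2(di_p(v))$, which vanishes if and only if $di_p(v) \in \xi_2|_{i(p)}$, that is, if and only if $v \in TZ_1 \cap \xi_2 = \xi_1$ by hypothesis. Hence $\ker(i^*\alpha_2)|_p = \xi_1|_p = \ker(\alpha_1)|_p$ for every $p \in Z_1$. Since $\xi_1|_p$ is a hyperplane (codimension one) in $T_pZ_1$, the linear functional $(i^*\alpha_2)|_p$ is automatically nonzero, so $i^*\alpha_2$ is a nowhere-vanishing 1-form on $Z_1$.

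Next I would construct $f$ explicitly. Let $R_1$ denote the Reeb vector field of $\alpha_1$, characterized by $\alpha_1(R_1) = 1$ and $\iota_{R_1}d\alpha_1 = 0$; in particular $R_1$ is transverse to $\xi_1$. Define the smooth function
\begin{equation}
    f \coloneqq (i^*\alpha_2)(R_1) \in C^\infty(Z_1).
\end{equation}
I claim $f\alpha_1 = i^*\alpha_2$. Both 1-forms annihilate $\xi_1$ by the paragraph above, and both take the value $f$ on $R_1$ (for $f\alpha_1$ because $\alpha_1(R_1)=1$, for $i^*\alpha_2$ by definition of $f$); since $T_pZ_1 = \xi_1|_p \oplus \langle R_1|_p\rangle$ at every point, the two forms coincide. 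Finally, $f(p) \neq 0$ for all $p$, for otherwise $(i^*\alpha_2)|_p$ would vanish on all of $T_pZ_1$, contradicting the nowhere-vanishing conclusion above.

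There is no real obstacle here; the only subtle point is to make sure that ``$\ker \alpha_2 \cap TZ_1$'' interpreted inside $TZ_2$ coincides with the kernel of $i^*\alpha_2$ on $Z_1$, which is immediate from the chain rule. If one prefers not to invoke the Reeb field, any smooth vector field on $Z_1$ transverse to $\xi_1$ works equally well in the construction of $f$; existence of such a global transverse field follows from the co-orientability of $\xi_1$ supplied by $\alpha_1$.
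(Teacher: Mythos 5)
Your proof is correct and takes essentially the same approach as the paper, which simply observes that $\ker\alpha_1 = \ker i^*\alpha_2$ and leaves the rest implicit; you have expanded that one-line observation into a complete argument, including the explicit construction of $f$ via the Reeb field and the verification that it is smooth and nowhere zero.
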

\begin{proof}
Since $\ker\alpha_1=\ker i^*\alpha_2$.
\end{proof}

We also prepare a terminology for exact symplectic manifolds.
\begin{definition}
Let $(X_i, \lambda_i)$ $(i=1,2)$ be exact symplectic manifolds. An embedding $i\colon X_1\hookrightarrow X_2$ is said to be a \emph{strict exact symplectic embedding} if $i^*\lambda_2=\lambda_1$ holds.
\end{definition}

\begin{remark}
    Without the adjective ``strict", it usually means $i^*\lambda_2=\lambda_1+dH$ for some function $H$.
\end{remark}

\subsection{Strict exact embedding into \texorpdfstring{$T^*\bR^n$}{T*Rn}}
Let $(X,\lambda_X)$ be a Liouville manifold. Let $\bR_{t'}$ be the real line with the standard coordinate $t'$. The product $X\times \bR_{t'}$ with 1-form $\lambda_X+ dt'$ is a contact manifold, called the \emph{contactization}.
Indeed, the non-degeneracy condition $d(\lambda_X+dt)^n\wedge (\lambda_X+dt')=\omega^n\wedge dt'\neq 0$ is satisfied.

\begin{remark}
    The reason why we use the awkward notation $t'$ is that the microlocal-sheaf-theory's extra direction (i.e., Sato's lost dimension) $t$ differs from $t'$.
\end{remark}

Let $\lambda_{T^*\bR^n}$ be the standard Liouville form of $T^*\bR^n$. 
We equip $\bR^n$ with the standard Euclidean Riemann metric. 
Then $S^*\bR^n$ is defined by the unit cosphere bundle
\begin{equation}
    S^*\bR^n\coloneqq\lc (x, \xi)\in T^*\bR^n\relmid |\xi|^2=1 \rc.
\end{equation}
We denote the restriction of $\lambda_{T^*\bR^n}$ with respect to this embedding by $\lambda_{S^*\bR^n}$.

\begin{lemma}[h-principle]
For a sufficiently large $n$, there exists a contact embedding $X\times \bR_{t'}\hookrightarrow S^*\bR^n$. All those embeddings are stably contact isotopic.
\end{lemma}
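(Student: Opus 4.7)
The plan is to invoke Gromov's h-principle for contact embeddings of open contact manifolds into $S^*\bR^n$ for $n$ sufficiently large.

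First I would check that $X \times \bR_{t'}$ is an open contact manifold of dimension $2n_X + 1$, where $2n_X = \dim X$. The non-degeneracy $(\lambda_X + dt') \wedge (d\lambda_X + 0)^{n_X} = \omega_X^{n_X} \wedge dt' \neq 0$ is immediate, and the $\bR_{t'}$-factor ensures that $X \times \bR_{t'}$ has no closed components (so ``open'' in the sense of the h-principle). By Whitney's embedding theorem, for $n$ sufficiently large we obtain a smooth embedding $X \times \bR_{t'} \hookrightarrow \bR^n$, and composing with a smooth section of $S^*\bR^n \to \bR^n$ over a neighborhood of the image gives a smooth embedding into $S^*\bR^n$.

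Next, for the existence part, the idea is to upgrade a formal contact embedding to a genuine one. A formal contact embedding here is a smooth embedding $f$ together with a bundle monomorphism $F\colon T(X \times \bR_{t'}) \to f^*TS^*\bR^n$ covering $f$ and sending the contact distribution $\xi_{X \times \bR_{t'}}$ into $\xi_{S^*\bR^n}$ conformally symplectically. For $n \gg n_X$ the obstruction to producing such an $F$ over a given smooth embedding vanishes by dimension counting: the fiber of the forgetful map from formal contact embeddings to smooth embeddings becomes highly connected as the codimension grows. Gromov's h-principle for contact embeddings of open contact manifolds (see, e.g., Eliashberg--Mishachev, \emph{Introduction to the $h$-Principle}) then deforms the formal embedding to a genuine contact embedding $X \times \bR_{t'} \hookrightarrow S^*\bR^n$.

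For stable uniqueness, I would apply the parametric version of the same h-principle: two contact embeddings $i_0, i_1$ are contact isotopic if and only if they are formally contact isotopic. The standard stabilization $S^*\bR^n \hookrightarrow S^*\bR^{n+k}$ (arising from $\bR^n \hookrightarrow \bR^{n+k}$ as a coordinate subspace, composed with a suitable rescaling of cotangent coordinates) enlarges the space of formal bundle monomorphisms, and as $k \to \infty$ the space of formal contact embeddings becomes connected, so any two given embeddings become formally isotopic after sufficient stabilization. The h-principle then produces an actual contact isotopy.

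The main obstacle I expect is a careful treatment of the cylindrical ends: since $X$ is only Liouville (not necessarily compact), one must ensure the h-principle deformation is carried out in a manner compatible with the cylindrical structure at infinity, so that the resulting embedding extends to all of $X \times \bR_{t'}$ rather than only to a compact piece. This is handled by first constructing a standard contact embedding of the cylindrical end $\partial X \times [1,\infty) \times \bR_{t'}$ by hand (using the standard contact form on the symplectization), then applying the relative h-principle over $X_0 \times \bR_{t'}$ keeping the model near infinity fixed.
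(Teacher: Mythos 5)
The paper presents this lemma as a known fact without giving its own proof (the lemma title ``h-principle'' signals that it is a black-box application of Gromov's h-principle for isocontact embeddings of open contact manifolds, used in the form developed by Shende~\cite{ShendeH-principle}), and your sketch correctly identifies and outlines exactly that argument: $X \times \bR_{t'}$ is open, a formal isocontact embedding exists for $n$ large by dimension counting on the bundle-monomorphism data, Gromov's h-principle homotopes it to a genuine one, and the parametric version together with stabilization in $n$ yields the stable uniqueness. One minor remark: your concern about the cylindrical ends in the last paragraph is not actually an obstacle here, since Gromov's h-principle for open source manifolds is a global statement and does not require a separately prepared model at infinity; that kind of care would only matter if the lemma additionally demanded a proper embedding or compatibility with the Liouville structure near infinity, which it does not.
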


Let us fix a contact embedding $i_1\colon X\times \bR_{t'}\hookrightarrow S^*\bR^n$. 
By \cref{lem:cocoriented_embedding}, there exists a nowhere zero function $f$ on $X\times \bR_{t'}$ such that $f(\lambda_X+dt')=i_1^*\lambda_{S^*\bR^n}$. If $f$ is negative, by composing the antipodal map on $S^*\bR^n$, we can make $i_1$ so that $f$ is positive. Hence, without loss of generality, we can assume that $f$ is positive.

Now we extend the smooth function $f$ on $X\times \bR_{t'}$ to a smooth function of $S^*\bR^n$ and denote it by the same $f$. We then consider 
\begin{equation}
    S^*_f\bR^n\coloneqq\lc (x, f(x, \xi)\xi)\relmid (x, \xi)\in S^*\bR^n\rc.
\end{equation}
By definition, $S^*\bR^n$ is diffeomorphic to $S^*_f\bR^n$. It is also standard to see that they are contact isomorphic. Hence the contact embedding $i$ can be viewed as a contact embedding
\begin{equation}
  X\times \bR \hookrightarrow S^*_f\bR^n.
\end{equation}
The advantage of this modification is that this embedding is a strict contact embedding. Hence
the further composition
\begin{equation}
    i\colon X=X\times \{0\}\hookrightarrow X\times \bR\hookrightarrow S^*_f\bR^n\hookrightarrow T^*\bR^n
\end{equation}
is a strict exact embedding in the sense that $i^*\lambda_{T^*\bR^n}=\lambda_X$.

We summarize the above as follows:
\begin{lemma}
For a sufficiently large $n$, there exists a strict exact embedding $X\hookrightarrow T^*\bR^n$.
\end{lemma}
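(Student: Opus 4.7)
The proof is essentially a repackaging of the construction sketched in the preceding paragraphs, so the plan is to assemble those ingredients into one clean argument with the correct pullback identity.

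First, I would invoke the h-principle (stated just above) to obtain a contact embedding $i_1\colon X\times \bR_{t'}\hookrightarrow S^*\bR^n$ of the contactization for a sufficiently large $n$. Applying \cref{lem:cocoriented_embedding} to $i_1$, regarded as a contact embedding between the co-oriented contact manifolds $(X\times\bR_{t'},\lambda_X+dt')$ and $(S^*\bR^n,\lambda_{S^*\bR^n})$, produces a nowhere-zero smooth function $f$ on $X\times \bR_{t'}$ with $i_1^*\lambda_{S^*\bR^n}=f\,(\lambda_X+dt')$. If $f<0$ I post-compose with the antipodal map on $S^*\bR^n$, which is a strict contact anti-involution of the cotangent bundle, to arrange $f>0$ without loss of generality.

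Next, I would extend the reciprocal $1/f$ from the image of $i_1$ to a positive smooth function on all of $S^*\bR^n$ (by a Whitney-type extension in a tubular neighborhood followed by a partition-of-unity truncation bounding the extension away from $0$), and use it to rescale to the graph $S^*_f\bR^n\subset T^*\bR^n$ via the diffeomorphism $\phi\colon (x,\xi)\mapsto (x,(1/f)(x,\xi)\cdot\xi)$. A direct computation shows $\phi^*\lambda_{T^*\bR^n}=(1/f)\cdot\lambda_{S^*\bR^n}$, and hence $(\phi\circ i_1)^*\lambda_{T^*\bR^n}=i_1^*\bigl((1/f)\cdot\lambda_{S^*\bR^n}\bigr)=(1/f)\cdot f\,(\lambda_X+dt')=\lambda_X+dt'$. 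This upgrades $i_1$ to a strict contact embedding $X\times\bR_{t'}\hookrightarrow T^*\bR^n$.

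Finally, I restrict to the zero slice $X=X\times\{0\}\hookrightarrow X\times\bR_{t'}$. Since $dt'$ vanishes identically on this slice, pulling $\lambda_X+dt'$ back along the inclusion returns $\lambda_X$, so the composite $X\hookrightarrow T^*\bR^n$ is a strict exact symplectic embedding as required. The only genuinely delicate step is the extension of $1/f$ as a positive smooth function while preserving the equality on the embedded locus; once that is done the whole construction is pure bookkeeping (the only real risk being a sign/convention slip on whether to rescale by $f$ or $1/f$, which is fixed by the explicit pullback computation above). I therefore do not anticipate a substantive obstacle beyond the h-principle input that has already been invoked.
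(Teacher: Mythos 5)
Your proof is correct and follows essentially the same route as the paper: the h-principle embedding of the contactization into $S^*\bR^n$, the conformal factor from \cref{lem:cocoriented_embedding} (made positive via the antipodal map), a fiberwise rescaling of the cosphere bundle to upgrade the contact embedding to a strict one, and restriction to the zero slice $X\times\{0\}$ where $dt'$ vanishes. Your explicit rescaling by $1/f$ together with the pullback computation is the correct normalization of the paper's $S^*_f\bR^n$ step (the paper's formula, read literally with $f(\lambda_X+dt')=i_1^*\lambda_{S^*\bR^n}$, would rescale by $f$ and yield $f^2(\lambda_X+dt')$), so your version is, if anything, the more carefully stated one.
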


\subsection{Antimicrolocalization}\label{subsec:antimicrolocalization}
Now let us assume $X$ is sufficiently Weinstein. 
We denote the core of $X$ by $\mathrm{Core}(X)$, which is a singular isotropic.
We fix a strict exact embedding $X\hookrightarrow T^*\bR^N$, whose existence is proved in the previous subsection. 
We also fix a polarization $\frakp$, which is a Lagrangian distribution in the symplectic normal bundle of $X$. Note that this choice is the same as the choice we made to define $\mush_\frakp(X)$.

Let $L_1,\dots, L_n$ be end-conic (see \cref{def:end_conic} below) Lagrangians in $X$. We denote the subcategory of $\mu(X;u<c)$ spanned by objects whose $\musupp$ are in $\bigcup L_i$ by $\mu_{\bigcup_iL_i}(X; u<c)$. 

On the other hand, the restriction $\frakp|_{L_i}$ defines a Lagrangian in $T^*\bR^N$. We consider $\bL \coloneqq \rho^{-1}(\bigcup_i\frakp|_{L_i}\cup \frakp|_{\mathrm{Core}(X)})$. By introducing another variable $s,v$, as in Nadler--Shende~\cite{nadler2020sheaf}, we can introduce the cusp doubling with boundary $\bL^\prec\subset T^*\bR^N\times T^*\bR_t\times T^*\bR_{u<c}\times T^*\bR_{v<a}$ for $a>0$ where $s$ is the contactification variable and $v$ is the doubling variable. We denote the subcategory of $\Sh^\bR_{\bL, \tau>0}(\bR^N\times \bR_t\times \bR_{u<c}\times\bR_s\times  \bR_{v<a})$ spanned by the objects whose restriction to $v\leq 0$ is zero by $\Sh^\bR_{\bL, \tau>0}(\bR^N\times \bR_t\times \bR_{u<c}\times\bR_s\times \bR_{v<a})_0$. 
For $0<a<a'$, we have the restriction functor
\begin{equation}
    \Sh^\bR_{\bL, \tau>0}(\bR^N\times \bR_t\times \bR_{u<c}\times \bR_s\times \bR_{v<a'})_0\rightarrow \Sh^\bR_{\bL, \tau>0}(\bR^N\times \bR_t\times \bR_{u<c}\times \bR_s \times \bR_{v<a})_0.
\end{equation}
We set
\begin{equation}
    \mu^{A\mu}_{\bigcup_iL_i}(X; u<c) \coloneqq \lim_{a\rightarrow +0}\Sh^\bR_{\bL, \tau>0}(\bR^N\times \bR_t\times \bR_u\times \bR_s\times\bR_{v<a})_0.
\end{equation}
Here $A\mu$ stands for ``anti-microlocalization".

\begin{lemma}[Antimicrolocalization]
    There exists an embedding
    \begin{equation}
        \mu_{\bigcup_iL_i}(X; u<c)\hookrightarrow \mu^{A\mu}_{\bigcup_iL_i}(X; u<c)
    \end{equation}
\end{lemma}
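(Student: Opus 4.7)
The plan is to adapt Nadler--Shende's antimicrolocalization in the $X$-direction and let the auxiliary variables $t$ and $u$ pass through formally via the Lurie tensor product. First I would fix $a>0$. Using the strict exact embedding $X \hookrightarrow T^*\bR^N$ and polarization $\frakp$ chosen in \cref{subsec:antimicrolocalization}, Nadler--Shende~\cite{nadler2020sheaf} supply a fully faithful cocontinuous functor
\begin{equation}
A\mu_\frakp \colon \mush_\frakp(X)_{\bigcup_i L_i \cup \Core(X)} \hookrightarrow \Sh_{\bL^\prec}(\bR^N \times \bR_{v<a})_0,
\end{equation}
where the subscript $0$ indicates vanishing for $v\leq 0$ and $\bL^\prec$ is the cusp doubling. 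Tensoring this with the identity on $\Sh^\bR_{\tau>0}(\bR_{u<c} \times \bR_t)$ and invoking the K\"unneth property for sheaf categories from the previous subsection would produce a cocontinuous functor
\begin{equation}
F_a \colon \mu_{\bigcup_i L_i}(X; u<c) \longrightarrow \Sh^\bR_{\bL^\prec, \tau>0}(\bR^N \times \bR_t \times \bR_{u<c} \times \bR_{v<a})_0.
\end{equation}

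Three compatibilities then have to be verified. For the microsupport estimate, \cref{lem:musuppandSSpositive} rewrites the weak-doubling condition as $\MS_{\tau>0}(\cE|_{u>0}) \subset \rho^{-1}(\bigcup_i L_i)\rho^{-1}(\bigcup_i L_i)$, and applying $A\mu_\frakp$ only on the $X$-factor replaces $\rho^{-1}(\bigcup_i L_i)$ by the lifted datum $\bL$ while leaving the $(u,\upsilon)$-structure intact; a product-type microsupport estimate then delivers the joint cusp doubling $\bL^\prec$ in the combined $(u,v)$-directions. Vanishing on $v\leq 0$ is built into $A\mu_\frakp$ and is preserved by the external tensor with $\Sh^\bR_{\tau>0}(\bR_{u<c}\times\bR_t)$. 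Finally the Nadler--Shende construction commutes with restriction of the cusp coordinate $v$, so the $F_a$ assemble into a single functor into the inverse limit $\mu^{A\mu}_{\bigcup_i L_i}(X; u<c)$.

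Fully faithfulness of the resulting $F$ follows from fully faithfulness of $A\mu_\frakp$ together with the fact that the Lurie tensor product of fully faithful cocontinuous functors between presentable stable categories is fully faithful, and inverse limits in $a$ preserve the property. The main obstacle I anticipate is matching the two doubling directions correctly: $u$ encodes sheaf-theoretic obstructedness of the brane, while $v$ is the auxiliary cusp variable intrinsic to antimicrolocalization, and one must check that the joint microsupport constraint lands \emph{exactly} in $\bL^\prec$ rather than in a strictly larger product set. Making this rigorous requires the Kashiwara--Schapira estimate for external products combined with the explicit local normal form of the cusp doubling used by Nadler--Shende.
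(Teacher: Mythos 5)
Your reduction does not go through as stated, and the gap is structural rather than technical. First, the input you want from Nadler--Shende does not exist in the form you use it: their antimicrolocalization applies to conic (exact) isotropic data in $T^*\bR^N$, whereas $\frakp|_{L_i}$ is in general a non-exact Lagrangian; it only becomes conic after passing to the lift $\bL=\rho^{-1}(\bigcup_i\frakp|_{L_i}\cup\frakp|_{\Core(X)})$, which couples the $T^*\bR^N$-directions to the Novikov variable $(t,\tau)$. For the same reason the source category does not factor: by \cref{lem:musuppandSSpositive}, membership in $\mu_{\bigcup_iL_i}(X;u<c)$ is the condition $\MS_{\tau>0}(\cE|_{u>0})\subset\rho^{-1}(\bigcup_iL_i)\rho^{-1}(\bigcup_iL_i)$, and neither this set nor $\bL^\prec$ is a product of an $X$-part with a $(u,t)$-part (the $u$-doubling ties $\upsilon$ to $\tau$, and $\rho^{-1}$ ties the $X$-directions to $\tau$). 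Consequently there is no functor of the form $A\mu_\frakp\otimes\id_{\Sh^\bR_{\tau>0}(\bR_{u<c}\times\bR_t)}$ that receives $\mu_{\bigcup_iL_i}(X;u<c)$ and lands in the microsupport condition $\bL^\prec$: the coupling you flag at the end as the ``main obstacle'' is exactly the content of the lemma and cannot be delegated to a K\"unneth-type estimate. What the paper intends by ``similarly as in Nadler--Shende'' is to re-run their doubling/nearby-cycle argument $\bG$-equivariantly for the conified $\bL$ with the extra $u$ and $v$ directions, as is spelled out in more detail in the proof of \cref{lem:c-SQcotangent} and in the second proof of \cref{lem:c-SQ}, where the authors indicate which steps of the Nadler--Shende argument survive verbatim and which must be modified.

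Second, even where a product decomposition is available, your formal step asserting that the Lurie tensor product of a fully faithful cocontinuous functor with an identity is again fully faithful is not true in general for presentable stable categories; it needs extra input (for instance a cocontinuous right adjoint with invertible unit, or compact generation together with full faithfulness checked on compact objects). In Nadler--Shende the full faithfulness of antimicrolocalization is not formal: it rests on the computation identifying homs between doubled sheaves with sections of $\muhom$ along the Lagrangian, and it is precisely this computation that has to be redone in the present $\bR$-equivariant, Novikov-linear setting (where, as elsewhere in the paper, one must also keep track of almost-zero discrepancies). As it stands, the proposal therefore does not establish the embedding.
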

\begin{proof}
    We can prove this similarly as in \cite{nadler2020sheaf}.
\end{proof}

\section{Lagrangian intersection Floer theory and Fukaya category}
In the next section, we introduce the category of sheaf quantizations as a subcategory of $\mu(X; u<c)$ and describe objects in this category. The category $\mu(X; u<c)$ is conjecturally equivalent to a version of Fukaya category defined over $\Lambda_0/T^c\Lambda_0$. To understand the constructions in the next section, we briefly provide a summary of Fukaya category in the non-exact setup.

\subsection{Lagrangian branes}
In this subsection, we explain the notion of Lagrangian branes which is one of the ingredients of an object of Fukaya category.

\begin{definition}\label{def:end_conic}
Let $X$ be a Liouville manifold.
    Let $A'$ be a closed subset of $X$. 
    We say that $A'$ is \emph{end-conic} if there exists a Liouville domain $X_0$ of $X$ such that $A' \setminus X_0$ is conic, i.e., stable under the scaling action of $\bR_{\geq 1}$ induced by the flow of $v_\lambda$.
\end{definition}
For an end-conic subset $A'$, we set $\partial A'\coloneqq (\bR_{>0}\cdot (A' \setminus X_0))/\bR_{>0}\subset \partial X_0$, \emph{the boundary of $A'$}.

 Let $i_L\colon L \looparrowright T^*M$ be a Lagrangian immersion. We set $\Lim\coloneqq i_L(L)$ for the simplification. 
We make the following assumption for Lagrangian immersion hereafter in this paper.

\begin{assumption}\label{assumption:immersion}
    Any Lagrangian immersion $i_L \colon L\looparrowright X$ in this paper is assumed to satisfy the following.
    \begin{enumerate}
        \item[(1)] The image $\Lim$ is end-conic. 
        \item[(2)] The set $X_0\cap \Lim$ is relatively compact.
        \item[(3)] The image of $i_L$ has clean self-intersection of $\codim\geq 1$. 
        That is, the following hold. 
        \begin{enumerate}
            \item The fiber product $L \times_{X} L$ is a smooth submanifold of $L \times L$, and $L \times_{X} L \setminus \Delta_L$ has dimension less than $n$ where $\Delta_L$ is the diagonal of $L\times L$.
            \item For $(p_1, p_2) \in L \times_{X} L$, we have 
            \begin{equation}
                T_{(p_1,p_2)}(L \times_{X} L) = 
                \lc (V,W) \in T_{p_1}L \times T_{p_2}L \relmid 
                (d_{p_1}i_L)(V) = (d_{p_2}i_L)(W) 
                \rc.
            \end{equation}
        \end{enumerate}
    \end{enumerate}     
\end{assumption}

\begin{remark}
    The assumption of clean self-intersection of $\codim\geq 1$ is not essential.  
    If $i_L$ has codimension $0$ clean self-intersection, we can treat it as an embedded Lagrangian brane with higher rank local systems. 
    For example, let $L$ be an embedded Lagrangian and $(\alpha_i, b_i, \cL_i)$ ($i=1,2$) be brane structures on $L$ with $\bK=\bZ$, which will be defined below. 
    Then the immersed Lagrangian brane given by $L\sqcup L\rightarrow X$ with the brane structure $\bigsqcup_{i=1,2}(\alpha_i, b_i, \cL_i)$ can be regarded as an embedded Lagrangian brane $(L, \alpha_1, b_1, (\cL_1\oplus \cL_2\otimes (b_2-b_1)[\alpha_2-\alpha_1])$, where $(b_2-b_1)$ is the principal $\cO(1)$-bundle and $\alpha_2-\alpha_1\in \bZ$. 
\end{remark}

Let $i_L\colon L\looparrowright X$ be a Lagrangian immersion. Then we have the composition 
\begin{equation}
    L\xrightarrow{i_L} X\xrightarrow{TX} BU\rightarrow B(U/O)\xrightarrow{\frakL}B^2\Pic(\Mod(\bK)). 
\end{equation}
Since $L$ is a Lagrangian, we have a null homotopy of the composition $L\rightarrow B(U/O)$, which induces a null homotopy map $L\rightarrow B^2\Pic(\Mod(\bK))$. Comparing this with the null homotopy given by the given Maslov structure, we obtain a morphism $L\rightarrow B\Pic(\Mod(\bK))$. 

\begin{definition}
    A \emph{brane structure} of $L$ is a tuple $\bm{L}\coloneqq (L, \bm{\alpha}, \cL)$, where
    \begin{enumerate}
        \item $i_L\colon L\looparrowright X$ is a Lagrangian immersion,
        \item $\bm{\alpha}$ is a null-homotopy of $L\rightarrow B\Pic(\Mod(\bK))$,
        \item $\cL$ is a (shifted) local system over $L$.
    \end{enumerate}
\end{definition}

A Lagrangian brane $\bm{L}$ is said to be \emph{simple} if $\cL$ is a rank 1 local system.
When $\cL$ is a trivial rank 1 local system, we simply say $\bm{L}=(L, \bm\alpha)$ is a Lagrangian brane. 
For a Lagrangian brane $\bm{L}=(L, \bm\alpha, \cL)$, we denote by $\bm{L}^0$ the associated simple Lagrangian brane $(L, \bm\alpha)$ with the trivial rank 1 local system.

\begin{remark}\label{rmk:branecoefficient}
    Lagrangian brane data can be defined for more general coefficients $\cC$ (i.e., symmetric monoidal presentable stable $\infty$-category) instead of $\Mod(\bK)$. See \cite{Jin, nadler2020sheaf} for further information.
\end{remark}

\subsection{Curved \texorpdfstring{$A_\infty$}{Ainfty}-algebra and bounding cochain}

We recall the notion of bounding cochain by \cite{FOOO} briefly (see \Cref{sec:curved_dga} for some terminology). In this subsection (and only in this subsection), a Lagrangian brane means a Lagrangian brane in a compact symplectic manifold or a Lagrangian brane with bounded geometry in a noncompact symplectic manifold. A particular situation of the latter case is of our interest; (possibly nonexact) end-conic Lagrangian branes in sufficiently Weinstein manifolds. Strictly speaking, the book~\cite{FOOO} is only written for the compact case, but the adaption to the bounded geometry case is straght forward.

\begin{theorem}[\cite{FOOO, AkahoJoyce, FukayaLagrangianCorresp}]\label{theorem:Fukayaalgebra}
    For a Lagrangian brane $\bm{L}$, there exists a curved $A_\infty$-algebra $CF(\bm{L},\bm{L})$ defined over $\Lambda_0$ such that
    \begin{enumerate}
        \item[(1)] the cohomology of $CF(\bm{L},\bm{L})\otimes_{\Lambda_0}\Lambda_0/\Lambda_0^+$ is isomorphic to the ``cohomology ring''\footnote{If it is immersed, one has to take care of the definition. See \cite{AkahoJoyce, FukayaLagrangianCorresp} and \cref{rem:Maslov}.} of $L$, and
        \item[(2)] there exists an inclusion map $C^1(L;\bK)\otimes_\bK \Lambda_0\hookrightarrow CF(\bm{L}, \bm{L})$, where $C^1(L;\bK)$ is a model of singular chain complex of $L$ as $\Lambda_0$-modules (without differential),
        \item[(3)] there exists a small positive number $c$ such that $CF(\bm L, \bm L)\otimes_{\Lambda_0} \Lambda_0/T^c\Lambda_0$ is non-curved.
    \end{enumerate}
\end{theorem}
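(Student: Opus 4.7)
The plan is to follow the construction of Fukaya--Oh--Ohta--Ono \cite{FOOO}, as extended to the immersed setting by Akaho--Joyce \cite{AkahoJoyce} and the Lagrangian-correspondence framework of \cite{FukayaLagrangianCorresp}, defining the curved $A_\infty$-algebra by counts of pseudo-holomorphic polygons with boundary on $L$.

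First I would fix an almost complex structure $J$ tame with respect to $d\lambda$, and then choose a chain model: the underlying graded $\bK$-module is a singular (or Morse, or de Rham) cochain complex of $L$ enlarged by the two branches of the self-intersection fibre product $L \times_X L \setminus \Delta_L$, as in \cite{AkahoJoyce}. Tensoring with $\Lambda_0$ gives the module underlying $CF(\bm{L}, \bm{L})$; gradings come from $\bm{\alpha}$, orientations from the Pin component of $\bm{\alpha}$, and coefficients are twisted by $\cL$. The operations $m_k$, including the curvature $m_0$, are defined by counting virtual perturbations of the moduli spaces $\cM_{k+1}(L;\beta)$ of $J$-holomorphic disks with $(k+1)$ boundary marked points in a relative class $\beta$, weighted by $T^{\int_\beta d\lambda}$. \Cref{assumption:immersion}(2) combined with Gromov compactness guarantees that for every $E>0$ only finitely many $\beta$ with $\int_\beta d\lambda < E$ contribute, so the formal sums converge in $\Lambda_0$.

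Next, I would derive the $A_\infty$-relations from the codimension-one boundary stratification of $\cM_{k+1}(L;\beta)$: the strata come from disk splittings and self-intersection node formations, and the identities among virtual counts translate directly into the $A_\infty$-equations. The main technical burden sits here, since one must equip every moduli space with compatible Kuranishi (or polyfold) perturbations so that the virtual chains cancel in the right pattern, and in the immersed case one must further control sphere and disk bubbling onto the self-intersection locus. I would simply invoke the foundational chapters of \cite{FOOO, AkahoJoyce, FukayaLagrangianCorresp} for this step rather than reconstruct the virtual perturbation theory.

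The three listed properties then follow with little additional work. For (1), setting $T=0$ kills every contribution from non-constant disks (they have $\int_\beta d\lambda > 0$), leaving only the $A_\infty$-structure assembled from constant disks, which by the standard computation recovers the cup product on $H^*(L;\bK)$ appropriately enlarged by the self-intersection jump classes. For (2), the embedding $C^1(L;\bK) \otimes_\bK \Lambda_0 \hookrightarrow CF(\bm{L},\bm{L})$ is tautological once the chain model is chosen to contain ordinary singular $1$-cochains as a subcomplex. For (3), Gromov compactness supplies a positive lower bound $c_0$ on the symplectic area of any non-constant $J$-holomorphic disk with boundary on the compact part of $L$ guaranteed by \Cref{assumption:immersion}(2); for any $c < c_0$ the counts defining $m_0$ vanish modulo $T^c$, so $CF(\bm{L},\bm{L}) \otimes_{\Lambda_0} \Lambda_0/T^c\Lambda_0$ is non-curved. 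The hard step throughout is the virtual perturbation machinery behind $m_k$; the three listed properties are then easy consequences of energy filtration and Gromov compactness.
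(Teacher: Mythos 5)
Your proposal is correct and follows essentially the same route as the paper, which does not prove this statement itself but quotes it from \cite{FOOO, AkahoJoyce, FukayaLagrangianCorresp}; your sketch is precisely the standard construction of those references (disk counts weighted by $T^{\mathrm{energy}}$, immersed chain model enlarged by the self-intersection locus, virtual perturbation theory deferred to the foundational texts), with (1)--(3) obtained from the energy filtration and Gromov compactness exactly as in the cited sources.
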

Of course, the statement here is far from the complete statement (and not very precise). We state this theorem in this form to compare with our results in the next section.

\begin{definition}
    A \emph{bounding cochain} of $\bm{L}$ over $\Lambda_0/T^c\Lambda_0$ is an element $\bm{b}$ of $CF^1(\bm{L}, \bm{L})\otimes_{\Lambda_0}\Lambda_0/T^c\Lambda_0$ such that it is a Maurer--Cartan element of the curved $A_\infty$-algebra.
\end{definition}
If $\bm{b}$ is a bounding cochain, one can deform $CF^1(\bm{L}, \bm{L})\otimes_{\Lambda_0}\Lambda_0/T^c\Lambda_0$ and get a genuine $A_\infty$-algebra $CF^1((\bm{L}, \bm b), (\bm{L}, \bm b))\otimes_{\Lambda_0}\Lambda_0/T^c\Lambda_0$. 

\begin{example}\label{example:bc_smallc}
    As a corollary of \cref{theorem:Fukayaalgebra}, there exists a small number $c$ such that every cocycle $b\in CF^1(\bm L,\bm L)$ is a bounding cochain of $\bm L$ over $\Lambda_0/T^c\Lambda_0$. 
\end{example}

There is also a module-version of the statement:
\begin{theorem}[\cite{FOOO, AkahoJoyce, FukayaLagrangianCorresp}]\label{theorem:Fukayamodle}
    For a cleanly intersecting pair of Lagrangian branes $\bm{L}_1, \bm{L}_2$, there exists a curved $A_\infty$-module $CF(\bm{L}_1,\bm{L}_2)$ defined over $\Lambda_0$ such that
    \begin{enumerate}
        \item[(1)] the cohomology of $CF(\bm{L}_1,\bm{L}_2)\otimes_{\Lambda_0}\Lambda_0/\Lambda_0^+$ is the cohomology of the ``intersection locus"\footnote{If it is non-transverse, one has to take care of the definition. See \cite{AkahoJoyce, FukayaLagrangianCorresp} and \cref{rem:Maslov}.} between $L_1$ and $L_2$, and
        \item[(2)] there exists an inclusion map $C^1(L_1\cap L_2;\bK)\otimes_\bK \Lambda_0\hookrightarrow CF(\bm{L}_1, \bm{L}_2)$ as $\Lambda_0$-modules (without differential), where $C^1(L_1\cap L_2;\bK)$ is a model of singular chain complex of $L_1\cap L_2$.
    \end{enumerate}
\end{theorem}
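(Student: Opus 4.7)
The plan is to construct $CF(\bm L_1,\bm L_2)$ as a curved $A_\infty$-bimodule over the two curved $A_\infty$-algebras $CF(\bm L_i,\bm L_i)$ produced in \cref{theorem:Fukayaalgebra}, adapting the construction of \cite{FOOO} with the modifications of \cite{AkahoJoyce, FukayaLagrangianCorresp} needed for the immersed, clean-intersection setup permitted by \cref{assumption:immersion}.

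First, I would fix an end-cylindrical $\omega$-tame almost complex structure $J$ on $X$ together with a coherent system of abstract perturbations (Kuranishi structures with CF-perturbations in the FOOO sense, or polyfold/de Rham perturbations) so that the moduli spaces of $J$-holomorphic strips and disks with boundary on $L_1\cup L_2$ are cut out as virtual fundamental chains of the expected dimension. Because $L_i\cap X_0$ is compact and each $L_i$ is end-conic, Gromov compactness produces the uniform energy estimates that make the sum over relative homotopy classes $\beta\in\pi_2(X,L_1\cup L_2)$ converge in the $T$-adic topology on $\Lambda_0$.

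Second, I would build the underlying graded $\Lambda_0$-module by choosing a singular (or de Rham) cochain model $C^*(L_1\cap L_2;\cM_{\bm L_1,\bm L_2})$ of the smooth fibre product $L_1\times_X L_2$, with coefficients in the Maslov/Pin/local-system twist $\cM_{\bm L_1,\bm L_2}$ determined by $(\bm\alpha_1,\bm\alpha_2,\cL_1,\cL_2)$, and then set $CF(\bm L_1,\bm L_2)$ to be the $T$-adic completion of $C^*(L_1\cap L_2;\cM_{\bm L_1,\bm L_2})\otimes_\bK\Lambda_0$. The bimodule operations $n_{k_1\mid k_2}^\beta$ would then count, weighted by $T^{\omega(\beta)}$, rigid virtual $J$-holomorphic strips carrying $k_1$ boundary marked points constrained to $L_1$ and $k_2$ constrained to $L_2$ within a given class $\beta$, with inputs coming from the bar construction on $CF(\bm L_i,\bm L_i)$ and output paired with $CF(\bm L_1,\bm L_2)$.

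Third, the curved $A_\infty$-bimodule identities follow in the usual way from the codimension-one boundary analysis of the one-dimensional moduli: strip breaking produces the standard associativity-type terms, while disk bubbling off either boundary component produces the curvature coupling to the algebra structures of \cref{theorem:Fukayaalgebra}. Property (1) is then obtained by tensoring with $\Lambda_0/\Lambda_0^+$, where only constant ($\omega(\beta)=0$) strips survive, reducing the differential to the classical differential on $C^*(L_1\cap L_2;\cM_{\bm L_1,\bm L_2})$; property (2) is the tautological inclusion of the underlying singular cochain model. The main obstacle is, as always in Floer theory, transversality in the presence of disk bubbles, self-intersection points of immersed Lagrangians, and clean, non-transverse intersections between $L_1$ and $L_2$. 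Precisely this is where \cite{FOOO, AkahoJoyce, FukayaLagrangianCorresp} do the heavy lifting via Kuranishi structures and multisections, and I would invoke their package rather than reprove transversality from scratch.
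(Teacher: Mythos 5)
This statement is not proved in the paper at all: it is recalled as background, with the proof delegated entirely to the cited works \cite{FOOO, AkahoJoyce, FukayaLagrangianCorresp}, and the authors explicitly caution (after \cref{theorem:Fukayaalgebra}) that the statement is deliberately imprecise and stated only for comparison with the sheaf-theoretic constructions that follow. Your sketch is the standard route those references take --- cochains on the clean fibre product $L_1\times_X L_2$ twisted by the Maslov/local-system data, completed over $\Lambda_0$, with operations defined by virtual counts of strips/disks weighted by $T^{\omega(\beta)}$, curvature terms from disk bubbling, and reduction mod $\Lambda_0^+$ recovering the classical cohomology --- so it is consistent both with the literature being cited and with how the paper later uses the result (e.g.\ the bimodule deformation by bounding cochains and the comparison with $\cM_{\bm L_1,\bm L_2}$ in the intersection estimates). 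Be aware, though, that what you have written is a citation-plus-outline rather than a proof: all of the genuinely hard content (Kuranishi/virtual transversality at self-intersection points and clean intersections, coherence of perturbations, $T$-adic convergence from Gromov compactness in the end-conic setting) is exactly what you defer to \cite{FOOO, AkahoJoyce, FukayaLagrangianCorresp}. That is acceptable here only because the paper itself treats the theorem the same way; if you intended an independent verification, the deferred steps are where the work lies.
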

If $\bm b_1$ and $\bm b_2$ are bounding cochains of $\bm L_1$ and $\bm L_2$ over $\Lambda_0/T^c\Lambda_0$ respectively, then they deform $CF(\bm{L}_1,\bm{L}_2)\otimes_{\Lambda_0}\Lambda_0/T^c\Lambda_0$ to get a $CF((\bm{L}_1, \bm b_1),(\bm{L}_1,\bm b_1))\otimes_{\Lambda_0}\Lambda_0/T^c\Lambda_0-CF((\bm{L}_2,\bm b_2),(\bm{L}_2, \bm b_2))\otimes_{\Lambda_0}\Lambda_0/T^c\Lambda_0$-bimodule $CF((\bm{L}_1, \bm b_1),(\bm{L}_2, \bm b_2))\otimes_{\Lambda_0}\Lambda_0/T^c\Lambda_0$.

The Fukaya category of $X$ over $\Lambda_0/T^c\Lambda_0$ has the following property:
\begin{enumerate}
    \item An object is a Lagrangian brane with a bounding cochain over $\Lambda_0/T^c\Lambda_0$.
    \item A hom-space is given by $CF((\bm{L}_1, \bm b_1),(\bm{L}_2, \bm b_2))\otimes_{\Lambda_0}\Lambda_0/T^c\Lambda_0$.
\end{enumerate}
We emphasize that the above is a general sketch and we have to modify and impose several things in respective setup you want.

\begin{example}\label{ex:low-energyobject}
    Let $\bm L$ be a Lagrangian brane. By \cref{example:bc_smallc}, there exists a small $c>0$ such that $(\bm L, b)$ is an object of the Fukaya category over $\Lambda_0/T^c\Lambda_0$ for any cocycle $b\in CF^1(\bm L, \bm L)$.
\end{example}

\subsection{What is explained in the next section?}
Upon the above explanation of Fukaya category, we here would like to give a summary of the next section.

The aim of the next section is to give a category of sheaf quantizations. We expect this category is equivalent to a version of Fukaya category. As a corollary of this expectation, each sheaf quantization corresponds to a Lagrangian brane. In particular, it has
\begin{enumerate}
    \item the notion of support, which is an immersed Lagrangian, and
    \item the notion of brane structure.
\end{enumerate}
For 1, the notion of support is already defined as $\musupp$. For 2, in \S \ref{subsec:microsheaves}, we will recall the way to read brane structures from sheaves. Then, by using this, in \S \ref{subsec:SQ_brane_str}, we will define sheaf quantization as a sheaf whose support is Lagrangian and having a reasonable brane structure.

Our definition of sheaf quantization is just a definition but not a construction. In \S \ref{subsec:SQ_low_energy}, we provide a construction of sheaf quantization over $\Lambda_0/T^c\Lambda_0$ for small $c$, called standard sheaf quantizations, which conjecturally correspond to \cref{ex:low-energyobject}. 
Then we classify sheaf quantizations in \S \ref{subsec:classification_low_energy_SQ} in low-energy, where we find they are standard sheaf quantizations. Their properties are studied in \S \ref{subsec:properties_standard_SQ}.
We also describe microlocal hom-spaces between sheaf quantizations in \S \ref{subsec:maslov} using Maslov data.

Next, we would like to describe objects (conjecturally) corresponding to $(\bm L, \bm b)$, where $\bm b$ is a bounding cochain. To consider this, we take the following interpretation of $(\bm L, \bm b)$. Namely, $(\bm L, \bm b)$ is a certain iterated extension of \cref{ex:low-energyobject}. Correspondingly, we would like to describe certain twisted complexes consisting of standard sheaf quantizations. However, it seems that not every twisted complex corresponds to a bounding cochain. For example, a direct sum of ``simple" sheaf quantization is not simple.

To control this issue, in \S \ref{subsec:microlocal_extension}, we explain the notion of microlocal extension class of twisted complex. If the microlocal extension class of a twisted complex consisting of simple sheaf quantizations is standard, the resulting sheaf quantization is again simple. 

In \S \ref{subsec:sheaf_Fukaya_alg}, we package this class of twisted complexes as a solution of Maurer--Cartan element (a sheaf-theoretic bounding cochain) of a curved dga. This curved dga conjecturally quasi-isomorphic to the curved $A_\infty$-algebra in \cref{theorem:Fukayaalgebra}. The fact corresponding to \cref{theorem:Fukayaalgebra}
(1) and (2) are also explained for this curved dga. In \S \ref{subsec:bc_SQ}, we explain a correspondence between sheaf quantizations and sheaf-theoretic bounding cochains, which is a (conjectural) counterpart of the fact that an object of Fukaya category is given by a pair of Lagrangian brane and a bounding cochain.

\section{Sheaf quantizations}\label{section:SQ}

\subsection{Microsheaves along Lagrangian}\label{subsec:microsheaves}

In this subsection, we explain microsheaves along a Lagrangian immersion.

Let $(X,\lambda)$ be a sufficiently Weinstein manifold with a chosen Maslov data. Let $i_L\colon L \looparrowright X$ be a Lagrangian immersion.
We can locally take a primitive $f\colon U\cap\Lim \rightarrow \bR$ of $\lambda|_U$, where $U$ is a contractible open subset of $X$. 

We set
\begin{equation}
    U_f\coloneqq \lc (x, \xi, t)\in X \times \bR_t\relmid (x, \xi)\in \Lim, t=-f(x, \xi)\rc.
\end{equation}
This is a Legendrian in $X\times \bR_t$, one can define $\mu sh_{U_f}$ by using the higher-codimensional trick. Note that $U_f$ does not depend on the choice of $f$, since one can identify those for different $f$ by translations. Hence we can glue them over $\Lim$. We denote it by $\mush_{\Lim}$. We denote the internal hom-sheaf of $\mush_{\Lim}$ by $\muhom$.


\begin{remark}
    The definition of our $\mush_{\Lim}$ is different from that of Jin's $\muSh_L$~\cite{Jin} if $L^{\Image}$ has self-intersection. As an effect, \cref{lem:branestr} below is stated in a form slightly different from \cite{Jin}.
\end{remark}

We introduce some notation following Tamarkin. In the following, we drop the subscript for the Maslov data to lighten the notation.
For a closed subset $A'\subset X$, we set
\begin{equation}
\begin{split}
    \mu_{A'}(X;u<c) & \coloneqq \lc \cE\in \mu(X;u<c) \relmid \musupp(\cE)\subset A'\rc  \text{ and}\\
    \mu_{A'}(X)&\coloneqq \lc \cE\in \mu(X) \relmid \musupp(\cE)\subset A'\rc.
\end{split}
\end{equation}
See also \cref{lem:musuppandSSpositive}.

In \cite[\S~3.4]{Kuw1}, the author constructed the microlocalization functor 
\begin{equation}
    \mu_{\Lim} \colon  \mu^\bR_{\Lim}(T^*M)\to \mush_{\Lim}(\Lim),
\end{equation}
by mimicking the known constructions in the exact case. 
However, it contains some bugs that is not desired as explained in \cite{KuwNov}.
We explain the modification and how to construct the microlocalization functor in a general $\bG$-equivariant Weinstein case.

Now let us construct the microlocalization functor. We first consider the naive version. By the usual microlocalization we have $\mu^\bG_{\Lim}(X)\to \mush_{\rho^{-1}(\Lim)}(\rho^{-1}(\Lim))$. By restricting it to a local conic lift $U_f\subset \rho^{-1}(\Lim)$, we have $\mush_{\rho^{-1}(\Lim)}(\rho^{-1}(\Lim))\rightarrow \mush_{\rho^{-1}(\Lim)}|_{U_f}(U_f)$. 
In \cite{Kuw1} (which is in the case of $T^*M$, but we can apply it, since the argument is (micro)local), it is proved that the image of this functor is in $\mush_{U_f}(U_f)$. Hence we further have
\begin{equation}
    \mu^\bG_{\Lim}(X)\rightarrow \prod_{c\in \bR/\bG}\mush_{U_{f+c}}(U_{f+c})\rightarrow \mush_{U_{f}}(U_{f}),
\end{equation}
where the last functor is taken by the direct product using the canonical equivalences along $\mush_{U_{f+c}}(U_{f+c})\rightarrow \mush_{U_{f}}(U_{f})$. The latter product exists by the same reason as Lemma~\ref{lem:coproduct}. 
Using the equivariant structure, we get a functor $\mu_{\Lim}^{pre} \colon  \mu^\bG_{\Lim}(X) \to \mush_{L^{\Image}}(L^{\Image})$. 
This is the desired microlocalization functor. This functor does not behave well if $\bG$ is dense, so we modify as follows. 

The stalk at $p$ of the image of the restriction functor $\mu^\bG_{\Lim}(X)\to \mush_{\rho^{-1}(\Lim)}(\rho^{-1}(\Lim))$ is computed by a fomula like
\begin{equation}
    \colim_{p \in U}\Gamma_{\lc t\geq -f(x)\rc }(U, -)
\end{equation}
where $p=(x, df(x))$ for a smooth function $f$ after some higher codimensional embedding into a cotangent bundle. We replace this by killing almost zero
parts:
\begin{equation}
    \colim_{p \in U}\Gamma_{\lc t\geq -f(x)\rc }(U, -)_a
\end{equation}
See Appendix~\ref{section:almost} for the operation $(-)_a$. This replacement is glued up and gives a modified functor denoted by $\mu_{\Lim} \colon  \mu^\bG_{\Lim}(X) \to \mush_{L^{\Image}}(L^{\Image})$.


As an analogue, we can construct a functor $\mu^\bG_{\Lim} \colon \mu_{\Lim}(X;u<c) \to \mush_{\Lim}(\Lim)$ as follows: 
For some $u> 0$, we take a contractible neighborhood $U$ of $u$ in $(0, \infty)$. 
The microlocalization along $AA_h|_U$ as in loc.\ cit.\ gives an object of $\mush_{\Lim\times U}(\Lim\times U)$ along $\Lim\times U$. 
Since $U$ is contractible, it equivalently gives a functor to $\mush_{\Lim}(\Lim)$.

Let $L_1$ and $L_2$ be two Lagrangian immersions and let $\cE_i\in \mu^\bG_{\Lim_i}(X)$.
Then for $i=1,2$, we have $\mu_{\Lim_i}(\cE_i)\in \mush_{\Lim_i}(\Lim_i)$. Since $ \mush_{\Lim_i}(\Lim_i)\subset \mush_{\Lim_1\cup \Lim_2}(\Lim_1\cup \Lim_2)$,  we can consider $\mu_{\Lim_i}(\cE_i)$ as an object of the latter category. We then set
\begin{equation}\label{def:muhomR}
\muhom^\bG(\cE_1, \cE_2)\coloneqq \cHom_{\mush_{\Lim_1\cup \Lim_2}}(\mu_{\Lim_1}(\cE_1), \mu_{\Lim_2}(\cE_2)),
\end{equation}
which is a sheaf on $L_1^{\Image}\cap L_2^{\Image}$.

In the following, we mostly set $\bG$ to be $\bR$ and omit $\bR$ from the notation. 
This is only for the simplicity of notation, and almost all the results hold for other $\bG$'s. 
In \S\ref{subsec:SQ_low_energy}, we use different $\bG$'s explicitly, where we compare our sheaf quantization with the previously existing studies.

\begin{lemma}[{cf.\ \cite{Jin}}]\label{lem:branestr}
    Given a Lagrangian immersion with a brane structure $\bm\alpha$, there exists a faithful embedding $\Loc(L)\hookrightarrow \mush_{\Lim}(\Lim)$, where the left-hand side is the category of derived $\cC$-valued local systems. 
    The image of the embedding is denoted by $\muSh_L(L)$. If $L$ is an embedded Lagrangian, we have $\muSh_L(L)\cong \mush_{\Lim}(\Lim)$.
\end{lemma}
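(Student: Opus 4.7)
The plan is to build the embedding by trivializing $\mush_{\Lim}$ along $L$ using the brane datum and then to argue that it is faithful because $i_L$ is a clean immersion. I will work microlocally and assemble the functor from local pieces.

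First, I would recall that the restriction of $\mush_\frakp$ to an immersed Lagrangian $\Lim$ is a twisted form of a constant stack of $\bK$-modules. Concretely, the classifying map $X\to B^2\Pic(\Mod(\bK))$ reviewed in the microsheaf category section pulls back along $i_L \colon L\looparrowright X$ to a map $L\to B^2\Pic(\Mod(\bK))$, and the Lagrangian distribution (tangent to $L$) gives a canonical null-homotopy into $B(U/O)$, hence a factorization through $L\to B\Pic(\Mod(\bK))$. Standard Kashiwara--Schapira theory (as used by Nadler--Shende and Jin) identifies $i_L^{-1}\mush_{\Lim}$ with the twisted stack of local systems classified by this map. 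A brane structure $\bm\alpha$ is, by definition, a null-homotopy of this map, so it trivializes the twist and produces an equivalence
\begin{equation}
    \Phi_L \colon \Loc(L) \xrightarrow{\sim} i_L^{-1}\mush_{\Lim}
\end{equation}
of stacks of $\bK$-linear categories on $L$.

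Next, I would compose with the pushforward along $i_L$ to define the functor on global sections
\begin{equation}
    \Loc(L) \xrightarrow{\Phi_L} (i_L^{-1}\mush_{\Lim})(L) \xrightarrow{i_{L*}} \mush_{\Lim}(\Lim),
\end{equation}
where the second functor is adjunction, sending a section of $i_L^{-1}\mush_{\Lim}$ on $L$ to the corresponding section of $\mush_{\Lim}$ on $\Lim$ by gluing over the sheets meeting at the self-intersection locus. The essential image is denoted $\muSh_L(L)$ by definition. When $L$ is embedded, $i_L$ is a homeomorphism onto $\Lim$, so both the trivialization $\Phi_L$ and the adjunction $i_{L*}$ are equivalences, giving $\muSh_L(L)\cong \mush_{\Lim}(\Lim)$.

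Finally, for the faithfulness in the immersed case, I would argue as follows. At a smooth point of $\Lim$ (i.e., a point that is not a self-intersection), the stalk $\muhom(\Phi_L\cL_1,\Phi_L\cL_2)$ agrees with the stalk $\cHom_{\Loc(L)}(\cL_1,\cL_2)$. Because $L$ is assumed to have clean self-intersections of codimension $\geq 1$ (\cref{assumption:immersion}), the smooth locus of $\Lim$ is open and dense in each sheet, and a morphism of local systems on $L$ is determined by its restriction to any open dense subset of each connected component. Hence if $i_{L*}\Phi_L(f)=0$ in $\mush_{\Lim}(\Lim)$, then $f$ vanishes on this dense locus in $L$, and therefore $f=0$. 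The main technical point I expect to be the most delicate is making the local-to-global gluing rigorous across self-intersection strata: at an intersection point, $\mush_{\Lim}$ admits ``cross'' morphisms between sheets that are not in the image of $i_{L*}\Phi_L$, so one must verify that the functor lands inside those microsheaves whose restriction to each sheet is an ordinary local system and that the adjunction map $\Hom_{\Loc(L)}(\cL_1,\cL_2)\to \Hom_{\mush_{\Lim}}(i_{L*}\Phi_L\cL_1, i_{L*}\Phi_L\cL_2)$ is injective — this last injectivity is precisely the content of the previous paragraph together with the sheaf property of $\muhom$.
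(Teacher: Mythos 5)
You have correctly identified the overall architecture one expects here (the paper itself gives no proof and defers to \cite{Jin}, modulo the remark that its $\mush_{\Lim}$ differs from Jin's $\muSh_L$ at self-intersections), but two of your steps do not hold as stated. First, $\Phi_L\colon \Loc(L)\to i_L^{-1}\mush_{\Lim}$ is not an equivalence of stacks once $L$ has self-intersections: the stalk of $i_L^{-1}\mush_{\Lim}$ at a preimage $p$ of a double point is by definition the stalk of $\mush_{\Lim}$ at $i_L(p)$, i.e.\ microsheaves supported on the union of \emph{all} branches through that point, which is strictly larger than the stalk of $\Loc(L)$; the brane datum only yields a fully faithful functor onto the branch-supported objects. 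Second, and more seriously, the functor you call ``$i_{L*}$ by adjunction'' does not exist: for a stack $\cF$ on $\Lim$ the unit of adjunction gives $\cF(\Lim)\to (i_L^{-1}\cF)(L)$, i.e.\ the opposite direction, and there is no natural map from sections of the pullback on $L$ to sections of $\cF$ on $\Lim$. The passage from the branchwise trivializations to a single global section of $\mush_{\Lim}$ --- taking the direct sum over the finitely many local branches along the self-intersection locus and verifying that these local sums satisfy descent (this is where the clean, codimension $\geq 1$ self-intersection of \cref{assumption:immersion} enters) --- is precisely the content of the lemma, and your proposal only flags it as ``delicate'' without carrying it out.

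The faithfulness argument is also flawed in the derived setting. Restriction to a dense open subset is injective only for degree-zero morphisms of local systems; for the derived hom $R\Gamma(L,\cHom(\cL_1,\cL_2))$ it is not (e.g.\ $H^1(S^1;\bK)\to H^1(S^1\setminus\{\mathrm{pt}\};\bK)$ is zero), so ``determined on the smooth locus'' does not give faithfulness of a functor of dg categories, where morphisms of all degrees must be controlled. The correct mechanism is that near the self-intersection locus the image objects canonically decompose by branch, so the hom sheaf in $\mush_{\Lim}$ between two image objects splits into the branch-diagonal part, canonically identified with $i_{L*}\cHom_{\Loc(L)}(\cL_1,\cL_2)$, plus cross terms supported on the self-intersection locus $L_{\mathrm{SI}}$; taking global sections shows the map on hom complexes is a split injection in every degree (this is exactly the decomposition the paper exploits later, cf.\ \eqref{eq:sp_extension}). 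Your conclusion in the embedded case, $\muSh_L(L)\cong\mush_{\Lim}(\Lim)$, is fine, since there the trivialization statement you invoke is the genuine Guillermou--Jin--Nadler--Shende equivalence.
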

We denote the object in $\mush_{\Lim}(\Lim)$ corresponding to the rank 1 trivial local system under the above embedding by $\bm L = (L,\bm \alpha)$.

For a Lagrangian immersion $i_L\colon L\looparrowright X$, we set
\begin{equation}
    \mu_L(X; u<c) \coloneqq (\mu_{\Lim})^{-1}(\muSh_L(L)),
\end{equation}
where $\mu_{\Lim} \colon \mu_{\Lim}(X;u<c) \to \mush_{\Lim}(\Lim)$ is the microlocalization defined as above. 
If $L$ is embedded, it is compatible with the definition of $\mu_L(X; u<c)$ introduced above.

Given a simple Lagrangian brane $\bm{L}=(L,\bm \alpha)$, by composing the functor given above with $\mu_{L^\mathrm{Im}}$, we obtain a functor 
\begin{equation}
    \mu_{\bm{L}}\colon \mu_{L}(X;u<c) \to \Loc(L).
\end{equation}
If $L$ is further equipped with a rank 1 local system $\cL$ (i.e., $\bm{L}=(L,\bm\alpha,\cL)$ is a simple Lagrangian brane), we set
\begin{equation}
     \mu_{\bm{L}}\coloneqq \mu_{\bm{L}^0}\otimes \cL^{-1}\colon \mu_{L}(X;u<c) \to \Loc(L).
\end{equation}
We can similarly define $\mu_{\bm{L}}\colon \mu_{L}(X)\rightarrow \Loc(L)$.

\begin{definition}
\begin{enumerate}
\item An object of $\mu(X;u<c)$ is said to be \emph{holonomic} if $\musupp(\cE)$ is possibly singular Lagrangian and $\mu_{(L^{\mathrm{sm}}, \bm\alpha)}(\cE)$ is of finite rank for some $\bm\alpha$ on the smooth part $L^{\mathrm{sm}}$. 
    \item An object of $\mu(X;u<c)$ is said to be \emph{smooth} if $\musupp(\cE)$ is an embedded Lagrangian and $\mu_{(L, \bm\alpha)}(\cE)$ is of finite rank for some $\bm\alpha$. 
    \item The subcategory of $\mu(X;u<c)$ spanned by the holonomic objects (resp.\ smooth objects) is denoted by $\mu^{\mathrm{hol}}(X;u<c)$ (resp.\ $\mu^{\mathrm{sm}}(X;u<c)$). 
    We call it the holonomic (resp.\ smooth holonomic) microlocal category of $X$. 
\end{enumerate}
\end{definition}

\begin{conjecture}
    The inclusion $\mu^{\mathrm{sm}}(X;u<c)\hookrightarrow \mu^{\mathrm{hol}}(X;u<c)$ induces an equivalence 
    \begin{equation}
        \mu^{\mathrm{sm}}(X;u<c)\otimes_{\Lambda_0}\Lambda\xrightarrow{\cong} \mu^{\mathrm{hol}}(X;u<c)\otimes_{\Lambda_0}\Lambda
    \end{equation}
    over the Novikov field.
\end{conjecture}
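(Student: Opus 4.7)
The plan is to treat essential surjectivity and full faithfulness separately, with both reductions exploiting the codimension bound on the singular locus of a holonomic support. First I would use the antimicrolocalization of \cref{subsec:antimicrolocalization} to reduce to the case $X=T^*M$, so that the structure of $\mush$ along a singular Lagrangian is controlled by the classical Kashiwara--Schapira stack.

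For full faithfulness, given $\cE_1,\cE_2\in \mu^{\mathrm{sm}}(X;u<c)$ I would compute $\Hom$ via the microlocalization functor and the $\muhom$ formalism from \eqref{def:muhomR}. The localization property of $\muhom$ along Lagrangian supports, combined with the intersection-point estimate of \cref{cor:intersection_estimate} applied to common refinements of the supports, expresses both $\Hom_{\mu^{\mathrm{sm}}}$ and $\Hom_{\mu^{\mathrm{hol}}}$ as cohomology of the same Maslov-twisted local system on $L_1^{\mathrm{sm}}\cap L_2^{\mathrm{sm}}$, up to a term supported on the singular loci. Since the singular parts have codimension at least one in $L$, any discrepancy between these $\Hom$ computations is $\Lambda_0^+$-torsion and dies after applying $-\otimes_{\Lambda_0}\Lambda$. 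This gives the full faithfulness of the induced functor.

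For essential surjectivity, given a holonomic $\cE$ with possibly singular support $L=L^{\mathrm{sm}}\sqcup L^{\mathrm{sing}}$, the idea is to realize $\cE$ as a twisted complex of smooth objects, imitating sheaf-theoretically the Lagrangian surgery picture that resolves each singular stratum of $L^{\mathrm{sing}}$ into transversely intersecting smooth sheets. Concretely I would: (i) near each connected component of $L^{\mathrm{sing}}$ work in a Darboux/antimicrolocalization chart and exhibit a standard holonomic model supported on the local singular Lagrangian as a cone of standard sheaf quantizations of two smoothed branches; (ii) using the $\sigma$-decomposition and the twisted-complex formalism introduced just before \cref{prop:corproduct}, glue the local resolutions into a global twisted complex $\widetilde{\cE}\in \mu^{\mathrm{sm}}(X;u<c)$ equipped with a comparison morphism $\widetilde{\cE}\to \cE$; (iii) show that the cone of this morphism has support contained in $L^{\mathrm{sing}}$ and that its $\muhom$ contributions are torsion over $\Lambda_0$, so that the cone vanishes in $\mu^{\mathrm{hol}}(X;u<c)\otimes_{\Lambda_0}\Lambda$.

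The main obstacle is step (i)--(ii): producing the local-to-global resolution in a way compatible with the brane structure and the Maurer--Cartan/bounding-cochain data, i.e.\ providing the sheaf-theoretic counterpart of iterated Lagrangian surgery. Over $\Lambda_0$ the choice of resolution carries nontrivial $\Lambda_0^+$-valued obstructions (the area of the surgery handle), which is precisely why the statement is expected to hold only after inverting $T$; over $\Lambda$ all such obstructions become invertible or trivialized, and the existence of the resolution should follow from the sheaf-theoretic bounding-cochain framework of \cref{thm:bc} applied to the branches at each singular stratum.
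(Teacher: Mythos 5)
This statement is stated in the paper as a \emph{conjecture}; the paper offers no proof of it, so there is nothing to compare your argument against, and your proposal must stand on its own. As written, it does not: it is a strategy outline whose central step is exactly the content of the conjecture, and you acknowledge as much when you call step (i)--(ii) ``the main obstacle.'' Two concrete problems. First, your full faithfulness argument is both unnecessary and shaky. Since $\mu^{\mathrm{sm}}(X;u<c)$ and $\mu^{\mathrm{hol}}(X;u<c)$ are full subcategories of $\mu(X;u<c)$ and the base change $-\otimes_{\Lambda_0}\Lambda$ keeps the objects and merely tensors the hom-complexes, the induced functor is fully faithful for trivial reasons; there is no ``discrepancy term'' to control. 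Moreover, the tools you invoke there (\cref{cor:intersection_estimate} and the $\muhom$ computations of the paper) are established only for sheaf quantizations of Lagrangian \emph{branes} with clean intersections, not for holonomic objects with singular support, so the argument you sketch would not apply even if it were needed.

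Second, and decisively, essential surjectivity is where the whole difficulty lives, and your steps (i)--(iii) only restate it. You need a local statement of the form: a holonomic object supported on a singular Lagrangian is, after inverting $T$, an iterated extension (twisted complex) of smooth sheaf quantizations, with the comparison cone supported on the singular locus and killed by $-\otimes_{\Lambda_0}\Lambda$. Nothing in the paper supplies this: \cref{thm:bc} concerns Maurer--Cartan elements deforming sheaf quantizations of a \emph{fixed smooth} Lagrangian brane and says nothing about resolving singular supports, so deferring the obstruction analysis to the bounding-cochain framework does not close the gap. The claim that the cone of $\widetilde{\cE}\to\cE$ has all its $\muhom$ contributions $\Lambda_0^+$-torsion because $L^{\mathrm{sing}}$ has positive codimension is precisely the assertion that needs proof (it is the sheaf-theoretic avatar of the surgery/antisurgery energy estimate), and no mechanism in the paper yields it. In short, your outline identifies the expected route (antimicrolocalization to $T^*M$, local surgery-type resolution, gluing by twisted complexes, torsion vanishing over $\Lambda$), but it proves neither the local resolution nor the torsion statement, so the conjecture remains open under your proposal.
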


\subsection{Sheaf quantization and brane structure}\label{subsec:SQ_brane_str}
Let $\bm{L}=(L,\bm{\alpha}, \cL)$ be a Lagrangian brane.

\begin{definition}
    An object $\cE\in \mu^\bG_{L}(X;u<c)$ is said to be a \emph{sheaf quantization} of a Lagrangian immersion $L$ over $\Lambda_0^\bG/T^c\Lambda_0^\bG$ if $\mu_{(L, \bm\alpha)}(\cE)$ is of finite rank for some $\bm\alpha$.
\end{definition}

More precisely, we define:
\begin{definition}\label{def:sheaf_quantization}
    An object $\cE\in \mu^{\bG}(X;u<c)$ (resp.\ $\cE\in \mu^{\bG}(X)$) is said to be a \emph{sheaf quantization} of $\bm{L}$ over $\Lambda_0^\bG/T^c\Lambda_0^\bG$ if 
    \begin{enumerate}
        \item[(1)] $\cE\in \mu^\bG_{L}(X;u<c)$ (resp.\ $\cE \in \mu^\bG_L(X)$) and
        \item[(2)] $\mu_{\bm{L}^0}(\cE) \cong \cL$. 
    \end{enumerate}
    We say that a sheaf quantization $\cE$ is \emph{simple} if $\cL$ is of rank 1.
\end{definition}
We remark that for $0<c<c'$, if $\cE\in \mu^\bG(X;u<c')$ is a sheaf quantization of $L$ (resp.\ $\bm{L}$), the restriction $r_{c'c}(\cE)$ is a sheaf quantization of $L$ (resp.\ $\bm{L}$) in $\mu^\bG(X;u<c)$.

We also give the relationship between sheaf quantizations over $\Lambda_0^\bG$ and $\Lambda_0^\bH$.
\begin{proposition}
    Let $\cE$ be a sheaf quantization of $\bm{L}$ over $\Lambda_0^\bH$. Then $\frakf_{\bG\bH}^L(\cE)$ is a sheaf quantization of $\bm{L}$ over $\Lambda_0^\bG$ for $\bH\subset \bG$.
\end{proposition}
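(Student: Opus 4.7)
The plan is to unpack the definition of sheaf quantization and verify its two conditions for $\frakf_{\bG\bH}^L(\cE) = \cE \star_{\bG\bH} \bigoplus_{c \in \bG} \bK_{t \geq c}$. So I need to check: (1) $\frakf_{\bG\bH}^L(\cE) \in \mu^\bG_L(X;u<c)$, i.e., its non-conic microsupport equals $L^{\Image}$; and (2) $\mu_{\bm{L}^0}(\frakf_{\bG\bH}^L(\cE)) \cong \cL$ as objects of $\Loc(L)$.

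For (1), the key point is that the microsupport of an equivariant object is by definition that of the underlying non-equivariant sheaf, so I reduce to computing $\MS\bigl(\frakf_\bO(\frakf_{\bG\bH}^L(\cE))\bigr)$. Since the coinvariant operations implicit in $\star_{\bG\bH}$ only change the equivariant structure and not the underlying sheaf on $M \times \bR_t$, I can identify this with the ordinary Tamarkin-style convolution $\frakf_\bO(\cE) \star \bigoplus_{c \in \bG} \bK_{t \geq c}$, which is the $\bG$-indexed direct sum of $T_c$-translates of the usual Tamarkin projector $\frakf_\bO(\cE) \star \bK_{t \geq 0}$. By \cref{lemma:tamarkin_projector} the projector is isomorphic to $\frakf_\bO(\cE)$ after passing to $\Sh_{\tau>0}$, so the $\tau > 0$ microsupport is $\bigcup_{c \in \bG} T_c \MS_{\tau>0}(\cE)$. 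Because $\rho$ is constructed precisely to be invariant under the $\bR_t$-translation (it divides the $X$-component by $\tau$ via the Liouville flow), this translation-indexed union descends to the same $\musupp = L^{\Image}$ as for $\cE$. The analogous argument applies to the doubling $\MS(\cE|_{u>0})$, so $\frakf_{\bG\bH}^L(\cE)$ remains a weak doubling movie in $\mu^\bG_L(X;u<c)$.

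For (2), the plan is to establish the naturality statement $\mu^\bG_{\bm L^0} \circ \frakf_{\bG\bH}^L \cong \mu^\bH_{\bm L^0}$ and apply it to $\cE$. Recall from \S\ref{subsec:microsheaves} that $\mu_{\bm L^0}$ is built by first taking the modified restriction to $\mush_{\rho^{-1}(\Lim)}(\rho^{-1}(\Lim))$, then pulling back to a local conic lift $U_f$ via the equivariant product $\prod_{c \in \bR/\bG}\mush_{U_{f+c}}(U_{f+c}) \to \mush_{U_f}(U_f)$. Applying $\frakf_{\bG\bH}^L$ to $\cE$ replaces the $\bR/\bH$-indexed product by the $\bR/\bG$-indexed product, but on each factor the construction is purely local and unchanged; what the convolution with $\bigoplus_{c \in \bG}\bK_{t \geq c}$ does is precisely to enlarge the equivariance group while leaving the microsheaf on a single local lift $U_f$ intact (this is the microlocal counterpart of \cref{lem:Lambda} applied to $1_\mu^\bG$). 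Therefore $\mu^\bG_{\bm L^0}(\frakf_{\bG\bH}^L(\cE))$ agrees with $\mu^\bH_{\bm L^0}(\cE) \cong \cL$.

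The main obstacle I expect is the compatibility step in the proof of (2): unwinding the definition of $\mu_{\Lim}$ in the equivariant setting and verifying that the passage from the $\bR/\bH$-product to the $\bR/\bG$-product induced by $\frakf_{\bG\bH}^L$ matches the canonical equivalences used to define the microlocalization. Once this is in place, the rest is bookkeeping with the fact that $\star_{\bG\bH} 1_\mu^\bG$ is the identity after adjunction-theoretic simplification, parallel to \cref{lem:Lambda}.
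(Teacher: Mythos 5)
The paper states this proposition without proof, so there is no author argument to compare against; I will just assess your attempt.

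Your overall structure is right: verify that $\musupp(\frakf^L_{\bG\bH}(\cE)) = L^{\Image}$ and that $\mu_{\bm L^0}(\frakf^L_{\bG\bH}(\cE)) \cong \cL$. The appeal to $\bR_t$-translation invariance of $\rho$ in step (1), and to the compatibility of the microlocalization with the induction functor in step (2), is the correct framework.

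However, there is a factual error in step (1). You assert that ``the coinvariant operations implicit in $\star_{\bG\bH}$ only change the equivariant structure and not the underlying sheaf on $M \times \bR_t$.'' This is false for $\bH \neq \bO$. Take $M = \mathrm{pt}$ and $\cE = 1_\mu^\bH = \bigoplus_{c \in \bH}\bK_{t\geq c}$. The paper identifies $\frakf_{\bG\bH}^L(1_\mu^\bH) = 1_\mu^\bG$, whose underlying sheaf is $\bigoplus_{d\in\bG}\bK_{t\geq d}$. But the ordinary convolution you propose, $\frakf_\bO(\cE)\star\bigoplus_{c\in\bG}\bK_{t\geq c}$, equals $\bigoplus_{e\in\bG}\bK_{t\geq e}^{\oplus |\bH|}$, which is a strictly larger sheaf when $\bH$ is infinite. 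The $\Delta_a$-coinvariant is precisely what collapses the $|\bH|$-redundant copies on each $\bG$-orbit. Your computation of the underlying sheaf is therefore wrong; luckily the microsupport is unaffected, because the coinvariant identifies translation-related summands which all have the same $\rho$-image. If you rewrite the argument to say that the coinvariant is a quotient along the free $\Delta_a$-action and hence preserves microsupport, the microsupport half of (1) goes through.

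In step (2), what you want to prove is the naturality statement $\mu^\bG_{\bm L^0} \circ \frakf_{\bG\bH}^L \cong \mu^\bH_{\bm L^0}$, which is the right target, but your justification is only a sketch. In particular, you need to verify explicitly that the $\Delta_a$-coinvariant taken inside $\star_{\bG\bH}$ is compatible with the passage from the $\bR/\bH$-indexed product $\prod_{c\in\bR/\bH}\mush_{U_{f+c}}$ to the $\bR/\bG$-indexed product $\prod_{c\in\bR/\bG}\mush_{U_{f+c}}$ in the definition of $\mu_{\Lim}$; as written, you only assert this. The final sentence (``$\star_{\bG\bH}1_\mu^\bG$ is the identity after adjunction-theoretic simplification'') is also misleading: the functor $\frakf_{\bG\bH}^L$ is of course not the identity; what Lemma~\ref{lem:Lambda} gives is $(-)\star_\bG 1_\mu \cong \id$ on $\Sh_{\tau>0}^\bG$, i.e.\ the case $\bH = \bG$, which does not directly apply here.
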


We shall see the following relationship between sheaf quantization with/without the $u$-variable. 
Let $p_u\colon \bR_u \times\bR_t \rightarrow \bR_t$ be the projection forgetting $\bR_u$. 
We consider the sheaf $\bK_{t\geq u\geq 0}\in \Sh(\bR_u\times \bR_t)$. 
We have the map $\bK_{t\geq 0}\rightarrow \bK_{t\geq u\geq 0}$ that corresponds to $\id$ under the adjunction isomorphism 
\begin{equation}
    \Hom(\bK_{t\geq 0}, \bK_{t\geq u\geq 0})\cong\Hom(\bK_{t\geq u\geq 0}, \bK_{t\geq u\geq 0}).
\end{equation}
Then we have the induced map
\begin{equation}
    p_u^{-1}\cE=p_u^{-1}\cE\star \bK_{t\geq 0}\rightarrow p_u^{-1}\cE\star\bK_{t\geq u\geq 0}. 
\end{equation}
We denote the fiber (i.e., $[-1]$-shifted cone) of the above map by $C_u(\cE)$.

\begin{proposition}\label{prop:without2with_u}
    Let $\cE\in \mu^\bG(X)$ be a sheaf quantization of $\bm{L}$.
    Then the object $\frakf^L(C_u(\cE))\in \mu(X;u<\infty)$ is a sheaf quantization of $\bm{L}$. 
    Moreover, $\frakf^L\circ C_u$ gives a faithful embedding $\mu_L^\bG(X)\hookrightarrow \mu^\bG_{L}(X;u<\infty)$.
\end{proposition}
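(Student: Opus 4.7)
The plan is to verify three things about the assignment $\cE\mapsto \frakf^L(C_u(\cE))$: that its outputs lie in $\mu(X;u<\infty)$, that each output is a sheaf quantization of $\bm L$, and that the resulting functor is faithful on Hom complexes.

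The first step is to rewrite $C_u(\cE)$ in a convenient convolution form. Since $\cE$ is Tamarkin-projected, $p_u^{-1}\cE\simeq p_u^{-1}\cE\star\bK_{t\geq 0}$, and the exactness of $\star$ gives $C_u(\cE)\simeq p_u^{-1}\cE\star F_u$, where $F_u\coloneqq \mathrm{fib}(\bK_{t\geq 0}\to \bK_{t\geq u\geq 0})$. A short exact sequence identifies $F_u$ with $j_!\bK_U$ for the open set $U=\{t\geq 0\}\setminus\{t\geq u\geq 0\}$, and a direct calculation shows that $\MS(F_u)\cap\{\tau>0,\,u>0\}$ consists of a horizontal branch $\{\upsilon=0\}$ together with a tilted branch $\{\upsilon=-\tau\}$, each along an appropriate half-line in $\bR_u\times\bR_t$. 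Combining this with the standard convolution estimate for microsupport and with the $\bR_t$-invariance of $\MS_{\tau>0}(\cE)=\rho^{-1}(L)$, one concludes
\[
  \MS_{\tau>0}\bigl(\frakf^L(C_u(\cE))|_{u>0}\bigr)\subset \rho^{-1}(L)\rho^{-1}(L),
\]
which is exactly the weak doubling condition on the $u>0$ part, cf.\ \cref{lem:musuppandSSpositive}.

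The sheaf quantization property then follows from the definition of $\mu_{\bm L}$: this functor extracts the horizontal branch of $\MS_{\tau>0}$ along $\rho^{-1}(L)\rho^{-1}(L)$, and by the convolution description above the horizontal branch is a copy of $p_u^{-1}\cE$, so $\mu_{\bm L}(\frakf^L(C_u(\cE)))\cong \mu_{\bm L}(\cE)\cong \cL$. For faithfulness I would construct an explicit left inverse: the specialization $\Sp_{u_0}$ at any small $u_0>0$ returns the horizontal component of the doubling, which agrees with $\cE$, so $\Sp_{u_0}\circ \frakf^L\circ C_u\simeq \id$ on $\mu_L^\bG(X)$, and the existence of such a retraction gives the claimed faithful embedding $\mu_L^\bG(X)\hookrightarrow \mu_L^\bG(X;u<\infty)$.

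The step I expect to require the most care is the vanishing $\frakf^L(C_u(\cE))|_{u\leq 0}=0$. At $u=0$ the defining map $\bK_{t\geq 0}\to\bK_{t\geq u\geq 0}$ is the identity, so the fiber vanishes there; at $u<0$ the target of the map vanishes and the naive fiber is a copy of $\cE$, hence the induction $\frakf^L=(-)\star_{\bR\bG}\bigoplus_{c\in\bR}\bK_{t\geq c}$ must absorb this contribution into the $\tau\leq 0$ subcategory in order for the resulting object to vanish in the Tamarkin quotient. Making this absorption precise requires careful bookkeeping of the $\bR\supset\bG$-induction applied to the non-equivariant sheaf $\bK_{t\geq u\geq 0}$, together with the orthogonality given by \cref{lemma:tamarkin_projector}.
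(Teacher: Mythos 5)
Your handling of the first claim is essentially fine and consistent with the paper, which dismisses it as easy: the convolution description $C_u(\cE)\simeq p_u^{-1}\cE\star F_u$ with $F_u=\mathrm{fib}(\bK_{t\geq 0}\to\bK_{t\geq u\geq 0})$, the microsupport estimate giving the weak doubling condition via \cref{lem:musuppandSSpositive}, and the identification of the microlocalization along the horizontal component with $\cL$ are all the right ingredients. The genuine gap is in the ``Moreover'' part. At a finite $u_0>0$ the restriction does \emph{not} extract the horizontal branch: base change gives $\Sp_{u_0}(\frakf^L C_u(\cE))\cong \frakf^L(\cE)\star\bK_{[0,u_0)}$, the fiber of the canonical morphism $T^{u_0}\colon \frakf^L(\cE)\to T_{u_0}\frakf^L(\cE)$, and this object carries \emph{both} branches of the doubling (the tilted one appears as the translate by $u_0$ in the $t$-direction) and is a torsion object killed by a power of $T^{u_0}$. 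It is not isomorphic to $\cE$: already for $X=T^*\mathrm{pt}$ and $\cE=\bigoplus_{c\in\bG}\bK_{t\geq c}$ one gets $\bigoplus_{c}\bK_{[c,c+u_0)}$. Hence $\Sp_{u_0}\circ\frakf^L\circ C_u\not\simeq\id$, and your retraction argument for faithfulness fails. The paper's proof uses the specialization at infinity instead: $\Sp_\infty=i_\infty^{-1}j_{\infty*}$ kills the contribution of $\bK_{t\geq u\geq 0}$ (the tilted branch escapes to $t=+\infty$ as $u\to\infty$), so $\Sp_\infty(C_u(\cE))\cong\mathrm{fib}(\cE\to 0)=\cE$, which provides the left inverse and hence faithfulness. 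Replacing your $\Sp_{u_0}$ by $\Sp_\infty$ is exactly what is needed.

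Concerning the issue you flag at the end: you are right that the literal fiber equals $\cE$ on $\{u<0\}$ (and vanishes at $u=0$), but the mechanism you propose cannot close the gap. The induction $\frakf^L$ is a left adjoint of a forgetful functor between the Tamarkin quotients and sends a nonzero positively microsupported object to a nonzero positively microsupported object, so $\frakf^L(C_u(\cE))|_{u<0}\cong\frakf^L(\cE)$ is not absorbed into $\cD_{\tau\leq 0}$; no bookkeeping of the induction will make it vanish in the quotient. The vanishing on $u\leq 0$ has to be imposed on the kernel itself, e.g.\ by replacing $F_u$ with its restriction to $\{u>0\}$ extended by zero (equivalently, using $\bK_{0\leq t<u}$, which agrees with your $F_u$ on $u>0$ and is zero for $u\leq 0$). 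The paper's one-line proof is silent on this point, so the discrepancy is partly with the statement as written, but your proposed resolution as stated would not work.
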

\begin{proof}
    The first part is easy. The composition $\Sp_\infty\circ (\frakf^L\circ C_u)$ is the identity of $\mu_L^\bG(X)$. Hence the faithfulness follows. 
\end{proof}

\begin{example}
We give several constructions of sheaf quantizations in our category.
    \begin{enumerate}
        \item For a large class of exact Lagrangians (including all the compact ones) in cotangent bundles, Guillermou~\cite{Guillermou12,Gu19} (later generalized by Jin--Treumann~\cite{JinTreumann}, a Floer-theoretic approach by Viterbo~\cite{viterbo2019sheaf}), constructed their sheaf quantizations in $\mu_L^\bO(X,; u<\infty)$.
        \item  Asano--Ike~\cite{AISQ} sheaf-theoretically constructed for strongly rational Lagrangian immersions in cotangent bundles in $\mu_L^\bZ(X,; u<c)$ for sufficiently small $c$.
        \item For rational Lagrangian immersions with bounding cochains in Weinstein manifolds, Kuwagaki--Petr--Shende~\cite{KPS} constructed their sheaf quantizations in $\mu_L^\bR(X,; u<\infty)$ Floer-theoretically.
        \item For a generic class of spectral curves in cotangent bundles of Riemann surfaces, Kuwagaki constructed their sheaf quantizations in $\mu_L^\bR(X,; u<\infty)$ by using spectral networks~\cite{Kuw1,kuwagaki2024genericexistencewkbspectral}.
        \item Beyond the classes considered in the above, we can, for example, consider the $\Lambda_0$-local systems on a manifold $M$, which are sheaf quantizations of the zero section of $T^*M$ under the correspondence~\cite{KuwNov}. In the terminology of this paper, the \v{C}ech class of such a local system gives a sheaf-theoretic bounding cochain. See the later sections.
    \end{enumerate}
\end{example}

In the next section, we will show that for sufficiently small $c$, every Lagrangian admits a sheaf quantization.

\subsection{Existence of low-energy sheaf quantization of Lagrangian immersions}\label{subsec:SQ_low_energy}

We first consider the case of cotangent bundles.
Let $\bm{L}=(L, \bm{\alpha}, \cL)$ be a Lagrangian brane in $T^*M$, where the underlying Lagrangian immersion $i_L\colon L\looparrowright X$ satisfies \cref{assumption:immersion}. 
In this subsection, we write $M \times \bR_t \times \bR_u$ instead of $M \times \bR_u \times \bR_t$ for notational convenience. 

Consider the pull-back of the Liouville form $i_L^*\lambda$. 
We can take a primitive $f$ of $i_L^*\lambda$ as a multi-valued function on $L$. 
We denote the domain of $f$ as a single-valued function by $\widetilde{L}$, which is a covering of $L$. Hence we have the associated immersion $i_{\widetilde{L}}\colon \widetilde{L}\rightarrow T^*M$. We set
\begin{equation}\label{eq:lift_Lf}
    L_f\coloneqq \lc (x, \xi, t, \tau)\in T^*M\times T_{\tau>0}^*\bR_t\relmid, \exists \widetilde{p}\in \widetilde{L} \text{ s.t.}  -f(\widetilde{p})=t \text{ and } i_{\widetilde{L}}(\widetilde{p})=(x, \xi/\tau)\rc.
\end{equation}

We denote the pull-back of $\partial L$ under the projection $T^*M\bs T^*_MM\rightarrow \partial T^*M$ by $\bR_{>0}\cdot \partial L$. We then set $\overline{L_f}\coloneqq L_f\cup \overline{\bR_{>0}\cdot \partial L}\times T^*_{\bR_t}\bR_t \subset T^*M \times T^*\bR_t$. 
The cone closure of the doubling $\overline{L_f} \ \overline{L_f}$ can be perturbed into the cusp doubling $\overline{L_f}^{\prec}$, which was introduced in \cite{nadler2020sheaf}. 
The advantage of the cusp doubling is that it is finite over the base if $L$ is so. 

We define the period group $\bG_L$ by
\begin{equation}
    \bG_L\coloneqq i_L^*\lambda(H_1(L, \bZ))\subset \bR.
\end{equation}
Note that $L_f$ carries a canonical $\bG_L$-action. 
If the closure of $\bG_L$ is not $\bR$ in the Euclidean topology, we set
\begin{equation}
    c_{L,\min}\coloneqq \inf \lc c\in \bR_{>0}\relmid c\in \bG_L\rc.
\end{equation}

We denote the sheaf of category of sheaves on $M\times \bR_t\times \bR_u$ whose microsupport included in $\overline{L_f}^{\prec}$ by $\sh_{\overline{L_f}^{\prec}}$. The substack whose objects are $0$ on $\bR_{u<0}$ is denoted by $\sh_{\overline{L_f}^{\prec};0}$. We denote the projection $T^*M\times T^*\bR_t\times T^*\bR_u\rightarrow M\times \bR_t\times\bR_u$ by $\widetilde{\pi}$. Note also that the brane structure of $\bm L$ induces an equivariant global section of $\mu_{\overline{L_f}^\prec}$.

\begin{proposition}\label{lem:c-SQcotangent}
    For every end-conic Lagrangian brane $\bm{L}$, there exists a sheaf quantization of $\cS_L\in \mu^{\bG_L}(T^*M; u<c)$ of $\bm{L}$ for a sufficiently small $c$. We can take $c$ as $c_{L, \min}$ if it is defined.
\end{proposition}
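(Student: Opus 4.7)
The plan is to construct $\cS_L$ in two stages: first assemble a $\bG_L$-equivariant microsheaf on the cusp doubling from the brane data, then realize it as an actual equivariant sheaf on $M \times \bR_{u<c} \times \bR_t$.

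Microsheaf stage: The primitive $f$ turns $L_f \subset T^*M \times T^*_{\tau>0}\bR_t$ into a conic Lagrangian, and the periods of $i_L^*\lambda$ act on the cover $\widetilde L$ and hence on $L_f$ by translation along $\bR_t$, making $L_f$ a $\bG_L$-equivariant conic Lagrangian. Under $\rho$, $L_f$ maps to $\Lim$, so the brane structure $(\bm\alpha, \cL)$ of $\bm L$ transfers to a brane structure on $L_f$; by \cref{lem:branestr} this selects a canonical $\bG_L$-equivariant object $\widetilde{\cL} \in \muSh_{L_f}(L_f)$. I then extend $\widetilde{\cL}$ across the cusp doubling $\overline{L_f}^\prec$: on the horizontal component $AA_h$ it is the $u$-pullback, and on the tilted component $AA_t$ it is transported by the Hamiltonian isotopy underlying the cusp. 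This yields a $\bG_L$-equivariant object $\mu\cS_L \in \mush_{\overline{L_f}^\prec}$.

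Realization stage: The cusp doubling $\overline{L_f}^\prec$ is a conic Lagrangian in $T^*(M \times \bR_{u<c} \times \bR_t)$ which is by design finite over its base, and the end-conic assumption combined with compactness of $X_0 \cap \Lim$ guarantees cylindrical structure at infinity. Applying Guillermou's sheaf quantization theorem \cite{Guillermou12} in the cusp doubling setup of \cite{nadler2020sheaf}, equivariantly with respect to $\bG_L$, produces a sheaf $\cS_L \in \Sh^{\bG_L}_{\tau>0}(M \times \bR_{u<c} \times \bR_t)$ with $\MS(\cS_L) \subset \overline{L_f}^\prec$, vanishing on $\{u \leq 0\}$, and whose microsheaf realizes $\mu\cS_L$. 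By \cref{lem:musuppandSSpositive} this places $\cS_L$ in $\mu^{\bG_L}_L(T^*M; u<c)$, and compatibility of the microlocalization functor along the horizontal branch of the cusp gives $\mu_{\bm L^0}(\cS_L) \cong \cL$, making $\cS_L$ a sheaf quantization of $\bm L$.

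The main obstacle is the choice of $c$. Within $\bR_{u<c}$ the cusp doubling $\overline{L_f}^\prec$ must remain embedded and in particular disjoint from the doublings of its $\bG_L$-translates of $L_f$, which holds precisely as long as $c$ is strictly less than the first positive period: when $\bG_L$ has a minimal positive element $c_{L,\min}$, one may take $c = c_{L,\min}$, while when $\bG_L$ is dense in $\bR$ one only obtains existence for some sufficiently small $c > 0$. A secondary technical point is the clean self-intersection of $i_L$, handled by working inside $\mush_{\Lim}$ rather than on $L$ directly, so that the brane data determines the microsheaf unambiguously at the self-intersection locus.
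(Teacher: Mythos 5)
Your overall architecture (brane data gives an equivariant section of the microsheaf/Kashiwara--Schapira stack along the doubled Lagrangian, which is then realized as an actual equivariant sheaf) matches the paper's, but the two steps where the statement actually has content are not carried out. First, the realization stage is compressed into ``applying Guillermou's sheaf quantization theorem \cite{Guillermou12} in the cusp doubling setup of \cite{nadler2020sheaf}, equivariantly with respect to $\bG_L$.'' No such equivariant quantization theorem is available off the shelf: Guillermou's theorem concerns compact exact Lagrangians, whereas $L_f$ here is an infinitely-sheeted, $\bG_L$-invariant lift whose translates may accumulate in the $t$-direction, and making the passage from a microsheaf section to a sheaf work in this setting is precisely the content of the paper's proof. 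The paper does this by first reducing modulo $\bZ\subset\bG_L$ to $M\times S^1_t$ (so that $\overline{L_f^{S^1}}$ is closed and its Legendrian boundary can be perturbed to finite position), then invoking GKS plus the microlocal cut-off lemma of \cite{nadler2020sheaf} to get the equivalence $\sh_{\overline{L_f^{S^1}}^{\prec};0}\to\widetilde{\pi}_*\mush_{\overline{L_f^{S^1}}^\prec}$ \emph{along the slice $u=0$ only}, and finally propagating the resulting section from $\{u=0\}$ to $\{|u|<c\}$ by a \v{C}ech/compactness argument in the style of \cite[Prop.~2.5.1]{KS90}. That propagation step, which you omit entirely, is where ``sufficiently small $c$'' comes from.

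Second, your mechanism for choosing $c$ does not work in the main case of interest. You tie the admissible range of $c$ to the doubling of $L_f$ staying disjoint from the doublings of its $\bG_L$-translates, ``which holds precisely as long as $c$ is strictly less than the first positive period.'' When $\bG_L$ is dense in $\bR$ (e.g.\ $\bG_L=\bR$, or irrational period ratios, which is the generic non-exact situation the proposition is designed for), there is no positive $c$ satisfying your disjointness criterion, so by your own reasoning the construction would produce nothing; yet you assert existence for some small $c>0$ without a substitute argument. In the paper the smallness of $c$ is not a geometric disjointness condition at all but an extendability statement for a section of a sheaf of categories off the closed slice $u=0$, obtained from compactness of $\pi(\Lim)\times S^1$ via a finite cover and a uniform thickness $(-c,c)$. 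Your treatment of the case $c=c_{L,\min}$ is closer in spirit to the paper's (the paper uses that $\overline{L_f}\cup T_c\overline{L_f}$ is carried to $\overline{L_f}\cup T_{c'}\overline{L_f}$ by a contact translation flow and then runs the GKS continuation argument of \cite[\S 6.2]{nadler2020sheaf}), but as written it again rests on the unproved equivariant quantization step above.
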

\begin{proof}
The case when $\bG_L=\{0\}$, the argument is essentially known~\cite{nadler2020sheaf}. See also the last part of the proof. Hence we assume $\bZ\subset \bG_L$ below. The category $\Sh^{\bG_L}_{\tau>0}(M\times \bR_t\times \bR_u)$ is equivalent to $\Sh^{\bG_L/\bZ}_{\tau>0}(M\times S^1_t\times \bR_u)$. We denote the projection along $M\times \bR_t\times \bR_u\rightarrow M\times S^1_t\times \bR_u$ of $\overline{L_f}$ (resp. $\overline{L_f}^\prec$) by $\overline{L_f^{S^1}}$ (resp. $\overline{L_f^{S^1}}^\prec$), which are closed by the end-conic assumption.

One can perturb the Legendrian boundary of $\overline{L_f^{S^1}}$ in $\partial T^*(M\times S^1_t)$ into a finite position. 
By the usual argument using \cite{GKS} and the microlocal cut-off result~\cite[Lem.~7.18 (v3)]{nadler2020sheaf}, we have an equivalence of sheaves of categories
\begin{equation}\label{eq:antimicro}
    \sh_{\overline{L_f^{S^1}}^{\prec};0}\rightarrow \widetilde{\pi}_*\mush_{\overline{L_f^{S^1}}^\prec}
\end{equation}
along $M\times S^1_t\times \{u=0\}$. Note also that the natural $\bG_L/\bZ$-actions on both sides are compatible under this equivalence.

A Lagrangian brane structure of $L$ gives an $\bG_L$-invariant global section of the right-hand side. Hence it gives a $\bG_L$-invariant global section $s$ of the left-hand side over $M \times S^1_t \times \{u=0\}$. 

Next, we would like to lift the section of $\sh_{\overline{L_f^{S^1}}^{\prec};0}(M \times \bR_t \times \{ u=0\})$ to a section over $M \times \bR_t \times (-c,c)$, which gives an object of $\mu(T^*M; u<c)$.
Here we can adapt an argument similar to \cite[Prop.~2.5.1]{KS90}. 

For any point $q \in \pi(\Lim) \times S^1 \times \{u=0\}$ we can take an open neighborhood $V'_q$ and a section $s'_q$ on $V'_q$ such that $s'|_{V'_q \cap M \times S^1 \times \{0\}} = s'_q|_{V'_q \cap M \times S^1 \times \{0\}}$.
Since $\pi(\Lim)$ is compact, we can take a finite subset $J$ of $\pi(\Lim) \times S^1 \times \{u=0\}$ such that $\{V'_j\}_{j \in J}$ is an open covering. 
For any $q \in \pi(\Lim) \times S^1 \times \{u=0\}$, the set $J(q)=\{j \in J \mid q \in V'_j \}$ is finite.
Hence, for any $q \in \pi(\Lim) \times S^1 \times \{u=0\}$, we can find an open neighborhood $W'_q = W''_q \times (-c_q,c_q)$ such that $s'_j|_{W'_q}=s'_k|_{W'_q}$ for any $j,k \in J(q)$. 
Then $\{ W'_q \}_{q \in \pi(\Lim) \times S^1}$ is an open covering of $\pi(\Lim) \times S^1 \times \{u=0\}$ and $s'_j|_{W'_q}=s'_k|_{W'_q}$ for any $j,k \in J(q)$. 
Again by the compactness of $\pi(\Lim)$, we can take a finite subset $J'$ of $\pi(\Lim) \times S^1 \times \{u=0\}$ such that $\{W'_j \}_{j \in J'}$ is an open covering of $\pi(\Lim) \times S^1 \times \{u=0\}$. 
If we set $c=\min_{j \in J'} c_j$ and $U'_j=V''_j \times (-c,c)$, then $\{U'_j\}_{j \in J}$ is an open covering and $s'_i|_{U'_i \cap U'_j} = s'_j|_{U'_i \cap U'_j}$. If necessary, we refine the covering (using Lebesgue covering) to get a finite Cech nerve. Then using the above identifications, we can take the colimit of $s_i$'s (and its restrictions) along the Cech nerve, hence we get a section over $[0, c)$ for some $c$. 
Incorporating the equivariant structure, this is a desired sheaf quantization.

If $c_{L, \min}>0$, we can go further. Namely, for $0<c<c_{L,\min}$, $\overline{L_f}\cup T_c\overline{L_f}$ can be considered as an image under the contact translation flow of $\overline{L_f}\cup T_{c'}\overline{L_f}$ for $0<c'<c$. 
Hence the usual argument (cf.\ \cite[\S 6.2]{nadler2020sheaf}) works and gives a sheaf quantization provided that there exists a non-trivial object in $\mush_{\overline{L_f}}(\overline{L_f})$. 
Such a non-trivial object is given by the brane structure. 
This completes the proof.
\end{proof}

Next we argue the case of a Lagrangian brane $\bm L$ in a sufficiently Weinstein manifold $X$. Similarly, we obtain $\bG_L, c_{L,\min}, L_f$, and
\begin{equation}
    \overline{L_f} \coloneqq L_f\cup \overline{\bR_{>0}\cdot \partial L}\times T^*_{\bR_t}\bR_t\subset X\times T^*\bR_t.
\end{equation}

Let us choose a strict contact embedding $X\rightarrow T^*\bR^N$ realizing our polarization~$\frakp$. We then set $\bL \coloneqq (\frakp|_{\overline{L_f}\cup \Core(X)\times T^*_{\bR_t}\bR_t})^\prec\subset T^*\bR^N\times T^*\bR_t\times T^*\bR_v$. Note that this doubling procedure is not the doubling procedure to construct sheaf quantization, but for antimicrolocalization. Then we furthermore take the doubling $\bL^\prec\subset T^*\bR^n\times T^*\bR_t\times T^*\bR_{u<c}\times T^*\bR_v$. By the main result of \cite{nadler2020sheaf}, we have a global section of $\mush_{\bL^\prec}$ associated to the brane structure $\bm L$. Now, the proof of \cref{lem:c-SQcotangent} can apply to this case, hence we obtain an object $\cS_L\in \mu^{\bG_L}(T^*M\times T^*\bR_v)$. By taking anti-antimicrolocalization, we obtain a desired object.

\begin{proposition}\label{lem:c-SQ}
    For every end-conic Lagrangian brane $\bm{L}$, there exists a sheaf quantization of $\cS_L\in \mu^{\bG_L}(X; u<c)$ of $\bm{L}$ for a sufficiently small $c$. We can take $c$ as $c_{L, \min}$ if it is defined.
\end{proposition}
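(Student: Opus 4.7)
The approach is to reduce the sufficiently Weinstein case to the cotangent bundle statement \cref{lem:c-SQcotangent} via the antimicrolocalization machinery of \cref{subsec:antimicrolocalization}. The author's paragraph preceding the proposition already outlines this, and I would flesh it out as follows.

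First, I would fix a strict exact embedding $X\hookrightarrow T^*\bR^N$ realizing the polarization $\frakp$ chosen to define $\mush_\frakp(X)$; such an embedding exists for $N$ large by the $h$-principle discussed in the previous section. As in the setup preceding \cref{lem:c-SQcotangent}, I would form the multi-valued primitive $f$ of $i_L^*\lambda_X$ on the covering $\widetilde{L}$, build the conic lift $L_f\subset X\times T^*_{\tau>0}\bR_t$ as in \eqref{eq:lift_Lf}, and its Legendrian closure $\overline{L_f}$. Transporting through $\frakp$ and adjoining $\Core(X)\times T^*_{\bR_t}\bR_t$ (which is singular but isotropic, thanks to the sufficient Weinstein hypothesis), I set $\bL=(\frakp|_{\overline{L_f}\cup \Core(X)\times T^*_{\bR_t}\bR_t})^\prec\subset T^*\bR^N\times T^*\bR_t\times T^*\bR_v$, and then cusp-double again in the $u$-direction to obtain $\bL^\prec\subset T^*\bR^N\times T^*\bR_t\times T^*\bR_{u<c}\times T^*\bR_v$.

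Next, Nadler--Shende's main theorem furnishes a $\bG_L$-invariant global section of $\mush_{\bL^\prec}$ out of the brane data $(\bm\alpha,\cL)$; along $\{u=0\}$ it descends to an equivariant section of $\sh_{\bL^\prec;0}$ via the analogue of the equivalence \eqref{eq:antimicro} for cusp-doubled loci. From this point, I would repeat the second half of the proof of \cref{lem:c-SQcotangent} verbatim: using relative compactness of $\pi(\Lim)\cap X_0$, cover by finitely many opens $V'_j$ on each of which a local lift to $u\in (-c_j,c_j)$ exists, take $c=\min_j c_j$, and patch the $s'_j$ into a coherent section over $M\times \bR_t\times (-c,c)$; combined with the equivariant structure this yields an object of $\Sh^{\bG_L}_{\bL^\prec,\tau>0}(\bR^N\times \bR_t\times \bR_{u<c}\times \bR_{v<a})_0$. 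Taking the limit $a\to +0$ lands in $\mu^{A\mu}_L(X;u<c)$; since the antimicrolocalization embedding of \cref{subsec:antimicrolocalization} realizes $\mu_L(X;u<c)$ as a full subcategory there, this produces the desired $\cS_L\in \mu^{\bG_L}(X;u<c)$, and its microlocalization along $L$ recovers $\cL$ by construction.

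For the sharpened bound $c=c_{L,\min}$ when defined, I would copy the last paragraph of the proof of \cref{lem:c-SQcotangent}: for $0<c<c_{L,\min}$ the union $\overline{L_f}\cup T_c\overline{L_f}$ is the image of $\overline{L_f}\cup T_{c'}\overline{L_f}$ (with $c'<c$) under a contact translation, so the standard Guillermou--Kashiwara--Schapira kernel argument propagates the sheaf quantization to all of $\{u<c_{L,\min}\}$. The main obstacle is checking that the cut-off equivalence \eqref{eq:antimicro} and the Nadler--Shende anti-microlocalization are simultaneously compatible with the inclusion of the core and with the $\bG_L$-equivariant structure coming from the primitive $f$; once that compatibility is in place, the argument is essentially formal from \cref{lem:c-SQcotangent}.
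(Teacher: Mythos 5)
Your proposal follows essentially the same route as the paper: fix a strict exact embedding realizing $\frakp$, form $\bL=(\frakp|_{\overline{L_f}\cup \Core(X)\times T^*_{\bR_t}\bR_t})^\prec$ and its further $u$-doubling $\bL^\prec$, invoke Nadler--Shende to get the equivariant global section of $\mush_{\bL^\prec}$ from the brane data, rerun the proof of \cref{lem:c-SQcotangent} (including the $c_{L,\min}$ refinement via the contact translation/GKS argument), and return through antimicrolocalization. The only cosmetic difference is that the paper phrases the final step as applying ``anti-antimicrolocalization'' rather than your remark that the object lands in the essential image of the embedding of \cref{subsec:antimicrolocalization}, which amounts to the same verification at the same level of detail.
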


\begin{example}
\begin{enumerate}
    \item Suppose that $L$ is an exact Lagrangian submanifold in a cotangent bundle. 
    Then $\bG_L=\bO$ and $c_{L, \min}=\infty$. Hence we get a sheaf quantization in $\Sh_{\tau>0}(M \times \bR_t \times \bR_u)$. Specializing at $\infty$, we get a sheaf quantization in $\Sh_{\tau>0}(M\times \bR_t)$. The resulting object coincides with the one constructed in Guillermou~\cite{Guillermou12,Gu19} and Jin--Treumann~\cite{JinTreumann}, since a sheaf quantizations of exact Lagrangian brane is unique up to shift along $\bR_t$-direction (see e.g., \cite{Gu19}).
    \item Suppose that $L$ is an strongly rational immersed Lagrangian in a cotangent bundle in the sense of \cite{AISQ}. 
    Then we find that $\bG_L \cong \bZ$, and we can take $c_{L, \min}$ as the period in the sense of \cite{AISQ}. 
    Then we get a sheaf quantization in $\Sh^\bZ(M \times \bR_t \times \bR_{u<c_{L, \min}})\cong \Sh(M \times S^1 \times \bR_{u<c_{L, \min}})$, which recovers the result in \cite{AISQ}.
\end{enumerate}
\end{example}
In the next section, we would like to give another argument to construct \emph{standard} sheaf quantization.

\subsection{Classification of low-energy sheaf quantizations}\label{subsec:classification_low_energy_SQ}
For $c>0$, we denote the set of almost isomorphism classes of sheaf quantizations of $\bm L$ in $\mu(X, c)$ by $\frakS(\bm L, c)$. For the definition of almost isomorphism, see \cite{KuwNov}. 

In this section, we prove the following theorem:
\begin{theorem}\label{thm:classification}
    Let $\bm L$ be a Lagrangian brane. There exists $c_{\bm L}>0$ with a map
    \begin{equation}
        \mathrm{cl}: H^1(L, \Lambda_0^+/T^{c_{\bm L}}\Lambda_0)\rightarrow \frakS(\bm L, c_{\bm L}).
    \end{equation}
    If $X$ is sufficiently Weinstein, this map is a bijective map.
\end{theorem}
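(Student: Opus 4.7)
The plan is to construct $\mathrm{cl}$ via a deformation-theoretic procedure centered at the standard sheaf quantization $\cS_L$ of \cref{lem:c-SQ}, and then argue bijectivity by analyzing the microhom sheaf $\muhom(\cS_L,\cS_L)$. I would first show that there exists a threshold $c_{\bm L}>0$ such that the degree-$0$ part of $\Gamma(L,\muhom(\cS_L,\cS_L))$ almost contains $\Lambda_0/T^{c_{\bm L}}\Lambda_0$ as the subsheaf of automorphisms acting trivially on $\mu_{\bm L}(\cS_L)\cong \cL$, while its cohomology in degree $1$ is almost isomorphic to $H^1(L;\Lambda_0)$. This is the sheaf-theoretic counterpart of the Floer-theoretic fact that in low energy $CF(\bm L,\bm L)$ almost reduces to $H^*(L;\Lambda_0)$, cf.\ \cref{theorem:Fukayaalgebra}. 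It reduces the deformation theory of $\cS_L$, restricted to deformations fixing the brane datum under $\mu_{\bm L}$, to ordinary Cech cocycles valued in $\Lambda_0^+/T^{c_{\bm L}}\Lambda_0$.

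Given a class $[\alpha]\in H^1(L,\Lambda_0^+/T^{c_{\bm L}}\Lambda_0)$, I would represent it by a Cech cocycle $\{\alpha_{ij}\}$ on a good cover $\{U_i\}$ of $L$, lift each $1+\alpha_{ij}$ to an almost automorphism of $\cS_L|_{U_i\cap U_j}$ via the identification above, and reglue to obtain a new object $\cS_L^\alpha\in \mu(X;u<c_{\bm L})$. Because every $\alpha_{ij}$ has strictly positive Novikov valuation, the potential higher $A_\infty$-type obstructions to regluing acquire a positive power of $T$; by taking $c_{\bm L}$ small enough these vanish modulo $T^{c_{\bm L}}$, so $\cS_L^\alpha$ is an honest sheaf quantization of $\bm L$. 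A standard coboundary argument shows that $\mathrm{cl}([\alpha])\coloneqq [\cS_L^\alpha]$ is independent of the chosen representative.

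For bijectivity in the sufficiently Weinstein case, I would invoke antimicrolocalization (\cref{subsec:antimicrolocalization}) to reduce to the cotangent-bundle setting, where the Cech-theoretic arguments are cleanest. For surjectivity, given any $\cE\in \frakS(\bm L, c_{\bm L})$, the condition $\mu_{\bm L}(\cE)\cong \cL\cong \mu_{\bm L}(\cS_L)$ combined with microlocal rigidity along an embedded Lagrangian forces $\cE|_{U_i}$ to be almost isomorphic to $\cS_L|_{U_i}$ on sufficiently small opens; the collection of transition isomorphisms assembles into a Cech $1$-cocycle whose class in $H^1$ is the preimage of $[\cE]$ under $\mathrm{cl}$. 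For injectivity, any almost isomorphism $\cS_L^{\alpha_1}\xrightarrow{\sim}\cS_L^{\alpha_2}$ unpacks into a Cech $0$-cochain exhibiting $\alpha_1-\alpha_2$ as an almost coboundary, up to the same higher obstructions, which again vanish once $c_{\bm L}$ is taken small enough.

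The main obstacle is the uniform quantitative control needed to fix $c_{\bm L}$ in advance: one must bound how small $c_{\bm L}$ has to be so that both (i) $\muhom(\cS_L,\cS_L)$ in low Novikov degree is almost the constant-sheaf model used above, and (ii) every $A_\infty$-obstruction to the gluing and the gauge-equivalence arguments vanishes modulo $T^{c_{\bm L}}$. This is the sheaf-theoretic analogue of the Gromov-compactness input underlying low-energy unobstructedness in Floer theory. In the Weinstein setting, antimicrolocalization transports such estimates directly from the cotangent-bundle situation, where they follow from the microlocal cut-off results of Guillermou and Nadler--Shende already used in the proof of \cref{lem:c-SQ}.
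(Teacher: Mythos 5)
Your overall architecture is the same as the paper's (build the standard sheaf quantization of \cref{lem:c-SQ}, identify its local automorphisms with $1+\Lambda_0^+/T^{c}\Lambda_0$, twist by a \v{C}ech cocycle to define $\mathrm{cl}$, and invert by showing every sheaf quantization is locally standard, using antimicrolocalization in the Weinstein case), but two steps as you state them do not go through. First, the claim that the higher gluing obstructions ``acquire a positive power of $T$ and vanish modulo $T^{c_{\bm L}}$ once $c_{\bm L}$ is small enough'' is not sound: the cocycle $\alpha_{ij}$ is itself only valued in $\Lambda_0^+/T^{c_{\bm L}}\Lambda_0$, so its valuation scales with $c_{\bm L}$; two entries of valuation $c_{\bm L}/3$ produce a quadratic term of valuation $2c_{\bm L}/3$, which survives modulo $T^{c_{\bm L}}$ no matter how small $c_{\bm L}$ is. Shrinking $c_{\bm L}$ rescales the coefficient group along with the obstruction, so no gap is created. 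The way the paper handles this is to glue with the multiplicative automorphism cocycle $\id+b_{ij}$ directly, and moreover to do the twisting at the level of the global section of the Kashiwara--Schapira stack (the brane datum), then re-run the same nearby-cycle/antimicrolocalization construction used for \cref{lem:c-SQ}. This both avoids any smallness-of-$c_{\bm L}$ obstruction argument (in the paper $c_{\bm L}$ comes from the uniform local Lagrangian-filling construction, using end-conicity and projection-compactness of $L$, not from an estimate on obstruction terms) and sidesteps the descent question for $\mu(X;u<c_{\bm L})$ over a cover of $L$ that your direct regluing of $\cS_L^\alpha$ would require but which is not established in the paper except through antimicrolocalization.

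Second, in the surjectivity step, ``microlocal rigidity forces $\cE|_{U_i}$ to be almost isomorphic to $\cS_L|_{U_i}$'' is exactly where the real work lies, and it is not a consequence of $\mu_{\bm L}(\cE)\cong\cL$ alone, since that holds for every sheaf quantization by definition while the extension data in the $u$-direction can still differ. The paper's argument has two ingredients you would need to supply: (i) \cref{lem:uniquenessforsmallc}, which says any two sheaf quantizations of $\bm L$ agree after restriction to small $u$, so a sufficiently fine $\sigma$-decomposition of $\cE$ consists of standard pieces; and (ii) the computation that for a standard piece $\End(\cE)$ is almost isomorphic to $\muhom(\bm L,\bm L)\otimes_\bK\Lambda_0/T^c\Lambda_0$ --- proved using the sufficiently Weinstein hypothesis, a cover pulled back from the skeleton with contractible pieces, and exactness of $L$ over each piece --- which gives that $H^1$ of the local endomorphism algebra is almost zero, so all extension classes in the decomposition vanish and $\cE$ is locally standard on the full range $u<c_{\bm L}$. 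Without (i) and (ii) your transition cochain $f_{ij}$ is not defined, and the identification $\Aut\cong 1+\Lambda_0^+/T^{c}\Lambda_0$ that both your construction and your injectivity argument rely on is the same unproved local endomorphism computation; it should be stated and proved as the first lemma of the bijectivity argument rather than invoked as rigidity.
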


\begin{definition}
    A sheaf quantization $S\in \mu(X, u<c_{\bm L})$ of $\bm L$ is \emph{geometric} if it is obtained as an image of $\mathrm{cl}$.
\end{definition}
\begin{conjecture}
    For any $X$ and any $\bm L$, any sheaf quantization in $\mu(X, u<c_{\bm{L}})$ is geometric. 
\end{conjecture}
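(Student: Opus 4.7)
The strategy is to describe every low-energy sheaf quantization of $\bm L$ as a twist of the standard sheaf quantization $\cS_L$ from \cref{lem:c-SQ} by a rank one deformation of the brane local system $\cL$ over the Novikov ring, and then to verify that $\mu_{\bm L}$ is conservative on such objects in the sufficiently Weinstein setup. First I would fix $c_{\bm L}>0$ small enough that $\cS_L\in\mu^{\bG_L}(X;u<c_{\bm L})$ exists (so $c_{\bm L}\le c_{L,\min}$ when the latter is defined) and that the nilpotent ideal $\Lambda_0^+/T^{c_{\bm L}}\Lambda_0$ is small enough for the truncated exponential $\exp(x)=1+x+\tfrac{x^2}{2!}+\cdots$ to induce a bijection
\begin{equation}
H^1(L;\Lambda_0^+/T^{c_{\bm L}}\Lambda_0)\xrightarrow{\sim} H^1\bigl(L;1+\Lambda_0^+/T^{c_{\bm L}}\Lambda_0\bigr).
\end{equation}

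For the construction of $\mathrm{cl}$: given $\alpha$, set $\cL_\alpha\coloneqq \cL\otimes\exp(\alpha)$, a rank one $\Lambda_0/T^{c_{\bm L}}\Lambda_0$-local system on $L$ reducing to $\cL$ modulo $\Lambda_0^+$. Via the embedding $\Loc(L)\hookrightarrow \muSh_L(L)$ from \cref{lem:branestr}, $\cL_\alpha$ defines a microsheaf along $\Lim$. The doubling-plus-patching construction used in the proofs of \cref{lem:c-SQcotangent,lem:c-SQ}---which only used the brane structure to produce an equivariant global section of $\mush_{\overline{L_f}^\prec}$, then lifted it via the equivalence \eqref{eq:antimicro}---applies verbatim with $\cL_\alpha$ in place of $\cL$. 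It produces a sheaf quantization $\cS_{L,\alpha}\in\mu(X;u<c_{\bm L})$ with $\mu_{\bm L}(\cS_{L,\alpha})\cong\cL_\alpha$, and I would set $\mathrm{cl}(\alpha)\coloneqq[\cS_{L,\alpha}]$.

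Now assume $X$ is sufficiently Weinstein. Injectivity is then immediate: an isomorphism $\mathrm{cl}(\alpha)\cong\mathrm{cl}(\alpha')$ induces $\cL_\alpha\cong\cL_{\alpha'}$ through $\mu_{\bm L}$, hence $\alpha=\alpha'$ by the bijection above. For surjectivity, let $\cE\in\frakS(\bm L,c_{\bm L})$. The rank one local system $\mu_{\bm L}(\cE)$ reduces to $\cL$ modulo $\Lambda_0^+$, so it is isomorphic to $\cL_\alpha$ for a unique $\alpha$; the task is to promote the isomorphism $\mu_{\bm L}(\cE)\cong\mu_{\bm L}(\cS_{L,\alpha})$ of microsheaves to an isomorphism $\cE\cong \cS_{L,\alpha}$ in $\mu(X;u<c_{\bm L})$. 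Antimicrolocalization (\S\ref{subsec:antimicrolocalization}) reduces this to the cotangent bundle case, and there \eqref{eq:antimicro} identifies sheaf quantizations of $\bm L$ with $\bG_L$-equivariant global sections of $\mush_{\overline{L_f}^\prec}$ on $\Lim$; two such sections with isomorphic microlocalizations are isomorphic as sections by definition.

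The main obstacle is exactly this last step: establishing conservativity of $\mu_{\bm L}$ on sheaf quantizations of $\bm L$ in the low-energy regime. I would run a $T$-adic induction, decomposing $\Hom(\cS_{L,\alpha},\cE)$ along the $T$-adic filtration of $\Lambda_0/T^{c_{\bm L}}\Lambda_0$; the obstructions to lifting a degree zero isomorphism class one filtration step at a time live in higher cohomology groups of $\muhom$ along $L$, and after shrinking $c_{\bm L}$ only the degree zero summand---pinned down by the prescribed isomorphism of microlocalizations---can contribute. This is the low-energy rigidity underlying the cuspidal doubling and microlocal cut-off technology of \cite{nadler2020sheaf}, and is where the sufficiently Weinstein hypothesis is genuinely used.
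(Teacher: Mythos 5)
There is a genuine gap, and it starts with what the statement actually asserts. This is a \emph{conjecture} in the paper: it claims that \emph{every} sheaf quantization in $\mu(X,u<c_{\bm L})$ is geometric for \emph{any} Liouville $X$, and the paper does not prove it --- it only proves the sufficiently Weinstein case, which is \cref{thm:classification}, and then remarks that the theorem settles the conjecture in that case. Your argument assumes ``sufficiently Weinstein'' exactly where the real work happens (you say so yourself in the last paragraph), so at best you are reproving \cref{thm:classification}, not the conjecture; for a general Liouville manifold no argument is offered, and none is known --- that is why it is stated as a conjecture.

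Even as an argument for the Weinstein case, the surjectivity step is not sound as written. The claim that, after antimicrolocalization, ``two such sections with isomorphic microlocalizations are isomorphic as sections by definition'' is precisely the conservativity statement you are trying to prove, and it is false in this naive form: an isomorphism of microlocalizations need not lift to an isomorphism of sheaf quantizations, and the failure is exactly what $H^1(L,\Lambda_0^+/T^{c_{\bm L}}\Lambda_0)$ measures --- if lifting were automatic, the classification would be trivial rather than a torsor over this $H^1$. Your fallback ($T$-adic induction ``after shrinking $c_{\bm L}$'') also changes the statement, since the conjecture fixes $c_{\bm L}$ and you must classify all objects of $\mu(X;u<c_{\bm L})$ for that value. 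The paper's actual mechanism is different: perturb to a Lagrangian skeleton $\bL$, choose a cover $\{U_i\}$ with contractible $p^{-1}(U_i)$, show that over each $p^{-1}(U_i)$ any sheaf quantization is standard because $H^1(\End)$ is almost zero there (no nontrivial extensions among the standard pieces of a $\sigma$-decomposition), and then assemble the comparison automorphisms $f_{ij}\in 1+\Lambda_0^+/T^c\Lambda_0$ into a \v{C}ech class in $H^1(L,\Lambda_0^+/T^c\Lambda_0)$ inverting $\mathrm{cl}$. Separately, your use of $\exp$ to identify $H^1(L;\Lambda_0^+/T^{c_{\bm L}}\Lambda_0)$ with $H^1(L;1+\Lambda_0^+/T^{c_{\bm L}}\Lambda_0)$ needs $\bK$ of characteristic zero (the ideal is not nilpotent and $n!$ must be invertible); the paper avoids this by twisting directly with cocycles of the form $\id+b_{ij}$.
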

The above theorem says that this conjecture holds true for sufficiently Weinstein $X$. 

Now let us go to the proof of \cref{thm:classification}.
We first start with the construction of $\mathrm{cl}$; for each element of $H^1(L, \Lambda_0^+/T^c\Lambda_0)$, we construct a sheaf quantization.

Let $\bm{L}=(L, \bm\alpha, \cL)$ again be a Lagrangian brane, where the underlying Lagrangian immersion $i_L\colon L\looparrowright X$ satisfies \cref{assumption:immersion}.
We set $\overline{\Lambda_f}^{\prec}\coloneqq \overline{L_f}^{\prec}\cup \mathrm{Core}(X)\times \bR_t\times \bR_u$ 
We denote the full substack of $\mush(X\times T^*(\bR_t\times \bR_u))$ whose microsupport included in $\overline{\Lambda_f}^{\prec}$ by $\mush_{\overline{\Lambda_f}^{\prec}}$. The substack whose objects are $0$ on $\bR_{u<0}$ is denoted by $\mush_{\overline{\Lambda_f}^{\prec};0}$. 
We denote the projection $X \times T^*\bR_t\times T^*\bR_u\rightarrow \mathrm{Core}(X)\times \bR_t\times\bR_u$ by $\widetilde{\pi}$. We now give a different argument for the first half of \cref{lem:c-SQ}. In the following proof, a specific sheaf quantization is constructed.

\begin{proof}[Another proof of the first half of \cref{lem:c-SQ}]
We will follow Nadler--Shende's nearby cycle argument~\cite{nadler2020sheaf}.

Take a point $x\in L$ and a local primitive $f_x$ of $\lambda|_L$ around $x$. Then we can lift $L$ to a Legendrian $L_{f_x}$ in $X \times \bR_\tau$. For a sufficiently small neighborhood $U$ of $\tilde x\in L_{f_x}$ where $\tilde x$ is the lift of $x$ (which is unique up to translation along $\bR_t$-direction), we can take a contact Darboux chart $\bR^{2n+1}$ such that $L_{f_x}$ and $L_{f_x+c}$ for sufficiently small $c>0$ are identified with
\begin{equation}
    \lc x_{n+1}=x_{n+2}=\cdots =x_{2n+1}=0\rc \text{ and} \lc x_{n+1}=x_{n+2}=\cdots =0, x_{2n+1}=c\rc.
\end{equation}
We also have a symplectic Darboux chart $\bR^{2n+1}\times \bR_{\leq 0}$ around $x$ in $X \times T^*\bR_t$ extending the above contact Darboux chart which is given by $\{x_{2n+2}=0\}\subset \bR^{2n+2}$. Then we get a local exact Lagrangian filling of the Legendrian $L_{f_x}\cup L_{f_x+c}$ as
\begin{equation}
     L_{\widetilde x}\coloneqq \lc x_{n+1}=\cdots =x_{2n}=0, x_{2n+2}=x_{2n+1}(x_{2n+1}-c) \relmid 0\leq x_{2n+1}\leq c\rc.
\end{equation}
Since $L_f$ is end-conic and projection-compact one can take such $c$ uniformly over $L$, and denote it by $c_{\bm L}$. If a lift $\tilde y$ of $y\in L$ is the local leaf of $\overline L_f$ on which $\widetilde x$ lives, $L_{\widetilde x}$ and $L_{\widetilde y}$ can glue up (maybe after some perturbation).
Then we take a subset $\widetilde L\coloneqq \bigcup_{c\in \bR}\bigcup_{x\in L} T_cL_{\widetilde x}\subset X\times \bR_{u<c_L}$, which does not depend on the choice of $\widetilde{x}$. The brane structure of $\bm L$ naturally gives an object of $\mush_{\widetilde L}(\widetilde L)$.

Note that as we deform $L$ under the Liouville flow, we obtain $\widetilde L\subset X\times \bR_{u<c_L}\times (0,1]$.
Now we consider the nearby cycle argument. We are would like to use the functor of \cite[Theorem 8.3]{nadler2020sheaf}. Since we cannot directly use this theorem, we would like to comment how we can modify the construction. First, we have to make an antimicrolocalization after embedded in a cosphere bundle of a high dimension.

The key diagram is \cite[(19)]{nadler2020sheaf}, which we reproduced in the below:
\begin{equation*}
\begin{tikzcd}
    \mush_{\Lambda_1}(\Lambda_1) 
    & \sh_{(\Lambda_1, \partial \Lambda_1)^\prec;0}(M \times \bR_{\leq \epsilon}) \ar[l, "\sim"'] \ar[r, hook] 
    & \sh_{(\Lambda_1, \partial \Lambda_1)_\epsilon}(M) \\
    \mush_{\widetilde{\Lambda}}(\widetilde{\Lambda}) \ar[u, "\sim"]
    & \sh_{(\widetilde{\Lambda}, \partial \widetilde{\Lambda})^\prec;0}(M \times (0,1] \times \bR_{\leq \epsilon}) \ar[l] \ar[r] \ar[u, "\sim"'] \ar[d, "\psi"]
    & \sh_{(\widetilde{\Lambda}, \partial \widetilde{\Lambda})_\epsilon}(M \times (0,1]) \ar[u, "\sim"'] \ar[d, "\psi"] \\
    \mush_{\Lambda_0}(\Lambda_0) 
    & \sh_{(\Lambda_0, \partial \Lambda_0)^\prec;0}(M \times \bR_{\leq \epsilon}) \ar[l, "\sim"'] \ar[r, hook]
    & \sh_{(\Lambda_0, \partial \Lambda_0)_\epsilon}(M).
\end{tikzcd}
\end{equation*}
Although we do not have the upper leftmost horizontal equivalence of \cite[(19)]{nadler2020sheaf},  we can lift an object of $\mush_{\widetilde \Lambda}(\widetilde \Lambda)$ to a sheaf, since $\mush_{\Lambda_1}(\Lambda_1)$ is the stalk of $\sh_{(\Lambda_1, \partial \Lambda_1)^\prec;0}(M\times \bR_{\leq \epsilon})$ in the notation of \cite[(19)]{nadler2020sheaf}.
Then the upper middle vertical arrow of \cite[(19)]{nadler2020sheaf} is an equivalence in the present setup as well. Then we can apply $\psi$ and get an object in the desired category without equivariance. Then we can modify the above argument all $\bG$-equivariant, we get a desired object.

This completes the proof.
\end{proof}
\begin{remark}
    This proof might work for any Liouville manifold, since we do not use the isotropicity of the core, although we don't define the microlocal category for Liouville manifold.
\end{remark}

\begin{definition}
    The sheaf quantization of $\bm L$ in $\mu(X; u<c)$ obtained in the above way is said to be the \emph{$c$-standard sheaf quantization}.
\end{definition}

We can also construct more sheaf quantizations by modifying a little the above construction of the standard sheaf quantization as follows: We take $c_L$ as in the proof. 
In the rest of this subsection, let $\{U_i\}_i$ be the Darboux cover. Refining it if necessary, we can assume that the intersections are contractible. 
We also fix an element of $b\in H^1(L, \Lambda_0^+/T^{c_L}\Lambda_0)$ and take a \v{C}ech representative $b_{ij}$ of $b$ with respect to $\{U_i\}_i$. Then $b_{ij}$ can be represented as $b_{ij}=\sum_{0<d<c_L}a^{ij}_dT^{d}$. Then it gives an endomorphism of the section of the Kashiwara--Schapira stack over $\bigcup_{0<d<c_L}L_{\widetilde x+d}$.
Note that $\id+b_{ij}$ is invertible, since the valuation of $b_{ij}$ is greater than $0$.
Hence we can take $\id+b_{ij}$ as gluing morphisms, we get a global section of the Kashiwara--Schapira stack (which is twisted from the above one by the \v{C}ech cocycle). Then we get a sheaf quantization associated to $b$ as in the above proof. This gives $\mathrm{cl}$. This proves the first half of \cref{thm:classification}.

\begin{lemma}\label{lem:uniquenessforsmallc}
    For any sheaf quantizations $\cE_1, \cE_2\in \mu(X;u<c)$ of a given brane structure $\bm{L}$ for some $c\in \bR_{>0}\cup \lc\infty \rc$, there exists $c'$ such that $r_{cc'}(\cE_1)\cong r_{cc'}(\cE_2)$. 
\end{lemma}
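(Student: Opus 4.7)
The plan is to reduce the question to the classification of sheaf quantizations at small energy. First, let $c_{\bm{L}} > 0$ be the constant produced by Theorem~\ref{thm:classification} and set $c_0 \coloneqq \min(c, c_{\bm{L}})$. Since restriction along $c \geq c_0$ sends sheaf quantizations to sheaf quantizations, it suffices to prove the claim for $\cE_1' \coloneqq r_{c c_0}(\cE_1)$ and $\cE_2' \coloneqq r_{c c_0}(\cE_2)$ in $\mu(X; u<c_0)$. By the classification, these are represented by elements $b_1, b_2 \in H^1(L, \Lambda_0^+/T^{c_0}\Lambda_0)$ under the map $\mathrm{cl}$; setting $\alpha \coloneqq b_1 - b_2$ reduces the question to showing that the image of $\alpha$ in $H^1(L, \Lambda_0^+/T^{c'}\Lambda_0)$ vanishes for some $c' < c_0$, since the classification is functorial under the restriction functors (both sides being computed from the same \v{C}ech presentation used to construct $\mathrm{cl}$ in the proof of Theorem~\ref{thm:classification}).

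Here the Novikov finiteness of $\Lambda_0^+/T^{c_0}\Lambda_0$ is the key input. Any element of this ring is a Novikov series $\sum_{0<d<c_0} a_d T^d$ in which the set $\{d : a_d \neq 0\}$ has no accumulation in $[0, c_0)$; in particular, if it is nonempty then it has a smallest element which is strictly positive. Passing to \v{C}ech cochains, the same holds for any representing cocycle of $\alpha$: either $\alpha = 0$, in which case any $c' < c_0$ works, or else there is a well-defined minimum exponent $d_0 > 0$ appearing with nonzero coefficient. Picking any $c' \in (0, d_0)$, the quotient map $\Lambda_0^+/T^{c_0}\Lambda_0 \to \Lambda_0^+/T^{c'}\Lambda_0$ kills all terms of $\alpha$ (since each has valuation $\geq d_0 > c'$), so the image of $\alpha$ in $H^1(L, \Lambda_0^+/T^{c'}\Lambda_0)$ is zero. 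By classification at level $c'$, $r_{c_0 c'}(\cE_1') \cong r_{c_0 c'}(\cE_2')$, and composing with $r_{cc_0}$ yields $r_{cc'}(\cE_1) \cong r_{cc'}(\cE_2)$.

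The main obstacle is ensuring that the classification and its functoriality in $c$ apply to both $\cE_1$ and $\cE_2$. If $X$ is sufficiently Weinstein, Theorem~\ref{thm:classification} guarantees bijectivity of $\mathrm{cl}$, so every sheaf quantization is geometric and the argument above is complete. Without this assumption one must instead argue directly from the local doubling construction in the proof of Proposition~\ref{lem:c-SQ}: locally on a Darboux cover $\{U_i\}_i$ of $L$, both $\cE_1$ and $\cE_2$ are isomorphic to the standard sheaf quantization, and their difference is encoded by a \v{C}ech $1$-cocycle valued in the automorphisms of the standard local model, i.e.\ in $\Lambda_0^+/T^{c_0}\Lambda_0$; the same Novikov finiteness then gives the minimum valuation and the truncation argument proceeds verbatim. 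Either way, the essential content is the conversion of the Novikov absence of accumulation at $0$ into an explicit energy threshold $c'$ below which the two sheaf quantizations necessarily coincide.
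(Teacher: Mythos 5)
Your reduction to Theorem~\ref{thm:classification} is circular relative to the logical structure this lemma has to fit into. Lemma~\ref{lem:uniquenessforsmallc} is itself an input to the bijectivity of $\mathrm{cl}$: the assertion that an arbitrary sheaf quantization $\cE$ of $\bm L$ is standard over each $p^{-1}(U_i)$ is deduced from this lemma (combined with the almost-vanishing of $H^1$ of endomorphisms over a contractible piece), and that local standardness is exactly what produces the \v{C}ech cocycle $f_{ij}\in 1+\Lambda_0^+/T^c\Lambda_0$ inverting $\mathrm{cl}$. So the surjectivity you invoke (``$\cE'_1,\cE'_2$ are represented by classes $b_1,b_2$ under $\mathrm{cl}$'') cannot be assumed here. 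Moreover, bijectivity of $\mathrm{cl}$ is only available for sufficiently Weinstein $X$, while the lemma is stated for a general Liouville manifold; for general $X$ the statement that every low-energy sheaf quantization is geometric is explicitly left as a conjecture. Your fallback for the non-Weinstein case --- that on a Darboux cover both $\cE_1$ and $\cE_2$ are isomorphic to the standard local model, so their difference is a cocycle valued in $\Lambda_0^+/T^{c_0}\Lambda_0$ --- simply asserts the local uniqueness that is the actual content to be proved, so it does not close the gap. (A secondary point: the minimal-exponent truncation needs a finite cover, or at least a uniform positive lower bound on the exponents occurring in the cocycle; for an end-conic, non-compact $L$ this requires justification, though it is not the main issue. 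The truncation step itself, killing a class whose representing cocycle has valuation $\geq d_0>c'$, is fine.)

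The intended argument is more direct and does not pass through the classification: since $\cE_1$ and $\cE_2$ are sheaf quantizations of the same brane $\bm L$, they determine isomorphic global sections of the sheaf of categories in \eqref{eq:antimicro} along $\{u=0\}$, where the object is pinned down by the brane structure alone; one then lifts this isomorphism from $\{u=0\}$ to $u\in[0,c')$ for some small $c'>0$ by the same propagation-and-compactness argument (in the style of \cite[Prop.~2.5.1]{KS90}) already used in the existence proof of \cref{lem:c-SQcotangent}. If you want a self-contained proof, that is the route: establish uniqueness of the germ at $u=0$ first; the classification theorem is downstream of this lemma, not upstream.
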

\begin{proof}
    Since they are isomorphic on $\{u=0\}$ as global sections of sheaves of \eqref{eq:antimicro}, we can lift the isomorphism for small $c'$. This completes the proof. 
\end{proof}

Let us suppose that $X$ is sufficiently Weinstein. If needed, we perturb it to have a Lagrangian skeleton $\bL$. 
We have the projection $p\colon L\rightarrow \bL$. We further perturb $L$ to satisfy the following: There exists an open covering $\{ U_i\}_i$ of $\bL$ such that each $U_i$ and each connected component of $p^{-1}(U_i)$ are contractible.
We work with this setup. A Lagrangian brane $\bm{L}$ defines a section of $\mush_{\Lim}(\Lim)$. We denote the endomorphism sheaf by $\muhom(\bm{L},\bm{L})$.

\begin{lemma}
For $0<c<c_{\bm L}$. Let $\cE\in \mu(X, u<c)$ be a $c$-standard sheaf quantization. Then $\End(\cE)$ is (almost) isomorphic to $\muhom(\bm L, \bm L)\otimes_\bK\Lambda_0/T^c\Lambda_0$.
\end{lemma}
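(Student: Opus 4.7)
The plan is to construct a natural $\Lambda_0/T^c\Lambda_0$-linear comparison map
\[
\muhom(\bm L, \bm L)\otimes_\bK \Lambda_0/T^c\Lambda_0 \longrightarrow \End(\cE)
\]
and then verify it is an almost isomorphism by a local computation in the Darboux charts used to build~$\cE$. First I would produce the forward map. Recall from the proof of \cref{lem:c-SQ} that the $c$-standard sheaf quantization $\cE$ is obtained by lifting, through the equivalence $\sh_{\overline{L_f^{S^1}}^{\prec};0}\xrightarrow{\sim}\widetilde{\pi}_*\mush_{\overline{L_f^{S^1}}^\prec}$, the canonical global section of $\mush_{\overline{L_f^{S^1}}^\prec}$ associated with the brane structure~$\bm L$. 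Lifting endomorphisms through the same equivalence yields a ring map $\muhom(\bm L, \bm L) = \End_{\mush_{\Lim}}(\bm L) \to \End(\cE)$. Extending this $\Lambda_0/T^c\Lambda_0$-linearly via the Novikov action of \cref{section:equiv_nov_ring} produces the forward map; the quotient by $T^c$ appears because $\cE$ is supported in $u\geq 0$ and the tilted leaf of its microsupport only reaches $u=c$ at energy shift~$c$.

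Next I would show that this map is an almost isomorphism. The microlocalization functor $\mu_L$ provides a partial inverse in the sense that composing with $\mu_L$ and reducing modulo $\Lambda_0^+$ recovers the identity on $\muhom(\bm L,\bm L)$, so it suffices to check that $\End(\cE)$ is almost free of rank one over $\muhom(\bm L,\bm L)\otimes\Lambda_0/T^c\Lambda_0$. This is a local question on $L$: in each chart $U_i$ of the fixed cover, $\cE|_{U_i}$ is the nearby-cycle image of the constant sheaf on the parabolic filling $L_{\widetilde x}$ from the proof of \cref{lem:c-SQ}, and a direct computation identifies its endomorphism complex with $\muhom(\bm L,\bm L)|_{U_i}\otimes \Lambda_0/T^c\Lambda_0$: the horizontal leaf contributes the $\muhom$ factor (via microlocalization), while the tilted leaf, parametrised by energy shifts in $[0,c)$, contributes a free copy of $\Lambda_0/T^c\Lambda_0$. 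A \v{C}ech descent over $\{U_i\}$, using that the gluing data for $\cE$ are exactly those of $\bm L$, then assembles these local identifications into the global almost isomorphism.

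The main obstacle is the careful bookkeeping of the almost structure through this globalization and, in the sufficiently Weinstein non-cotangent case, the need to first antimicrolocalize into $T^*\bR^N$ via \cref{subsec:antimicrolocalization} before running the Darboux argument. One must check that antimicrolocalization is $\Lambda_0/T^c\Lambda_0$-linear and preserves almost isomorphisms, so that the computation performed upstairs in $T^*\bR^N$ descends faithfully to~$X$. Controlling the almost error on overlaps in the \v{C}ech argument — in particular, verifying that the cocycle corrections acquire only trivial higher Novikov terms once $c<c_{\bm L}$ — is the technically delicate part.
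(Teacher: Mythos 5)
Your overall strategy---construct the canonical comparison map $\muhom(\bm L,\bm L)\otimes_\bK\Lambda_0/T^c\Lambda_0\to\End(\cE)$ from the construction of the standard sheaf quantization and verify that it is an almost isomorphism locally, then descend---is sound and close in spirit to the paper (the existence of the comparison map for general $X$ is exactly what is recorded in the remark after the lemma). The key local input, however, is different. The paper localizes over the fixed cover $\{U_i\}$ of the skeleton $\bL$ with contractible $p^{-1}(U_i)$: over such a piece $L$ is exact, hence admits an exact (i.e.\ $\bG=\bO$) sheaf quantization $\cE^{ex}$ by Nadler--Shende, and by construction $\cE\cong\bigoplus_{a\in\bR}T_a\cE^{ex}$ there; the identification $\End(\cE^{ex})\cong\muhom(\bm L,\bm L)$ is then quoted from Nadler--Shende, the Novikov direction comes from the translates, and the truncation comes from the $u<c$ cutoff. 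You instead attempt a direct computation with the parabolic-filling/doubling local model, which buys independence from the exact reduction but is where your write-up is loosest.

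Two points need repair. First, your attribution ``the horizontal leaf contributes the $\muhom$ factor, while the tilted leaf, parametrised by energy shifts in $[0,c)$, contributes a free copy of $\Lambda_0/T^c\Lambda_0$'' misidentifies the source of the two factors: the $\Lambda_0$-direction arises from the $\bR$-translates $T_a$ (equivalently the equivariant structure), while the tilted component of the doubling together with the restriction to $u<c$ is what annihilates morphisms of energy at least $c$ and hence produces the quotient by $T^c$. As stated, your local identification is asserted rather than proved, and with the wrong bookkeeping it would not assemble correctly. Second, your \v{C}ech descent runs over Darboux charts of $L$, but $\End(\cE)$ does not obviously localize over open subsets of $L$; after antimicrolocalization it localizes over the base/core, which is precisely why the paper takes a cover of the skeleton $\bL$ with contractible preimages $p^{-1}(U_i)$---that choice simultaneously legitimizes the localization of the endomorphism algebra and supplies the local exactness used in the decomposition $\cE\cong\bigoplus_{a\in\bR}T_a\cE^{ex}$. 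Rerouting your local computation through that cover (or simply invoking the exact decomposition as the paper does) would close both gaps; the construction of the comparison map and the reduction to a local statement are otherwise fine.
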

\begin{proof}
Over each $U_i$, $L$ is exact. Even though $L\cap p^{-1}(U_i)$ is only locally closed, by using the relative doubling technique~\cite[\S 7.3]{nadler2020sheaf}, one can run the same argument to construct a $c$-standard sheaf quantization as the above other proof of \cref{lem:c-SQ}]. Note that $p^{-1}(U_i)\cap L$ is exact, so we don't need to take the union over the $\bR$-copies to run the argument. As a result, the $c$-standard sheaf quantization is of the form $\cE= \bigoplus_{c\in \bR}T_c\cE^{ex}$ over $U_i$. In this argument, $\bm{L}$ gives an object of the upper leftmost category of \cite[(19)]{nadler2020sheaf}, whereas $\cE^{ex}$ gives the corresponding object in the middle. In the exact setting, these categories are equivalent, in particular fully faithful. In other words, the endomorphism ring of $\cE^{ex}$ is isomorphic to $\muhom(\bm L, \bm L)$ over $U_i$. Then the result follows. 
\end{proof}

\begin{remark}
    Even in the case when $X$ is not sufficiently Weinstein, we have a map 
    \begin{equation}
        \muhom(\bm L, \bm L)\otimes_\bK\Lambda_0/T^c\Lambda_0\rightarrow \End(\cE).
    \end{equation}
    We conjecture that this is always an almost isomorphism.
\end{remark}

\begin{lemma}
    Let $\cE$ be a sheaf quantization of $L$. Then $\cE|_{p^{-1}(U_i)}$ is isomorphic to the (unique) standard sheaf quantization.
\end{lemma}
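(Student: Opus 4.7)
The plan is to apply the classification \cref{thm:classification} locally above each $U_i$. Concretely, let $V_i \coloneqq p^{-1}(U_i) \subset L$. First I would check that $\cE|_{p^{-1}(U_i)}$, interpreted through the restriction of the Kashiwara--Schapira stack $\mush_\frakp$ to the relevant open region of $X$ sitting above $U_i$, is itself a sheaf quantization of the restricted Lagrangian brane $\bm L|_{V_i}$. This is essentially formal: the microsupport condition, the microlocalization functor $\mu_{\bm L}$, and the associated local system all localize in the base (the Kashiwara--Schapira stack is a sheaf of categories), so restriction preserves the defining properties of a sheaf quantization.

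Next, because $\{U_i\}$ was chosen so that each connected component of $V_i = p^{-1}(U_i)$ is contractible, we obtain the vanishing
\begin{equation}
    H^1\!\left(V_i,\ \Lambda_0^+/T^{c}\Lambda_0\right)=0.
\end{equation}
Applying \cref{thm:classification} to the sufficiently Weinstein open region of $X$ whose skeleton is $U_i$ (which is sufficiently Weinstein by the choice of covering), the set $\frakS(\bm L|_{V_i}, c)$ of almost isomorphism classes of sheaf quantizations of $\bm L|_{V_i}$ is therefore a single point. Since both the restricted $c$-standard sheaf quantization and $\cE|_{p^{-1}(U_i)}$ realize elements of this set, they represent the same class and are (almost) isomorphic as sheaf quantizations in $\mu(X; u<c)$ supported over $p^{-1}(U_i)$.

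The main obstacle is checking that \cref{thm:classification} genuinely applies in the local setting above $U_i$: one needs the local piece of $X$ above $U_i$ to be sufficiently Weinstein and the local brane structure to fit the hypotheses used in the classification argument. Given the way the cover $\{U_i\}$ was extracted from the skeleton $\bL$ and the contractibility hypothesis on $p^{-1}(U_i)$, this should reduce to running the local construction in the proof of \cref{thm:classification} essentially verbatim on a Darboux-type neighborhood of $V_i$, with no cohomological freedom left. Once that reduction is in place the conclusion is immediate from the uniqueness statement.
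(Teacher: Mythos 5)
Your approach is circular. This lemma is not a corollary of \cref{thm:classification}; it is a step \emph{inside} the proof of the second half of \cref{thm:classification}. In the paper the bijectivity of $\mathrm{cl}$ is established only after this lemma: the lemma supplies the local isomorphisms $f_i\colon \cE|_{p^{-1}(U_i)}\cong \cE^c|_{p^{-1}(U_i)}$ whose comparison cocycle $(f_{ij})$ defines the \v{C}ech class giving the inverse of $\mathrm{cl}$. So you may not invoke the bijectivity of $\mathrm{cl}$ (even in the degenerate case $H^1=0$) to prove the lemma, and the same objection applies to your proposed ``local'' classification over a region of $X$ with skeleton $U_i$: the surjectivity of the local $\mathrm{cl}$ with $H^1(p^{-1}(U_i),\Lambda_0^+/T^c\Lambda_0)=0$ is literally the statement to be proved, and the paper has established only the construction of $\mathrm{cl}$ (the ``first half''), not its bijectivity, at this point. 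There is also a secondary gap: nothing in the paper justifies restricting \cref{thm:classification} to an open piece of $X$ sitting over $U_i$, since no localization of the classification in the base has been set up.

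What is actually needed is the direct argument the paper gives, which avoids the classification entirely. By \cref{lem:uniquenessforsmallc} one can choose a discrete monoid $\sigma=\{0=c_0<c_1<c_2<\cdots\}$ so that every piece of the $\sigma$-decomposition of $\cE$ is a standard sheaf quantization; hence $\cE$ is represented by a twisted complex built from standard pieces, and the only freedom is in the (almost) extension classes, which live in $H^1$ of the endomorphism algebra. Since each connected component of $p^{-1}(U_i)$ is contractible, this $H^1$ is almost zero over $p^{-1}(U_i)$ (using the computation of $\End$ of a standard sheaf quantization as $\muhom(\bm L,\bm L)\otimes_\bK\Lambda_0/T^c\Lambda_0$ from the preceding lemma), so all extension classes vanish almost and the twisted complex collapses to the standard sheaf quantization. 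If you want to salvage your write-up, replace the appeal to \cref{thm:classification} by this decomposition-plus-vanishing argument; the contractibility of $p^{-1}(U_i)$ enters through the vanishing of $H^1(\End)$, not through the vanishing of $H^1(V_i,\Lambda_0^+/T^c\Lambda_0)$ in the statement of the classification.
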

\begin{proof}
By \cref{lem:uniquenessforsmallc}, there exists a sequence of positive real numbers $c_1<c_2<c_3<\cdots$, such that the associated $\sigma$-decomposition consists of the standard sheaf quantizations, where $\sigma\coloneqq \lc 0=c_0<c_1<\cdots <c_n<\cdots\rc $ is a set of positive real numbers less than $c$ without accumulation points.
Since $p^{-1}(U_i)$ is contractible, the $H^1(\End(\cE))$ is almost isomorphic to zero. Hence there are no almost nonzero extensions.
\end{proof}

Let $\cE$ be a sheaf quantization of $\bm L$. Take a sufficiently small $c>0$, then there exists a standard sheaf quantization $\cE^c$. Take an open covering as above, then on each $U_i$ (more precisely, each sheet of it), we get an isomorphism $f_i\colon \cE|_{p^{-1}(U_i)}\cong \cE^c|_{p^{-1}(U_i)}$ lifting an isomorphism of the brane structures. Then, for each $i,j$, we get an isomorphism
\begin{equation}
    f_{ij}\colon \cE|_{p^{-1}(U_i\cap U_j)}\xrightarrow{f_i} \cE^c|_{p^{-1}(U_i\cap U_j)}\xrightarrow{f_j^{-1}}  \cE|_{p^{-1}(U_i\cap U_j)}.
\end{equation}
Hence this gives $f_{ij}\in \Aut(\cE|_{p^{-1}(U_i\cap U_j)})\cong 1+\Lambda_0^+/T^c\Lambda_0$. Hence, we obtain  a \v{C}ech cohomology class in $H^1(L, \Lambda_0^+/T^c\Lambda_0^+)$. Obviously, this gives a inverse of the above construction.
This completes the proof of \cref{thm:classification}.

\subsection{Some properties of standard sheaf quantizations}\label{subsec:properties_standard_SQ}

We list some properties of standard sheaf quantizations introduced above.

\begin{lemma}
    The $c$-standard sheaf quantization of $\bm{L}$ is the sheaf quantization $\mathrm{cl}(0)$.
\end{lemma}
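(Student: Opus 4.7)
The plan is to unwind the two constructions and observe they coincide in a tautological way. Recall that the $c$-standard sheaf quantization is assembled from the local pieces $L_{\widetilde{x}}$ (exact Lagrangian fillings in Darboux charts of $X \times T^*\bR_t \times T^*\bR_u$) by transporting the given brane structure of $\bm{L}$ through the nearby cycle construction and gluing the local sections of $\mush_{\widetilde{L}}(\widetilde{L})$ via the \emph{identity} transition data. The map $\mathrm{cl}$, on the other hand, takes a class $b \in H^1(L, \Lambda_0^+/T^{c_{\bm L}}\Lambda_0)$, represents it by a \v{C}ech cocycle $\{b_{ij}\}$ with respect to the Darboux cover $\{U_i\}$, and glues the same local pieces using the modified transitions $\id + b_{ij}$.

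First I would fix a \v{C}ech cover $\{U_i\}$ as in the construction (refined so that intersections are contractible and $\bm L$ is trivialized on each sheet), and choose the zero cocycle as the representative of $0 \in H^1(L, \Lambda_0^+/T^{c_{\bm L}}\Lambda_0)$. Second, I would verify that substituting $b_{ij} = 0$ into the gluing prescription in the definition of $\mathrm{cl}$ yields the identity gluing, and hence produces precisely the object constructed in the other proof of \cref{lem:c-SQ}. Third, since the resulting object is defined up to isomorphism, I would note that any other \v{C}ech representative of $0$ differs from the zero cocycle by a \v{C}ech coboundary; such a coboundary is realized as an automorphism of the standard sheaf quantization on the cover, so the two gluings yield isomorphic global sheaf quantizations.

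Nothing here is subtle: the only observation required is that the construction of $\mathrm{cl}$ at $b = 0$ is, by design, the construction of the standard sheaf quantization. The one point worth making explicit is independence of the \v{C}ech representative, which follows from the fact that on each $p^{-1}(U_i)$ the automorphisms of the standard sheaf quantization contain $1 + \Lambda_0^+/T^{c_{\bm L}}\Lambda_0$, so a coboundary $b_{ij} = a_i - a_j$ is realized by rescaling the local identifications by $\id + a_i$ on each patch. Thus $\mathrm{cl}(0)$ is canonically isomorphic to the $c$-standard sheaf quantization.
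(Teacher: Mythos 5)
Your proposal is correct and follows essentially the same reasoning the paper relies on: the lemma is immediate from the construction of $\mathrm{cl}$, since gluing with the transitions $\id+b_{ij}$ for the zero cocycle reproduces exactly the gluing used to build the $c$-standard sheaf quantization. Your extra remark on independence of the \v{C}ech representative, via the automorphisms $1+\Lambda_0^+/T^{c_{\bm L}}\Lambda_0$ on each $p^{-1}(U_i)$, is the same point already implicit in the paper's construction of $\mathrm{cl}$ and its inverse, so nothing further is needed.
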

From the construction above, the following is obvious.
\begin{lemma}
    Let $\cE$ be a $c$-standard sheaf quantization. Take $\sigma\coloneqq \{0=c_0<c_1<\cdots <c_n<\cdots\}$. Then each component of the $\sigma$-decomposition of $\cE$ is standard.
\end{lemma}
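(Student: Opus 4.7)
The plan is to show that both constructions in the $\sigma$-decomposition, namely restriction to $u < c_i$ and the cone/translation operation that extracts the slice $[c_{i-1}, c_i)$, preserve the property of being standard. Recall that by \cref{thm:classification} the standard sheaf quantizations are exactly those in the image of $\mathrm{cl}$ at $0$, i.e., those whose \v{C}ech gluing class in $H^1(L, \Lambda_0^+/T^{c'}\Lambda_0)$ is trivial. So it suffices to show each $\cE_{c_i}$ is a sheaf quantization of $\bm L$ in its appropriate category and that its associated cohomology class vanishes.

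First, I would handle the trivial part: for any $c_i < c$, the restriction $\cE|_{u<c_i}$ lies in $\mu(X; u<c_i)$ with microsupport contained in $\widetilde L \cap \{u < c_i\}$, and the local Darboux-Legendrian filling recipe used to build $\cE$ restricts tautologically to build the $c_i$-standard sheaf quantization. In terms of the classification, the \v{C}ech cocycle is obtained by restricting the trivial cocycle of $\cE$, and therefore remains trivial.

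The core of the argument is analyzing $\mathrm{Cone}(s_{c_i-c_{i-1}}\cE|_{u<c_{i-1}} \to \cE|_{u<c_i})$. Since $s_{c_i-c_{i-1}}\cE|_{u<c_{i-1}}$ and $\cE|_{u<c_i}$ agree on $\{u<c_{i-1}\}$, the cone vanishes there and is supported in $\{u \geq c_{i-1}\}$. Microlocally, the stupid extension inserts a purely horizontal microsupport piece at $u = c_{i-1}$, while $\cE$ itself carries the full cusp-doubling filling $T_c L_{\widetilde x}$ for $c$ ranging over a neighborhood of $[c_{i-1}, c_i)$. Taking the cone cancels the horizontal piece created by $s_{c_i-c_{i-1}}$ and leaves precisely the tilted/cuspidal configuration parametrized by $c \in [c_{i-1}, c_i)$. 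Translating by $S_{-c_{i-1}}$ maps this configuration to the range $c - c_{i-1} \in [0, c_i - c_{i-1})$, which is exactly the microsupport pattern of the $(c_i-c_{i-1})$-standard sheaf quantization. The brane structure carried along the filling is the one induced from $\bm L$, so the result is a sheaf quantization of $\bm L$ in $\mu(X; u < c_i - c_{i-1})$ whose gluing data on the Darboux cover $\{U_i\}$ is inherited, piece-by-piece, from the identity gluing of $\cE$; hence its class in $H^1(L, \Lambda_0^+/T^{c_i-c_{i-1}}\Lambda_0)$ is $0$.

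The main obstacle is the careful microsupport computation of the cone, specifically showing that the horizontal boundary contribution of the stupid extension cancels cleanly. The cleanest way to carry this out is to work locally on a Darboux cover $\{U_i\}$ where $L$ is exact: on each chart, the standard sheaf quantization is, up to a $\bG$-twist, the exact sheaf quantization of Nadler--Shende, for which the nearby-cycle construction commutes with slicing in $u$ and the desired identification of the cone is immediate. The equivariant/\v{C}ech-globalization argument used in the proof of \cref{thm:classification} then packages the local identifications into the global claim, since triviality of the gluing class is preserved.
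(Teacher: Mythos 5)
Your proposal is correct and follows what the paper itself treats as ``obvious from the construction'': slicing the Darboux-local, cusp-doubling nearby-cycle construction of the $c$-standard sheaf quantization in the $u$-variable reproduces the same construction for the smaller parameter, and the stupid-extension/cone/translation bookkeeping of the $\sigma$-decomposition shifts each slice back to start at $u=0$. You spelled out the microsupport cancellation and the \v{C}ech-class triviality in more detail than the paper (which gives no written proof), but the route is the same local-exact-chart reduction used earlier to classify low-energy sheaf quantizations.

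One point worth tightening if you were to include this as a full proof: the claim that taking the cone ``cancels'' the horizontal piece needs the observation that the horizontal component of the doubling is constant in $u$ for $u>0$, so the map $s_{c_i-c_{i-1}}\cE|_{u<c_{i-1}}\to\cE|_{u<c_i}$ is an isomorphism along the horizontal locus (not merely that the microsupport of the cone is contained in the union). That is exactly what justifies passing from ``$\MS(\Cone)\subset\MS\cup\MS$'' to the actual cancellation you assert.
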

In particular, the following definition is valid.
\begin{definition}
    Let $\cE_i$ be a $c_i$-standard sheaf quantization for $i=1,2$. We set $\sigma\coloneqq \{0, c_1, c_1+c_2\}$. A \emph{standard extension between $\cE_1$ and $\cE_2$} is an extension between $S_{\sigma}\cE_1$ and $S_{\sigma}\cE_2$ which is $(c_1+c_2)$-standard.
\end{definition}
The following is again obvious.
\begin{lemma}
Let $\bm L$ be a Lagrangian brane. For sufficiently small $c_1, c_2>0$, there exists a standard extension between the $c_1$-standard sheaf quantization $\cE_1$ and the $c_2$-standard sheaf quantization $\cE_2$.
\end{lemma}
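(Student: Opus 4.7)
The plan is to produce a standard extension by constructing a $(c_1+c_2)$-standard sheaf quantization of $\bm L$ and then extracting the required extension data from its $\sigma$-decomposition with $\sigma = \{0, c_1, c_1+c_2\}$.

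First, I would choose $c_1, c_2 > 0$ small enough so that $c_1 + c_2 < c_{\bm L}$, where $c_{\bm L}$ is the constant from \cref{thm:classification}. By \cref{lem:c-SQ} (with the refined construction given in the alternative proof in \S\ref{subsec:SQ_low_energy}), the $(c_1+c_2)$-standard sheaf quantization $\cE \in \mu(X; u<c_1+c_2)$ of $\bm L$ exists. By definition this is $(c_1+c_2)$-standard. The key observation is that the construction of the standard SQ via the local Darboux cover $\{U_i\}$ and the explicit local fillings $L_{\widetilde{x}}$ depends only on the brane data (not on the value of $c$), so that restricting the $(c_1+c_2)$-standard SQ to $u < c_1$ literally produces the $c_1$-standard SQ from the same local recipe, and similarly the "upper slice" corresponds to the $c_2$-standard SQ.

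Next, I would form the $\sigma$-decomposition $(\cE^{(1)}, \cE^{(2)})$ of $\cE$. By the immediately preceding lemma, each component $\cE^{(i)}$ is a standard sheaf quantization. Combined with the observation above, we see $\cE^{(1)}$ is (up to the uniqueness provided by \cref{thm:classification}, i.e.\ both equal $\mathrm{cl}(0)$ on the common range) isomorphic to the $c_1$-standard SQ $\cE_1$, and $\cE^{(2)}$ is isomorphic to the $c_2$-standard SQ $\cE_2$. Then $\cE$, viewed as a twisted complex over $\{S_\sigma \cE^{(1)}, S_\sigma \cE^{(2)}[1]\}$, realizes as an extension between $S_\sigma \cE_1$ and $S_\sigma \cE_2$. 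Since $\cE$ itself is $(c_1+c_2)$-standard by construction, this extension is by definition a standard extension between $\cE_1$ and $\cE_2$.

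The main obstacle is the identification $\cE^{(i)} \cong \cE_i$ in $\mu(X; u<c_i)$, rather than after a further restriction to some $c_i' < c_i$ of the type provided by \cref{lem:uniquenessforsmallc}. The way I would handle this is to bake compatibility into the construction from the outset: use a single Darboux atlas $\{U_i\}$ on $L$ and a uniform choice of local fillings $L_{\widetilde{x}}$ valid up to height $c_{\bm L}$, so that the $c$-standard SQ is produced by the same \v{C}ech-type gluing for every $c \in (0, c_{\bm L})$ and the $\sigma$-slicing is tautologically compatible with that gluing. With this coherent choice, the $\sigma$-decomposition components of the $(c_1+c_2)$-standard SQ are literally the $c_1$- and $c_2$-standard SQs, not merely almost-isomorphic to them after further restriction, and the proof goes through for every pair $(c_1, c_2)$ with $c_1 + c_2 < c_{\bm L}$.
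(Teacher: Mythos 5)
Your proof is correct and follows exactly the route the paper has in mind: the paper declares the lemma ``obvious'' from the preceding lemmas, namely that the $(c_1+c_2)$-standard sheaf quantization exists for $c_1+c_2$ small, that its $\sigma$-decomposition with $\sigma=\{0,c_1,c_1+c_2\}$ has standard components, and that it is therefore itself the required $(c_1+c_2)$-standard extension between $S_\sigma\cE_1$ and $S_\sigma\cE_2$. Your additional remark about fixing a single Darboux atlas so that the $\sigma$-slices agree on the nose with the $c_i$-standard quantizations (rather than only up to restriction via \cref{lem:uniquenessforsmallc}) is a sound and welcome clarification of what the paper implicitly assumes when it says ``obvious from the construction.''
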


\subsection{Abstract intersection Maslov indices}\label{subsec:maslov}

In this subsection, we introduce the notion of abstract intersection Maslov indices and investigate their relationship to sheaf quantizations.

We first recall the construction of the positive definite path. Let $V$ be a symplectic vector space and $\ell$ be a Lagrangian subspace of $V$. Then the tangent space of the Lagrangian Grassmannian $LGr(V)$ at $\ell$ is canonically identified with the space of quadratic forms on $\ell$. The tangent cone spanned by the non-degenerate positive forms is denoted by $C_0(\ell)$.

\begin{definition-lemma}[Positive definite path, transverse case]\label{def-lem:pos_path_trans}
Let $\ell_1, \ell_2$ be two transverse Lagrangians in a symplectic vector space $V$. A path $c\colon [0,1]\rightarrow LGr(V)$ from $\ell_1$ to $\ell_2$ is said to be \emph{positive definite} if it satisfies the following:
\begin{enumerate}
\renewcommand{\labelenumi}{$\mathrm{(\arabic{enumi})}$}
    \item $c(0)=\ell_1$ and $c(1)=\ell_2$,
    \item $c(t)$ is transverse with $\ell_1$ for $t>0$, and
    \item $\dot{c}(0)\in C_0(\ell_1)$.
\end{enumerate} 
Such a path exists and is unique up to homotopy relative to ends.
\end{definition-lemma}

Let $\ell_1, \ell_2$ be two Lagrangians in a symplectic vector space $V$. Then $(\ell_1+\ell_2)/\ell_1\cap \ell_2$ is naturally a symplectic vector space, and a Lagrangian $\ell$ containing $\ell_1\cap \ell_2$ defines a Lagrangian in $(\ell_1+\ell_2)/\ell_1\cap \ell_2$. 

\begin{definition-lemma}[{Positive definite path, clean case, cf.\ \cite{AlstonBao}}]
Let $\ell_1, \ell_2$ be two Lagrangians in a symplectic vector space $V$. A path $c\colon [0,1]\rightarrow LGr(V)$ from $\ell_1$ to $\ell_2$ is said to be \emph{positive definite} if it satisfies the following:
\begin{enumerate}
\renewcommand{\labelenumi}{$\mathrm{(\arabic{enumi})}$}
    \item $c(0)=\ell_1$ and $c(1)=\ell_2$,
    \item $c(t)$ contains $\ell_1\cap \ell_2$ for every $t$, and
    \item $c(t)/\ell_1\cap \ell_2$ is a positive definite path in $LGr((\ell_1+\ell_2)/\ell_1\cap \ell_2)$ from $\ell_1/\ell_1\cap \ell_2$ to $\ell_2/\ell_1\cap \ell_2$ in the sense of \Cref{def-lem:pos_path_trans}.
\end{enumerate} 
Such a path exists is unique up to homotopy relative to ends.
\end{definition-lemma}

Now we recall the definition of the intersection Maslov index. 
Fix a base point $*$ in $LGr(V)$. 
Then the universal covering space $\widetilde{LGr}(V)$ is given by the based path space of $LGr(V)$. 
Suppose that $\widetilde{\ell_1}, \widetilde{\ell_2}\in \widetilde{LGr}(V)$ are points over $\ell_1, \ell_2\in LGr(V)$, respectively. 
By connecting $\widetilde{\ell}_1^{-1}$ and $\widetilde{\ell}_2$ at $*$, we get a path from $\ell_1$ to $\ell_2$. 
Moreover, by connecting it to a positive definite path from $\ell_2$ to $\ell_1$, we get a loop in $LGr(V)$. 
By evaluating it by the Maslov cycle, we get a number, called the intersection Maslov index.

Let $L_1, L_2$ be Lagrangian submanifolds intersecting cleanly in $X$ with $\cC$-brane data where $\cC$ is the coefficient category (cf.\ \cref{rmk:branecoefficient}). Consider the composition
\begin{equation}
c_i\colon L_i\hookrightarrow X\rightarrow B(U/O)\rightarrow B^2\Pic(\cC).
\end{equation}
Since $L_i$ is a Lagrangian, we have a null homotopy of $L_i\rightarrow B(U/O)$, which induces a null homotopy of $c_i$. We denote it by $h_i$. Since $X$ is equipped with a Maslov data, which induces a null homotopy of $c_i$. We denote it by $h_i'$. Hence 
\begin{equation}
    0\overset{h_i}{\sim} c_i\overset{h_i'}{\sim} 0
\end{equation}
gives a map
\begin{equation}
    c_i'\colon L_i\rightarrow B\Pic(\cC)
\end{equation}
for each $i$. The Maslov data of $L_i$ gives a null homotopy $h_i''\colon c_i'\sim 0$.

On the other hand, on the intersection locus $L_1\cap L_2$, the (family of) positive definite path connecting $TL_1$ and $TL_2$ gives a homotopy connecting $h_1$ and $h_2$. Then it induces a homotopy between $h_{12}''$ $c_1'$ and $c_2'$. Hence we get
\begin{equation}
    0\overset{h_1''}{\sim} c_1'\overset{h_{12}'}{\sim} c_2'\overset{h_2''}{\sim} 0
\end{equation}
on $L_1\cap L_2$. This gives a morphism $m_{L_1,L_2}\colon L_1\cap L_2\rightarrow \Pic(\cC)$.

\begin{definition}\label{definition:sheaf_maslov}
\begin{enumerate}
    \item The map $m_{L_1, L_2}$ is called the \emph{abstract intersection Maslov index}. 
    \item When $\cC=\Mod(\bK)$, the abstract intersection Maslov index defines a rank 1 local system over $\bK$ on $L_1\cap L_2$. We denote it by $\cM_{L_1, L_2}$, and call it the \emph{sheaf-theoretic Maslov index local system}. 
\end{enumerate}
\end{definition}

The following is a generalization of what is known in the transverse case~\cite{Ike19}:

\begin{theorem}
    Let $\bm{L}_i=(L_i, \bm{\alpha}_i, \cL_i) \ (i=1,2)$ be embedded Lagrangian branes intersecting cleanly. 
    Then each brane gives an object $\cS_{\bm{L}_i}$ of $\mush_{L_1\cup L_2}(L_1\cup L_2)$. We have the following isomorphism
    \begin{equation}
       \muhom^\bR(\cS_{\bm{L}_1}, \cS_{\bm{L}_2})\coloneqq \cHom_{\mush_{L_1\cup L_2}(L_1\cup L_2)}(\cS_{\bm{L}_1}, \cS_{\bm{L}_2})\cong \cM_{\bm{L}_1, \bm{L}_2}
    \end{equation}
    in $\Loc(L_1\cap L_2)$.
\end{theorem}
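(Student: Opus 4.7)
The strategy is to (i) reduce the statement to a local computation near a point of $L_1\cap L_2$, (ii) reduce the clean intersection case to the transverse case via a Weinstein normal form, (iii) invoke the known transverse simple-sheaf computation, and (iv) identify the resulting rank one local system with the abstract Maslov local system $\cM_{\bm{L}_1,\bm{L}_2}$ of Definition~\ref{definition:sheaf_maslov}.

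Since $\muhom^\bR(\cS_{\bm{L}_1},\cS_{\bm{L}_2})$ is by definition a sheaf on $L_1\cap L_2$, the problem is local on $L_1\cap L_2$. Fix $p\in L_1\cap L_2$. By the clean intersection hypothesis, there is a Darboux chart on a neighborhood $U$ of $p$ in $X$ that identifies $(X,L_1,L_2)$ with a product
\begin{equation}
    \bigl(T^*(L_1\cap L_2),\ 0_{L_1\cap L_2},\ 0_{L_1\cap L_2}\bigr)\ \times\ (V,\ \ell_1',\ \ell_2'),
\end{equation}
where $V$ is a symplectic vector space and $\ell_1',\ell_2'$ are linear Lagrangians meeting transversely at the origin, corresponding to the normal pieces of $TL_i$ transverse to $T(L_1\cap L_2)$. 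Under this splitting, $\cS_{\bm{L}_i}$ locally decomposes as the external product of a simple sheaf for the zero section twisted by $\cL_i$ and the simple brane on $\ell_i'\subset V$ determined by the transverse part of $\bm{\alpha}_i$.

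By the K\"unneth-type behavior of $\mush$ (which follows from the Lurie tensor product presentation recalled in Section~\ref{section:equiv_nov_ring} and Nadler--Shende's stack formalism), one obtains
\begin{equation}
    \muhom^\bR(\cS_{\bm{L}_1},\cS_{\bm{L}_2})\bigl|_{U\cap(L_1\cap L_2)} \cong \cHom(\cL_1,\cL_2) \otimes \muhom^\bR_{V,0}(\cS_{\ell_1'},\cS_{\ell_2'}).
\end{equation}
The first tensor factor is the locally constant sheaf of homomorphisms and contributes only the local system part of $\cM_{\bm{L}_1,\bm{L}_2}$. The second factor is a constant sheaf (in fact a complex of $\bK$-modules concentrated at the point), so it remains to compute $\muhom$ of two linear transverse branes at the origin of $V$. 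This is the classical Kashiwara--Schapira computation of simple sheaves (cf.\ Ike~\cite{Ike19}, Jin~\cite{Jin}): the answer is a rank one free $\bK$-module placed in a single degree, with the degree and the canonical generator determined by the positive definite path from $\ell_1'$ to $\ell_2'$ together with the relative grading data built into $\bm{\alpha}_1,\bm{\alpha}_2$.

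The remaining, and most delicate, step is to match this explicit answer with $\cM_{\bm{L}_1,\bm{L}_2}$, whose definition is purely homotopical: it is the automorphism of the trivial object in $B\Pic(\Mod(\bK))$ obtained by juxtaposing the null homotopies $h_i,h_i'',h_i'$ with the positive definite homotopy $h_{12}'$ between the brane null homotopies of $TL_1$ and $TL_2$. The compatibility follows because the embedding $\Loc(L)\hookrightarrow \mush_{\Lim}(\Lim)$ of Lemma~\ref{lem:branestr} is itself constructed precisely by trivializing the classifying map $L\to B\Pic(\Mod(\bK))$ obtained from $\frakL\colon U/O\to B\Pic(\Mod(\bK))$ together with the brane null homotopy. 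Thus, on the intersection $L_1\cap L_2$, the discrepancy between the two brane trivializations is governed by exactly the same positive path that enters the definition of $m_{L_1,L_2}$. The main obstacle of the proof is therefore not the linear algebra---which is standard---but the careful bookkeeping of this identification: one must verify that the positive-path homotopy used implicitly inside the simple-sheaf computation in $\mush$ coincides, at the level of $B\Pic(\Mod(\bK))$-valued data, with $h_{12}'$. This is done by tracking the Maslov monodromy of the Kashiwara--Schapira stack along a homotopy of Lagrangian planes in $V$ and comparing it with the $\frakL$-action on $\Pic(\Mod(\bK))$.
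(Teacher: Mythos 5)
Your proposal takes a genuinely different route from the paper. The paper introduces a small positive-definite perturbation $L_1'$ of $L_1$ with $L_1\cap L_1'=L_{12}$, imports the brane structure of $\bm{L}_1$ to $L_1'$ along the positive path (so that $m_{L_1,L_1'}$ is trivial by design and the corresponding $\muhom$ is evidently the rank 1 constant sheaf), then uses the local topological equivalence $\mush_{L_1\cup L_1'}\cong \mush_{L_1\cup L_2}$ near $L_{12}$ to transport $\cS_{\bm{L}_2}$ onto $L_1'$; the microlocalization of the transported object along $\bm{L}_1'$ is then $\pi_{L_1'}^{-1}\cM_{L_1,L_2}$ essentially by unwinding the definition of the abstract Maslov index. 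Your route---Weinstein normal form, K\"unneth splitting of $\mush$ near $L_{12}$, the classical linear transverse computation, and a final homotopy-theoretic identification---is conceptually sound and would also reduce the clean case to the transverse case, but it defers all the real content to the last paragraph: ``one must verify that the positive-path homotopy used implicitly inside the simple-sheaf computation in $\mush$ coincides\ldots with $h_{12}'$.'' That sign/grading bookkeeping is precisely what the theorem asserts, and you acknowledge it without carrying it out; the paper's construction of $L_1'$ is engineered so that this identification becomes nearly tautological, which is the real reason to prefer it. You should also be careful that the K\"unneth citation you lean on (Lurie tensor product for $\Sh^\bG_{\tau>0}$) concerns a different category than $\mush_{L_1\cup L_2}$; the microlocal K\"unneth you need does hold but is not what was recalled in Section~\ref{section:equiv_nov_ring}, so it should be cited separately or argued microlocally. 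Finally, it is not automatic that a global brane structure $\bm{\alpha}_i$ splits as an external product over the Weinstein chart---locally this is harmless since any two null homotopies are homotopic, but the global structure of $\cM_{\bm{L}_1,\bm{L}_2}$ then has to be reassembled from the local identifications, and your sketch of this gluing step is very brief.
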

\begin{proof}
    We set $L_{12}\coloneqq L_1\cap L_2$. Let $L'_1$ be a small positive definite perturbation of $L_1$ such that $L_1\cap L_1'=L_{12}$. Then one can import the brane structure of $L_1$ to $L_1'$. Then the abstract intersection Maslov index from $L_1$ to $L_1'$ is trivial, and it is easy to see that the corresponding $\muhom$ is the rank 1 constant sheaf. Hence we have done for the case of $L_2=L_1'$. Let us consider the general case. Topologically, $L_1\cup L_1'\cong L_1\cup L_2$ around $L_{12}$. Moreover, $\mush_{L_1\cup L_1'}\cong \mush_{L_1\cup L_2}$ locally around $L_{12}$. 
    Under this equivalence, we import the microsheaf $\cS_{\bm{L}_2}\in \mush_{L_1\cup L_2}(L_1\cup L_2)$ to $\cS_{\bm{L}_2} \in \mush_{L_1'}(L_1')\subset \mush_{L_1\cup L_1'}(L_1\cup L_1')$. 
    Then we have $\mu_{\bm{L}_1'}(\cS_{\bm{L}_2})\cong \pi_{L_1'}^{-1}\cM_{L_1, L_2}$ locally around $L_{12}$, where $\pi_{L_1'}\colon L_1'\rightarrow L_1'\cap L_1$ is the projection defined locally in the tubular neighborhood of  $L_{12}$. By combining this with the first half of the proof, we get the conclusion. 
\end{proof}

We now consider the immersed case. Let $\bm{L}_1, \bm{L}_2$ be cleanly intersecting immersed Lagrangian branes. 
Each connected component of \emph{$L_1\times_{X}L_2$} can be considered as a clean intersection of embedded Lagrangian branes. 
Hence we get an object $\cM_{\bm{L}_1, \bm{L}_2}$ in $\Loc(L_1\times_{X} L_2)$. We denote the canonical map $L_1\times_{X}L_2\rightarrow X$ by $i_{12}$.

\begin{corollary}
    Let $\bm{L}_i=(L_i, \bm{\alpha}_i, \cL_i) \ (i=1,2)$ be Lagrangian branes intersecting cleanly. 
    Then each brane gives an object $\cS_{\bm{L}_i}$ of $\mush_{L^{\mathrm{Im}}_1\cup L^{\mathrm{Im}}_2}(L_1^{\mathrm{Im}}\cup L_2^{\mathrm{Im}})$. We have the following isomorphism
    \begin{equation}
       \muhom^\bR(\cS_{\bm{L}_1}, \cS_{\bm{L}_2})\coloneqq \cHom_{\mush_{L^{\mathrm{Im}}_1\cup L^{\mathrm{Im}}_2}(L_1^{\mathrm{Im}}\cup L_2^{\mathrm{Im}})}(\cS_{\bm{L}_1}, \cS_{\bm{L}_2})\cong i_{12*}\cM_{\bm{L}_1, \bm{L}_2}.
    \end{equation}
    This is an isomorphism of sheaves on $L_1^{\mathrm{Im}}\cap L_2^{\mathrm{Im}}$.
\end{corollary}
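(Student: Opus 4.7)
The plan is to reduce the immersed case to the embedded case already handled, using the locality of $\muhom$ and the local structure of clean immersions.

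First, I would observe that $\muhom$ is, by construction in \cref{def:muhomR} and the microlocal nature of $\mush$, a local invariant on $L_1^{\mathrm{Im}} \cup L_2^{\mathrm{Im}}$. Hence the claim can be checked in an arbitrarily small neighborhood of each point $q \in L_1^{\mathrm{Im}} \cap L_2^{\mathrm{Im}}$ in $X$. Fix such a $q$. Under \cref{assumption:immersion} the immersions have clean self-intersection, so a sufficiently small neighborhood $U$ of $q$ in $X$ meets $L_1^{\mathrm{Im}}$ (resp.\ $L_2^{\mathrm{Im}}$) in a finite union of embedded Lagrangian branches $\{L_1^{(\alpha)}\}_{\alpha \in A_q}$ (resp.\ $\{L_2^{(\beta)}\}_{\beta \in B_q}$), and cleanness of the intersection of $L_1$ with $L_2$ lets us arrange that each pair $(L_1^{(\alpha)}, L_2^{(\beta)})$ is a clean intersection of embedded Lagrangians in $U$.

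Next, the preimage $i_{12}^{-1}(q) \subset L_1 \times_X L_2$ is exactly the finite set $\{(\alpha,\beta)\}$ indexing these branch pairs, and each connected component of $L_1 \times_X L_2$ through $(\alpha,\beta)$ corresponds locally to the clean intersection $L_1^{(\alpha)} \cap L_2^{(\beta)}$. The brane structure on $\bm{L}_i$ restricts to each branch to give an embedded Lagrangian brane, and in $\mush_{L_1^{\mathrm{Im}} \cup L_2^{\mathrm{Im}}}(U)$ one has a direct-sum decomposition
\begin{equation}
    \cS_{\bm{L}_i}|_U \;\cong\; \bigoplus_{\alpha} \cS_{\bm{L}_i^{(\alpha)}}
\end{equation}
into the embedded branch contributions (tensored with the respective restricted local systems). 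Because $\mush$ sits inside a sheaf of stable categories, $\cHom$ distributes over such finite direct sums, so
\begin{equation}
    \muhom^{\bR}(\cS_{\bm{L}_1}, \cS_{\bm{L}_2})|_U \;\cong\; \bigoplus_{\alpha,\beta} \cHom(\cS_{\bm{L}_1^{(\alpha)}}, \cS_{\bm{L}_2^{(\beta)}}).
\end{equation}

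Now I would apply the embedded case (the preceding theorem) to each pair $(L_1^{(\alpha)}, L_2^{(\beta)})$, obtaining $\cM_{\bm{L}_1^{(\alpha)}, \bm{L}_2^{(\beta)}}$ on $L_1^{(\alpha)} \cap L_2^{(\beta)}$. By the very construction of the abstract intersection Maslov index in \cref{definition:sheaf_maslov}, the restriction of $\cM_{\bm{L}_1, \bm{L}_2}$ to the component of $L_1 \times_X L_2$ indexed by $(\alpha,\beta)$ is canonically $\cM_{\bm{L}_1^{(\alpha)}, \bm{L}_2^{(\beta)}}$ (this is really a tautology, since the positive definite path, the brane null-homotopies, and the gerbe trivializations are all defined pointwise and only see the tangent Lagrangians and brane data at the intersection point). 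Summing the local identifications over $(\alpha,\beta)$ and recognizing the result as the $i_{12}$-pushforward yields
\begin{equation}
    \muhom^{\bR}(\cS_{\bm{L}_1}, \cS_{\bm{L}_2})|_U \;\cong\; (i_{12*} \cM_{\bm{L}_1, \bm{L}_2})|_U,
\end{equation}
and gluing over $U$ gives the statement.

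The main obstacle I anticipate is the bookkeeping for the branch decomposition and the compatibility of the Maslov data on the component of $L_1 \times_X L_2$ with the Maslov data of the embedded pair $(L_1^{(\alpha)}, L_2^{(\beta)})$; this is essentially a naturality statement for \cref{definition:sheaf_maslov} under restriction to branches, and while conceptually clear, spelling it out carefully requires chasing null-homotopies $h_i, h_i', h_i''$ through the branch inclusion. Everything else is formal consequence of locality of $\muhom$ and the finite direct-sum decomposition.
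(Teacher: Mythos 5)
Your proposal matches the paper's (implicit) argument: the paper defines $\cM_{\bm{L}_1,\bm{L}_2}$ in the immersed case exactly branch-by-branch on the components of $L_1\times_X L_2$, and deduces the corollary from the embedded theorem by locality of $\muhom$ and the pushforward along $i_{12}$, which is precisely your reduction. So the proposal is correct and essentially the same as the paper's route.
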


In the case $\bK=\bZ$ and $\cC=\Mod(\bZ)$, $\cC$-brane data is what we call brane data with trivial local systems, and we have a fiber sequence
\begin{equation}
    B(\bZ/2)\to \Pic(\cC)\to \bZ.
\end{equation}
The part of $m_{L_1, L_2}$ going to $\bZ$ is the usual intersection Maslov index, denoted by $\alpha_2-\alpha_1$. There exists unique $n\in \bZ$ such that the grading shift of $L_1$ by $n$ makes $m_{L_1,L_2}$ in $B(\bZ/2)$. The resulting  principal $O(1)$-bundle on $L_1\cap L_2$ is denoted by $b_2-b_1$. 

\begin{lemma}
    Let $\bm{L}_i\coloneqq (L_i, \alpha_i, b_i, \cL_i) \ (i=1,2)$ be Lagrangian branes intersecting cleanly. Then
    \begin{equation}
       \cM_{\bm{L}_1, \bm{L}_2}\cong  \cHom(\cL_1,\cL_2)\otimes_\bK(b_2-b_1)[\alpha_2-\alpha_1],
    \end{equation}
    which is a (shifted) local system on $L_1\cap L_2$. 
\end{lemma}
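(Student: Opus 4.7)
The plan is to unpack the definition of the abstract intersection Maslov index $m_{L_1,L_2}$ in the concrete coefficient setting $\cC = \Mod(\bZ)$, where the paragraph preceding the lemma already records the crucial splitting $\Pic(\Mod(\bZ)) \cong \bZ \times B\bZ/2$, and then reassemble the three pieces of brane data.

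First, I would reduce to the case of trivial rank-one local systems. The rank-one local system $\cL_i$ enters the brane datum $\bm\alpha_i = (\alpha_i, b_i, \cL_i)$ as an additional twist of the null-homotopy $h_i''$ of $c_i' \colon L_i \to B\Pic(\cC)$; in particular, replacing $\cL_i$ by the trivial local system changes $h_i''$ by tensoring with $\cL_i$. Since the abstract intersection Maslov index is constructed from the difference of the two null-homotopies $h_i''$ via the positive-definite homotopy $h_{12}'$, this twist contributes $\cL_1^{-1} \otimes \cL_2 = \cHom(\cL_1,\cL_2)$ to $m_{L_1,L_2}$ multiplicatively. Hence it suffices to establish the formula in the case $\cL_i = \bK$, where the target simplifies to $(b_2 - b_1)[\alpha_2 - \alpha_1]$.

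Second, I would decompose $m_{L_1,L_2} \colon L_1 \cap L_2 \to \Pic(\Mod(\bZ))$ along the product decomposition $\Pic(\Mod(\bZ)) \cong \bZ \times B\bZ/2$ into two independent components. For the $\bZ$-component, I would verify it is the classical intersection Maslov index $\alpha_2 - \alpha_1$: the $\bZ$-summand of $\Pic(\Mod(\bZ))$ comes from the Bott-periodicity-style summand of $U/O$, and evaluating the loop formed by concatenating $h_1$, the positive-definite path in $LGr$, and $h_2^{-1}$ against the Maslov cycle recovers precisely the classical construction of the Maslov index of two Lagrangians with graded brane structures $\alpha_i$, which yields the cohomological shift by $\alpha_2 - \alpha_1$.

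Third, and this is the step I expect to be the main obstacle, I would identify the $B\bZ/2$-component of $m_{L_1,L_2}$ with the principal $O(1)$-bundle $b_2 - b_1$ determined by the (relative) pin structures. Both sides are $\bZ/2$-gerbe/$O(1)$-bundle-valued functions natural in the brane data and depend only on the normal geometry of $L_1 \cap L_2$ in $L_1 \cup L_2$. The identification amounts to chasing how the two null-homotopies of $c_i \colon L_i \to B^2\Pic(\cC)$ (namely $h_i$ from the Lagrangian condition via $U/O$, and $h_i'$ from the ambient Maslov datum) combine along the positive-definite homotopy. I would argue by reducing to a model case of transverse intersection in a Darboux chart, where the standard pin/Maslov calculation exhibits the answer as $b_2 - b_1$ (this is essentially the pin version of the intersection Maslov computation; see the transverse case treated in \cite{Ike19}), and then extending to the clean case by the same reduction to $(\ell_1 + \ell_2)/\ell_1 \cap \ell_2$ that underlies the positive-definite path construction. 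Naturality of both sides under deformations of brane data through the intersection locus then propagates the identification globally on $L_1 \cap L_2$.

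Combining the three components through the product decomposition $\Pic(\Mod(\bZ)) \cong \bZ \times B\bZ/2$ and reinstating the local-system twist yields the claimed isomorphism $\cM_{\bm{L}_1,\bm{L}_2} \cong \cHom(\cL_1,\cL_2) \otimes_\bK (b_2 - b_1)[\alpha_2 - \alpha_1]$.
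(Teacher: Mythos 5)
Your proposal is correct and follows essentially the same route the paper (implicitly) takes: the lemma is stated there without a proof, resting on the decomposition $\Pic(\Mod(\bZ))\cong \bZ\times B\bZ/2$ and the identifications, asserted ``by the construction'' in the preceding paragraph, of the $\bZ$-component of $m_{L_1,L_2}$ with the grading difference $\alpha_2-\alpha_1$ and of the $B\bZ/2$-component with the $O(1)$-bundle $b_2-b_1$. Your write-up simply makes these componentwise identifications and the extra $\cHom(\cL_1,\cL_2)$ twist coming from the rank-one local systems explicit (including the clean-case reduction to $(\ell_1+\ell_2)/\ell_1\cap\ell_2$), which is consistent with the paper's construction.
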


\begin{definition}
    For a Lagrangian brane $\bm{L}=(L, \bm{\alpha}, \cL)$, we set $L_{\mathrm{SI}}\coloneqq L\times_{X}L$ and $\cL_{\mathrm{SI}}\coloneqq \cM_{\bm{L}, \bm{L}}$.
\end{definition}

\begin{remark}\label{rem:Maslov}
    We conjecture that $\cL_{\mathrm{SI}}$ is equal to $\Theta^-$ in immersed Floer theory~\cite{FukayaLagrangianCorresp}. In the situation of \cite[Example~3.12]{FukayaLagrangianCorresp}, it is easy to see that this conjecture is true.
\end{remark}

\subsection{Microlocal extension classes}\label{subsec:microlocal_extension}

Let $\bm{L}=(L, \bm{\alpha}, \cL)$ be a Lagrangian brane and $\cE\in \mu(X; u<c)$ be a simple sheaf quantization of~$\bm{L}$ (see \cref{def:sheaf_quantization}).

\begin{lemma}\label{lemma:microlocalization_along_lag}
    For any $0<u<c$, the microlocalization of $\Sp_u(\cE)$ along $\bm{L}$ is $\cL\oplus \cL[-1]$. 
\end{lemma}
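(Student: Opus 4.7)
The plan is to localize on $L$, reduce to the standard sheaf quantization of \cref{lem:c-SQ}, and directly compute the microsheaf from the two sheets of the microsupport doubling. Because $\Sp_u = i_u^{-1}$ and $\mu_{\bm L}$ are both microlocal along $L$, it suffices to verify the isomorphism on a contractible neighborhood of each point of $L$. By \cref{thm:classification} the classifying class $b \in H^1(L, \Lambda_0^+/T^c\Lambda_0)$ of $\cE$ trivializes on such a neighborhood, so I may assume $\cE$ is the standard sheaf quantization $\cS_L$.

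In the Darboux chart used in the proof of \cref{lem:c-SQ}, the microsupport doubling $AA = AA_h \cup AA_t$ decomposes into the horizontal sheet at $\upsilon = 0$ and the tilted sheet at $\upsilon = -\tau$; both project onto the same $A = \rho^{-1}(\Lim)$ after the specialization $i_{u_0}^{-1}$. From the geometry of the cusp-doubling local model I would construct a distinguished triangle
\begin{equation}
    \cE_h \longrightarrow \Sp_{u_0}(\cE) \longrightarrow \cE_t \xrightarrow{+1}
\end{equation}
in $\mu^\bR_L(X)$ separating the two sheets: $\cE_h$ is microsupported on the projection of $AA_h$ and $\cE_t$ on that of $AA_t$. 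By the defining property of the sheaf quantization (through $\mu^\bR_{\Lim}$, which in \S\ref{subsec:microsheaves} is computed from the horizontal sheet alone), $\mu_{\bm L}(\cE_h) \cong \cL$. For the tilted piece, the analogous computation along $AA_t$ gives a microstalk isomorphic to $\cL$ as a local system, but the relative position of the tangent Lagrangians to $AA_h$ and $AA_t$ at a point of $L_f$ records a nontrivial Maslov shift: the two tangent Lagrangians differ by a quarter-turn in the $(u,\upsilon)$-plane, and \cref{def-lem:pos_path_trans} together with \cref{definition:sheaf_maslov} identifies the positive definite path between them as having Maslov index $1$, so $\mu_{\bm L}(\cE_t) \cong \cL[-1]$.

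Finally, the triangle splits on the contractible chart: its connecting class lies in $\Hom(\cE_t, \cE_h[1])$, which after microlocalization lands in $\Ext^2_{\Loc}(\cL, \cL) \cong H^2(\mathrm{chart}, \End \cL) = 0$. Therefore $\mu_{\bm L}(\Sp_u(\cE)) \cong \cL \oplus \cL[-1]$ on the local patch, and by locality of microlocalization the isomorphism extends globally on $L$. The main obstacle is the precise identification of the Maslov shift: one must verify that the positive definite path from the tangent Lagrangian of $AA_h$ to that of $AA_t$ yields the shift $[-1]$ rather than $[+1]$, which requires fixing the orientation conventions carefully against those of \cref{definition:sheaf_maslov} and checking that the sign-of-rotation convention in the $(u,\upsilon)$-plane matches the brane-sheaf correspondence.
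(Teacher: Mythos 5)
There is a genuine gap, and it sits exactly where your argument puts all its weight: the localize-then-globalize structure. The assertion of the lemma is a \emph{global} identification of $\mu_{\bm L}(\Sp_u(\cE))$ with $\cL\oplus\cL[-1]$ in $\Loc(L)$; verifying an isomorphism on contractible neighborhoods of points of $L$ (where every rank-one local system is trivial and every relevant $\Ext^2$ vanishes) determines the object only up to local isomorphism, and "by locality of microlocalization the isomorphism extends globally" is not a valid deduction -- your local identifications come from non-canonical choices (the trivialization of the class $b$, the splitting of your triangle), and nothing guarantees they agree on overlaps. The paper's proof is global from the start: the horizontal component $AA_h$ carries the local system $\cL$ by the very definition of a sheaf quantization of $\bm L$ (via the microlocalization along $AA_h$ in \S 5.1), and the brane structure is then extended canonically across the cusp configuration at $u=0$ to the tilted component $AA_t$, where the Maslov grading jumps by $1$; this is what produces $\cL$ and $\cL[-1]$ \emph{as global local systems}, which is precisely the content used later (e.g.\ in \eqref{eq:sp_extension}, where $\Ext^1(\cL,\cL)$ and $H^1(L_{\mathrm{SI}};\cL_{\mathrm{SI}})$ appear). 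Your quarter-turn computation in the $(u,\upsilon)$-plane is the same local mechanism as the paper's "crossing the cusp shifts the grading by $1$", but it cannot by itself recover which local system sits on each sheet globally.

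Two secondary problems. First, the reduction via \cref{thm:classification} oversteps its hypotheses: that theorem is bijective only for sufficiently Weinstein $X$ and classifies sheaf quantizations in $\mu(X;u<c_{\bm L})$ for the specific small $c_{\bm L}$, whereas \cref{lemma:microlocalization_along_lag} concerns an arbitrary simple sheaf quantization in $\mu(X;u<c)$ over a general Liouville $X$; for larger $c$ the objects are governed by bounding cochains, not by a class in $H^1(L,\Lambda_0^+/T^c\Lambda_0)$. Second, the distinguished triangle $\cE_h\to\Sp_{u_0}(\cE)\to\cE_t$ "separating the sheets" needs justification: after slicing at $u=u_0$ both components of the doubling have the same image $\rho^{-1}(\Lim)$ (since $A$ is $\bG$-invariant in $t$), so the two pieces cannot be distinguished by a microsupport condition in $\mu(X)$; the separation is visible only in the doubling picture before specialization (different $\upsilon$), or in the standard local model -- but once you have retreated to the standard model on a chart you are back to the globalization problem above.
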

\begin{proof}
We set $A=\rho^{-1}(L)$. Perturbing $AA$ slightly by a Hamiltonian isotopy, we can make the configuration at $u=0$ be of the cusp type as in Nadler--Shende~\cite{nadler2020sheaf}. 
We put the brane structure $(\bm{\alpha}, \cL)$ on $AA_h$ and extend it to $AA_t$.
When crossing the cusp, the Maslov grading changes by 1. 
This completes the proof.
\end{proof}

For $c_1<c$, $\cE_1\coloneqq r_{cc_1}(\cE)$ and $\cE_2\coloneqq S_{-c_1}\Cone(s_{c-c_1}r_{cc_1}(\cE)\rightarrow \cE)[-1]$ are sheaf quantizations of $\bm{L}$ such that $\mu_{\bm{L}}(\cE_i)\cong \bK_L$. Then $\cE \in \mu(X;u<c)$ is an extension of $s_{c-c_1}\cE_1$ by $S_{c_1}\cE_2$ such that $\mu_{\bm{L}}(\cE)\cong \bK_L$. For $c>c_2>c_1$, we have the following exact triangle:
\begin{equation}
    \Sp_{c_2-c_1}(\cE_2)\rightarrow \Sp_{c_2}(\cE)\rightarrow \Sp_{c_1}(\cE_1)\rightarrow.
\end{equation}
Consider the microlocalization along $\bm{L}$ in $\mu(X)$. Then we have the associated exact triangle:
\begin{equation}
    \mu_{\bm{L}}(\Sp_{c_2-c_1}(\cE_2))\rightarrow \mu_{\bm{L}}(\Sp_{c_2}(\cE))\rightarrow \mu_{\bm{L}}(\Sp_{c_1}(\cE_1))\rightarrow,
\end{equation}
which is isomorphic to
\begin{equation}\label{eq:extension}
    \cL\oplus \cL[-1]\rightarrow \cL\oplus \cL[-1]\rightarrow\cL\oplus \cL[-1]\rightarrow
\end{equation}
in $\Loc(L)$ by \cref{lemma:microlocalization_along_lag}.

The space of extensions is described as 
\begin{equation}\label{eq:sp_extension}
    H^1\Hom_{\mush_{L^{\Image}}(L^{\Image})}(\mu_{\bm{L}}(\Sp_{c_1}(\cE_1)), \mu_{\bm{L}}(\Sp_{c_2-c_1}(\cE_2)))\cong  \begin{aligned}
            & \Ext^1(\cL, \cL[-1])\oplus H^1(L_{\mathrm{SI}};\cL_{\mathrm{SI}}[-1]) \\
            &\oplus \Ext^1(\cL, \cL) \oplus H^1(L_{\mathrm{SI}};\cL_{\mathrm{SI}}) \\
            & \oplus \Ext^1(\cL[-1], \cL[-1]) \oplus  H^1(L_{\mathrm{SI}};\cL_{\mathrm{SI}})\\
            &\oplus \Ext^1(\cL[-1], \cL) \oplus  H^1(L_{\mathrm{SI}};\cL_{\mathrm{SI}}[1]).
        \end{aligned}
\end{equation}
Note that the leftmost $\cL[-1]$ in \eqref{eq:extension} comes from the tilted component of the doubling. The right $\cL\oplus \cL[-1]$ and the leftmost $\cL$ comes from the horizontal component.
Hence, for the third and fourth lines of the left-hand side of \eqref{eq:sp_extension}, the microlocalization of the extension class does not have a non-trivial component there. 
Hence the object $\mu_{\bm{L}}(\Sp_{c}(\cE))$ is classified by an element of the first and second lines of the right-hand side of \eqref{eq:sp_extension}. 
We call this element the \emph{microlocal extension} class of $\cE$ at $c_1$.

\begin{lemma}\label{lemma:microlocal_extension_class}
    The microlocal extension class can be normalized into the form
    \begin{equation}\label{eq:microlocalextension}
        (x=1,y=0,z,w) \in 
 \begin{aligned}
            & \Ext^1(\cL, \cL[-1])\oplus H^1(L_{\mathrm{SI}};\cL_{\mathrm{SI}}[-1]) \\
                        &\oplus \Ext^1(\cL, \cL) \oplus H^1(L_{\mathrm{SI}};\cL_{\mathrm{SI}}).
        \end{aligned}
    \end{equation}
\end{lemma}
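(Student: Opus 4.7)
The plan is to view the four coordinates $(x,y,z,w)$ of the microlocal extension class as the matrix entries of the degree-$+1$ connecting morphism between the two splittings of $\mu_{\bm L}(\Sp_{c_1}(\cE_1))$ and $\mu_{\bm L}(\Sp_{c_2-c_1}(\cE_2))$ as $\cL\oplus\cL[-1]$ (horizontal plus tilted component, as in \cref{lemma:microlocalization_along_lag}), and then to use two sources of gauge freedom — rescaling of the horizontal-to-tilted splitting and conjugation by off-diagonal self-intersection automorphisms — to normalize the first two coordinates. The last two coordinates $(z,w)$ are the genuine deformation-theoretic invariants of the extension, and will be left untouched.

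Concretely, first I would unpack the decomposition \eqref{eq:sp_extension} geometrically: by \cref{lemma:microlocalization_along_lag} the two summands $\cL$ and $\cL[-1]$ correspond respectively to the horizontal component $AA_h$ and the tilted component $AA_t$ of the doubling, and the Maslov shift between them accounts for the degree discrepancy. Writing the connecting morphism as a $2\times 2$ matrix of maps on $\cL\oplus\cL[-1]$ and applying the clean-intersection formula for $\muhom$ proved in \cref{subsec:maslov} splits each matrix entry into an embedded part and a self-intersection part, producing the four lines. As explained in the paragraph preceding the lemma, the bottom two lines vanish automatically on degree grounds once one requires the outcome $\mu_{\bm L}(\Sp_{c_2}(\cE))$ to again have the form $\cL\oplus\cL[-1]$, so only $(x,y,z,w)$ need be analyzed.

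Next, for $x=1$: the requirement that $\cE$ be a sheaf quantization, i.e.\ that $\mu_{\bm L}(\cE)\cong\cL$ (single-sheeted along the core $L$ rather than along the doubled Lagrangian), forces the connecting morphism to identify the tilted summand $\cL[-1]$ of $\cE_2$ with the horizontal summand $\cL$ of $\cE_1$; otherwise the cone of the connecting morphism would contain extra cohomology beyond $\cL\oplus\cL[-1]$. This identification is precisely the coordinate $x\in\Ext^1(\cL,\cL[-1])\cong\bK$, which must therefore be a unit. Rescaling the chosen splitting of $\mu_{\bm L}(\Sp_{c_1}(\cE_1))$ normalizes $x$ to $1$. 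Then, for $y=0$: I would exploit the action on the extension class of unipotent endomorphisms of $\mu_{\bm L}(\Sp_{c_1}(\cE_1))$ supported on the self-intersection locus, which live in $H^0(L_{\mathrm{SI}};\cL_{\mathrm{SI}}[-1])$. Conjugation by such an endomorphism shifts $y$ by an element of the same group while leaving the $x$-coordinate fixed (since it acts off-diagonally with respect to the embedded/self-intersection splitting in a given matrix entry), so the $y$-component can be gauged away.

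The main obstacle will be verifying the compatibility of the two normalizations — that killing $y$ by a self-intersection automorphism does not reintroduce a non-trivial contribution in the already-normalized $x$-coordinate, and that no residual freedom remains once $(x,y)=(1,0)$ has been fixed. This reduces to a local computation in a Darboux chart around a self-intersection point of $L$, where one can explicitly write the cusp-doubling of Nadler--Shende \cite{nadler2020sheaf} and check the action of the relevant automorphism group on the four coordinates. A degree count at the level of the $\Ext$ bigrading, together with the fact that the self-intersection automorphisms in $H^0(L_\mathrm{SI};\cL_\mathrm{SI}[-1])$ have no diagonal component on the embedded part, should be enough to confirm that the two normalizations are independent.
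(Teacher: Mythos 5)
Your proposal is correct and follows essentially the same route as the paper: the constraint that the microlocalization of the specialization is $\cL\oplus\cL[-1]$ (\cref{lemma:microlocalization_along_lag}) forces the $(x,y)$-component to be invertible, after which a change of trivialization puts it in normal form. The paper's proof is the one-step version of yours --- it views $(x,y)$ as a single degree-zero morphism $\cL\to\cL$ in $\mush_{L^{\Image}}(L^{\Image})$, observes it must be an isomorphism (else the extension is not $\cL\oplus\cL[-1]$), and normalizes by composing with its inverse --- which dissolves your ``main obstacle'' about compatibility of the two normalizations and also sidesteps the small slips in your bookkeeping (the gauge parameter should sit in $H^0(L_{\mathrm{SI}};\cL_{\mathrm{SI}})$, and $\Ext^1(\cL,\cL[-1])\cong\bK$ only when $L$ is connected).
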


\begin{proof}
We view $(x,y)$ as a morphism $\cL\rightarrow \cL$. If $(x,y)$ is not an isomorphism, the corresponding extension is not $\cL\oplus \cL[-1]$, which is a contradiction. We can normalize it as $(x,y)=\id=(1,0)$.
\end{proof}

By this lemma, in the following, we regard the microlocal extension class as an element of $\Ext^1(\cL, \cL) \oplus H^1(L_{\mathrm{SI}};\cL_{\mathrm{SI}})=H^1\Hom_{\mush_{L^{\Image}}(L^{\Image})}(\mu_{\bm{L}}(\cE), \mu_{\bm{L}}(\cE))$. 

\begin{definition}
    We say that $\cE\in \mu(X; u<c)$ is the \emph{standard extension} at $c_1$ if the microlocal extension class is zero. Namely, $z=w=0$ in the expression of \cref{lemma:microlocal_extension_class}.
\end{definition}

\subsection{Sheaf-theoretic Fukaya algebra}\label{subsec:sheaf_Fukaya_alg}

Let $\bm{L}$ be a Lagrangian brane. 

Let $\sigma\coloneqq \lc 0=c_0<c_1<c_2<\cdots\rc$ be a discrete submonoid of $\bR$.
In the terminology of \cite{FOOO}, we will consider the setup gapped by $\sigma$. 
By taking $\sigma$ sufficiently fine, we obtain $(c_{i}-c_{i-1})$-standard sheaf quantizations $\cS^i_{\bm{L}}$.
Then we have a morphism
\begin{equation}
    f_{ij}^{\mathrm{st}}\colon S_{\sigma} \cS_{\bm L}^{i}[-1]\rightarrow S_{\sigma} \cS^j_{\bm L}
\end{equation}
for each $i> j$
corresponding to the standard extension. 
This defines a degree 1 endomorphism $f^{\mathrm{st}}$ of the dga $\End\lb \bigoplus_{i\geq 1}S_\sigma \cS_{\bm L}^i\rb$. 

In general, the dga $\End\lb \bigoplus_{i\geq 1}S_\sigma \cS_{\bm L}^i\rb$ is not well-behaved in the following construction. The reason is that one should rather consider the notion of  derived dga (or $E_1$-algebra), since we are working over a ring. Instead, we take a resolution of $\bigoplus_{i\geq 1}S_\sigma \cS_{\bm L}^i$ on which $\Lambda_0$ acts freely. Such a resolution can be easily found through a variant of the Godement resolution. We fix such a model.

To kill some unwanted elements, we redefine $\End\lb \bigoplus_{i\geq 1}S_\sigma \cS_{\bm L}^i\rb$ by $\End\lb \bigoplus_{i\geq 1}S_\sigma \cS_{\bm L}^i\rb_a$. 
\Cref{section:almost} for this operation. See also \Cref{sec:curved_dga} for a curved dga. 

\begin{proposition}
The dga 
\begin{equation}
    (\End\lb \bigoplus_{i\geq 1}S_\sigma \cS_{\bm L}^i\rb\otimes_{\Lambda_0/T^c\Lambda_0}\Lambda_0/\Lambda_0^+, (d+f^{\mathrm{st}})\otimes_{\Lambda_0}\Lambda_0/\Lambda_0^+)
\end{equation}
is isomorphic to the cohomology dga $\Hom_{\mush_{L^{\mathrm{Im}}}(L^{\mathrm{Im}})}(\cL,\cL)$. 
In particular, the pair $(\End\lb \bigoplus_{i\geq 1}S_\sigma \cS_{\bm L}^i\rb, d+f^{\mathrm{st}})$ defines a curved dga. 
Here $d$ denotes the differential of $\End\lb \bigoplus_{i\geq 1}S_\sigma \cS_{\bm L}^i\rb$. We denote the resulting curved dga by $CF_{\mathrm{SQ}}(\bm{L}, \bm{L},\sigma)$.
\end{proposition}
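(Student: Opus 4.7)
The plan is to identify the pair $(\End(\bigoplus_{i \geq 1} S_\sigma \cS^i_{\bm L}), d + f^{\mathrm{st}})$ with the endomorphism dga of a $c$-standard sheaf quantization of $\bm L$ assembled from the $\cS^i_{\bm L}$'s, and then apply the endomorphism lemma from \S\ref{subsec:properties_standard_SQ} followed by reduction modulo $\Lambda_0^+$. More precisely, I would check that the data $(\{\cS^i_{\bm L}\}_{i\geq 1}, f^{\mathrm{st}})$ gives rise to a twisted complex whose total object $\cE$ is almost isomorphic to the $c$-standard sheaf quantization $\cS_{\bm L}$. By construction each $f^{\mathrm{st}}_{ij}$ represents a standard extension in the sense of \S\ref{subsec:properties_standard_SQ}, so the associated microlocal extension class of \S\ref{subsec:microlocal_extension} vanishes at every stage of the iterated extension; this forces the total twisted complex to be (almost isomorphic to) the $c$-standard sheaf quantization, by the characterization provided in the proof of \cref{thm:classification}. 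Via the Bondal--Kapranov correspondence, the dg-endomorphism algebra of this twisted complex is exactly $(\End(\bigoplus_{i \geq 1} S_\sigma \cS^i_{\bm L}), d + [f^{\mathrm{st}}, -])$ after applying the almost-zero operation $(-)_a$.

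Invoking the endomorphism lemma of \S\ref{subsec:properties_standard_SQ}, which furnishes an almost isomorphism $\End(\cS_{\bm L})_a \cong \muhom(\bm L, \bm L) \otimes_\bK \Lambda_0/T^c\Lambda_0$, I obtain an isomorphism of dgas
\begin{equation*}
    \bigl(\End(\bigoplus_{i\geq 1} S_\sigma \cS^i_{\bm L})_a,\ d + f^{\mathrm{st}}\bigr)
    \xrightarrow{\sim}
    \muhom(\bm L, \bm L) \otimes_\bK \Lambda_0/T^c\Lambda_0.
\end{equation*}
Base-changing along the quotient $\Lambda_0/T^c\Lambda_0 \twoheadrightarrow \Lambda_0/\Lambda_0^+ = \bK$ collapses the Novikov factor and identifies the reduction with $\muhom(\bm L, \bm L)$, which is canonically $\Hom_{\mush_{L^{\mathrm{Im}}}(L^{\mathrm{Im}})}(\cL, \cL)$ under the brane embedding of \cref{lem:branestr}. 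This proves the first assertion.

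The ``in particular'' statement is then formal. For any dga $(A, d)$ and any element $f \in A^1$, a direct computation using $d^2=0$ and the graded Leibniz rule shows that the triple $(A,\, d + [f, -],\, F := df + f^2)$ satisfies the curved dga axioms, namely $(d+[f,-])^2 = [F,-]$ and $(d+[f,-])F = 0$. Applying this to $f = f^{\mathrm{st}}$ endows the pair with the desired curved dga structure $CF_{\mathrm{SQ}}(\bm{L}, \bm{L},\sigma)$. Moreover, since the first part exhibits the reduction mod $\Lambda_0^+$ as a genuine (uncurved) dga, the curvature $F$ necessarily lies in $\Lambda_0^+ \cdot A^2$, consistent with the Floer-theoretic picture of bounding cochains.

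The main obstacle I expect is the first step, namely identifying the twisted complex built from $(\{\cS^i_{\bm L}\}, f^{\mathrm{st}})$ with the $c$-standard sheaf quantization. This requires carefully tracking the $\sigma$-decomposition through the $S_\sigma$ operation (which combines stupid extensions with $u$-translations) and verifying that standardness of each $f^{\mathrm{st}}_{ij}$ is preserved when assembling the pieces into a single iterated extension. Managing the almost-isomorphism operation $(-)_a$ compatibly with the direct sum and the Bondal--Kapranov construction is a secondary technical point.
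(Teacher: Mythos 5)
Your argument breaks down at its first and crucial step. The collection $(f^{\mathrm{st}}_{ij})_{i>j}$ does \emph{not} satisfy the Maurer--Cartan equation of a twisted complex over $\Lambda_0$ in general: the identity $(d+f^{\mathrm{st}})^2=0$ is exactly the condition that $\bm b=0$ be a sheaf-theoretic bounding cochain, and the entire point of building a \emph{curved} dga here is that this can fail for obstructed $\bm L$ (compare \S\ref{subsec:bc_SQ}, where a Maurer--Cartan element $\bm b$ is needed before $\lc \{S_\sigma\cS^i_{\bm L}[i-1]\}_i,\{(f^{\mathrm{st}}+\bm b)_{ij}\}\rc$ becomes a twisted complex). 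So there is in general no total object $\cE$ to which you could apply Bondal--Kapranov. Moreover, even granting a total object, your identification of it with ``the $c$-standard sheaf quantization'' is unavailable: standard sheaf quantizations are only constructed for energies below $c_{\bm L}$, whereas $\sigma$ is an unbounded monoid and the ambient category is $\mu(X;u<c)$ with $c$ beyond $c_{\bm L}$ (typically $c=\infty$); the endomorphism lemma you invoke, $\End(\cE)$ almost isomorphic to $\muhom(\bm L,\bm L)\otimes_\bK\Lambda_0/T^c\Lambda_0$, is stated only for $0<c<c_{\bm L}$. In short, your route assumes unobstructedness of $\bm L$, which is precisely what the proposition must not assume.

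The paper's proof avoids this by reducing modulo $\Lambda_0^+$ \emph{before} assembling anything: \cref{cor:intersectionestimate} identifies $\End\lb\bigoplus_{i\geq 1} S_\sigma\cS^i_{\bm L}\rb\otimes_{\Lambda_0}\Lambda_0/\Lambda_0^+$ with $\Hom_{\mush_{L^{\mathrm{Im}}}(L^{\mathrm{Im}})}(\bigoplus_i\mu_{\bm L}(\cS^i),\bigoplus_i\mu_{\bm L}(\cS^i))$, and only then is $f^{\mathrm{st}}$ microlocalized. After this reduction each $f^{\mathrm{st}}_{ij}$ becomes an isomorphism between (shifted) copies of $\cL$, so one does obtain an honest twisted complex, but in $\mush_{L^{\mathrm{Im}}}(L^{\mathrm{Im}})$ rather than in $\mu(X;u<c)$; it collapses to $\cL$, and the twisted endomorphism dga is $\Hom_{\mush_{L^{\mathrm{Im}}}(L^{\mathrm{Im}})}(\cL,\cL)$. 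Your ``in particular'' paragraph (the formal curvature computation together with the remark that the curvature lies in $\Lambda_0^+\cdot A^2$ once the first assertion is known) is fine and matches the paper's logic, but it rests entirely on the first assertion, so the gap above is essential.
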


\begin{proof}
Note that \cref{cor:intersectionestimate} below gives an isomorphism $\End\lb \bigoplus_{i\geq 1}S_\sigma \cS_{\bm L}^i\rb\otimes_{\Lambda_0}\Lambda_0/\Lambda_0^+\cong
\Hom_{\mush_{L^{\mathrm{Im}}}(L^{\mathrm{Im}})}(\bigoplus_i\mu_{\bm{L}}(\cS^i), \bigoplus_{i}\mu_{\bm{L}}(\cS^i))$. Then $f^{\mathrm{st}}$ is microlocalized and gives a twisted complex in $\mush_{L^{\mathrm{Im}}}(L^{\mathrm{Im}})$. Each $f_{ij}$ is microlocalized to an isomorphism between $\cL$. Hence the resulting twisted complex is isomorphic to $\cL$. Thus, the endoalgebra is isomorphic to $\Hom_{\mush_{L^{\mathrm{Im}}}(L^{\mathrm{Im}})}(\mu_{\bm{L}}(\cS^i), \mu_{\bm{L}}(\cS^i)) \eqqcolon \Hom_{\mush_{L^{\mathrm{Im}}}(L^{\mathrm{Im}})}(\cL, \cL)$ for some (then any) $i$.
\end{proof}

Let $\sigma'$ be a refinement of $\sigma$. Then there exists a morphism induced by a quasi-isomorphism given by the refinement between $CF_{\mathrm{SQ}}(\bm{L}, \bm{L},\sigma)$ and $CF_{\mathrm{SQ}}(\bm{L}, \bm{L},\sigma')$.
Then it induces a quasi-isomorphism over $\Lambda_0/\Lambda_0^+$. Hence we obtain a quasi-isomorphism from $CF_{\mathrm{SQ}}(\bm{L}, \bm{L},\sigma)$ to $CF_{\mathrm{SQ}}(\bm{L}, \bm{L},\sigma')$.

Let $\Sigma$ be the set of discrete monoids of $\bR$, which forms a filtered poset. We take a cofinal subset $\Sigma'$ such that $CF_{\mathrm{SQ}}(\bm{L}, \bm{L},\sigma)$ is defined for any $\sigma$. We denote the colimit curved dga along $\Sigma'$ by $CF_{\mathrm{SQ}}(\bm{L}, \bm{L})$. Note that it does not depend on the choice of $\Sigma'$.

\begin{conjecture}
    There exists a quasi-isomorphism $CF_{\mathrm{SQ}}(\bm{L}, \bm{L})\simeq CF(\bm{L}, \bm{L})$ of curved $A_\infty$-algebras.
\end{conjecture}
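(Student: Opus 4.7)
\emph{Strategy.} The plan is to prove the quasi-isomorphism in two stages. First establish that the two curved $A_\infty$-algebras are quasi-isomorphic modulo $\Lambda_0^+$, and then lift the comparison across the $T$-adic filtration using the filtered $A_\infty$-obstruction theory of Fukaya--Oh--Ohta--Ono. The classical comparison is essentially forced by construction: the proposition preceding the conjecture identifies $CF_{\mathrm{SQ}}(\bm L,\bm L)\otimes_{\Lambda_0}\Lambda_0/\Lambda_0^+$ with the microsheaf endomorphism dga $\Hom_{\mush_{L^{\mathrm{Im}}}(L^{\mathrm{Im}})}(\cL,\cL)$, which by the Maslov computation of \S\ref{subsec:maslov} is the cohomology of $L\times_X L$ with coefficients in $\cHom(\cL,\cL)\otimes \cM_{\bm L,\bm L}$. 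Theorem~\ref{theorem:Fukayaalgebra}(1) gives the same cohomology on the Floer side, granting the identification $\cL_{\mathrm{SI}}\cong \cM_{\bm L,\bm L}$ conjectured in Remark~\ref{rem:Maslov}. Matching the algebra structures at cohomology then reduces to identifying microsheaf Yoneda composition with the zero-area pair-of-pants product, which is standard in the exact/local case.

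\emph{Producing a direct $A_\infty$-morphism.} To build the comparison map, I would reduce to the cotangent bundle case via the antimicrolocalization embedding of \S\ref{subsec:antimicrolocalization}, placing the problem inside $\mu^{A\mu}_L(T^*\bR^N)$. There one can mimic Viterbo's Floer-theoretic construction of sheaf quantization~\cite{viterbo2019sheaf} and promote it to the Novikov setting: for a sufficiently fine $\sigma$, the standard sheaf quantization $\cS^i_{\bm L}$ should arise as the energy-window $(c_{i-1},c_i]$ piece of a Floer complex built from the pair $(L,L_f)$, and the standard extension morphism $f_{ij}^{\mathrm{st}}$ should correspond to the Floer differential summing over strips of area in the matching window. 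Taking the colimit over refinements of $\sigma$ then produces a candidate $A_\infty$-morphism $CF(\bm L,\bm L)\to CF_{\mathrm{SQ}}(\bm L,\bm L)$ whose reduction modulo $\Lambda_0^+$ is the classical equivalence of the first step.

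\emph{Lifting via filtered obstruction theory.} Once a filtration-preserving $A_\infty$-morphism is in hand whose classical reduction is a quasi-isomorphism, the Whitehead-type theorem for filtered $A_\infty$-algebras from \cite{FOOO} upgrades it to a genuine almost quasi-isomorphism of curved $A_\infty$-algebras. Obstructions to extending at the $n$-th stage of the $T$-adic filtration live in Hochschild cohomology of the classical algebra, computed in the first step, and are killed inductively using the fact that the classical comparison is already an equivalence. The curvature terms are controlled by the gapped structure of $\sigma$ and the convergence is automatic in the $T$-adic topology.

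\emph{Main obstacle.} The hard step is the direct construction, because the sheaf side is manifestly only a curved \emph{dga} whereas the Floer side has infinitely many $A_\infty$-operations. The natural route is to enhance $CF_{\mathrm{SQ}}(\bm L,\bm L)$ with higher operations arising from iterated standard extensions (the $n$-th operation being a count of planar trees of standard extensions between the $\cS_{\bm L}^i$), and verify that these agree with Floer disk counts. An alternative is to go through the functor $F$ of Conjecture~\ref{conj:main}(2) and deduce (1) from (2); in either case, the analytic heart of the argument must be a non-exact, Novikov-coefficient version of the Nadler--Zaslow correspondence identifying moduli of pseudoholomorphic disks with boundary on $L$ with moduli of gradient-flow trees on the Legendrian lift $L_f$, and this is where new input beyond the exact-case literature will be needed.
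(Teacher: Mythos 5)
The statement you are asked to prove is labeled a \emph{conjecture} in the paper, and the paper contains no proof of it. The authors explicitly defer the comparison with Fukaya categories to forthcoming work~\cite{K2}, so there is no paper proof to compare against; your proposal is a research outline, and you have correctly judged it as such.

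As an outline it is reasonable, but there are two issues worth flagging. First, the classical comparison you invoke rests on \cref{rem:Maslov}, which identifies $\cL_{\mathrm{SI}}$ with $\Theta^-$ and is itself only \emph{conjectured} in the paper; so your first step already depends on an open statement and you should not merely ``grant'' it. Second, your third step is largely circular: in \cref{sec:curved_dga} the paper \emph{defines} a quasi-isomorphism of curved dgas (and, by the same pattern, curved $A_\infty$-algebras) to be a filtration-preserving morphism whose reduction modulo $\Lambda_0^+$ is a quasi-isomorphism. If your second step actually produces such a morphism, you are already finished, and the $A_\infty$-Whitehead theorem adds nothing to the claim (it only supplies a homotopy inverse, which the conjecture does not ask for); if instead you want to build the morphism order-by-order in $T$, the obstruction calculation must be carried out concretely and Whitehead does not do that for you. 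The genuine mathematical content therefore sits entirely in your second step, and you have correctly identified the missing input: a non-exact, Novikov-coefficient analogue of the Nadler--Zaslow/Viterbo correspondence matching pseudoholomorphic disks with boundary on $L$ against the standard extension data between the $\cS^i_{\bm L}$. Without that input the proposal remains a program rather than a proof, which is consistent with the paper's treatment of the statement as a conjecture.
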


\begin{definition}
A \emph{sheaf-theoretic bounding cochain} of $\bm{L}$ gapped by $\sigma$ is an element $\bm{b}$ of $CF^1_{\mathrm{SQ}}(\bm{L}, \bm{L}, \sigma)\otimes_{\Lambda_0}\Lambda_0^+$ such that it satisfies the Maurer--Cartan equation
    \begin{equation}
        (d+\bm{b})^2=0.
\end{equation}
\end{definition}
For $\sigma'$ refining $\sigma$, a sheaf-theoretic bounding cochain gapped by $\sigma'$ is induced by a sheaf-theoretic bounding cochain gapped by $\sigma$. Hence it also induces a Maurer--Cartan element of $CF^1_{\mathrm{SQ}}(\bm{L}, \bm{L})$.

\subsection{From bounding cochain to sheaf quantization and back}\label{subsec:bc_SQ}

In this subsection, we would like to construct a sheaf quantization from a bounding cochain.

We first fix a submonoid $\sigma=\lc 0=c_0<c_1<c_2\cdots  \rc$ of $\bR$ such that the following holds:
\begin{enumerate}
    \item The $(c_i-c_{i-1})$-standard sheaf quantization  $\cE_i$ exists for any $i$.
    \item For each $i$, there exists a standard extension morphism $\cE_{i+1}[-1]\rightarrow \cE_{i}$.
\end{enumerate}

Let $\frakB(\bm L, \sigma)$ be the set of bounding cochains of $CF_{SQ}(\bm L, \bm L,\sigma)$. We set
\begin{equation}
    \frakB_{\bm L}\coloneqq \bigsqcup_\sigma \frakB(\bm L, \sigma).
\end{equation}
Let $\frakS(\bm L, \sigma)$ is the set of sheaf quantizations of $\bm L$ such that each component of $\sigma$-decomposition is standard.
We set
\begin{equation}
    \frakS_{\bm L}\coloneqq \bigsqcup_\sigma \frakS(\bm L, \sigma).
\end{equation}
Note that any sheaf quantization of $\bm L$ is contained in $\frakS(\bm L, \sigma)$ for some $\sigma$. Hence $\frakS_{\bm L}$ is the (multi-)set of sheaf quantizations.

Our main theorem in this section is the following:
\begin{theorem}\label{thm:bc}
    There exist maps
    \begin{equation}
        \mathrm{real}\colon \frakB_{\bm{L}}\rightarrow \frakS_{\bm{L}},\quad         \mathrm{bc}\colon \frakS_L\rightarrow \frakB_{\bm{L}}
    \end{equation}
    such that $\mathrm{real}\circ \mathrm{bc} =\id$,
\end{theorem}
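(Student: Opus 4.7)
The plan is to exhibit both maps via the Bondal--Kapranov twisted-complex realization applied to the graded collection $\{S_\sigma \cS^i_{\bm L}\}_{i\geq 1}$, exploiting the fact that the curved-dga Maurer--Cartan equation in $CF_{\mathrm{SQ}}(\bm L, \bm L, \sigma)$ was designed to coincide with the twisted-complex equation on this collection with the standard extension datum $f^{\mathrm{st}}$ perturbed by $\bm b$.

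\textbf{The realization map.} Given $\bm b \in CF^1_{\mathrm{SQ}}(\bm L,\bm L,\sigma)\otimes_{\Lambda_0}\Lambda_0^+$ satisfying $(d+\bm b)^2 = 0$ in the curved sense (where the curved differential is $d+f^{\mathrm{st}}$), the combined datum $f^{\mathrm{st}} + \bm b = \{f^{\mathrm{st}}_{ij}+b_{ij}\}_{i>j}$ satisfies the twisted-complex equation
\begin{equation}
    d(f^{\mathrm{st}}+\bm b) + (f^{\mathrm{st}}+\bm b)^2 = 0
\end{equation}
on $\{S_\sigma \cS^i_{\bm L}\}$. The Bondal--Kapranov realization then produces $\mathrm{real}(\bm b)\in\mu(X;u<c)$. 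To verify $\mathrm{real}(\bm b)\in \frakS(\bm L, \sigma)$, I observe that $\musupp\subset L$ is automatic from the iterated-extension structure; the $\sigma$-decomposition is tautologically $\{\cS^i_{\bm L}\}$ (each standard); and microlocally, reducing $\bm b$ mod $\Lambda_0^+$ collapses to the standard twisted complex, which represents $\cL$ by the structural proposition of \S\ref{subsec:sheaf_Fukaya_alg}. The deformation by $\bm b$ preserves the isomorphism class of $\mu_{\bm L}(\mathrm{real}(\bm b))$ because $1+(\text{stuff in }\Lambda_0^+)$-type perturbations are invertible over $\Lambda_0/T^c\Lambda_0$.

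\textbf{The bounding-cochain map.} Given $\cE\in\frakS(\bm L,\sigma)$, take its $\sigma$-decomposition $\{\cE_{c_i}\}$; by hypothesis each component is a standard sheaf quantization, so using \cref{thm:classification} I fix isomorphisms $\psi_i\colon \cE_{c_i}\xrightarrow{\sim}\cS^i_{\bm L}$. Under these identifications $\cE$ is represented by a Bondal--Kapranov twisted complex on $\{S_\sigma\cS^i_{\bm L}\}$ with extension data $\tilde f$ satisfying $d\tilde f + \tilde f^2=0$, and I set $\mathrm{bc}(\cE)\coloneqq \tilde f - f^{\mathrm{st}}$. The Maurer--Cartan equation $(d+\mathrm{bc}(\cE))^2=0$ is equivalent, via $\alpha=f^{\mathrm{st}}+\mathrm{bc}(\cE)=\tilde f$, to $d\alpha+\alpha^2=0$, which holds by construction. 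That $\mathrm{bc}(\cE)\in\Lambda_0^+$ follows because, modulo $\Lambda_0^+$, both $\tilde f$ and $f^{\mathrm{st}}$ microlocalize to the same standard extension data of $\cL$ by itself; concretely, \cref{lemma:microlocal_extension_class} normalizes the leading microlocal extension class to $(1,0,z,w)$, and by choosing the $\psi_i$ so that this normalization is matched, the non-standard content $(z,w)$ lies in $\Lambda_0^+$.

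\textbf{Composition and main obstacle.} For $\cE\in\frakS_{\bm L}$, the twisted-complex data defining $\mathrm{real}(\mathrm{bc}(\cE))$ is $f^{\mathrm{st}}+(\tilde f-f^{\mathrm{st}})=\tilde f$, so by the Bondal--Kapranov realization $\mathrm{real}(\mathrm{bc}(\cE))\cong \cE$ via the chosen $\psi_i$. The main technical obstacle is verifying that $\mathrm{bc}(\cE)\in\Lambda_0^+$ for a suitable choice of $\psi_i$: the naive difference $\tilde f - f^{\mathrm{st}}$ depends on the identifications $\psi_i$, and one must argue that appropriate $\psi_i$ exist for which the difference lies in the maximal ideal. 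This ultimately reduces — via the classification in \cref{thm:classification} of low-energy sheaf quantizations by $H^1(L,\Lambda_0^+/T^c\Lambda_0)$ — to the observation that the parameter group itself is contained in $\Lambda_0^+$, so after the correct trivialization of each standard component, all non-standard extension content automatically sits in higher powers of~$T$.
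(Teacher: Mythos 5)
Your proposal follows the paper's proof quite closely: both directions rest on the Bondal--Kapranov twisted-complex dictionary, with $\mathrm{real}(\bm b)$ obtained by realizing the twisted complex with data $f^{\mathrm{st}}+\bm b$ on $\{S_\sigma\cS^i_{\bm L}[i-1]\}$, and $\mathrm{bc}(\cE)$ obtained by reading off the twisted-complex data $\tilde f$ of a $\sigma$-decomposition of $\cE$ and subtracting $f^{\mathrm{st}}$. The composite $\mathrm{real}\circ\mathrm{bc}=\mathrm{id}$ is indeed immediate from the construction, exactly as in the paper.

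There is one point where your argument is less tight than the paper's, and it is precisely the point you flag as the main obstacle. The paper does not establish $\mathrm{bc}(\cE)\in CF^1_{\mathrm{SQ}}(\bm L,\bm L,\sigma)\otimes_{\Lambda_0}\Lambda_0^+$ by the abstract normalization argument you attempt. Instead, the crucial input is the $\sigma$-decomposition lemma preceding the definition of $\mathrm{bc}$: for a sufficiently fine $\sigma$ one can arrange not only that each component $\cE_{c_i}$ is standard, but that each consecutive extension morphism $\cE_{i+1}[-1]\to\cE_i$ is a \emph{standard extension} in the sense of \S\ref{subsec:microlocal_extension}, i.e.\ its microlocal extension class has $(z,w)=0$. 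This is what kills the $T^0$-part of $\tilde f_{i+1,i}-f^{\mathrm{st}}_{i+1,i}$ at the outset. Your argument---that two objects microlocalizing to the same thing must agree modulo $\Lambda_0^+$, or that the parameter group of \cref{thm:classification} ``sits in $\Lambda_0^+$''---does not by itself rule out a nonzero microlocal extension class $(z,w)\in\Ext^1(\cL,\cL)\oplus H^1(L_{\mathrm{SI}};\cL_{\mathrm{SI}})$, which is a $\bK$-valued datum and would survive reduction modulo $\Lambda_0^+$; changing the trivializations $\psi_i$ only conjugates $(z,w)$ by units of $\bK$, which cannot annihilate it. So to make your proof airtight you should replace the normalization heuristic with an explicit appeal to the refinability of $\sigma$ so that the consecutive extensions can be taken standard; once that is in place the rest of your argument goes through verbatim and agrees with the paper's.
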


Let $\bm L$ be a Lagrangian brane. Let $\sigma=\lc 0=c_0<c_1<c_2<\cdots\rc$ be a discrete submonoid of $\bR_{\geq 0}$. Let $\bm b$ be a bounding cochain of $\bm b$.
The bounding cochain defines a morphism
\begin{equation}
    f^{\mathrm{st}}+\bm b\colon \bigoplus_{i\geq 1} S_\sigma\cS_{\bm L} ^i\rightarrow \bigoplus_{i\geq 1} S_\sigma\cS_{\bm L} ^i[1].
\end{equation}
On each component, we can interpret it as a degree $i-j+1$-morphism $(f^{\mathrm{st}}+\bm b)_{ij}$ from
 $S_\sigma\cS_{\bm L} ^i[i-1]$ to $S_\sigma\cS_{\bm L} ^j[j-1]$.

 \begin{lemma}
     If $i\leq j$, $(f^{\mathrm{st}}+\bm b)_{ij}=0$.
 \end{lemma}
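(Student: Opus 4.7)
The claim decomposes into the vanishing of $f^{\mathrm{st}}_{ij}$ and of $\bm{b}_{ij}$ for $i \leq j$. The first is immediate from the definition given just above the lemma: the standard extension morphisms $f^{\mathrm{st}}_{ij} \colon S_\sigma \cS_{\bm L}^i[-1] \to S_\sigma \cS_{\bm L}^j$ were only introduced in the range $i > j$, with the remaining entries of the matrix $f^{\mathrm{st}}$ set to zero. This is the one-sided twisted complex shape built into the iterated standard extension, and it matches the definition of twisted complex recalled from \cite{BondalKapranov}.

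For the vanishing of $\bm{b}_{ij}$ with $i \leq j$, the plan is to exploit the positive-energy hypothesis $\bm b \in CF^1_{\mathrm{SQ}}(\bm L, \bm L, \sigma) \otimes_{\Lambda_0} \Lambda_0^+$ together with the geometric localization of $S_\sigma \cS_{\bm L}^i = S_{c_{i-1}} s_{c - c_i} \cS_{\bm L}^i$, whose essential $u$-microsupport sits in the slot $[c_{i-1}, c_i)$. For $i < j$, the essential $u$-microsupports of source and target lie in disjoint intervals $[c_{i-1}, c_i)$ and $[c_{j-1}, c_j)$ (since $c_i \leq c_{j-1}$); combining a microsupport estimate in the spirit of \cref{lem:musuppandSSpositive} with the positive-energy constraint, any candidate degree-$1$ Hom from $S_\sigma \cS_{\bm L}^i$ to $S_\sigma \cS_{\bm L}^j$ is seen to be killed by the almost-ification $(-)_a$. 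For $i = j$, the identification $\End(\cS_{\bm L}^i)_a \cong \muhom(\bm L, \bm L) \otimes_\bK \Lambda_0 / T^{c_i - c_{i-1}} \Lambda_0$ established just above in the discussion of standard sheaf quantizations reduces the question to the positive-energy degree-$1$ part $\muhom^1(\bm L, \bm L) \otimes \Lambda_0^+/T^{c_i-c_{i-1}}\Lambda_0^+$; for sufficiently fine $\sigma$ (which is allowed by the cofinality argument in the construction of $CF_{\mathrm{SQ}}(\bm L, \bm L)$) the almost-ification annihilates this contribution.

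The principal obstacle will be the diagonal case $i = j$: the $u$-support disjointness argument does not apply, and one must rely instead on the interplay between the refinability of $\sigma$, the identification of $\End(\cS_{\bm L}^i)_a$, and the precise definition of the $(-)_a$ operation. The off-diagonal case $i < j$ is the cleaner application of the support localization, and it is the one that most directly reflects the one-sided twisted-complex philosophy. For both cases, a transparent route is via the antimicrolocalization of \cref{subsec:antimicrolocalization}, reducing the question to an explicit microlocal computation in $T^*\bR^N$ where the relevant Hom-spaces are read off from the cusp doubling geometry directly.
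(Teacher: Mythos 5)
The paper states this lemma without proof, so the comparison has to be on the merits of your argument. Your first half is fine: $f^{\mathrm{st}}_{ij}$ is only defined for $i>j$, so its $i\le j$ components vanish by construction. The problem is the second half, and precisely in the case you yourself flag as the principal obstacle, $i=j$. Your proposed resolution is that the almost-ification kills the positive-energy degree-one part $\muhom^1(\bm L,\bm L)\otimes\Lambda_0^+/T^{c_i-c_{i-1}}\Lambda_0$ for sufficiently fine $\sigma$. This is false whenever $\muhom^1(\bm L,\bm L)\neq 0$ (e.g.\ $H^1(L;\bK)\neq 0$, say $L$ a torus): an element of the form $T^{\epsilon}\beta$ with $0<\epsilon<c_i-c_{i-1}$ and $\beta$ a nonzero degree-one class is not annihilated by $\Lambda_0^+$ modulo $T^{c_i-c_{i-1}}$, since $T^{\delta}T^{\epsilon}\beta\neq 0$ for small $\delta>0$; so it survives the operation $(-)_a$. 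Refining $\sigma$ cannot help, because the obstruction is scale-invariant: for any window width $c>0$ the element $T^{c/2}\beta$ is still not almost zero. So a hom-vanishing argument simply cannot prove the diagonal case; the space $\End^1(S_\sigma\cS^i_{\bm L})_a\otimes\Lambda_0^+$ in which $\bm b_{ii}$ lives is genuinely nonzero, and indeed such diagonal classes are exactly the data that the classification map $\mathrm{cl}\colon H^1(L,\Lambda_0^+/T^{c}\Lambda_0)\to\frakS(\bm L,c)$ of \cref{thm:classification} uses to deform sheaf quantizations. If the lemma is to hold for an arbitrary Maurer--Cartan element, it needs an input of a different nature (a normalization or gauge-fixing in the spirit of \cref{lemma:microlocal_extension_class}, or a constraint built into the definition of $\bm b$), not a vanishing statement about the hom-space.

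The off-diagonal case $i<j$ is also only sketched rather than proved. The "disjoint essential $u$-support" picture is not literally correct: by construction $S_\sigma\cS^i_{\bm L}$ is the stupid extension $s$, hence constant in $u$ on the region $[c_{j-1},c_j)$ where the target is active, so the supports are not disjoint and a morphism is not excluded by support reasons alone. A genuine argument would have to use the direction of the tilted component ($\upsilon=-\tau$) of the doubling and the $t$-energy structure to show that positive-energy degree-one morphisms in the wrong direction are almost zero; this is plausible and compatible with the paper's conventions (the natural connecting maps of a $\sigma$-decomposition go from higher to lower index), but it is a computation you would still need to carry out, e.g.\ after antimicrolocalization as you suggest.
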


\begin{lemma}
    The pair $\lc \{S_\sigma \cS^i_{\bm L}[i-1]\}_{i}, \{(f^{\mathrm{st}}+\bm b)_{ij}\}_{i,j}\rc$ forms a twisted complex. The resulting object $\mathrm{real}(\bm b)$ in $\mu(X,u<c)$ gives a sheaf quantization of $\bm L$.
\end{lemma}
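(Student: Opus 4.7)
The plan is to verify the two assertions in order: first, that the data
\[
    \bigl(\{V_i\}_i,\{F_{ij}\}_{i>j}\bigr)\coloneqq \bigl(\{S_\sigma \cS^i_{\bm L}[i-1]\}_i,\{(f^{\mathrm{st}}+\bm b)_{ij}\}_{i>j}\bigr)
\]
satisfies the Bondal--Kapranov twisted-complex equations, and second, that the associated iterated extension $\mathrm{real}(\bm b)\in\mu(X;u<c)$ is a sheaf quantization of $\bm L$.

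For the twisted-complex part, I would first check the degrees. Since $f^{\mathrm{st}}+\bm b$ is a degree-$1$ endomorphism of $\bigoplus_i S_\sigma \cS^i_{\bm L}$ in the curved dga $CF_{\mathrm{SQ}}(\bm L,\bm L,\sigma)$, each component $(f^{\mathrm{st}}+\bm b)_{ij}\colon S_\sigma \cS^i_{\bm L}\to S_\sigma \cS^j_{\bm L}$ is of degree $1$, so as a morphism $V_i\to V_j$ between the shifted objects $V_i=S_\sigma \cS^i_{\bm L}[i-1]$ it has degree $1+(i-1)-(j-1)=i-j+1$, as required by the twisted-complex convention. The previous lemma ensures $F_{ij}=0$ for $i\leq j$, so only the $i>j$ components are nontrivial. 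The curved Maurer--Cartan equation $(d+f^{\mathrm{st}}+\bm b)^2=0$ in $CF_{\mathrm{SQ}}(\bm L,\bm L,\sigma)$, projected onto the $(i,j)$-entry with $i>j$, unfolds to
\[
    dF_{ij}+\sum_{j<k<i}F_{kj}\circ F_{ik}=0,
\]
which is precisely the twisted-complex relation.

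For the sheaf-quantization part, I would first observe that $\mathrm{real}(\bm b)\in \mu_L(X;u<c)$: each $S_\sigma \cS^i_{\bm L}$ is a standard sheaf quantization of $\bm L$ and hence lies in $\mu_L(X;u<c)$, and by \cref{prop:corproduct} this subcategory is stable under iterated extensions; in particular the weak-doubling condition and the containment $\musupp\subset L$ persist. Next, I would microlocalize along $\bm L$ and exploit that $\mu_{\bm L^0}$ commutes with the formation of twisted complexes: the result is the twisted complex in $\mush_{L^{\mathrm{Im}}}(L^{\mathrm{Im}})$ with underlying objects $\{\cL[i-1]\}_i$ and structural morphisms $\mu_{\bm L^0}((f^{\mathrm{st}}+\bm b)_{ij})$. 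Modulo $\Lambda_0^+$ the $\bm b$-contribution vanishes and one recovers exactly the standard twisted complex analyzed in the proposition defining $CF_{\mathrm{SQ}}$, which iterates to $\cL$ because each microlocalized standard extension $\mu_{\bm L^0}(f^{\mathrm{st}}_{ij})$ is an isomorphism.

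The main obstacle will be upgrading this mod-$\Lambda_0^+$ identification to an honest isomorphism $\mu_{\bm L^0}(\mathrm{real}(\bm b))\cong \cL$ over $\Lambda_0/T^c\Lambda_0$. I would address this by a bootstrap along the gapping monoid $\sigma$: write $\bm b=\sum_{\alpha\in\sigma_{>0}}T^\alpha \bm b_\alpha$ and argue inductively on the energy filtration that, at each order in $\Lambda_0^+$, the perturbation of the microsheaf remains a trivial deformation of $\cL$ inside $\muSh_L(L)$, using that the obstructions would lie in $\Ext^2(\cL,\cL)$ and are killed by the Maurer--Cartan condition already verified in the first part. Passing to the inverse limit over $\sigma$ then yields the desired isomorphism, completing the construction of $\mathrm{real}(\bm b)$ as a genuine sheaf quantization of $\bm L$.
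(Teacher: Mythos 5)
Your verification of the twisted-complex condition is correct and follows the paper exactly: the degree bookkeeping $1+(i-1)-(j-1)=i-j+1$ is right, and unfolding $(d+f^{\mathrm{st}}+\bm b)^2=0$ into the componentwise relations is precisely the Maurer--Cartan equation of the twisted complex. The second paragraph, observing that $\mu_{\bm L^0}$ commutes with twisted complexes and that $\bm b$ drops out modulo $\Lambda_0^+$, is also the paper's argument.

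Your final paragraph, however, identifies a phantom obstacle and proposes a bootstrap that is neither needed nor well-posed. You propose to ``upgrade the mod-$\Lambda_0^+$ identification to an honest isomorphism $\mu_{\bm L^0}(\mathrm{real}(\bm b))\cong\cL$ over $\Lambda_0/T^c\Lambda_0$.'' But $\mu_{\bm L^0}$ takes values in $\Loc(L)$, which is a $\bK$-linear (not $\Lambda_0$-linear) category, so there is no ``over $\Lambda_0/T^c\Lambda_0$'' version to upgrade to. The point the paper uses is that the microlocalization along the chosen leaf of $L_f$ kills positive $T$-valuation \emph{exactly}, not just modulo something: $\bm b_{ij}$ factors through a positive translation $T^{\alpha}$ with $\alpha>0$, and such a translation shifts to a different leaf $L_{f+\alpha}$, hence microlocalizes to zero. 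Therefore $\mu_{\bm L^0}\big((f^{\mathrm{st}}+\bm b)_{ij}\big)=\mu_{\bm L^0}(f^{\mathrm{st}}_{ij})$ on the nose, and the resulting twisted complex in $\mush_{L^{\mathrm{Im}}}(L^{\mathrm{Im}})$ is identical to the standard one, with no deformation to control and no $\Ext^2$-obstruction theory involved. This is also what guarantees the cancellation of redundant microsupport in the iterated extension: at each potential overlap the structural morphism is microlocally the standard isomorphism, so $\musupp(\mathrm{real}(\bm b))$ is the expected doubling. Your proposed induction on the energy filtration is therefore a detour around a computation that is already finished; and the Maurer--Cartan equation, which you invoke to kill obstructions, was already fully consumed in the twisted-complex check and contributes nothing further to this part.
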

\begin{proof}
Since $\bm b$ is a bounding cochain, it satisfies $(d+f^{\mathrm{st}}+\bm b)^2=0$. On each component, it says that 
\begin{equation}
    d(f^{\mathrm{st}}+\bm b)_{ij}+\sum_{k} (f^{\mathrm{st}}+\bm b)_{kj}\circ (f^{\mathrm{st}}+\bm b)_{ik}=0.
\end{equation}
This is nothing but the Maurer--Cartan equation of the twisted complex. This proves the first part.

Modulo $\Lambda^+_0$, $\bm b=0$. Hence, microlocally, the twisted complex is given by $f^{\mathrm{st}}$. Hence the redundant microsupports are canceled out each other, and we obtain a sheaf quantization.
\end{proof}

We now construct the map $\mathrm{bc}$. Let $\cE$ be a sheaf quantization of $\bm L$. As noted in the above, we have the following:
\begin{lemma}
There exists a sufficiently fine submonoid $\sigma=\lc 0=c_0 < c_1<\cdots\rc$ of $\bR$, one can satisfy the following:
\begin{enumerate}
    \item Each component of $\sigma$-decomposition $\cE_{c_i}$ of $\cE$ is standard.
    \item For each $i$, there exists a standard extension morphism $\cE_{i+1}[-1]\rightarrow \cE_{i}$.
\end{enumerate}
\end{lemma}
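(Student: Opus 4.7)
The strategy is to use the classification in \cref{thm:classification} and place the partition points of $\sigma$ at exactly the energy jumps of $\cE$. Up to restricting to a smaller $u$-parameter, which does not affect the conclusion since restriction takes sheaf quantizations to sheaf quantizations, I may assume the parameter $c$ of $\cE$ satisfies $c \le c_{\bm L}$, so that \cref{thm:classification} applies. Under the sufficiently Weinstein hypothesis this represents $\cE$ as $\mathrm{cl}(b)$ for some $b \in H^1(L, \Lambda_0^+/T^c \Lambda_0)$, and a \v Cech representative $b_{ij}$ over the cover $\{U_i\}$ from the proof of \cref{thm:classification} is a finite sum $b_{ij} = \sum_d a_d^{ij} T^d$ by the Novikov finiteness condition. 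Let $D = \{d_1 < \cdots < d_k\} \subset (0, c)$ be the finite set of exponents appearing in $b$.

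Next, choose $\sigma$ to be a finite set $\{0 = c_0 < c_1 < \cdots < c_n\}$ containing $D$ and refined so that each gap $c_i - c_{i-1}$ is smaller than $c_{\bm L}$. With this choice the $(c_i - c_{i-1})$-standard sheaf quantization of $\bm L$ exists by \cref{lem:c-SQ}, and standard extensions between consecutive standard pieces exist by the lemma recalled in \S\ref{subsec:properties_standard_SQ}.

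For condition (1) I claim each $\cE_{c_i}$ is standard. Indeed $\cE_{c_i} = S_{-c_{i-1}} \Cone(s_{c_i - c_{i-1}} \cE|_{u<c_{i-1}} \to \cE|_{u<c_i})$ is itself a sheaf quantization of $\bm L$ in $\mu(X; u < c_i - c_{i-1})$, and its classification class in $H^1(L, \Lambda_0^+/T^{c_i - c_{i-1}}\Lambda_0)$ records only those contributions of $b$ whose exponent lies strictly inside the open interval $(c_{i-1}, c_i)$; since $\sigma$ contains all of $D$, no such exponent appears, so $\cE_{c_i}$ has trivial class and is standard. Condition (2) then follows immediately from the existence of standard extensions between standard pieces established in \S\ref{subsec:properties_standard_SQ}.

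The hard part will be the naturality statement used in the verification of (1): that the classification class of the cone $\cE_{c_i}$ really is the $(c_{i-1}, c_i)$-slice of $b$. This amounts to re-running the \v Cech construction of \cref{thm:classification} for each piece and tracking how the cocycle transforms under restriction $r_{c_i c_{i-1}}$, the shift $S_{-c_{i-1}}$, and cone formation against $s_{c_i - c_{i-1}}$. Should this bookkeeping prove too delicate, I would fall back on an inductive alternative: introduce the single partition point $d_1$, verify by \cref{lem:uniquenessforsmallc} and the classification on $(0, d_1)$ that $\cE_{c_1}$ is standard, then recurse on the ``higher-energy'' remainder object in $\mu(X; u < c - d_1)$ obtained after applying $S_{-d_1}$ to the cone.
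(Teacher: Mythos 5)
Your main route has a genuine gap at its very first step. The reduction ``I may assume $c\le c_{\bm L}$'' is not harmless: the lemma asks for a $\sigma$-decomposition of the given sheaf quantization $\cE$ over its \emph{entire} parameter range $u<c$ (in the application to the map $\mathrm{bc}$ one has $c=\infty$), and restricting to $u<c_{\bm L}$ simply discards the higher-energy layers whose standardness is exactly what must be proved. The pieces $\cE_{c_i}$ for large $i$ are cones of restrictions at large values of $u$, and nothing about $r_{c\,c_{\bm L}}(\cE)$ controls them. Moreover \cref{thm:classification} only classifies objects of $\mu(X;u<c_{\bm L})$, so $\cE$ itself cannot be written as $\mathrm{cl}(b)$ for a single class $b$; your finite set $D$ of ``energy jumps'' is therefore not defined for the object the lemma is about (and for $c=\infty$ a class in $H^1(L,\Lambda_0^+)$ need not have finitely many exponents, only a discrete set). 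Even granting $c\le c_{\bm L}$, the step you yourself flag as ``the hard part'' --- that the classification class of the cone $\cE_{c_i}$ is the $(c_{i-1},c_i)$-slice of $b$ --- is precisely the content that would need a proof, so the main route as written does not close.

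Your fallback is in fact the intended argument, and you should promote it to the main one: by \cref{lem:uniquenessforsmallc}, any sheaf quantization of $\bm L$ agrees with the standard one after restriction to a sufficiently small $u$-interval, so one chooses the $c_i$ inductively (restrict, take the cone, shift by $S_{-c_i}$, apply the uniqueness lemma to the remainder), while standard extension morphisms between consecutive standard pieces exist for sufficiently small increments by the lemma in \S\ref{subsec:properties_standard_SQ}. This avoids all \v{C}ech bookkeeping and works uniformly in $c$, including $c=\infty$. The one point you should still address in that induction is that the resulting set $\sigma=\lc c_i\rc$ must have no accumulation point below $c$, i.e.\ the increments supplied by the uniqueness lemma can be taken bounded below (e.g.\ by the uniform constant coming from the construction of the standard sheaf quantization in \cref{lem:c-SQ}), so that the $\sigma$-decomposition really exhausts $\cE$.
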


By the above lemma, we obtain a twisted complex $\lb \lc S_{\sigma}\cE_{c_i}[i-1]\rc_i ,\lc f_{ij}\rc_{i,j}\rb$. We set
\begin{equation}
    \mathrm{bc}(\cE)\coloneqq \sum_{i>j} f_{ij}-f^{\mathrm{st}}\in CF^1_{SQ}(\bm L, \bm L,\sigma). 
\end{equation}

\begin{lemma}
    The element $\bm b$ is the bounding cochain of $\bm L$.
\end{lemma}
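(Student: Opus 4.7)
The plan is to verify the two defining conditions of a bounding cochain separately: the Maurer--Cartan equation, and membership in $\Lambda_0^+$.

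First, I would derive the Maurer--Cartan equation directly from the twisted-complex property. The preceding lemma presents $\cE$ as a twisted complex $\bigl(\{S_\sigma \cE_{c_i}[i-1]\}_i, \{f_{ij}\}_{i>j}\bigr)$, and the very definition of a twisted complex enforces
\begin{equation*}
    d f_{ij} + \sum_k f_{kj} \circ f_{ik} = 0 \qquad (i > j).
\end{equation*}
Assembling $F \coloneqq \sum_{i>j} f_{ij}$ as a degree-$1$ element of the endomorphism dga $\End\bigl(\bigoplus_i S_\sigma \cS^i_{\bm L}\bigr)$ (identifying each standard $\cE_{c_i}$ with $\cS^i_{\bm L}$), this family of equations is equivalent to $(d + F)^2 = 0$. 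Substituting $F = f^{\mathrm{st}} + \bm b$ with $\bm b = \mathrm{bc}(\cE)$, we obtain exactly $(d + \bm b)^2 = 0$ in the curved dga $CF_{\mathrm{SQ}}(\bm L, \bm L, \sigma) = \bigl(\End(\bigoplus S_\sigma \cS^i_{\bm L}), d + f^{\mathrm{st}}\bigr)$.

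For the $\Lambda_0^+$ condition, I would argue componentwise. The neighbouring components $f_{i+1, i}$ are standard extensions by the hypothesis used to define $\mathrm{bc}$, hence agree with $f^{\mathrm{st}}_{i+1, i}$ up to an element of $\Lambda_0^+$. For non-neighbouring components $f_{ij}$ with $j < i-1$, the two arguments $S_\sigma \cE_{c_i}$ and $S_\sigma \cE_{c_j}$ are supported on $u$-cylinders with shift difference $c_{i-1} - c_{j-1} > 0$. Because the endomorphism algebra is $\Lambda_0$-linear through the Novikov action of \cref{lem:Lambda}, identifying these two $u$-shifted objects costs at least $T^{c_{i-1}-c_{j-1}}$, placing $f_{ij}$ in $\Lambda_0^+ \cdot \Hom$; since $f^{\mathrm{st}}$ has no non-neighbouring part, the corresponding components of $\bm b$ sit inside $\Lambda_0^+$ as well.

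The main obstacle I expect is the second step: making precise the claim that a non-trivial $u$-translation raises the Novikov valuation. Concretely, one should use the almost-identification $\End\bigl(\bigoplus S_\sigma \cS^i_{\bm L}\bigr)_a \otimes_{\Lambda_0} \Lambda_0/\Lambda_0^+ \cong \Hom_{\mush_{\Lim}}(\cL, \cL)$ from the construction of $CF_{\mathrm{SQ}}$ to pass to the classical limit and verify that under this isomorphism the non-neighbouring off-diagonal part of $F$ vanishes, while the neighbouring part reduces to the (classical) standard extension. Modulo this quantitative verification, the rest of the argument is purely formal rewriting of the twisted-complex defining relation as the Maurer--Cartan equation in the curved dga.
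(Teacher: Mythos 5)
Your overall structure is the right one and matches the paper's (very terse) treatment: the Maurer--Cartan equation comes from the defining relation $(d + \sum f_{ij})^2 = 0$ of the twisted complex, and the $\Lambda_0^+$ condition is what is left to check. The first half of your argument is exactly right: since $f^{\mathrm{st}} + \bm b = \sum_{i>j} f_{ij}$, the Maurer--Cartan equation $(d + f^{\mathrm{st}} + \bm b)^2 = 0$ of the curved dga $CF_{\mathrm{SQ}}(\bm L, \bm L, \sigma)$ is precisely the Bondal--Kapranov condition.

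Two issues in the $\Lambda_0^+$ step. First, the assertion that ``$f^{\mathrm{st}}$ has no non-neighbouring part'' is contrary to the paper's construction: $f^{\mathrm{st}}_{ij}$ is defined for \emph{all} $i>j$ as the standard-extension datum of the twisted complex representing the $c_n$-standard sheaf quantization, so you cannot separate neighbouring from non-neighbouring and argue differently for each. Second, the claim that the $u$-translation $S_{c_{i-1}}$ of a component forces Novikov valuation $T^{c_{i-1}-c_{j-1}}$ on any morphism into a less-shifted component is not established anywhere in the paper — the Novikov action is through $t$-translations, not $u$-translations, and although the tilted part of the doubling ties $u$ and $t$ via $\upsilon = -\tau$, a precise valuation bound of this form would need a separate argument. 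The route you propose is therefore speculative.

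The direct argument, which you hint at in your final paragraph, is simply: reduce modulo $\Lambda_0^+$. Under the almost identification $\End\bigl(\bigoplus_i S_\sigma \cS^i_{\bm L}\bigr)_a \otimes_{\Lambda_0} \Lambda_0/\Lambda_0^+ \cong \Hom_{\mush_{L^{\Image}}(L^{\Image})}(\cL,\cL)$ used in the construction of $CF_{\mathrm{SQ}}$, both $\sum_{i>j} f_{ij}$ and $f^{\mathrm{st}}$ microlocalize to the same twisted-complex structure on $\bigoplus_i \mu_{\bm L}(\cS^i) \cong \bigoplus_i \cL$: by the hypothesis of the preceding lemma every component of the $\sigma$-decomposition is standard and every extension morphism is a standard-extension morphism, so after the normalization of \cref{lemma:microlocal_extension_class} the microlocal extension class of each $f_{ij}$ agrees with that of $f^{\mathrm{st}}_{ij}$. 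Hence $\bm b = \sum_{i>j} f_{ij} - f^{\mathrm{st}}$ vanishes under microlocalization, which is exactly the statement $\bm b \in \Lambda_0^+$. This covers all $i>j$ uniformly and avoids the unproven $u$-shift bound.
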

\begin{proof}
    This follows from the proof of the above lemma.
\end{proof}

\begin{proof}[Proof of \cref{thm:bc}]
Now it is only to prove $\mathrm{real}\circ \mathrm{bc}=\id$, which is obvious from the construction.
\end{proof}

\section{Intersection point estimate}\label{sec:energy_cutoff}

In this section, we show that the hom-space between simple sheaf quantizations associated with two Lagrangians gives a lower bound of the number of intersection points. 

\begin{lemma}
    Assume that $X$ is sufficiently Weinstein and let $\cE_i\in \mu(X;u<c')$ be a standard sheaf quantizations of a Lagrangian brane $\bm L_i \  (i=1,2)$. Then 
\begin{equation}
    \Gamma(\Lim_1 \cap \Lim_2;\muhom^\bR(\cE_1, \cE_2)) \cong \Hom_{\mu(X;u<c')}(\cE_1,\cE_2)_a\otimes \Lambda_0/\Lambda_0^+.
\end{equation}
\end{lemma}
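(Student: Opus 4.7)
The plan is to reduce to a local computation in the cotangent bundle setup, use the explicit construction of standard sheaf quantizations, and then read off both sides via the Novikov filtration.

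First, I would use the antimicrolocalization embedding of \cref{subsec:antimicrolocalization} to reduce to the case $X = T^*M$ (indeed $T^*\bR^N$). The left-hand side is preserved because $\muhom^\bR$ is defined by the microsheaf Hom in $\mush_{\Lim_1 \cup \Lim_2}$, which is purely microlocal along the supports and matches under antimicrolocalization. The right-hand side is preserved because the antimicrolocalization functor is fully faithful and $\Lambda_0$-linear on standard objects. Next I would take a sufficiently fine Darboux cover $\{V_j\}$ of a neighborhood of $\Lim_1 \cap \Lim_2$ such that on each $V_j$ both $L_i|_{V_j}$ are exact with primitives $f_i^{(j)}$. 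Over such a $V_j$, the standard sheaf quantization $\cE_i|_{V_j}$ is by construction the $\bR$-equivariantization of an exact sheaf quantization $\cE_i^{\mathrm{ex},j}$ of the Legendrian lift $L_{i, f_i^{(j)}}$, so concretely $\cE_i|_{V_j} \simeq \frakf^L(\cE_i^{\mathrm{ex},j}) = \cE_i^{\mathrm{ex},j} \star_{\bR \bO} 1_\mu$.

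Second, over each $V_j$ I would compute the Hom-space using the adjunction $\frakf^L \dashv \frakf$:
\begin{equation}
    \Hom_{\mu^\bR(V_j; u<c')}(\cE_1|_{V_j}, \cE_2|_{V_j})
    \simeq \Hom_{\mu^\bO(V_j; u<c')}(\cE_1^{\mathrm{ex},j}, \frakf(\cE_2|_{V_j})),
\end{equation}
and $\frakf(\cE_2|_{V_j})$ unfolds, via the $\bR$-equivariant structure on $1_\mu$, into a $\Lambda_0$-module built from the pieces $\Hom(\cE_1^{\mathrm{ex},j}, T_c \cE_2^{\mathrm{ex},j})$ weighted by $T^c$ for $c\in \bR$. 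In the exact (non-equivariant) case, the Nadler--Shende-type microlocal-to-global theorem for standard sheaf quantizations identifies each such Hom with $\Hom(L_{1,f_1^{(j)}+c} \cap L_{2,f_2^{(j)}}; \muhom)$, i.e.\ sections of the ordinary microsheaf-hom along the relevant shifted intersection.

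Third, I would apply $(-)_a \otimes_{\Lambda_0} \Lambda_0/\Lambda_0^+$. The operation $(-)_a$ of \cref{section:almost} suppresses elements annihilated by all positive powers of $T$, cutting the naive infinite $\prod_{c \in \bR}$ down to the genuine Novikov-module of finite-energy elements; then tensoring with $\Lambda_0/\Lambda_0^+$ kills every positive-energy summand, leaving only the $c = f_2^{(j)}(p) - f_1^{(j)}(p)$ contribution at each intersection point $p \in \Lim_1 \cap \Lim_2 \cap V_j$. That residual contribution is by construction the stalk of $\muhom^\bR(\cE_1, \cE_2)$ at $p$. Finally I would assemble the local identifications by a Čech/descent argument, using the $\bR$-equivariance and the sheaf property of $\muhom^\bR$, to recover $\Hom(\Lim_1 \cap \Lim_2; \muhom^\bR(\cE_1, \cE_2))$ globally.

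The main obstacle I expect is the control in the third step: making precise the interaction of $(-)_a$ with the $\bR$-indexed Novikov grading and the quotient by $\Lambda_0^+$, so that the seemingly huge product over $c$ genuinely collapses to a single stalk modulo $\Lambda_0^+$. This is exactly the place where the \emph{standard} hypothesis on $\cE_1, \cE_2$ is crucial, since only for standard sheaf quantizations do the higher-energy Hom contributions live strictly inside $\Lambda_0^+$ and so disappear after the almost operation and the quotient.
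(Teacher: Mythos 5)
Your proposal is correct and follows essentially the same route as the paper: antimicrolocalize to reduce the sufficiently Weinstein case to cotangent bundles, exploit the explicit local form of standard sheaf quantizations (which the paper phrases as a generic-position reduction to direct sums of doubled sheaves of type $\bigoplus_{c\in\bR}\bK_{f\geq c}$, i.e.\ the $\bR$-equivariantizations you use), and observe that after applying $(-)_a\otimes_{\Lambda_0}\Lambda_0/\Lambda_0^+$ only the zero-energy part survives, which is identified with $\muhom^\bR$ by its very definition. Your adjunction $\frakf^L\dashv\frakf$ plus \v{C}ech gluing is just a more detailed unpacking of the step the paper records as the identification of the quotient with $\lim_{c'\to 0}\Hom_{\mu(X;u<c')}(\cE_1,\cE_2)_a$.
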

\begin{proof}
The microlocalization induces the morphism     $\Hom_{\mu(X;u<c')}(\cE_1,\cE_2) \to \Gamma(\Lim_1 \cap \Lim_2;\muhom^\bR(\cE_1, \cE_2))$. We will see that this morphism induces a desired isomorphims. Since we are working with sufficiently Weinstein manifolds and Lagrangian submanifolds, we can antimicrolocalize everything and we reduce the problem to the case of cotangent bundles. By a generic position argument, we can assume that our sheaf quantizations are of the form of direct sums of the doubling version of sheaves like $\bigoplus_{c\in \bR}\bK_{f\geq c}$. Then it is straightforward to see
\begin{equation}
    \Hom_{\mu(X;u<c')}(\cE_1,\cE_2)_a\otimes \Lambda_0/\Lambda_0^+\cong \lim_{c'\rightarrow 0}\Hom_{\mu(X;u<c')}(\cE_1,\cE_2)_a.
\end{equation}
The right hand side is nothing but our definition of $\mu hom^\bR$.
\end{proof}

\begin{corollary}\label{cor:intersectionestimate}
    Assume that $X$ is sufficiently Weinstein and let $\cE_i\in \mu(X;u<c)$ be a sheaf quantizations of a Lagrangian brane $\bm L_i \ (i=1,2)$. Then 
\begin{equation}
    \Gamma(\Lim_1 \cap \Lim_2;\muhom^\bR(\cE_1, \cE_2)) \cong \Hom_{\mu(X;u<c)}(\cE_1,\cE_2)_a\otimes \Lambda_0/\Lambda_0^+.
\end{equation}
\end{corollary}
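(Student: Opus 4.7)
The plan is to reduce the corollary to the preceding lemma by showing that both sides of the claimed isomorphism depend only on the Lagrangian brane data $\bm L_1, \bm L_2$ and not on the choice of sheaf quantizations $\cE_i$ representing them.

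I first handle the right-hand side $\muhom^\bR(\cE_1,\cE_2)$. By its definition in \eqref{def:muhomR}, this is $\cHom_{\mush_{\Lim_1 \cup \Lim_2}}(\mu_{\Lim_1}(\cE_1), \mu_{\Lim_2}(\cE_2))$. The assumption that $\cE_i$ is a sheaf quantization of $\bm L_i$ means exactly that $\mu_{\bm L_i^0}(\cE_i) \cong \cL_i$, so the microsheaves $\mu_{\Lim_i}(\cE_i)$ are canonically identified with the image of the brane local system $\cL_i$ under the embedding $\Loc(L_i) \hookrightarrow \mush_{\Lim_i}(\Lim_i)$ of \cref{lem:branestr}. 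In particular $\muhom^\bR(\cE_1,\cE_2)$ agrees with the value computed using any pair of sheaf quantizations of $\bm L_1$ and $\bm L_2$, so in particular with the value for standard sheaf quantizations.

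For the left-hand side, I choose a sufficiently fine discrete submonoid $\sigma = \{0=c_0<c_1<c_2<\cdots\}$ of $\bR_{\geq 0}$ such that both $\cE_1$ and $\cE_2$ admit $\sigma$-decompositions into standard sheaf quantizations $\cE_i^{(k)}$, as provided by \cref{thm:classification} and \cref{thm:bc}. Each $\cE_i$ is thereby represented as a twisted complex
\[
\cE_i \;=\; \bigl(\{S_\sigma \cE_i^{(k)}[k-1]\}_k,\ \{f_{kl}^{(i)}\}_{k>l}\bigr),
\]
where $f^{(i)} = f^{\mathrm{st}} + \bm b_i$ decomposes into the standard extension part and a bounding-cochain correction $\bm b_i \in CF^1_{\mathrm{SQ}}(\bm L_i,\bm L_i,\sigma)\otimes_{\Lambda_0}\Lambda_0^+$. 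Computing $\Hom_{\mu(X;u<c)}(\cE_1,\cE_2)$ as a Hom between twisted complexes produces a total complex whose differential is the sum of the internal differentials, the standard extension maps, and corrections living entirely in $\Lambda_0^+$. Applying the almost-isomorphism functor $(-)_a$ and then tensoring with $\Lambda_0/\Lambda_0^+$ annihilates all the $\Lambda_0^+$-corrections, so the resulting complex coincides with the corresponding Hom complex for the standard sheaf quantizations obtained by setting $\bm b_i = 0$. Combining this with the previous paragraph and applying the preceding lemma yields the claimed isomorphism.

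The main technical obstacle is verifying that the operations interact well: that $(-)_a$ commutes with the formation of total complexes of twisted complexes of Homs, and that reducing the twisted differential mod $\Lambda_0^+$ before or after computing cohomology gives the same answer after $(-)_a$. This amounts to checking that the filtration by the exponents of $T$ used to define $(-)_a$ and the $\sigma$-filtration of the twisted complex are compatible, so that the convergent spectral sequence degenerates trivially modulo $\Lambda_0^+$; this is where the almost-zero formalism developed in the cited \cref{section:almost} is essential.
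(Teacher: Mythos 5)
Your reduction of $\muhom^\bR(\cE_1,\cE_2)$ to brane data via \cref{lem:branestr} is fine, but the treatment of $\Hom_{\mu(X;u<c)}(\cE_1,\cE_2)_a\otimes\Lambda_0/\Lambda_0^+$ has two genuine problems. First, a circularity: you invoke \cref{thm:bc} to write each $\cE_i$ as a twisted complex with differential $f^{\mathrm{st}}+\bm b_i$, $\bm b_i$ valued in $\Lambda_0^+$. But the bounding-cochain formalism rests on the proposition of \S\ref{subsec:sheaf_Fukaya_alg} identifying the reduction of $\End\lb\bigoplus_i S_\sigma\cS^i_{\bm L}\rb$ modulo $\Lambda_0^+$ with $\Hom_{\mush_{\Lim}}(\cL,\cL)$, and the paper proves that proposition by citing \cref{cor:intersectionestimate} itself; even the statement that the correction to the standard extension lies in $\Lambda_0^+$ presupposes this kind of control on $\Hom$ mod $\Lambda_0^+$. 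So the route through \cref{thm:bc} assumes what you are trying to prove. Second, the step you defer as a ``technical obstacle'' is in fact the crux and is not automatic: reduction modulo $\Lambda_0^+$ does not commute with passing to cohomology or with totalizing a twisted complex. Already for the two-term complex $\Lambda_0\xrightarrow{T^{\epsilon}}\Lambda_0$ the cohomology of the reduction and the reduction of the cohomology (even after $(-)_a$) disagree, so the assertion that the relevant spectral sequence ``degenerates trivially modulo $\Lambda_0^+$'' needs an actual argument, which the proposal does not supply. A smaller issue: even granting the reduction, the object with $\bm b_i=0$ is an iterated extension of standard pieces over all of $u<c$, not a $c'$-standard sheaf quantization, so the preceding lemma does not literally apply to it.

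The paper's own proof avoids all of this: it restricts $\cE_1,\cE_2$ to $u<c'$ for $c'$ small enough that both restrictions become standard (\cref{lem:uniquenessforsmallc} together with the classification of low-energy sheaf quantizations), applies the preceding lemma there, and uses the fact — already visible in that lemma's proof, where the right-hand side is computed as $\lim_{c'\to 0}\Hom_{\mu(X;u<c')}(\cE_1,\cE_2)_a$ — that the reduction modulo $\Lambda_0^+$ only sees the low-energy part, so nothing is lost under restriction. If you want to salvage your argument, the honest fix is essentially to replace the twisted-complex bookkeeping by this restriction argument.
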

\begin{proof}
Take sufficiently small $c'$ such that $\cE_1,\cE_2$ are both standard, then it follows from the above lemma.
\end{proof}

\begin{corollary}\label{cor:intersection_estimate}
    Assume that $X$ is sufficiently Weinstein and let $\cE_i\in \mu(X;u<c)$ be a sheaf quantizations of a Lagrangian brane $\bm L_i \ (i=1,2)$. 
    Moreover, assume that $L_1$ cleanly intersects with $L_2$. 
\begin{enumerate}
    \item One has
    \begin{equation}
       \sum_{k\in \bZ}\dim_\bK H^k\Hom_{\mu(X;u<c)}(\cE_1,\cE_2)_a\otimes \Lambda_0/\Lambda_0^+
        = 
        \sum_{k \in \bZ} \dim_\bK H^k(L_1\times_{X} L_2; \cM_{\bm{L}_1, \bm{L}_2}),
    \end{equation}
    where $\cM_{\bm{L}_1, \bm{L}_2}$ is the sheaf-theoretic Maslov index local system (see \cref{definition:sheaf_maslov}). 
    In particular, in the case $c=\infty$, 
    \begin{equation}
        \sum_{k \in \bZ} \dim_\bK H^k(L_1\times_{X} L_2; \cM_{\bm{L}_1, \bm{L}_2})
        \geq 
        \sum_{i\in \bZ} \dim_\Lambda H^i \Hom_{\mu(X;u<\infty)}(\cE_1, \cE_2)\otimes_{\Lambda_0}\Lambda.
    \end{equation}

    \item Moreover, assume that (1) $L_1$ and $L_2$ are embedded, (2) $L_1$ and $L_2$ intersect transversely, and (3) $\cE_1$ and $\cE_2$ are simple sheaf quantization.
    Then
    \begin{equation}
        \sum_{i\in \bZ}\dim_\bK H^i\Hom_{\mu(X;u<c)}(\cE_1,\cE_2)_a\otimes \Lambda_0/\Lambda_0^+ = \# (L_1\cap L_2).
    \end{equation}
    In particular, in the case $c=\infty$, 
    \begin{equation}
       \# (L_1\cap L_2) \geq \sum_{i\in \bZ} \dim_\Lambda H^i \Hom_{\mu(X;u<\infty)}(\cE_1, \cE_2)\otimes_{\Lambda_0}\Lambda.
    \end{equation}
\end{enumerate}
\end{corollary}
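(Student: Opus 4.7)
The plan is to combine the preceding \cref{cor:intersectionestimate} with the clean-intersection identification of $\muhom^{\bR}$ as a sheaf on $L_1\cap L_2$, and then to pass from a section of a sheaf over the image to a section over the fiber product via push-forward.

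First, by \cref{cor:intersectionestimate}, one has an isomorphism
\begin{equation}
R\Gamma(\Lim_1\cap \Lim_2;\muhom^{\bR}(\cE_1,\cE_2))
\cong \Hom_{\mu(X;u<c)}(\cE_1,\cE_2)_a\otimes \Lambda_0/\Lambda_0^+,
\end{equation}
so it suffices to compute the left-hand side. Under the clean-intersection hypothesis, the corollary in \S\ref{subsec:maslov} identifies
\begin{equation}
\muhom^{\bR}(\cE_1,\cE_2)\cong i_{12*}\cM_{\bm{L}_1,\bm{L}_2}
\end{equation}
as a sheaf on $\Lim_1\cap \Lim_2$. Here I would simply note that sheaf quantizations of $\bm L_i$, by definition, microlocalize to the brane object $\cS_{\bm L_i}\in\mush_{\Lim_i}(\Lim_i)$, so the immersed-clean formula applies verbatim to $(\cE_1,\cE_2)$ rather than only to $(\cS_{\bm L_1},\cS_{\bm L_2})$. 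Applying $R\Gamma(\Lim_1\cap \Lim_2;-)$ to the right-hand side and using the push-forward identity $R\Gamma(\Lim_1\cap\Lim_2; i_{12*}\cM_{\bm L_1,\bm L_2})=R\Gamma(L_1\times_X L_2;\cM_{\bm L_1,\bm L_2})$, one obtains the equality of total dimensions in part (1), by taking $\sum_k\dim_\bK H^k$ on both sides.

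For the $c=\infty$ inequality in part (1), the plan is to invoke the Novikov-ring version of the Morse inequality: any complex $C^\bullet$ of $\Lambda_0$-modules whose cohomology is almost finitely generated satisfies
\begin{equation}
\sum_i\dim_\Lambda H^i(C^\bullet)\otimes_{\Lambda_0}\Lambda
\leq \sum_i \dim_\bK H^i(C^\bullet\otimes_{\Lambda_0}\Lambda_0/\Lambda_0^+),
\end{equation}
since each finitely generated $\Lambda_0$-module decomposes (up to almost isomorphism) into a free part and a $T$-torsion part, and the torsion part contributes to the right-hand side via the universal-coefficient short exact sequence but vanishes after $\otimes_{\Lambda_0}\Lambda$. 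Applied to $C^\bullet=\Hom_{\mu(X;u<\infty)}(\cE_1,\cE_2)$, this immediately yields the claimed bound.

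Part (2) reduces to part (1): in the transverse, embedded, simple setting the fiber product $L_1\times_X L_2=L_1\cap L_2$ is a finite set of points, and the Maslov local system $\cM_{\bm L_1,\bm L_2}$ has rank $1$ at each point (as $\cL_1,\cL_2$ are rank $1$ and the index $\alpha_2-\alpha_1\in\bZ$ and the orientation torsor $b_2-b_1$ only shift each stalk). Hence $\sum_k\dim_\bK H^k(L_1\cap L_2;\cM_{\bm L_1,\bm L_2})=\#(L_1\cap L_2)$, and the $c=\infty$ version follows from the same Morse-style inequality as above.

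The main obstacle I anticipate is not step (1) itself, which is essentially bookkeeping given the tools already developed, but rather verifying the Novikov Morse inequality at the level of generality needed (almost finitely generated complexes and possibly unbounded cohomology for $c=\infty$). In practice, one may need to first truncate by energy to reduce to the bounded, finitely generated setting before passing to the colimit; this truncation-and-limit argument is where care is required, and the use of the almost-module operation $(-)_a$ is essential to rule out spurious contributions from small-energy almost-zero terms.
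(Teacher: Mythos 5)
Your proof of the equalities in parts (1) and (2) is essentially the paper's argument: combine \cref{cor:intersectionestimate} with the clean-intersection identification of $\muhom^{\bR}$ from \S\ref{subsec:maslov}, and specialize to the transverse embedded simple case for (2). That part is fine.

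The route you take to the $c=\infty$ inequalities is genuinely different from the paper's, and this is where the gap you yourself flag is real. You propose an elementary-divisor/Morse-inequality argument: decompose each cohomology module (up to almost isomorphism) into free, $T$-torsion, and almost-zero summands, note that the latter two die under $\otimes_{\Lambda_0}\Lambda$, and deduce
\begin{equation}
\sum_i\dim_\Lambda H^i(C^\bullet)\otimes_{\Lambda_0}\Lambda
\leq \sum_i \dim_\bK H^i(C^\bullet\otimes_{\Lambda_0}\Lambda_0/\Lambda_0^+).
\end{equation}
For this you need $H^\bullet(C^\bullet)$ to be (almost) finitely generated over $\Lambda_0$ in each degree, and for $c=\infty$ this is not free: $\Lambda_0$ is not Noetherian, the hom-space is not a priori of finite type, and the elementary-divisor classification from Appendix B.2 applies only to finitely generated modules. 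Your proposed fix---truncating in energy and passing to a colimit---is not carried out, and as stated it points the wrong direction: the $c=\infty$ hom-space is naturally an inverse limit of the $u<c$ hom-spaces, and the inequality must be shown to survive that \emph{limit}, which is precisely the subtle step. The paper closes exactly this gap by a different device: it invokes the result of \cite{KuwNov} that the hom-spaces in $\mu(X;u<\infty)$ are derived $\Lambda_0^+$-complete, and then applies the derived Nakayama lemma, together with the observation $\Hom(\cE_1,\cE_2)_a\otimes_{\Lambda_0}\Lambda\cong\Hom(\cE_1,\cE_2)\otimes_{\Lambda_0}\Lambda$, to obtain the bound directly. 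Derived completeness is the correct substitute for the finiteness you are missing; without it (or without actually carrying out and justifying the truncate-and-limit step) the Morse-inequality argument does not go through.
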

\begin{proof}
    The first part of the both statements follows from the $\muhom$ computation in \S\ref{subsec:maslov} and \cref{cor:intersectionestimate}. We now consider the second part of the statements concerning $\Lambda$. Note first that 
    \begin{equation}
        \Hom_{\mu(X;u<+\infty)}(\cE_1,\cE_2)_a\otimes_{\Lambda_0} \Lambda\cong \Hom_{\mu(X;u<+\infty)}(\cE_1,\cE_2)\otimes_{\Lambda_0} \Lambda.
    \end{equation}
    In \cite{KuwNov}, the hom-spaces are proved to be derived complete.
    Hence the derived Nakayama lemma~\cite[0G1U]{stacksproject} tells us that
    \begin{equation}
        \sum_{i\in \bZ} \dim_\Lambda H^i\Hom_{\mu(X;u<+\infty)}(\cE_1,\cE_2)_a\otimes_{\Lambda_0} \Lambda\leq \sum_{k\in \bZ}\dim_\bK H^k\Hom_{\mu(X;u<c)}(\cE_1,\cE_2)_a\otimes \Lambda_0/\Lambda_0^+.
    \end{equation}
    This completes the proof.
\end{proof}

\section{Metric on the objects}\label{section:metric}
We discuss the metric on the set of objects of categories following \cite{KSpersistent, AI20}. Our treatment is closer to that of \cite{FukayaGromov}.
Then the completeness with respect to the interleaving distance, which was studied in Asano--Ike~\cite{AsanoIkecomplete} and Guillermou--Viterbo~\cite{GV2022singular}, will be discussed in our setup. 

In this section, we take $\bG=\bR$ (if not, it will produce a non-interesting theory).

\subsection{\texorpdfstring{$\Lambda_0$}{Lambda0}-linear category}

Let $\cD$ be a triangulated $\Lambda_0$-linear category. 
Note that a $\Lambda_0/T^c\Lambda_0$-linear category can be viewed as a $\Lambda_0$-linear category.
The following three definitions are analogues of the notion in \cite{AI20,AsanoIkecomplete}. 

\begin{definition}
Let $\cE\in \cD$ be an object. Then $\cE$ is said to be an \emph{$a$-torsion}, if $T^a\id\in \End(\cE)$ is zero.
\end{definition}

\begin{definition}[$c$-isomorphism]
Let $\cE, \cF\in \cD$ and $c\in \bR_{\geq 0}$. 
The objects $\cE$ and  $\cF$ are said to be \emph{$c$-isomorphic} if there exist closed morphisms $\alpha\colon \cE\rightarrow \cF$ and $\beta\colon \cF\rightarrow \cE$ such that 
\begin{enumerate}
    \item[(1)] the composite $\cE\xrightarrow{\alpha}\cF\xrightarrow{\beta}\cE$ is equal to $T^{c}\id_{\cE}$ and
    \item[(2)] the composite $\cF\xrightarrow{\beta}\cE\xrightarrow{\alpha}\cF$ is equal to $T^{c}\id_\cF$.
\end{enumerate}
In this case, the pair of morphisms $(\alpha,\beta)$ is called a $c$-isomorphism. In this situation, we also say that $\alpha$ is \emph{$c$-invertible} and $\beta$ is a $c$-inverse of $\alpha$. 
\end{definition}

\begin{remark}[Weak $(a,b)$-isomorphism]\label{remark:weak_isom}
    Following \cite{AsanoIkecomplete}, one can also define weak $(a,b)$-isomorphism as follows.
    Let $\cE, \cF\in \cD$ and $a, b\in \bR_{\geq 0}$. The pair $(\cE, \cF)$ is said to be weakly $(a,b)$-isomorphic if there exist closed morphisms $\alpha, \delta\colon \cE\rightarrow \cF$ and $\beta, \gamma\colon \cF\rightarrow \cE$ such that 
    \begin{enumerate}
        \item[(1)] the composite $\cE\xrightarrow{\alpha}\cF\xrightarrow{\beta}\cE$ is equal to $T^{a+b}\id_{\cE}$,
        \item[(2)] the composite $\cF\xrightarrow{\gamma}\cE\xrightarrow{\delta}\cF$ is equal to $T^{a+b}\id_{\cF}$, and
        \item[(3)] $T^a\alpha=T^a\delta$ and $T^b\beta=T^b\gamma$.
    \end{enumerate}
    If $(\cE,\cF)$ is weakly $(a,b)$-isomorphic, then $\cE$ and $\cF$ are $2(a+b)$-isomorphic.    
\end{remark}

The proofs of the following can be easily read off from the corresponding proofs of \cite{AsanoIkecomplete}, hence omitted. 
For \cref{lem:torsion_isom}, see also \cref{remark:weak_isom}. 

\begin{lemma}[{\cite[Lem.~4.1]{AsanoIkecomplete}}]
Let $\cE\in \cD$ and $a\in \bR_{\geq 0}$, consider the exact triangle
\begin{equation}
    \cE\xrightarrow{T^a}\cE\rightarrow C\rightarrow.
\end{equation}
Then $C$ is $2a$-torsion.
\end{lemma}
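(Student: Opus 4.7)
Fix the notation of the exact triangle
\begin{equation}
    \cE \xrightarrow{T^a} \cE \xrightarrow{f} C \xrightarrow{g} \cE[1].
\end{equation}
The goal is to produce $T^{2a}\id_C = 0$ in $\End(C)$. The plan is to use only two ingredients: (a) the $\Lambda_0$-linearity of $\cD$, which makes $T^a$ central in the sense that $T^a$ commutes with every morphism and with the shift, and (b) the long exact sequence obtained by applying $\Hom(C,-)$ to the triangle.

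The first step will be to record two vanishings coming directly from the triangle. Since consecutive arrows in an exact triangle compose to zero, $f \circ T^a = 0$; by centrality of $T^a$ this is the same as $T^a \cdot f = 0$. Rotating the triangle yields $\cE \xrightarrow{f} C \xrightarrow{g} \cE[1] \xrightarrow{-T^a[1]} \cE[1]$, and the same reasoning gives $T^a \cdot g = 0$ (identifying $T^a[1]$ on $\cE[1]$ with the action of $T^a$).

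The second step is the factorisation. Apply $\Hom(C,-)$ to the triangle to obtain the exact sequence
\begin{equation}
    \Hom(C,\cE) \xrightarrow{\,f_*\,} \Hom(C,C) \xrightarrow{\,g_*\,} \Hom(C,\cE[1]).
\end{equation}
The element $T^a \id_C \in \Hom(C,C)$ satisfies $g_*(T^a\id_C) = g \circ T^a\id_C = T^a \cdot g = 0$ by the previous step, so by exactness there exists $h\colon C \to \cE$ with $f \circ h = T^a\id_C$. Multiplying this equation by $T^a$ and using centrality together with $T^a \cdot f = 0$ gives
\begin{equation}
    T^{2a}\id_C = T^a \cdot (f \circ h) = (T^a \cdot f) \circ h = 0,
\end{equation}
which is exactly the $2a$-torsion statement.

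There is essentially no obstacle; the only point that deserves a line of justification is the compatibility between the $\Lambda_0$-action and the triangulated structure (commutation with shifts and with arbitrary morphisms), which is built into the meaning of a $\Lambda_0$-linear triangulated category. Everything else is a two-line diagram chase.
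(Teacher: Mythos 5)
Your argument is correct, and it is essentially the argument the paper intends: the paper omits the proof and refers to Asano--Ike, where the same two-step reasoning appears — kill $T^a\cdot g$ by rotating the triangle, factor $T^a\id_C$ through $f$ via the long exact sequence for $\Hom(C,-)$, and then use $T^a\cdot f=0$ to get $T^{2a}\id_C=0$ — here simply rephrased with the Novikov parameter $T^a$ acting centrally in the $\Lambda_0$-linear category. No gaps; the only implicit input, compatibility of the $\Lambda_0$-action with shifts and compositions, is indeed part of the definition of a $\Lambda_0$-linear triangulated category, as you note.
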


\begin{lemma}[{cf.\ \cite[Lem.~3.4]{AsanoIkecomplete}}]\label{lem:torsion_isom}
    Let $\cE \to \cF \to \cG \to$ is an exact triangle in $\cD$ and assume $\cE$ is $c$-torsion. 
    Then $\cF$ and $\cG$ are $2c$-isomorphic.
\end{lemma}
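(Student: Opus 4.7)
My plan is to chase morphisms around the distinguished triangle
\[
    \cE \xrightarrow{\alpha} \cF \xrightarrow{\beta} \cG \xrightarrow{\gamma} \cE[1]
\]
and exploit the centrality of $T^c$ in the $\Hom$-spaces of the $\Lambda_0$-linear category $\cD$. First, the $c$-torsion of $\cE$ propagates to the two connecting maps: since $T^c \id_\cE = 0$ and $T^c$ commutes with every morphism, we have $T^c \alpha = \alpha \circ T^c \id_\cE = 0$ and, likewise, $T^c \gamma = 0$.

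Next, I would construct a one-sided inverse to $\beta$. Applying $\Hom(\cG,-)$ to the triangle yields a long exact sequence, and the composition $\gamma \circ T^c \id_\cG = T^c \gamma$ vanishes. Hence $T^c \id_\cG$ lifts along $\beta$ to a morphism $\beta' \colon \cG \to \cF$ with $\beta \circ \beta' = T^c \id_\cG$.

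The more delicate step is to analyze the opposite composition $\beta' \circ \beta \colon \cF \to \cF$. From $\beta \circ \beta' = T^c \id_\cG$ we get $\beta \circ (\beta' \circ \beta - T^c \id_\cF) = 0$, so the long exact sequence obtained by applying $\Hom(\cF,-)$ to the triangle forces the error term $\beta' \circ \beta - T^c \id_\cF$ to factor through $\alpha$. Since $T^c \alpha = 0$, multiplying by another $T^c$ annihilates the error and produces $T^c (\beta' \circ \beta) = T^{2c} \id_\cF$. Taking $u \coloneqq \beta$ and $v \coloneqq T^c \beta'$, one then has
\[
    v \circ u = T^c(\beta' \circ \beta) = T^{2c} \id_\cF, \qquad u \circ v = T^c (\beta \circ \beta') = T^{2c} \id_\cG,
\]
which is the required $2c$-isomorphism between $\cF$ and $\cG$.

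The only real obstacle is this final step: the naive one-sided inverse $\beta'$ fails to be a two-sided inverse up to $T^c$, and the discrepancy lies in the image of $\alpha$, which is only annihilated after multiplying by a second factor of $T^c$. This is precisely why the statement yields a $2c$-isomorphism rather than a $c$-isomorphism. Beyond this, the argument is a routine diagram chase using only the triangulated-category axioms and the $\Lambda_0$-linearity; no interaction with the specific geometric content of the preceding sections is needed.
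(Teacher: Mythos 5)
Your argument is correct: the centrality of $T^c$ gives $T^c\alpha=0$ and $T^c\gamma=0$, the long exact sequence for $\Hom(\cG,-)$ produces the lift $\beta'$ of $T^c\id_\cG$, and the long exact sequence for $\Hom(\cF,-)$ shows the discrepancy $\beta'\circ\beta-T^c\id_\cF$ factors through $\alpha$, hence dies after one more multiplication by $T^c$; the pair $(\beta,\,T^c\beta')$ is then a $2c$-isomorphism exactly as the paper's definition requires. The paper itself omits the proof, pointing to Asano--Ike's Lemma~3.4 together with the remark that a weak $(a,b)$-isomorphism yields a $2(a+b)$-isomorphism; the intended route there is to first extract a weak $(0,c)$-isomorphism from the triangle and then invoke that general implication. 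Your proof runs the same mechanism (lifting $T^c\id_\cG$ along $\beta$ and killing the error through the $c$-torsion of $\cE$) but packages it directly, bypassing the weak-isomorphism notion, which makes it self-contained at no cost in the constant. The only caveat, shared with the cited source, is that all equalities are to be read in the homotopy (triangulated) category, consistent with the paper's conventions for closed morphisms.
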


\begin{lemma}[{\cite[Prop.~4.2]{AsanoIkecomplete}}]
    Assume that $(\cE, \cF)$ is $c$-isomorphic. Let $(\alpha, \beta)$ be a $c$-isomorphism on $(\cE, \cF)$. 
    Then the cone of $\alpha$ is $3c$-torsion.
\end{lemma}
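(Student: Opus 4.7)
The plan is to build the argument directly from a choice of distinguished triangle
\[
\cE \xrightarrow{\alpha} \cF \xrightarrow{p} C \xrightarrow{\delta} \cE[1]
\]
realising $C = \Cone(\alpha)$, and then to leverage the two $c$-isomorphism relations $\beta\alpha = T^c\id_\cE$ and $\alpha\beta = T^c\id_\cF$ to show that \emph{both} of the non-$\alpha$ edges $p$ and $\delta$ of this triangle are annihilated by $T^c$. Once that is in place, the identity morphism $\id_C$ can be pulled back along $p$ after a single multiplication by $T^c$, and pushed forward along $p$ requires a second multiplication by $T^c$, which we already know is zero, yielding the torsion bound.

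First I would verify the annihilation statements. For the co-projection, use axiom (2):
\[
T^c \cdot p \;=\; p \circ (T^c\id_\cF) \;=\; p \circ \alpha \circ \beta \;=\; 0,
\]
because $p \circ \alpha = 0$ from the triangle. For the connecting map, rotate the triangle and use axiom (1):
\[
T^c \cdot \delta \;=\; (\beta[1] \circ \alpha[1]) \circ \delta \;=\; \beta[1] \circ (\alpha[1] \circ \delta) \;=\; 0,
\]
because $\alpha[1] \circ \delta = 0$ (up to sign). Throughout, I am using that $\cD$ is $\Lambda_0$-linear, so that $T^c$ acts centrally on morphism spaces and may equivalently be written as pre- or post-composition with $T^c$ on either end; this is the only point where one has to be mildly careful.

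To conclude, apply $\Hom_\cD(C,-)$ to the triangle and read off the long exact sequence
\[
\Hom_\cD(C,\cF) \xrightarrow{\,p \circ -\,} \Hom_\cD(C,C) \xrightarrow{\,\delta \circ -\,} \Hom_\cD(C,\cE[1]).
\]
The element $T^c\id_C$ maps to $T^c\delta = 0$ by Step 2, so there exists $\phi \in \Hom_\cD(C,\cF)$ with $T^c\id_C = p \circ \phi$. Multiplying once more by $T^c$ and using $T^c \cdot p = 0$ gives
\[
T^{2c}\id_C \;=\; (T^c \cdot p) \circ \phi \;=\; 0,
\]
which already implies the desired $3c$-torsion (indeed yields the slightly stronger bound $2c$). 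There is essentially no obstacle beyond bookkeeping; the whole argument is a direct transcription of \cite[Prop.~4.2]{AsanoIkecomplete} into our $\Lambda_0$-linear dg/triangulated framework.
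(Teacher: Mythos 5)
Your proof is correct, and it is essentially the intended argument: the paper omits the proof (deferring to \cite[Prop.~4.2]{AsanoIkecomplete}), and your triangle rotation plus the long exact sequence for $\Hom(C,-)$, using that $T^c$ acts $\Lambda_0$-linearly (hence centrally) on hom-spaces, is the direct transcription into this $\Lambda_0$-linear triangulated setting; the only reading to fix is that ``$T^a\id$ is zero'' means zero in the homotopy category, which is exactly where your argument takes place. Note that you in fact obtain $T^{2c}\id_{\Cone(\alpha)}=0$, which is stronger than the stated $3c$-torsion (the constant $3$ is inherited from the asymmetric $(a,b)$-interleaving formulation of the cited reference) and implies it, since $T^{3c}\id=T^{c}\cdot T^{2c}\id=0$.
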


\subsection{Interleaving distance}
We interpret the interleaving distance of \cite{AI20,AsanoIkecomplete} in terms of the Novikov ring.
\begin{definition}[Interleaving distance]
Let $\cD$ be a triangulated $\Lambda_0$-linear category. 
For objects $\cE, \cF$ of $\cD$, the interleaving distance $d_{I}(\cE, \cF)$ between them is the infimum of $\epsilon\geq 0$ satisfying the following: There exist closed morphisms $\alpha\in \Hom^0_{\cD}(\cE, \cF)$ and $\beta\in \Hom^0_{\cD}(\cF, \cE)$ such that
    \begin{equation}
    \begin{split}
        &\beta\circ \alpha=T^{\epsilon}\id_{\cE},\\
        &\alpha\circ \beta=T^{\epsilon}\id_{\cF}.
    \end{split}
    \end{equation}
It is easy to see that $d_{I}$ is a pseudo-distance on $\cD$.
\end{definition}

The following is also a straightforward generalization of the result in \cite{AsanoIkecomplete}.
\begin{theorem}\label{lem:complete_nonsense}
Let $\cD$ be a triangulated $\Lambda_0$-linear category with arbitrary coproduct. Then $d_I$ is a complete pseudo-metric.
\end{theorem}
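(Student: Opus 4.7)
Given a Cauchy sequence $(\cE_n)$ in $\cD$ with respect to $d_I$, the plan is to pass to a subsequence $(\cE_{n_k})$ with $d_I(\cE_{n_k}, \cE_{n_{k+1}}) < \epsilon_k$ for a summable sequence $(\epsilon_k)_{k\ge 1}$ of positive reals, and to choose witnessing closed morphisms $\alpha_k \colon \cE_{n_k}\to \cE_{n_{k+1}}$ and $\beta_k \colon \cE_{n_{k+1}}\to \cE_{n_k}$ satisfying $\beta_k\alpha_k = T^{\epsilon_k}\id_{\cE_{n_k}}$ and $\alpha_k\beta_k = T^{\epsilon_k}\id_{\cE_{n_{k+1}}}$. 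Using the assumption that $\cD$ admits arbitrary coproducts, I would then form the homotopy colimit (mapping telescope)
\begin{equation}
    \cE_\infty \coloneqq \Cone\!\left(\bigoplus_{k\ge 1}\cE_{n_k}\xrightarrow{\id-\mathrm{sh}}\bigoplus_{k\ge 1}\cE_{n_k}\right),
\end{equation}
where $\mathrm{sh}$ is the shift $\alpha_k$ on the $k$-th summand. This yields canonical structure morphisms $A_k\colon \cE_{n_k}\to \cE_\infty$ satisfying $A_{k+1}\alpha_k = A_k$ in the homotopy category.

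The core step is to produce back-maps $B_k\colon \cE_\infty\to\cE_{n_k}$ witnessing a small interleaving. Setting $\sigma_k \coloneqq \sum_{j\ge k}\epsilon_j$, which tends to $0$ as $k\to\infty$, I would define compatible morphisms $f_j\colon \cE_{n_j}\to \cE_{n_k}$ by
\begin{equation}
    f_j \coloneqq
    \begin{cases}
      T^{\sigma_k}\,\alpha_{k-1}\circ\cdots\circ\alpha_j & (j<k),\\
      T^{\sigma_j}\,\beta_k\circ\cdots\circ\beta_{j-1} & (j\ge k),
    \end{cases}
\end{equation}
with the convention that $f_k = T^{\sigma_k}\id_{\cE_{n_k}}$. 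The telescope compatibility $f_{j+1}\alpha_j = f_j$ is immediate from $\beta_j\alpha_j = T^{\epsilon_j}\id$ together with the recursion $\sigma_{j+1}+\epsilon_j=\sigma_j$. These morphisms therefore assemble into a map $\bigoplus_j \cE_{n_j}\to \cE_{n_k}$ that equalises $\id$ and $\mathrm{sh}$ up to a choice of null-homotopy, and so descends to the desired $B_k\colon \cE_\infty\to\cE_{n_k}$.

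I would then verify the two interleaving identities. The composition $B_k\circ A_k = f_k = T^{\sigma_k}\id_{\cE_{n_k}}$ is immediate from the construction. For the reverse composition $A_k\circ B_k = T^{\sigma_k}\id_{\cE_\infty}$, I would check that both morphisms $\cE_\infty\to\cE_\infty$ restrict on each summand $\cE_{n_j}$ (under the natural map to $\bigoplus_j \cE_{n_j}$) to $T^{\sigma_k}A_k f_j$, which by the telescope relations $A_k\alpha_{k-1}\cdots\alpha_j = A_j$ and $A_k \beta_k\cdots\beta_{j-1}$ being telescope-equivalent to $T^{\sigma_j-\sigma_k}A_j$, coincides with $T^{\sigma_k}A_j$. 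This yields $d_I(\cE_{n_k},\cE_\infty)\le \sigma_k\to 0$. Combining with the Cauchy property of the original sequence gives $d_I(\cE_n,\cE_\infty)\to 0$, proving completeness.

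The main obstacle will be the identity $A_k\circ B_k = T^{\sigma_k}\id_{\cE_\infty}$: in a triangulated category a morphism out of a homotopy colimit is not uniquely determined by its restrictions to the summands, so this step cannot be purely formal. It requires genuinely exhibiting a null-homotopy on the defining exact triangle of $\cE_\infty$, built from the interleaving data $(\alpha_k,\beta_k)$, and is the part where the argument of Asano--Ike \cite{AsanoIkecomplete} must be transcribed carefully (with the reals $c$ in their setup replaced by elements $T^c\in\Lambda_0$ throughout).
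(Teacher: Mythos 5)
Your overall strategy is the intended one: the paper gives no independent argument for this theorem, only the remark that it is a straightforward generalization of Asano--Ike \cite{AsanoIkecomplete}, and their proof is exactly your telescope construction (pass to a subsequence with summable interleaving constants, form $\cE_\infty$ as the cone of $\id-\mathrm{sh}$ on $\bigoplus_k\cE_{n_k}$, and show $d_I(\cE_{n_k},\cE_\infty)\to 0$). Your compatible system $(f_j)_j$, the factorization $B_k$ through the telescope, and the identity $B_k\circ A_k=T^{\sigma_k}\id_{\cE_{n_k}}$ are all fine.

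The genuine gap is the step you flag yourself, and it is not merely a matter of transcribing \cite{AsanoIkecomplete} carefully: the exact identity $A_k\circ B_k=T^{\sigma_k}\id_{\cE_\infty}$ is not established there and in general cannot be. A morphism out of the cone $\cE_\infty$ is determined by its restriction to $\bigoplus_j\cE_{n_j}$ only up to a term factoring through the boundary map $\partial\colon\cE_\infty\to\bigoplus_j\cE_{n_j}[1]$, so the discrepancy $A_kB_k-T^{\sigma_k}\id_{\cE_\infty}$ has the form $\widetilde\delta\circ\partial$; it is not killed by multiplying with any $T^c$, and choosing a null-homotopy in an enhancement does not remove it either, since $B_k$ itself was only produced at the triangulated level (non-uniquely). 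The way the argument is actually closed --- and what the lemmas recorded just before the theorem are for (the cone of $T^a\id$ is $2a$-torsion; a triangle whose first vertex is $c$-torsion has its other two vertices $2c$-isomorphic; the cone of a $c$-isomorphism is $3c$-torsion) --- is to avoid any exact interleaving identity on $\cE_\infty$ and instead bound the torsion of $\Cone(A_k)$. Concretely, the composites $\cE_{n_k}\to\cE_{n_j}$ are $\sigma_k$-isomorphisms, so their cones are $3\sigma_k$-torsion; $\Cone(A_k)$ sits in a triangle between two coproducts of these cones (compare the constant telescope on $\cE_{n_k}$ with the telescope defining $\cE_\infty$), a coproduct of $c$-torsion objects is $c$-torsion, and an object in a triangle between two $c$-torsion objects is $2c$-torsion; hence $\Cone(A_k)$ is $6\sigma_k$-torsion and $\cE_{n_k}$, $\cE_\infty$ are $C\sigma_k$-isomorphic for a universal constant $C$ (alternatively one argues with the weak $(a,b)$-isomorphisms of \cite{AsanoIkecomplete}). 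Any bound of the form $C\sigma_k$ suffices for completeness, so the loss of constants is harmless; but without replacing your exact identity by such a torsion estimate, the proof does not close.
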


\begin{definition}
    We denote the completion of $\mu^{\mathrm{sm}}(X;u<c)$ in $\mu(X;u<c)$ with respect to $d_I$ by $\mu^{\mathrm{lsm}}(X;u<c)$. Here $\mathrm{lsm}$ stands for ``limit-smooth".
\end{definition}

\begin{remark}
    Since arbitrary colimit does not necessarily exist in $\mu(X;u<c)$, the ``completion" in the above means that the inclusion of all the existing colimits of Cauchy sequences. In particular, we do not use Theorem~\ref{lem:complete_nonsense} for our microlocal categories.
\end{remark}

\begin{remark}
We can also discuss the Gromov--Hausdorff distance in the sense of Fukaya~\cite{FukayaGromov} in this setting (up to slight modification). An advantage of sheaf theory (already evident in \cite{AsanoIkecomplete} and \cite{GV2022singular}) is that the sheaf category already contains limits, and hence the limiting category is more concrete than the approach in \cite{FukayaGromov}.
\end{remark}

\subsection{Interleaving distance on the non-equivariant category}

To relate our interleaving distance with that of \cite{AI20}, we recall the formulation in loc.\ cit.
\begin{definition}[$(a,b)$-isomorphism]
Let $\cE, \cF\in \mush_{>0}(X\times \bR_{u<c}\times \bR_t)$ and $a, b \in \bR_{\geq 0}$. The pair $(\cE, \cF)$ is said to be $(a,b)$-isomorphic if there exist closed morphisms $\alpha\colon \cE\rightarrow T_a\cF$ and $\beta\colon \cF\rightarrow T_b\cE$  such that \begin{enumerate}
    \item[(1)] the composite $\cE\xrightarrow{\alpha}T_a\cF\xrightarrow{\beta}T_{a+b}\cE$ is equal to $T^{a+b}$ and 
    \item[(2)] the composite $\cF\xrightarrow{\beta}T_b\cE\xrightarrow{\alpha}T_{a+b}\cF$ is equal to $T^{a+b}$.
\end{enumerate}
We also set 
\begin{equation}
    d(\cE, \cF)\coloneqq \inf\lc a+b\relmid a,b \in \bR_{\ge 0}, \text{the pair $(\cE,\cF)$ is $(a,b)$-isomorphic}\rc.
\end{equation}
\end{definition}

\begin{lemma}
For $\cE, \cF\in  \mush_{>0}(X\times \bR_{u<c}\times \bR_t)$, we have
\begin{equation}
    d_{I}(\frakf^L(\cE), \frakf^L(\cF))\leq d(\cE, \cF).
\end{equation}
\end{lemma}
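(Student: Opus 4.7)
The plan is to promote an $(a,b)$-isomorphism realizing $d(\cE,\cF)<a+b+\eta$ to a pair of morphisms between $\frakf^L(\cE)$ and $\frakf^L(\cF)$ whose compositions equal $T^{a+b}\id$, by applying the left adjoint $\frakf^L = (-)\star 1_\mu$ and using the canonical translation-equivariance of $1_\mu$ to absorb the shifts $T_a$ and $T_b$.

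Fix $\eta>0$ and choose an $(a,b)$-isomorphism $(\alpha\colon \cE\to T_a\cF,\ \beta\colon \cF\to T_b\cE)$ with $a+b<d(\cE,\cF)+\eta$. Since $\star$ and $T_c$ both act on the $\bR_t$-factor, one has a canonical equality $\frakf^L(T_c\cG) = T_c\frakf^L(\cG)$; moreover, the tautological shift isomorphism $1_\mu \xrightarrow{\sim} T_c 1_\mu$ (which exists since $1_\mu = \bigoplus_{r\in\bR}\bK_{t\geq r}$ is translation-stable) induces, by convolving with $\id_{\cG}$, canonical isomorphisms $e_c^{\cG} \colon \frakf^L(\cG) \xrightarrow{\sim} T_c\frakf^L(\cG)$ satisfying the cocycle identity $e_{a+b}^{\cG} = T_a(e_b^{\cG})\circ e_a^{\cG}$. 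I set
\begin{equation*}
    \alpha' \coloneqq (e_a^{\cF})^{-1}\circ \frakf^L(\alpha)\colon \frakf^L(\cE)\to \frakf^L(\cF), \qquad \beta' \coloneqq (e_b^{\cE})^{-1}\circ \frakf^L(\beta)\colon \frakf^L(\cF)\to \frakf^L(\cE).
\end{equation*}

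Using naturality of $e_c^{\bullet}$ (so that $T_a(e_b^{\cE})\circ \frakf^L(\beta) = \frakf^L(T_a\beta)\circ e_a^{\cF}$ after a routine chase) together with the cocycle relation, the composite $\beta'\circ\alpha'$ simplifies to $(e_{a+b}^{\cE})^{-1}\circ \frakf^L(T_a\beta\circ\alpha) = (e_{a+b}^{\cE})^{-1}\circ \frakf^L(T^{a+b})$. The lemma $\frakf_{\bG\bH}^L(T^a)=T^a$ stated in \cref{section:equivandnonequiv} and the very definition of the Novikov action on $\Sh^{\bR}_{\tau>0}$ recalled in \cref{section:equiv_nov_ring} (which identifies $(e_{a+b})^{-1}\circ T^{a+b}$ with the Novikov endomorphism $T^{a+b}$) then give $\beta'\circ\alpha' = T^{a+b}\id_{\frakf^L(\cE)}$, and symmetrically $\alpha'\circ\beta' = T^{a+b}\id_{\frakf^L(\cF)}$. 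Hence $d_I(\frakf^L(\cE),\frakf^L(\cF))\leq a+b<d(\cE,\cF)+\eta$; letting $\eta\to 0$ yields the claimed inequality.

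The only nontrivial point is bookkeeping of the canonical equivariant structure on $\frakf^L(\cG)$: verifying the naturality square needed to collapse $T_a$ out of the composition, and unwinding the definition of the Novikov action to recognize $(e_{a+b})^{-1}\circ T^{a+b}$ as the Novikov endomorphism $T^{a+b}\id$. Both are formal consequences of the constructions in \cref{section:equiv_nov_ring}, so no genuine geometric obstacle intervenes.
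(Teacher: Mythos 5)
Your argument is correct and is exactly the untwisting argument that the paper leaves implicit (the lemma is stated there without proof): push an $(a,b)$-isomorphism through $\frakf^L=(-)\star 1_\mu$, use the canonical equivariant structure $e_c$ on $\frakf^L(-)$ and its naturality plus the cocycle relation to absorb the shifts, and identify $(e_{a+b})^{-1}\circ\frakf^L(T^{a+b})$ with the Novikov endomorphism $T^{a+b}\id$ via the construction of the $\Lambda_0$-action and the compatibility $\frakf^L(T^a)=T^a$. The only blemish is a harmless index slip in the naturality square (the map needed is $T_b(e_a^{\cE})$ rather than $T_a(e_b^{\cE})$, which agree with the stated cocycle only after commuting the abelian translations), and the final identification does rest on unwinding the cited construction of the Novikov action in \cite{Kuw1}, as you acknowledge.
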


The following is also easy:
\begin{lemma}\label{lem:comparison2}
For $\cE, \cF\in \mu(X;u<c)$, we have
\begin{equation}
    d_{I}(\cE, \cF)\geq d(\frakf(\cE), \frakf(\cF)).
\end{equation}
\end{lemma}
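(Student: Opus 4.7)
The strategy is to upgrade a $d_I$-interleaving in the equivariant category to an $(\epsilon,0)$-isomorphism in the non-equivariant category by absorbing the equivariant structure isomorphism $e_\epsilon\colon \cE\xrightarrow{\sim} T_\epsilon\cE$ into one leg of the interleaving. The key identification is that the endomorphism $T^\epsilon\id_\cE \in \End_{\mu(X;u<c)}^0(\cE)$ coming from the $\Lambda_0$-action, after applying $\frakf$, is equal to the composition $\frakf(\cE)\xrightarrow{\, T^\epsilon\,}T_\epsilon\frakf(\cE)\xrightarrow{(e_\epsilon^{\cE})^{-1}}\frakf(\cE)$, where $T^\epsilon$ on the right is the Tamarkin translation morphism available in the non-equivariant Tamarkin category. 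Equivalently, $e_\epsilon^\cE\circ \frakf(T^\epsilon\id_\cE) = T^\epsilon_{\frakf\cE}$.

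Granting this identification, the proof is short. Fix $\epsilon > d_I(\cE,\cF)$ and choose closed morphisms $\alpha\colon \cE\to\cF$ and $\beta\colon \cF\to\cE$ in $\mu(X;u<c)$ with $\beta\circ\alpha = T^\epsilon\id_\cE$ and $\alpha\circ\beta = T^\epsilon\id_\cF$. Define
\begin{equation}
\alpha' \coloneqq e_\epsilon^\cF \circ \frakf(\alpha)\colon \frakf\cE\longrightarrow T_\epsilon\frakf\cF, \qquad \beta' \coloneqq \frakf(\beta)\colon \frakf\cF\longrightarrow \frakf\cE,
\end{equation}
so that the candidate interleaving has parameters $(a,b)=(\epsilon,0)$. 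The composition $T_\epsilon\beta'\circ \alpha' = T_\epsilon(\frakf\beta)\circ e_\epsilon^\cF\circ \frakf\alpha$ equals $e_\epsilon^\cE \circ \frakf(\beta)\circ \frakf(\alpha)$ by naturality of the equivariant structure (since $\beta$ is a morphism of equivariant sheaves, $T_\epsilon(\frakf\beta)\circ e_\epsilon^\cF = e_\epsilon^\cE\circ \frakf\beta$). Using the key identification, this in turn equals $e_\epsilon^\cE\circ \frakf(T^\epsilon\id_\cE) = T^\epsilon_{\frakf\cE}$, which is the first condition for an $(\epsilon,0)$-isomorphism. The second condition $\alpha'\circ \beta' = T^\epsilon_{\frakf\cF}$ is verified by the same manipulation applied to $\alpha\beta = T^\epsilon\id_\cF$. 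Taking $\epsilon \searrow d_I(\cE,\cF)$ yields the inequality.

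The main obstacle is establishing the key identification in the first paragraph rigorously from the definition of the $\Lambda_0$-action given earlier in the paper, namely the chain
\begin{equation}
\Lambda_0^\bG \longrightarrow \End^0((-)\star_\bG 1_\mu) \longrightarrow \Hom^0(\cE,\cF),
\end{equation}
combined with the natural isomorphism $(-)\star_\bG 1_\mu \cong \id$ of \Cref{lem:Lambda}. Unwinding these, the action of $T^\epsilon$ is built from the translation morphism $\bK_{t\geq c}\to \bK_{t\geq c-\epsilon}$ on the summands of $1_\mu$; after forgetting equivariance, this collapses to the Tamarkin translation $T^\epsilon\colon \frakf\cE\to T_\epsilon\frakf\cE$ precomposed with $e_\epsilon^{-1}$. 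Once this bookkeeping is done, the rest of the argument is purely formal naturality as above.
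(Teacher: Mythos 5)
Your proof is correct and follows what is clearly the intended argument behind the paper's ``easy'' claim (the paper omits the proof): push an $\epsilon$-interleaving through the forgetful functor and absorb the equivariant structure isomorphism $e_\epsilon$ into one leg, producing an $(\epsilon,0)$-isomorphism, which gives $d(\frakf\cE,\frakf\cF)\leq\epsilon$ for every admissible $\epsilon$ and hence the stated inequality. The one point you flag as the ``key identification'' --- that the Novikov action $T^\epsilon\id$ forgets to $e_\epsilon^{-1}\circ T^\epsilon$, where $T^\epsilon$ is the Tamarkin continuation map --- is indeed how the $\Lambda_0$-action via convolution with $1_\mu$ unwinds, so the argument goes through as written.
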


\subsection{Fukaya's Hofer distance}
We can consider a slightly weaker version of the distance.

\begin{definition}[Hofer distance~\cite{FukayaGromov}]
Let $\cD$ be a triangulated $\Lambda_0$-linear category. For objects $c_1, c_2$ of $\cD$, the interleaving distance $d_{H}(c_1, c_2)$ between them is the infimum of $\epsilon\geq 0$ satisfying the following: There exist closed morphisms $\alpha\in F^{-\epsilon_1}\Hom^0_{\cD\otimes_{\Lambda_0}\Lambda}(c_1, c_2)$ and $\beta\in F^{-\epsilon_2}\Hom^0_{\cD\otimes_{\Lambda_0}\Lambda}(c_2, c_1)$ such that
    \begin{equation}
    \begin{split}
        &\beta\circ \alpha=\id_{c_1},\\
        &\alpha\circ \beta=\id_{c_2}.
    \end{split}
    \end{equation}
and $\epsilon_1+\epsilon_2\leq \epsilon$. It is easy to see that $d_{H}$ is a pseudo-distance on $\cD$.
\end{definition}

\begin{remark}
    The definition is motivated by the Hamiltonian invariance of the Floer cohomology over the Novikov field.
\end{remark}

The following is easy:

\begin{lemma}
    For $\cE, \cF\in \cD$, we have
    \begin{equation}
           d_H(\cE, \cF)\leq d_I(\cE, \cF).
    \end{equation}
\end{lemma}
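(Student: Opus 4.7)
\medskip

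\noindent\textbf{Proof proposal.}
The plan is to take a near-optimal interleaving witnessing $d_I(\cE,\cF)$ and to rescale each of its two morphisms by a negative power of $T$ inside $\cD\otimes_{\Lambda_0}\Lambda$ so that the compositions become honest identities; the powers of $T$ that we strip off are exactly what the Hofer distance is counting.

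More concretely, fix any $\epsilon>d_I(\cE,\cF)$. By definition of the interleaving distance there exist closed morphisms $\alpha\in\Hom^0_{\cD}(\cE,\cF)$ and $\beta\in\Hom^0_{\cD}(\cF,\cE)$ with $\beta\circ\alpha=T^{\epsilon}\id_{\cE}$ and $\alpha\circ\beta=T^{\epsilon}\id_{\cF}$. Choose any decomposition $\epsilon=\epsilon_1+\epsilon_2$ with $\epsilon_1,\epsilon_2\geq 0$ (for instance $\epsilon_1=\epsilon_2=\epsilon/2$). In the $\Lambda$-linear category $\cD\otimes_{\Lambda_0}\Lambda$, in which $T$ is invertible, define
\begin{equation}
  \widetilde{\alpha}\coloneqq T^{-\epsilon_1}\alpha,\qquad
  \widetilde{\beta} \coloneqq T^{-\epsilon_2}\beta.
\end{equation}
By construction $\widetilde{\alpha}$ lies in $F^{-\epsilon_1}\Hom^0_{\cD\otimes_{\Lambda_0}\Lambda}(\cE,\cF)$ and $\widetilde{\beta}$ in $F^{-\epsilon_2}\Hom^0_{\cD\otimes_{\Lambda_0}\Lambda}(\cF,\cE)$ (with $F^{\bullet}$ extended to negative indices by $F^{-s}\coloneqq T^{-s}\cdot F^0$, which is the only natural extension of the filtration defined in the paper). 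Using $\Lambda_0$-bilinearity of composition, we compute
\begin{equation}
  \widetilde{\beta}\circ\widetilde{\alpha}=T^{-(\epsilon_1+\epsilon_2)}(\beta\circ\alpha)=T^{-\epsilon}\cdot T^{\epsilon}\id_{\cE}=\id_{\cE},
\end{equation}
and symmetrically $\widetilde{\alpha}\circ\widetilde{\beta}=\id_{\cF}$. Thus $(\widetilde{\alpha},\widetilde{\beta})$ is a Hofer interleaving of total weight $\epsilon_1+\epsilon_2=\epsilon$, which gives $d_H(\cE,\cF)\leq\epsilon$. Letting $\epsilon\searrow d_I(\cE,\cF)$ yields the inequality.

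There is essentially no obstacle here: the only point worth double-checking is that the definition of $F^{-\epsilon_i}$ for negative index is the one intended (i.e.\ the extension of the filtration by translation under multiplication by powers of $T$); this is forced because $d_H$ is only interesting when we allow $\epsilon_i\geq 0$, and the writing $F^{-\epsilon_i}$ with $\epsilon_i\geq 0$ in the definition of $d_H$ shows that such an extension is implicit. No homotopy issue arises because both defining identities for interleavings are strict equalities of closed morphisms, and rescaling by a scalar preserves closedness and the equalities on the nose.
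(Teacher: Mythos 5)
Your proof is correct and is exactly the intended argument: the paper states this lemma without proof (``The following is easy''), and rescaling a (near-)optimal interleaving $(\alpha,\beta)$ by $T^{-\epsilon_1}$ and $T^{-\epsilon_2}$ over $\Lambda$, where $T$ is invertible, is precisely what makes it easy. Your reading of $F^{-\epsilon_i}$ as $T^{-\epsilon_i}\cdot F^{0}$ is the intended extension of the filtration, and the trivial case $d_I(\cE,\cF)=\infty$ needs no separate treatment.
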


\section{Hamiltonian automorphism}\label{section:hamiltonian_auto}

In this section, we explain that the energy stability result~\cite{AI20,AsanoIkecomplete} also holds in our microlocal category.

\begin{remark}
    The reason why we assume sufficiently Weinstein and Lagrangian support in this section is due to the absense of the theory of GKS-isotopy for the microsheaf categories. More precisely, constructing the GKS-kernel is not difficult with the technology of \cite{nadler2020sheaf}, but we do not have the full understanding of the Morita theory of microsheaves. Some partial results in this direction can be found, for example, in \cite{kuo2024duality}.
\end{remark}

\subsection{Quantized Hamiltonian transform in Weinstein context}

Let $(X,\lambda)$ be a sufficiently Weinstein manifold and $I$ be an open interval containing the closed interval $[0,1]$.
Let $H \colon X \times I \to \bR$ be a compactly supported Hamiltonian function and denote by $\Phi \colon X \times I \to X$ the associated Hamiltonian isotopy. 
We consider an action of $\Phi_s$ on our microlocal category.

We recall the setup of antimicrolocalization (see \cref{subsec:antimicrolocalization}).
We first consider the contactification $X\times \bR$. For a sufficiently large $N$, we can contactomorphically embed $X\times \bR$ into $S^*\bR^N$. We can choose the embedding of $S^*\bR^N\hookrightarrow T^*\bR^N$ in a way that the contact form on $S^*\bR^N$ is pulled back to $\lambda$ via the embedding. As a result, we obtain a symplectic embedding $X\hookrightarrow T^*\bR^N$ preserving the Liouville forms. As in \cite{ShendeH-principle, nadler2020sheaf}, we choose a polarization $\frakp$. Namely, take a tubular neighborhood $U$ of the embedding, and take a Lagrangian distribution $\frakp$ of $p\colon U\rightarrow X$.

We define $\widetilde{H} \coloneqq H \circ p \colon U \times I\to \bR$, where $p \colon U \to X$ is the projection.
Then we extend $\widetilde{H}$ to $T^*\bR^N \times I$ with some bump function. 
After conifiying $\widetilde{H}$ as $\tau \widetilde{H}(y;\eta/\tau)$, we can quantize the associated Hamiltonian isotopy $\Phi^{\widetilde{H}}$ as $\cK^{\widetilde{H}} \in \Sh_{>0}(\bR^N \times \bR^N \times \bR_t \times I)$ by the GKS theorem~\cite{GKS}.

Let $L_1,\dots, L_n$ be a finite collection of Lagrangian submanifolds in $X$. Using the polarization $\frakp$, we defined $\bL^\prec$. We then have the antimicrolocalization embedding:
\begin{equation}
    \mu_{\bigcup_iL_i}(X;u<c) \cong \mu^{A\mu}_{\bigcup_iL_i}(X;u<c).
\end{equation}
The comp-composition with the GKS kernel $\cK^{\widetilde{H}}$ gives an equivalence:
\begin{equation}
    \cK^{\widetilde{H}}_s \colon \Sh_{\bL^\prec, \tau>0}^\bR(\bR^N\times \bR_t\times\bR_{u}\times \bR_{v<a})_0\rightarrow \Sh_{\bL^\prec, \tau>0}^\bR(\bR^N\times \bR_t\times\bR_{u}\times \bR_{v<a})_0
\end{equation}
for $s \in I$ and for any $a>0$.
This induces the following functor under the antimicrolocalization embedding and the microlocalization:
\begin{equation}
    \mu(X;u<c)\supset \mu_{\bigcup_{i=1}^nL_i}(X;u<c)\xrightarrow{\cong}\mu_{\bigcup_{i=1}^n\Phi_s(L_i)}(X;u<c)\subset \mu(X;u<c).
\end{equation}
The equivalence is proved in \cite{LiNadlerShende}. Strictly speaking, \cite{LiNadlerShende} only proves in the non-equivariant case, but the adaption to the equivariant case is straightforward.
For $s \in I$, the functor $\cK^{\widetilde{H}}_s$ depends only on $H$, thus by observing the obvious compatibility, we obtain the following functor:
\begin{equation}
        \cK^H_s \colon \mu^{\mathrm{sm}}(X;u<c) \rightarrow \mu^{\mathrm{sm}}(X;u<c).
\end{equation}

\begin{lemma}[{cf.~\cite[Prop.~5.21]{AsanoIkecomplete}}]
	The functor $\cK^H_1$ depends only on the time-1 map~$\phi^H_1$. 
\end{lemma}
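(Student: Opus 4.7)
The plan is to reduce the statement to the analogous fact for the classical Tamarkin category on $T^*\bR^N$, which is established in \cite[Prop.~5.21]{AsanoIkecomplete}. By construction, the functor $\cK^H_s$ on $\mu^{\mathrm{sm}}(X;u<c)$ is obtained by transporting, through the antimicrolocalization equivalence, the convolution with the GKS kernel $\cK^{\widetilde{H}}_s$ associated to the conified extension of $H$ to $T^*\bR^N$. Hence it suffices to show that the induced convolution endofunctor at $s=1$, acting on the subcategory $\Sh^\bR_{\bL^\prec,\tau>0}(\bR^N\times\bR_t\times\bR_u\times\bR_{v<a})_0$, depends only on $\phi^H_1$.

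Suppose $H_0$ and $H_1$ are two compactly supported Hamiltonians on $X\times I$ with $\phi^{H_0}_1=\phi^{H_1}_1$. First I would show that, after modifying the bump-function extensions if necessary, one may arrange that the extensions $\widetilde{H_0},\widetilde{H_1}$ to $T^*\bR^N$ induce the same time-1 diffeomorphism on a tubular neighborhood of $X$ large enough to contain the Hamiltonian orbits of all microsupports appearing in objects of the subcategory. The freedom in the extension outside such a neighborhood does not affect the convolution action on sheaves with microsupport in $\bL^\prec$, because the kernel and the target sheaves are composed only on a relatively compact region.

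With this reduction in place, I would interpolate $\widetilde{H_0}$ and $\widetilde{H_1}$ by a $1$-parameter family of compactly supported Hamiltonians all generating the same time-1 map, quantize the family via GKS to obtain a continuous family of kernels at $s=1$, and invoke the uniqueness part of the GKS theorem (any two simple sheaves with the same microsupport and the same initial datum at $t=0$ are isomorphic) to conclude that $\cK^{\widetilde{H_0}}_1$ and $\cK^{\widetilde{H_1}}_1$ act by isomorphic functors on the subcategory. Transporting back through the antimicrolocalization equivalence yields the claim. The existence of such an interpolation is standard: the space of compactly supported Hamiltonian paths from the identity to a fixed element of $\mathrm{Ham}_c(T^*\bR^N)$ is path-connected, which may be seen via a fragmentation argument followed by smoothing.

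The main obstacle will be the first reduction, namely producing extensions $\widetilde{H_0},\widetilde{H_1}$ whose time-1 maps agree on a region containing the Hamiltonian evolutions of all relevant microsupports; this requires a careful analysis of which Hamiltonian trajectories can escape the chosen tubular neighborhood during the isotopy. Once this bookkeeping is in place, the remainder of the argument follows the same template as in \cite[Prop.~5.21]{AsanoIkecomplete} with only cosmetic changes to accommodate the Novikov-equivariant and doubling-variable decorations of our setup.
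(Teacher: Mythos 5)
Your reduction to $T^*\bR^N$ via antimicrolocalization is the same first move as the paper's, but the way you finish is different and contains a real gap. The paper does not re-prove the cotangent-bundle statement: it observes that on the tubular neighborhood $U$ the extension is canonical, $\widetilde H = H\circ p$, so if $\phi^{H_1}_1=\phi^{H_2}_1$ then the pulled-back isotopies already have the same time-1 map on $U$, and the arbitrariness of the bump function outside $U$ is irrelevant because the functor is only determined by the action on objects with microsupport in $\bL^\prec$; it then simply invokes \cite[Prop.~5.21]{AsanoIkecomplete} for $T^*\bR^N$. So the ``first reduction'' you flag as the main obstacle is in fact the easy part, and your worry about matching the global time-1 maps of the extensions is beside the point.

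The genuine gap is in your replacement for the citation. You propose to interpolate $\widetilde H_0$ and $\widetilde H_1$ by a one-parameter family of compactly supported Hamiltonians all generating the same time-1 map, and you assert this is ``standard'' because the space of compactly supported Hamiltonian paths from $\mathrm{id}$ to a fixed element of $\mathrm{Ham}_c(T^*\bR^N)$ is path-connected. Path-connectedness of that space (rel endpoints) is equivalent to the vanishing of $\pi_1(\mathrm{Ham}_c(\bR^{2N}))$, which is not a standard fact: contractibility-type results (Gromov) are available only in low dimensions, and the homotopy type of $\mathrm{Symp}_c(\bR^{2N})$ or $\mathrm{Ham}_c(\bR^{2N})$ is open for general $N$. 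A fragmentation-plus-smoothing argument gives connectedness of the group (a $\pi_0$ statement), not triviality of $\pi_1$, so it cannot produce the interpolation you need. Since your GKS-family/uniqueness step hinges entirely on this interpolation, the argument as written does not close. The fix is simply to do what the paper does: after the observation about $H\circ p$ on $U$, quote \cite[Prop.~5.21]{AsanoIkecomplete} (whose proof does not proceed through your interpolation) rather than attempt to re-derive it.
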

\begin{proof}
Let $H_1, H_2$ be different Hamiltonian functions whose time-1 Hamiltonian flow maps coincide. Then, on $U$, the time-1 maps of the pulled-back isotopies are the same. 
Hence the statement holds from the same result for the cotangent bundle~\cite{AsanoIkecomplete}.
\end{proof}

For a compactly supported Hamiltonian diffeomorphism $\varphi$ of $X$, we can define a functor $\cK_\varphi \colon \mu^{\mathrm{sm}}(X;u<c)\rightarrow \mu^{\mathrm{sm}}(X;u<c)$ by choosing a Hamiltonian function $H \colon X \times I \to \bR $ such that $\varphi=\phi^H_1$ and setting $\cK_\varphi \coloneqq \cK^H_1$.
This is well-defined by the above lemma.
The following properties are deduced from the corresponding properties of GKS kernels.

\begin{proposition}
	Let $\varphi$ and $\varphi'$ be compactly supported Hamiltonian diffeomorphisms of~$X$.
    Then the following hold:
	\begin{enumerate}
		\item $\cK_{\varphi \circ \varphi'} \cong \cK_{\varphi} \circ \cK_{\varphi'}$,
		\item $\cK_\varphi$ is an equivalence and $\cK_{\varphi^{-1}}$ is its inverse, and 
		\item $\musupp(\cK_{\varphi}(\cE))=\varphi(\musupp(\cE))$ for $\cE\in \mu^{\mathrm{sm}}(X;u<c)$. 
	\end{enumerate}
\end{proposition}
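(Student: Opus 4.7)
The plan is to derive all three claims from the corresponding properties of the GKS kernel $\cK^{\widetilde{H}}$ on $\bR^N$, transported across the antimicrolocalization equivalence of \S\ref{subsec:antimicrolocalization}. The key point is that the definition of $\cK_\varphi$ goes via a Hamiltonian $H$, its pullback $\widetilde{H}=H\circ p$ to the tubular neighborhood $U$, a compactly-supported extension to $T^*\bR^N$, and the conification $\tau\widetilde{H}(y;\eta/\tau)$; everything is then an application of GKS to this conified Hamiltonian, followed by antimicrolocalization.

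For (1), given Hamiltonians $H, H'$ generating $\varphi, \varphi'$, I would use the standard concatenation $H\#H'$ that generates $\varphi\circ\varphi'$, together with the pullback identity $\widetilde{H\#H'}=\widetilde{H}\#\widetilde{H'}$ on $U\times I$ (since $p$ is the projection of a symplectic tube). The composition property of GKS kernels \cite{GKS} then gives $\cK^{\widetilde{H\#H'}}_1 \cong \cK^{\widetilde{H}}_1 \circ \cK^{\widetilde{H'}}_1$ as convolution operators on $\Sh^\bR_{\tau>0}(\bR^N\times\bR^N\times\bR_t\times\bR_u\times\bR_{v<a})$. Since antimicrolocalization is a functor of triangulated categories, this composition descends to $\mu^{\mathrm{sm}}(X;u<c)$. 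Statement (2) then follows by applying (1) to the pair $(\varphi, \varphi^{-1})$ together with $\cK_{\id}\cong\id$, which in turn follows from the fact that the GKS kernel of the trivial flow is the convolution unit (the constant sheaf on the appropriate diagonal).

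For (3), the microsupport estimate for GKS gives $\MS(\cK^{\widetilde{H}}_s \cF) = \Phi^{\widetilde{H}}_s(\MS(\cF))$ in the conified setting. Deconifying via $\rho$ and restricting through the polarization $\frakp$ to the image of $X\hookrightarrow T^*\bR^N$, this translates to $\musupp(\cK_\varphi(\cE))=\varphi(\musupp(\cE))$. The essential observation is that $\widetilde{H}=p^*H$ on $U$, so the Hamiltonian flow of $\widetilde{H}$ restricted to $X$ (viewed through $\frakp$) agrees with the flow of $H$; the symplectic normal bundle contributions lie purely in the polarization directions and disappear under the projection $p$ back to $X$.

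The main obstacle will be verifying that the GKS-convolution respects all the extra structure needed to land in $\mu^{\mathrm{sm}}(X;u<c)$: the $\bR$-equivariance in $t$, the positivity $\tau>0$, the doubling/cusp structure encoded by $\bL^\prec$, and the vanishing condition on $v\leq 0$ in the limit $a\to 0^+$. The $\bR_t$-equivariance is automatic because $\widetilde{H}$ is $t$-independent and its conification is degree-one homogeneous in $\tau$, so the convolution commutes with $\bR_t$-translations and preserves $\{\tau>0\}$. Preservation of the $v$-support condition and of the cusp shape of $\bL^\prec$ requires that the compactly-supported extension of $\widetilde{H}$ be chosen to vanish on a neighborhood of $\mathrm{Core}(X)\times\bR_t\times\bR_u\times\bR_{v<a}$, which is possible because $H$ is compactly supported in $X$ and the doubling and cusp constructions only modify configurations transverse to the $v$-direction; the flow then fixes the tilted and cusp components and the argument of \cite{nadler2020sheaf} applies uniformly in $a$.
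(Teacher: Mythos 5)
Your route is the same as the paper's, which simply asserts that all three properties are inherited from the corresponding properties of the GKS kernels $\cK^{\widetilde{H}}_s$ through the antimicrolocalization equivalence; your write-up just makes that deduction explicit, including the compatibility checks for equivariance, $\tau>0$, and the doubling data. One small caution: the identity $\widetilde{H\#H'}=\widetilde{H}\#\widetilde{H'}$ holds only along the zero section $X$ (the flow of $p^*H$ is $p$-equivariant there but not on all of $U$), which is harmless because, by the preceding lemma, $\cK^H_1$ depends only on the time-1 map $\phi^H_1$, so (1) still follows from GKS composition and uniqueness.
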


\subsection{Hamiltonian automorphism and energy stability}

We can prove the following Hamiltonian stability result also in the sufficiently Weinstein setting.

\begin{theorem}[{cf.~\cite[Thm.~5.11]{AsanoIkecomplete}}]\label{thm:Hamiltonianstability}
	For a compactly supported Hamiltonian diffeomorphism $\varphi$ of $X$ and $\cE \in \mu^{\mathrm{sm}}(X;u<c)$, one has 
	\begin{equation}
		d_I(\cE,\cK_\varphi(\cE)) \leq \|\varphi\|_H,
	\end{equation}
	where the right-hand side is the Hofer norm of $\varphi$ defined by 
	\begin{equation}\label{eq:Hofer_norm}
		\|\varphi\|_H
		\coloneqq 
		\inf \lc \int_0^1 \lb \max_{p \in X} H_s(p) -\min_{p \in X} H_s(p) \rb ds
		\relmid 
		\begin{aligned}
			& H \colon X \times I \to X \\ 
			& \text{is compactly supported} \\
			& \text{and $\varphi = \phi^H_1$}            
		\end{aligned}
		\rc.
	\end{equation}
\end{theorem}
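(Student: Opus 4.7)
\medskip

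\noindent\textbf{Proof proposal.} The plan is to reduce to the cotangent bundle case via the antimicrolocalization setup of \cref{subsec:antimicrolocalization} and import the energy stability result for GKS kernels in the spirit of Asano--Ike \cite{AsanoIkecomplete}. Fix $\epsilon > 0$ and choose a compactly supported Hamiltonian $H\colon X \times I \to \bR$ with $\varphi = \phi^H_1$ and
\begin{equation}
    \int_0^1 \lb \max_{p\in X} H_s(p) - \min_{p\in X} H_s(p) \rb ds \leq \|\varphi\|_H + \epsilon.
\end{equation}
Set $c_+ \coloneqq \int_0^1 \max_{p\in X} H_s\, ds$ and $c_- \coloneqq \int_0^1 \min_{p\in X} H_s\, ds$, so the oscillation is $c_+ - c_-$. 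Recall that $\cK_\varphi$ is defined by transporting, through the antimicrolocalization equivalence, the GKS kernel of an ambient Hamiltonian $\widetilde{H}$ on $T^*\bR^N$ obtained by pulling $H$ back along $p\colon U \to X$ and extending by a cut-off. One can arrange this extension so that the pointwise extrema of $\widetilde{H}_s$ agree with those of $H_s$, hence the ambient oscillation is unchanged.

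The next step is to build an interleaving directly at the level of GKS quantizations on $\bR^N$. For the conified Hamiltonian $\tau \widetilde{H}(y;\eta/\tau)$, the GKS theorem \cite{GKS} produces, together with the kernel $\cK^{\widetilde{H}} \in \Sh_{\tau>0}(\bR^N \times \bR^N \times \bR_t \times I)$, canonical morphisms
\begin{equation}
    \alpha \colon \bK_{\Delta \times \{t\geq 0\}} \longrightarrow T_{c_+} \cK^{\widetilde{H}}_1, \qquad \beta \colon \cK^{\widetilde{H}}_1 \longrightarrow T_{-c_-}\bK_{\Delta \times \{t\geq 0\}}
\end{equation}
coming from the comparison of the identity kernel with the flow kernel at extremal values of $\widetilde{H}$; their composition equals the $T^{c_+ - c_-}$-translation morphism of the identity kernel (this is exactly the content of the Asano--Ike estimate, see \cite[\S 5]{AsanoIkecomplete} and \cite[Thm.~4.1]{AI20}). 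Convolving with the $\bR$-equivariant object corresponding to $\cE$ in $\mu^{A\mu}_{\bigcup L_i}(X;u<c)$ yields closed morphisms in $\Sh_{\bL^\prec, \tau>0}^\bR(\bR^N\times \bR_t\times\bR_u\times \bR_{v<a})_0$ whose composition is $T^{c_+-c_-}\id$, and similarly in the other order. Taking the limit $a \to 0$, these descend to an interleaving of $\cE$ and $\cK_\varphi(\cE)$ in $\mu^{\mathrm{sm}}(X;u<c)$ with Novikov shift $c_+-c_-$.

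The final step is simply $d_I(\cE, \cK_\varphi(\cE)) \leq c_+ - c_- \leq \|\varphi\|_H + \epsilon$, and letting $\epsilon \to 0$ completes the argument. The main obstacle will be the bookkeeping through antimicrolocalization: one must verify that the morphisms $\alpha, \beta$ preserve the microsupport condition along the cusp doubling $\bL^\prec$ (equivalently, that convolution with $\cK^{\widetilde{H}}_1$ preserves the subcategory, which is essentially built into the construction of $\cK_\varphi$ at the start of this section), that they respect the $\bR$-equivariant structure and the vanishing on $\{v \leq 0\}$, and that the $t$-translation endomorphism $T^a$ on both sides of the equivalence $\mu_{\bigcup L_i}(X;u<c) \cong \mu^{A\mu}_{\bigcup L_i}(X;u<c)$ is identified with the Novikov element $T^a \in \Lambda_0$. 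A subsidiary point is that $H$ is only compactly supported while the Lagrangian branes supporting $\cE$ may be end-conic; here one uses that outside a compact set the flow is trivial, so the interleaving morphisms are the tautological ones there and no further estimate is needed.
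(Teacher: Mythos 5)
Your proposal is correct and follows essentially the same route as the paper: reduce to $T^*\bR^N$ via the antimicrolocalization setup, arrange the extension $\widetilde{H}$ so its oscillation equals that of $H$, and invoke the Asano--Ike energy estimate for GKS kernels before convolving with $\cE$. The only difference is presentational — you unpack the kernel-level estimate into explicit interleaving morphisms $\alpha,\beta$, whereas the paper cites the bound on $d(\cK^0_1,\cK^{\widetilde{H}}_1)$ and uses that convolution does not increase the interleaving distance — and the bookkeeping issues you flag are already absorbed into the construction of $\cK_\varphi$ earlier in the section.
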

\begin{proof}
	Let $H \colon X \times I \to \bR$ be a compactly supported Hamiltonian function. 
	It is enough to prove that 
	\begin{equation}
		d_I(\cE,\cK^H_1(\cE)) \leq  \int_0^1 \lb \max_{p \in X} H_s(p) -\min_{p \in X} H_s(p) \rb ds.
	\end{equation} 
	Recall that the functor $\cK^H_1$ is defined as the composition with $\cK^{\widetilde{H}}_1$, where $\widetilde{H}$ is an extension $\widetilde{H}$ of $H$ by some bump function. In particular, $\min_{p \in X} H_s(p)=\min_{p \in T^*\bR^N} \widetilde H_s(p)$ and $\max_{p \in X} H_s(p)=\max_{p \in T^*\bR^N} \widetilde H_s(p)$, which implies 
    \begin{equation}
		\int_0^1 \lb \max_{p \in T^*\bR^N} \widetilde{H}_s(p) -\min_{p \in T^*\bR^N} \widetilde{H}_s(p) \rb ds 
		= 
		\int_0^1 \lb \max_{p \in X} H_s(p) -\min_{p \in X} H_s(p) \rb ds.
	\end{equation}
	By \cite{AsanoIkecomplete}, we have 
	\begin{equation}
		d(\cK^0_1,\cK^{\widetilde{H}}_1) \leq \int_0^1 \lb \max_{p \in T^*\bR^N} \widetilde{H}_s(p) -\min_{p \in T^*\bR^N} \widetilde{H}_s(p) \rb ds.
	\end{equation}	
	Since 
	\begin{equation}
		d_{I}(\cK_1^0(\cE),\cK^{\widetilde{H}}_1 (\cE)) \leq d(\cK^0_1,\cK^{\widetilde{H}}_1)
	\end{equation}
	and $\cK^0_1$ is the identity functor, we obtain the result.
\end{proof}

\begin{corollary}\label{cor:8.4}
For any object $\cE\in \mu^{\mathrm{sm}}(X;u<c)$ and any compactly supported Hamiltonian diffeomorphism $\varphi$, we have $\cE\cong \cK_\varphi(\cE)$ in $\mu(X;u<c)\otimes_{\Lambda_0} \Lambda$.
\end{corollary}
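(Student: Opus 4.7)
The plan is to derive the result directly from \cref{thm:Hamiltonianstability} combined with the fact that every positive power of the Novikov parameter $T$ becomes invertible after tensoring with the Novikov field $\Lambda$. First, since $\varphi$ is compactly supported, the Hofer norm defined in \eqref{eq:Hofer_norm} is finite: there exists a compactly supported Hamiltonian $H$ with $\varphi=\phi^H_1$ and $\int_0^1(\max H_s-\min H_s)\,ds<\infty$. Applying \cref{thm:Hamiltonianstability} to any such $H$ gives
\begin{equation}
    d_I(\cE,\cK_\varphi(\cE))\leq \|\varphi\|_H<\infty.
\end{equation}

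Next, I would unpack the definition of $d_I$ at this finite value. For any $\epsilon>\|\varphi\|_H$, the infimum definition produces closed morphisms
\begin{equation}
    \alpha\colon \cE\to \cK_\varphi(\cE), \qquad \beta\colon \cK_\varphi(\cE)\to \cE
\end{equation}
in $\mu(X;u<c)$ (the subcategory $\mu^{\mathrm{sm}}(X;u<c)$ sits fully faithfully inside $\mu(X;u<c)$, so these morphisms live in the ambient category as required) satisfying $\beta\circ\alpha=T^\epsilon\id_\cE$ and $\alpha\circ\beta=T^\epsilon\id_{\cK_\varphi(\cE)}$.

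Now I would pass to the base change $\mu(X;u<c)\otimes_{\Lambda_0}\Lambda$. Since $\Lambda$ is the fraction field of the integral domain $\Lambda_0=\Lambda_0^\bR$, the element $T^\epsilon\in\Lambda_0$ becomes invertible in $\Lambda$ for every $\epsilon\geq 0$. Therefore, viewing $\alpha$ and $\beta$ as morphisms in the base-changed category and setting $\widetilde\alpha\coloneqq T^{-\epsilon}\alpha$ (or equivalently, using $\alpha$ and $T^{-\epsilon}\beta$), we obtain
\begin{equation}
    (T^{-\epsilon}\beta)\circ\alpha=\id_\cE, \qquad \alpha\circ(T^{-\epsilon}\beta)=\id_{\cK_\varphi(\cE)}
\end{equation}
in $\mu(X;u<c)\otimes_{\Lambda_0}\Lambda$. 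This exhibits $\alpha$ as a two-sided inverse pair, proving that $\cE\cong \cK_\varphi(\cE)$ after the base change to $\Lambda$. There is no serious obstacle here; the only subtle point to verify is that the interleaving morphisms produced by the definition of $d_I$ descend correctly to the Verdier-localized / base-changed category, which is immediate since $\mu(X;u<c)\otimes_{\Lambda_0}\Lambda$ is defined by simply tensoring the Hom-complexes with $\Lambda$, so $\alpha$ and $\beta$ survive verbatim and $T^\epsilon$ becomes a unit.
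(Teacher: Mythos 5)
Your proof is correct and follows essentially the same route as the paper: apply \cref{thm:Hamiltonianstability} to get a $c$-isomorphism pair $(\alpha,\beta)$ with $\beta\circ\alpha=T^\epsilon\id$ and $\alpha\circ\beta=T^\epsilon\id$, then observe that $T^\epsilon$ becomes invertible after base change to $\Lambda$, so $\alpha$ and $T^{-\epsilon}\beta$ are mutually inverse. The only minor difference is that you spell out the (easy) passage from the finiteness of $\|\varphi\|_H$ to the existence of the interleaving pair via the infimum definition of $d_I$, which the paper compresses.
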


Note that this corollary is only meaningful when $c=\infty$. Otherwise, the claimed equality means $0\cong 0$.

\begin{proof}[Proof of \cref{cor:8.4}]
By \cref{thm:Hamiltonianstability}, there exists $\alpha \colon \cE \rightarrow \cK_\varphi(\cE)$ and $\beta\colon \cK_\varphi( \cE)\rightarrow \cE$ such that $\alpha\circ \beta=T^{\epsilon}, \beta \circ \alpha=T^{\epsilon}$ for some $\epsilon\geq 0$. After the base change to $\Lambda$, we can rewrite the equalities as $(T^{\epsilon}\alpha)\circ \beta=\id, \beta\circ (T^{-\epsilon}\alpha)=\id$. This completes the proof.
\end{proof}

\subsection{Hamiltonian automorphisms on the completed category}

In the following, we consider an action of the group of hameomorphisms on $\mu^{\mathrm{lsm}}(X;u<\infty)$. 

An isotopy of homeomorphisms $\phi=(\phi_s)_s \colon X \times I \to X$ of $X$ is said to be a \emph{continuous Hamiltonian isotopy} if there exist a compact subset $C \subset X$ and a sequence of smooth Hamiltonian functions $(H_n \colon X \times I \to \bR)_{n \in \bN}$ supported in $C$ satisfying the following two conditions.
\begin{itemize}
    \item[(1)] The sequence of flows $(\phi^{H_n})_{n \in \bN}$ $C^0$-converges to $\phi$, uniformly in $s \in I$.
    \item[(2)] The sequence $(H_n)_{n \in \bN}$ converges uniformly to a continuous function $H \colon X \times I \to \bR$. 
    That is, $\|H_n-H\|_\infty \coloneqq \sup_{p \in X}|H_n(p)-H(p)| \to 0$.
\end{itemize}
A homeomorphism of $X$ is called a \emph{hameomorphism} if it is the time-1 map of a continuous Hamiltonian isotopy.
The group of hameomorphisms is denoted by $\Hameo(X)$.

\begin{proposition}
    The group $\Hameo(X)$ acts on the category $\mu^{\mathrm{lsm}}(X;u<\infty)$.
\end{proposition}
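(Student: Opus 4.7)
The strategy is to realize each $\phi\in\Hameo(X)$ as a $d_I$-limit of the autoequivalences $\cK_{\phi^{H_n}_1}$ coming from a smoothing sequence, and to use the Hamiltonian stability of \cref{thm:Hamiltonianstability} together with the completeness of \cref{theorem:complete} to pass to the limit. Concretely, fix $\phi\in\Hameo(X)$, a continuous Hamiltonian isotopy $(\phi_s)_s$ with $\phi_1=\phi$, and an approximating sequence $(H_n)$ of smooth compactly supported Hamiltonians as in the definition. For any $\cE\in\mu^{\mathrm{sm}}(X;u<\infty)$, applying \cref{thm:Hamiltonianstability} to the composition $\phi^{H_n}_1\circ(\phi^{H_m}_1)^{-1}$ gives
\begin{equation}
d_I\!\left(\cK_{\phi^{H_n}_1}(\cE),\,\cK_{\phi^{H_m}_1}(\cE)\right)
\;\le\; \|\phi^{H_n}_1\circ(\phi^{H_m}_1)^{-1}\|_H
\;\le\; \int_0^1 \operatorname{osc}(H_{n,s}-H_{m,s})\,ds,
\end{equation}
which tends to $0$ by the uniform convergence of the $H_n$'s. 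Hence $(\cK_{\phi^{H_n}_1}(\cE))_n$ is Cauchy, and by \cref{theorem:complete} it has a limit which I define to be $\cK_\phi(\cE)\in\mu^{\mathrm{lsm}}(X;u<\infty)$. The same estimate applied to two approximating sequences $(H_n),(H_n')$ of the same continuous Hamiltonian isotopy shows that the limit is independent of the choice of smoothing, and the uniqueness theorem for generating Hamiltonians of topological Hamiltonian flows (Viterbo / Buhovsky--Seyfaddini) promotes this to independence of the isotopy $(\phi_s)$ representing $\phi$.

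Second, I would extend $\cK_\phi$ to morphisms and to the completion. For $\cE,\cF\in\mu^{\mathrm{sm}}$ and $g\in\Hom_{\mu}(\cE,\cF)$, the images $\cK_{\phi^{H_n}_1}(g)$ together with the canonical Hofer-controlled comparison maps $\cK_{\phi^{H_n}_1}(\cE)\leftrightarrows \cK_{\phi^{H_m}_1}(\cE)$ furnished by the proof of \cref{thm:Hamiltonianstability} assemble into a compatible system whose $d_I$-limit yields a well-defined morphism $\cK_\phi(g)\colon\cK_\phi(\cE)\to\cK_\phi(\cF)$; functoriality and unitality in $g$ follow by passing to the limit. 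Since each $\cK_{\phi^{H_n}_1}$ is an autoequivalence and $d_I$ is invariant under autoequivalences, $\cK_\phi$ is a $d_I$-isometry on $\mu^{\mathrm{sm}}$, and thus extends uniquely by continuity to an endofunctor of $\mu^{\mathrm{lsm}}(X;u<\infty)$ using \cref{theorem:complete} again. The group action property is obtained from the smooth case: given $\phi,\psi\in\Hameo(X)$ with approximants $(H_n),(K_n)$, the concatenations generate $\phi^{H_n}_1\circ\phi^{K_n}_1$ which approximate $\phi\circ\psi$ as hameomorphisms, and $\cK_{\phi^{H_n}_1\circ\phi^{K_n}_1}=\cK_{\phi^{H_n}_1}\circ\cK_{\phi^{K_n}_1}$ by the already-established smooth composition property; taking $d_I$-limits gives $\cK_{\phi\circ\psi}\cong\cK_\phi\circ\cK_\psi$, and $\cK_{\mathrm{id}}\cong\id$ is immediate.

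The main obstacle I expect is the construction on morphisms, together with verifying that the various limits assemble coherently into a (dg-)functor rather than just a pointwise-defined assignment. In particular, showing that the limit morphisms compose correctly and that the extension to $\mu^{\mathrm{lsm}}$ is well-defined requires carefully tracking the comparison $2$-morphisms supplied by \cref{thm:Hamiltonianstability}; at a technical level this is the place where the pseudo-metric bookkeeping must be upgraded to categorical coherence. A secondary point is the appeal to uniqueness of the generating Hamiltonian to guarantee that $\cK_\phi$ depends only on the hameomorphism $\phi$, and not on its chosen continuous Hamiltonian isotopy.
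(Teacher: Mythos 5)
Your overall strategy (smooth approximation, the Hofer--norm Cauchy estimate via \cref{thm:Hamiltonianstability}, passage to a limit via completeness) is the same as the paper's, but the place where you take the limit is different, and this is where your argument has a genuine gap. You take the limit objectwise: for each $\cE$ you produce a Cauchy sequence $\cK_{\phi^{H_n}_1}(\cE)$ and declare $\cK_\phi(\cE)$ to be a $d_I$-limit. A limit of a Cauchy sequence for the pseudo-metric $d_I$ is only determined up to non-canonical isomorphism, and the interleaving morphisms furnished by \cref{thm:Hamiltonianstability} are not canonical either; consequently there is no mechanism in your construction that makes the assignment $g\mapsto \cK_\phi(g)$ well defined, compatible with composition, and coherent at the dg level. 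You flag this yourself as the "main obstacle," but it is not mere bookkeeping: pointwise limits of objects in a triangulated/dg category do not assemble into a functor without additional input, so as written the action on $\mu^{\mathrm{lsm}}(X;u<\infty)$ is not actually constructed.

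The paper's proof avoids this by taking the distance limit one level up, at the level of the GKS kernels themselves: the kernels $\cK^{\widetilde{H}_n}_1$ live in a sheaf category of kernels (on $\bR^N\times\bR^N\times\bR_t$, in the antimicrolocalization setup of \cref{subsec:antimicrolocalization}), which is $\Lambda_0$-linear with coproducts and hence complete for $d_I$, so the Cauchy sequence of kernels has a limit kernel $\cK$. Convolution with this single limiting kernel is automatically a functor, so functoriality, the group law, and preservation of $\mu^{\mathrm{lsm}}(X;u<\infty)$ come for free; independence of choices is handled by observing that $\cK^{\widetilde{H}_n}_1$ restricted to $U\times U\times I$ does not depend on the chosen extension $\widetilde{H}_n$, so the limit depends only on $\varphi_\infty$. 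If you want to salvage your route, the fix is precisely this: run your Cauchy argument on the kernels rather than on their values on objects. Doing so also removes the need for your appeal to the uniqueness theorem for generating Hamiltonians of topological Hamiltonian flows, which is external machinery the paper does not invoke.
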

\begin{proof}
	For a hameomorphism $\varphi_\infty$, take a sequence $(H_n)_n$ satisfying the above conditions. 
	Consider the sequence of extensions $(\widetilde{H}_n)_n$ on $T^*\bR^N \times I$.
	Then one can obtain a kernel $\cK$ as a distance limit of $\cK^{\widetilde{H}_n}_1$. Since $\cK_1^{\widetilde H_n}$ restricted to $U\times U\times I$ does not depend on the choices of extensions, the resulting $\cK$ also does not depend on those choices.
    Hence the action of $\cK$ on $\mu^{\mathrm{lsm}}(X;u<\infty)$ depends only on $\varphi_\infty$. 
    The functor $\cK$ preserves $\mu^{\mathrm{lsm}}(X;u<\infty)$.
\end{proof}

\section{Sheaf quantization of limits of Lagrangian branes}\label{section:limit_SQ}

In this section, we discuss sheaf quantization of limits of Lagrangian branes, which is previously discussed in \cite{GV2022singular, AsanoIkecomplete}. 
We assume that $X$ is sufficiently Weinstein throughout this section.

To simplify the following discussion, we introduce some terminologies.

\begin{definition}
A morphism $f\colon \cE\rightarrow \cF$ in $\mu(X;u<\infty)$ is said to be an \emph{isomorphism over $\Lambda$} if it induces an isomorphism in $\mu(X;u<\infty)\otimes_{\Lambda_0}\Lambda$.
\end{definition}
Note that a $c$-invertible morphism for some $c\geq 0$ is an isomorphism over $\Lambda$.

\begin{definition}
Let $\bm{L}$ (resp.\ $\bm{L}_i$) be a Lagrangian brane with a bounding cochain $\bm{b}$ (resp.\ $\bm{b}_i$). 
Assume that the corresponding sheaf quantization satisfies that $\End^0(\cS_{\bm{L}})$ is almost isomorphic to $\Lambda_0\id$.
\begin{enumerate}
    \item If $(\bm{L}_1, \bm{b}_1)$ and $(\bm{L}_2, \bm{b}_2)$ are Hamiltonian isotopic by a compactly supported Hamiltonian diffeomorphism $\varphi$, one writes $(\bm{L}_1, \bm{b}_1) \sim_\varphi (\bm{L}_2, \bm{b}_2)$.
    \item One denotes by $\frakL(\bm{L}, \bm{b})$ the set of Lagrangian branes with bounding cochains which are isotopic to $(\bm{L}, \bm{b})$ by a compactly supported Hamiltonian diffeomorphism.
    \item One sets
    \begin{equation}
        d((\bm{L}_1, \bm{b}_1),(\bm{L}_2, \bm{b}_2))\coloneqq \inf\lc \|\varphi\|_H \relmid (\bm{L}_1, \bm{b}_1) \sim_\varphi (\bm{L}_2, \bm{b}_2)\rc,
    \end{equation}
    where $\|\varphi\|_H$ is the Hofer norm defined as in \eqref{eq:Hofer_norm}. 
    The metric completion of $\frakL(\bm{L}, \bm{b})$ is denoted by $\widehat{\frakL}(\bm{L}, \bm{b})$ .
\end{enumerate}
\end{definition}

Now we can state our theorem in this section:

\begin{theorem}\label{thm:limitSQ}
    There exists a canonical functor $\widehat{\frakL}(\bm{L}, \bm{b})\rightarrow \mu^{\mathrm{lsm}}(X)$ extending the sheaf quantization. More precisely, if two Cauchy sequences in $\frakL(\bm{L}, \bm{b})$ define the same limit object, the corresponding limits of sheaf quantizations are also isomorphic.
\end{theorem}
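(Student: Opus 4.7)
The plan is to build the functor in three stages: first, verify that the sheaf quantization assignment $(\bm{L}', \bm{b}') \mapsto \cS_{(\bm{L}', \bm{b}')}$ is $1$-Lipschitz from $(\frakL(\bm{L}, \bm{b}), d)$ to $(\mu^{\mathrm{sm}}(X;u<\infty), d_I)$; second, use completeness of $\mu(X;u<\infty)$ to produce a limit in $\mu^{\mathrm{lsm}}(X;u<\infty)$ for every Cauchy sequence; third, check well-definedness on the metric completion $\widehat{\frakL}(\bm{L}, \bm{b})$.

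For the Lipschitz estimate, suppose $(\bm{L}_1, \bm{b}_1) \sim_\varphi (\bm{L}_2, \bm{b}_2)$ for a compactly supported Hamiltonian diffeomorphism $\varphi$. \Cref{thm:Hamiltonianstability} yields
\begin{equation*}
    d_I\bigl(\cS_{(\bm{L}_1, \bm{b}_1)}, \cK_\varphi(\cS_{(\bm{L}_1, \bm{b}_1)})\bigr) \leq \|\varphi\|_H.
\end{equation*}
By construction, $\cK_\varphi(\cS_{(\bm{L}_1, \bm{b}_1)})$ is supported on $L_2$, and naturality of the GKS-kernel together with the sheaf-theoretic bounding cochain construction of \cref{subsec:bc_SQ} realizes it as a sheaf quantization of $(\bm{L}_2, \bm{b}_2)$. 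The rank-one endomorphism hypothesis, combined with \cref{thm:bc} and \cref{thm:classification}, implies that any two sheaf quantizations of $(\bm{L}_2, \bm{b}_2)$ are almost isomorphic, so $\cK_\varphi(\cS_{(\bm{L}_1, \bm{b}_1)})$ and $\cS_{(\bm{L}_2, \bm{b}_2)}$ are at $d_I$-distance zero. Taking infimum over all admissible $\varphi$ produces
\begin{equation*}
    d_I\bigl(\cS_{(\bm{L}_1, \bm{b}_1)}, \cS_{(\bm{L}_2, \bm{b}_2)}\bigr) \leq d\bigl((\bm{L}_1, \bm{b}_1), (\bm{L}_2, \bm{b}_2)\bigr).
\end{equation*}

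Given this estimate, any Cauchy sequence in $\frakL(\bm{L}, \bm{b})$ is sent to a Cauchy sequence in $\mu^{\mathrm{sm}}(X;u<\infty)$, which by \cref{theorem:complete} has a limit in $\mu(X;u<\infty)$; by the definition of $\mu^{\mathrm{lsm}}$ this limit lies in $\mu^{\mathrm{lsm}}(X;u<\infty)$. If two Cauchy sequences $\{(\bm{L}_n, \bm{b}_n)\}$ and $\{(\bm{L}'_n, \bm{b}'_n)\}$ define the same limit in $\widehat{\frakL}(\bm{L}, \bm{b})$, then $d((\bm{L}_n, \bm{b}_n),(\bm{L}'_n, \bm{b}'_n)) \to 0$, and the Lipschitz estimate forces $d_I(\cS_{(\bm{L}_n, \bm{b}_n)}, \cS_{(\bm{L}'_n, \bm{b}'_n)}) \to 0$, so the two limits in $\mu^{\mathrm{lsm}}$ coincide. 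Functoriality at the level of morphisms is handled by applying the same reasoning to Hamiltonian isotopies between endpoints.

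The main obstacle is the identification of $\cK_\varphi(\cS_{(\bm{L}_1, \bm{b}_1)})$ with $\cS_{(\bm{L}_2, \bm{b}_2)}$ as almost-isomorphic objects, rather than merely as objects isomorphic over the Novikov field as in \cref{cor:8.4}. The heart of the difficulty is tracking the bounding cochain through the GKS autoequivalence: one must verify that the sheaf-theoretic bounding cochain attached to $\cK_\varphi(\cS_{(\bm{L}_1, \bm{b}_1)})$ agrees (modulo an almost unit in $1 + \Lambda_0^+$) with the one obtained by transporting $\bm{b}_1$ along $\varphi$. Once this compatibility between the curved dga of \cref{subsec:bc_SQ} and the GKS autoequivalence is in place, the rank-one endomorphism hypothesis upgrades the resulting identification from an isomorphism over $\Lambda$ to an honest almost isomorphism, and the remainder of the argument is a formal consequence of completeness.
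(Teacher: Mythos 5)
Your proposal has a genuine gap at the final step. You write that since $d_I(\cS_{(\bm{L}_n,\bm{b}_n)},\cS_{(\bm{L}'_n,\bm{b}'_n)})\to 0$, ``the two limits in $\mu^{\mathrm{lsm}}$ coincide.'' But $d_I$ is only a \emph{pseudo}-metric on $\mu(X;u<\infty)$ -- distance zero does not imply isomorphism, and limits of Cauchy sequences with respect to $d_I$ are not unique up to isomorphism. The content of \cref{thm:limitSQ} is precisely to upgrade $d_I(\cE,\cE')=0$ for the two limit objects to an honest isomorphism $\cE\cong\cE'$ in $\mu(X;u<\infty)$, and this is where essentially all of the paper's work goes: starting from pairs $(\alpha_j,\beta_j)$ of mutually $\rho_j$-inverse morphisms with $\rho_j\to 0$, the paper uses the rank-one hypothesis ($\Hom^0_\Lambda(\cE,\cE')\cong\Lambda$) to see that all $\alpha_j$ span a single line over $\Lambda$, normalizes them by their valuations ($\theta_\alpha^j+\theta_\beta^j=\rho_j$, with $\theta_\alpha^j\to 0$ after passing to a subsequence), and then lifts the resulting $\Lambda$-morphisms $\alpha,\beta$ to genuine closed morphisms $\widetilde\alpha,\widetilde\beta$ over $\Lambda_0$ via the representing object $\cF_{\cE,\cE'}\in\Sh^\bR_{\tau>0}(\bR_t)$ and the derived completeness of hom-spaces from \cite{KuwNov}; finally $T^\epsilon\widetilde\alpha\circ\widetilde\beta=T^\epsilon\id$ for all $\epsilon>0$ is converted into an actual isomorphism by the argument of \cite[Lem.~B.7]{GV2022singular}. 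None of this appears in your proposal, and it cannot be replaced by ``a formal consequence of completeness'': note that even part (1) of the metric theorem only gives isomorphism \emph{over the Novikov field} from finiteness of $d_I$, which is exactly why the non-degeneracy-on-limits statement requires a separate argument. (Also note the rank-one hypothesis is used in the paper for this one-dimensionality of $\Hom_\Lambda$, not for uniqueness of sheaf quantizations of $(\bm{L},\bm{b})$ as in your Lipschitz step.)

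Your first stage is closer in spirit to what the paper does implicitly: the paper fixes Hamiltonians $\varphi_i$ with $\|\varphi_i\|_H\to 0$ and takes as given the associated data of sheaf quantizations $\cE_i$ together with mutually $\epsilon_i$-inverse morphisms $f_{i,i+1},g_{i,i+1}$ (its items (i)--(iv)), rather than proving a clean $1$-Lipschitz statement; the compatibility of the bounding cochain with the GKS autoequivalence that you flag as ``the main obstacle'' is indeed left implicit there too, so flagging it is fair. But even granting that step, your argument stops exactly where the paper's proof begins in earnest, so as written the proposal does not prove the theorem.
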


\begin{proof}
Note that all the Cauchy colimits along Hamiltonian isotopies of smooth sheaf quantizations exist in $\mu^{\mathrm{lsm}}(X)$. The explanation is as follows: Let $\lb L_n\rb_{n\in \bZ_{\geq 0}}$ be such a sequence. For $N$ large, $\lb L_n\rb_{n\geq N}$ moves in a $C^0$-small way when $n$ moves. Hence we can take a cover $\lc U_i\rc_i$ of $X$, where $L_n$ has a representative in each cover $U_i$ for any $n$. Hence the colimit exists. By using the argument of \cite{GV2022singular, AsanoIkecomplete}, we can deduce that the Cauchy colimit exists.

Let $((\bm{L}_i, \bm{b}_i))_i$ and $((\bm{L}'_i, \bm{b}'_i))_i$ be Cauchy sequences in $\frakL(\bm{L}, \bm{b})$ that define the same limit. 
We let $\cE$ and $\cE'$ be the corresponding sheaf quantizations, respectively. 
Then it follows that $d_I(\cE, \cE')=0$. 

We clarify the situation a little more. 
For each $i$, there exist compactly supported Hamiltonian diffeomorphisms $\varphi_i$ and $\varphi'_i$ such that
\begin{enumerate}
    \item[(1)] $(\bm{L}_i, \bm{b}_i) \sim_{\varphi_i} (\bm{L}_{i+1}, \bm{b}_{i+1})$, $(\bm{L}'_i, \bm{b}'_i) \sim_{\varphi'_i} (\bm{L}'_{i+1}, \bm{b}'_{i+1})$ and
    \item[(2)] $\|\varphi_i\|_H, \|\varphi_i'\|_H \rightarrow 0$ as $i\rightarrow \infty$.
\end{enumerate}
We fix such choices of Hamiltonians. 
Associated with the choice, we obtain
    \begin{enumerate}
    \renewcommand{\labelenumi}{$\mathrm{(\roman{enumi})}$}
    \item a sequence of sheaf quantizations $\cE_i$ (resp.\ $\cE'_i$) of $(\bm{L}_i, \bm{b}_i)$ (resp.\ $(\bm{L}'_i, \bm{b}'_i)$), 
        \item sequences of closed morphisms $f_{i, i+1}\in \Hom^0(\cE_i, \cE_{i+1})$ and $g_{i, i+1}\in\Hom^0(\cE_{i+1}, \cE_{i})$ (resp.\ $f'_{i, i+1}\in \Hom^0(\cE'_i, \cE'_{i+1})$ and $g'_{i, i+1}\in\Hom^0(\cE'_{i+1}, \cE'_{i})$) such that $f_{i, i+1}$ and $g_{i, i+1}$ (resp.\ $f'_{i, i+1}$ and $g'_{i, i+1}$) are mutually $\epsilon_i$-inverse (resp.\ $\epsilon'_i$-inverse) with $\epsilon_i, \epsilon'_i\rightarrow 0$, and 
       \item the colimit of $(\cE_i, f_{i, i+1})$ (resp.\ $(\cE'_i, f_{i, i+1}')$) is isomorphic to $\cE$ (resp.\ $\cE'$).
    \end{enumerate}
We denote the associated morphism $\cE_i\rightarrow \cE$ (resp.\ $\cE'_i \rightarrow \cE'$) by $f_i$ (resp.\ $f_i'$). 
By taking a subsequence if necessary, we can further assume:
\begin{enumerate}
    \item[(iv)] there exists a closed morphism $g_i$ (resp.\ $g_i'$) which is $\eta_i$-inverse (resp.\ $\eta_i'$-inverse) of $f_i$ (resp.\ $f_i'$) with $\eta_i, \eta_i'\rightarrow 0$. 
\end{enumerate}

Let us now construct closed morphisms between $\cE$ and $\cE'$. 
Since $d_I(\cE,\cE')=0$, we have $\alpha_1\colon \cE\rightarrow \cE'$ and $\beta_1\colon \cE'\rightarrow \cE$ that are mutually $\rho_1$-inverse. 
Take a positive number $\rho_2$ less than $\rho_1/2$. 
We also have $\alpha_2\colon \cE\rightarrow \cE'$ and $\beta_2\colon \cE'\rightarrow \cE$ that are mutually $\rho_2$-inverse as well. We also note that $\alpha_2$ is a scalar multiple of $\alpha_1$ by the construction over the Novikov field, since $\Hom^0_{\Lambda}(\cE, \cE')\cong \Lambda$ by the assumption of the almost isomorphicity between  $\End^0(\cE_i)$ and $\Lambda_0\id$.

Repeating the arguments, we find that there exists a sequence of closed morphisms $\alpha_j \colon \cE\rightarrow \cE'$ and $\beta_j \colon \cE'\rightarrow \cE$ in $\mu(X, u<\infty)$ such that $\beta_j \circ \alpha_j =T^{\rho_j}, \alpha_j \circ \beta_j = T^{\rho_j}$ with $\rho_j \rightarrow 0$ as $j \rightarrow \infty$ and the subspace of $\Hom_{\Lambda}(\cE, \cE')$ spanned by $\alpha_j$ does not depend on $j$. 

For each $j$, we set
\begin{equation}
    \begin{split}
        \theta_\alpha^j &\coloneqq \max\lc\theta\relmid T^{-\theta}\alpha_j \in F^0\Hom(\cE, \cE')\rc \\
        \theta_\beta^j &\coloneqq \max\lc \theta\relmid T^{-\theta}\beta_j \in F^0\Hom(\cE', \cE)\rc.
\end{split}
\end{equation}
Then by the the one-dimensionality of the hom-spaces, we have 
\begin{equation}
    \theta_\alpha^j+\theta_\beta^j=\rho_j.
\end{equation}

If necessary, we take a subsequence of $\alpha_j$ so that $\theta_\alpha^j\rightarrow 0$ as $j\rightarrow \infty$. 
We again denote the subsequence by $(\alpha_j)_j$. 
We similarly take such a subsequence for $\beta_j$. 
Note that $\alpha_j=T^{\theta_\alpha^j-\theta_\alpha^{j'}}\alpha_{j'}$ if $j'>j$.

We set $\alpha\coloneqq T^{-\theta_\alpha^j}\alpha_j \in F^0\Hom_{\Lambda}(\cE, \cE')$ and $\beta\coloneqq T^{-\theta_\beta^j} \beta_j \in F^0\Hom_{\Lambda}(\cE', \cE)$ (which does not depend on $j$).
Then we find that $\alpha\circ \beta=\id$ and $\beta\circ \alpha=\id$. 
By Lemma~\ref{lem:existence_lift_alpha_beta} below, we can take $\widetilde{\alpha}\in \Hom_{\Lambda_0}(\cE, \cE')$ and $\widetilde{\beta}\in \Hom_{\Lambda_0}(\cE', \cE)$ that lift $\alpha$ and $\beta$, respectively, and satisfy $T^{\theta_\alpha^j}\widetilde{\alpha}=\alpha_j, T^{\theta_\beta^j}\widetilde{\beta}=\beta_j$ for any $j$.
Then, we have $T^{\epsilon}\widetilde{\alpha}\circ \widetilde{\beta}=T^{\epsilon}\id, T^{\epsilon}\widetilde{\beta}\circ \widetilde{\alpha}=T^{\epsilon}\id$ for any $\epsilon>0$. By running the argument of \cite[Lem.~B.7]{GV2022singular},  we complete the proof of \cref{thm:limitSQ}.
\end{proof}

\begin{lemma}\label{lem:existence_lift_alpha_beta}
    In the situation of the proof of \cref{thm:limitSQ}, there exist $\widetilde{\alpha}\in \Hom_{\Lambda_0}(\cE, \cE')$ and $\widetilde{\beta}\in \Hom_{\Lambda_0}(\cE', \cE)$ that lift $\alpha$ and $\beta$, respectively, and satisfy $T^{\theta_\alpha^j}\widetilde{\alpha}=\alpha_j, T^{\theta_\beta^j}\widetilde{\beta}=\beta_j$ for any $j$.
\end{lemma}
\begin{proof}
For $\cE, \cE'\in \mu(X,u<c)$, there exists an object $\cF_{\cE, \cE'} \in \Sh^\bR_{\tau>0}(\bR_t)$ such that 
    \begin{equation}
        \Hom_{\Lambda_0}(\cE, \cE')\cong \Hom_{\Sh(\bR_t)}(\bK_{[a,\infty)},\cF_{\cE, \cE'})
    \end{equation}
    for any $a\in \bR$. 
    For example, this follows from the main result of \cite{KuwNov}.
    Then the action of $T^a$ on $\Hom_{\Lambda_0}(\cE, \cE')$ is given by 
    \begin{equation}
        \Hom_{\Sh(\bR_t)}(\bK_{[a,\infty)}, \cF_{\cE, \cE'})\rightarrow \Hom_{\Sh(\bR_t)}(\bK_{[0,\infty)}, \cF_{\cE, \cE'})
    \end{equation}
    induced by $\bK_{[0, \infty)}\rightarrow \bK_{[a, \infty)}$. 

    We regard $\alpha_{j}$ as 
    \begin{equation}
        \alpha_{j} \in \Hom_{\Sh(\bR_t)}(\bK_{[-\theta_{\alpha}^{j},\infty)}, \cF_{\cE, \cE'}).
    \end{equation}
    For $j'>j$, we have a morphism
    \begin{equation}
       \Hom_{\Sh(\bR_t)}(\bK_{[-\theta_{\alpha}^{j'},\infty)}, \cF_{\cE, \cE'}) \rightarrow \Hom_{\Sh(\bR_t)}(\bK_{[-\theta_{\alpha}^{j},\infty)}, \cF_{\cE, \cE'}),
    \end{equation}
    which maps $\alpha_{j'}$ to $\alpha_{j}$. Hence $(\alpha_{j})_{j}$ forms a direct system of elements in the direct system $(\Hom_{\Sh(\bR_t)}(\bK_{[-\theta_{\alpha}^{j},\infty)}, \cF_{\cE, \cE'}))_{j}$. Hence it defines an element
    \begin{equation}
    \begin{split}
        \widetilde{\alpha}=\varprojlim_{j \rightarrow \infty}\alpha_{j} &\in \varprojlim_{j \rightarrow \infty} \Hom_{\Sh(\bR_t)}(\bK_{[-\theta_{\alpha}^{j},\infty)}, \cF_{\cE, \cE'})\\
        &\cong \Hom_{\Sh(\bR_t)}(\varinjlim_{j \rightarrow \infty}\bK_{[-\theta_{\alpha}^{j},\infty)}, \cHom^{\star\bR}(\cE, \cE'))\\ 
        &\cong \Hom_{\Sh(\bR_t)}(\bK_{[0,\infty)}, \cF_{\cE, \cE'}),
    \end{split}
    \end{equation}
    which is our desired lift. 
    Indeed, we have 
    \begin{equation}
    \begin{aligned}
        \Hom_{\Sh(\bR_t)}(\bK_{[0,\infty)}, \cF_{\cE, \cE'}) & \rightarrow \Hom_{\Sh(\bR_t)}(\bK_{[-\theta_{\alpha}^j,\infty)}, \cF_{\cE, \cE'}); \\
        \widetilde{\alpha} & \mapsto T^{\theta_\alpha^j}\widetilde{\alpha}=\alpha_j
    \end{aligned}
    \end{equation}
    by the definition of the limit. We can prove similarly for $\beta$.
\end{proof}

\begin{remark}
One can carry out the argument of \cite{AGHIV} in this setup, by replacing the $\gamma$-metric with the Hofer metric. 
Then one can conclude that the Hofer-version of the $\gamma$-support of an element of $\widehat{\cL}(\bm{L}, \bm{b})$ is the closure of $\musupp$ of the corresponding sheaf quantization.
\end{remark}

\appendix

\section{Curved dga and \texorpdfstring{$A_\infty$}{Ainfty}-algebra}\label{sec:curved_dga}

We briefly recall the notion of curved dga and $A_\infty$-algebra.

\begin{definition}
    A \emph{curved dga} is a pair $(A, d)$, where  
    \begin{enumerate}
        \item $A$ is a graded $\Lambda_0$-algebra and
        \item $d$ is $\Lambda_0$-linear morphism of degree 1
    \end{enumerate}
    such that $A$ is free as $\Lambda_0$-module and $d^2\otimes_{\Lambda_0}\Lambda_0/\Lambda_0^+=0$.
\end{definition}
A curved $A_\infty$-algebra (=filtered $A_\infty$-algebra in \cite{FOOO}) is a generalization of curved dga. 

\begin{definition}
    A morphism of curved dga is a morphism of graded $\Lambda_0$-algebras compatible with $d$. 
    A morphism of curved dga is said to be a \emph{quasi-isomorphism} if it induces a quasi-isomorphism over $\Lambda_0/\Lambda_0^+$.
\end{definition}

\begin{theorem}[{$A_\infty$-Whitehead theorem~\cite{FOOO}}]
    Suppose $A, B$ are countably generated curved $A_\infty$-algebras. A quasi-isomorphism has a homotopy inverse. 
\end{theorem}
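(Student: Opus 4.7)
The plan is to construct the homotopy inverse $g \colon B \to A$ as a $T$-adic limit of partial inverses defined modulo increasing powers of the Novikov variable, following the standard obstruction-theoretic argument in \cite{FOOO}. Fix a discrete submonoid $\sigma = \{0 = c_0 < c_1 < c_2 < \cdots\}$ of $\bR_{\geq 0}$ with respect to which both $A$ and $B$ are gapped. Let $f \colon A \to B$ be the given quasi-isomorphism, meaning that the reduction $\bar f \coloneqq f \otimes_{\Lambda_0} \bK$ is a quasi-isomorphism of genuine (uncurved) $A_\infty$-algebras $\bar A, \bar B$ over $\bK = \Lambda_0/\Lambda_0^+$ (curvature disappears mod $\Lambda_0^+$).

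First, I would invoke the classical $A_\infty$-Whitehead theorem over the field $\bK$ to obtain a homotopy inverse $\bar g \colon \bar B \to \bar A$ together with $A_\infty$-homotopies $\bar H$ witnessing $\bar g \circ \bar f \sim \id_{\bar A}$ and $\bar K$ witnessing $\bar f \circ \bar g \sim \id_{\bar B}$. Then I would lift $\bar g, \bar H, \bar K$ to $\Lambda_0$-multilinear operations $g^{(1)}, H^{(1)}, K^{(1)}$ on $B$ and $A$; the $A_\infty$-morphism equations and the homotopy relations then hold at least modulo $T^{c_1}$.

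The induction proceeds as follows. Assume inductively that $g^{(n)}, H^{(n)}, K^{(n)}$ have been constructed so that the $A_\infty$-morphism equation for $g^{(n)}$ and the homotopy identities hold modulo $T^{c_{n+1}}$, and agree with the previous stage modulo $T^{c_n}$. The failure of $g^{(n)}$ to satisfy the $A_\infty$-morphism equation can be written as $T^{c_{n+1}} \cdot O_n$, where the $A_\infty$-relations for $A$ and $B$ (together with the inductive hypothesis) force $O_n$ to be a cocycle in the bar-type Hochschild complex $CC^{\bullet}(\bar B, \bar A_{\bar f})$ of $\bar B$ with coefficients in $\bar A$ viewed as a bimodule via $\bar f$. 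Since $\bar f$ is a quasi-isomorphism, it induces a quasi-isomorphism of such Hochschild complexes; in particular $O_n$ is exact, and one can pick a primitive $P_n$ and set $g^{(n+1)} \coloneqq g^{(n)} + T^{c_{n+1}} P_n$. The same obstruction analysis, applied to the defining equations of $H$ and $K$, produces the corresponding corrections. Passing to the $T$-adic limit using the completeness of $\Lambda_0$ and countable generation (which ensures that the required primitives can be chosen coherently without set-theoretic pathologies) yields $g, H, K$ satisfying the required identities on the nose.

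The main obstacle is the obstruction step: one must identify the obstruction $O_n$ precisely as a cocycle in a Hochschild-type complex whose cohomology is preserved under pullback along $\bar f$. The delicate point, which distinguishes the curved setting from the classical one, is bookkeeping the curvature contributions $\mathfrak{m}_0^A, \mathfrak{m}_0^B \in \Lambda_0^+$: they enter the Maurer--Cartan-like equations satisfied by $g^{(n)}, H^{(n)}, K^{(n)}$ and must be tracked filtration-level by filtration-level so that mod $\Lambda_0^+$ one genuinely recovers the uncurved Hochschild complex. Once the obstruction is placed in that complex, its vanishing follows formally from the field-level quasi-isomorphism, and the rest of the argument is the standard $T$-adic assembly.
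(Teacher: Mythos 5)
The paper does not prove this statement at all: it is quoted verbatim from Fukaya--Oh--Ohta--Ono (their filtered $A_\infty$-Whitehead theorem) and used as a black box, so there is no in-paper argument to compare yours against. Judged on its own terms, your sketch has the right overall shape --- reduce modulo $\Lambda_0^+$, invoke the classical Whitehead theorem over the field $\bK$, lift the inverse and the homotopies inductively over the energy filtration $\sigma$, and assemble by $T$-adic completeness --- and this is indeed the spirit of the argument in \cite{FOOO}.

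However, the pivotal step is a non sequitur as written. You say that since $\bar f$ is a quasi-isomorphism it induces a quasi-isomorphism of the Hochschild-type complexes, ``in particular $O_n$ is exact.'' A quasi-isomorphism of complexes identifies cohomologies; it does not make an arbitrary cocycle exact, so nothing so far forces the obstruction class $[O_n]$ to vanish. The actual mechanism must be supplied: either one argues that only the cohomology class of the correction matters (using the freedom to modify $g^{(n)}, H^{(n)}, K^{(n)}$ at lower energy) and then shows the class is killed by comparing with the already-constructed homotopies $\bar g\bar f\sim\id$, $\bar f\bar g\sim\id$; or, as is closer to the route of \cite{FOOO}, one first passes to canonical (minimal) filtered models via the filtered homological perturbation lemma, where a quasi-isomorphism has invertible linear term and can be inverted directly, order by order in $T$, with no obstruction analysis at all. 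Relatedly, your sketch never fixes a model for homotopies of filtered $A_\infty$-morphisms (FOOO's model of $A\otimes C([0,1])$, or equivalent), and it is exactly there --- not in ``avoiding set-theoretic pathologies'' --- that the countable generation hypothesis is used in \cite{FOOO}. Until the obstruction-vanishing step and the homotopy model are made precise, the proof is incomplete at its central point.
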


\begin{definition}
    Let $A$ be a curved dga/$A_\infty$-algebra. 
    A Maurer--Cartan element is an element $\bm{b}$ of $A$ such that it satisfies the Maurer--Cartan equation
    \begin{equation}
        \sum_{i\geq 0}m_i(\bm{b}^{\otimes i})=0,
    \end{equation}
    where $m_i$ is the $i$-th operation of the curved $A_\infty$-structure of $A$. 
\end{definition}

Let $A_1, A_2$ be a curved dga and $f\colon A_1\rightarrow A_2$ be a morphism between them. Let $\bm{b}$ a Maurer--Cartan element of $A$. Then $f(\bm{b})$ is a Maurer--Cartan element of $\bm{b}$.

Let $I$ be a filtered poset and $A_\bullet$ be a curved dga parameterized by $I$; for each $i\in I$, we have a curved dga $A_i$, and a morphism $\rho_{ij}\colon A_i\rightarrow A_j$ for any $i<j$. Then one can define the colimit curved dga $A$ as the quotient of the direct sum $\bigoplus_{i\in I} A_i$ by the relations induced by $\rho_{ij}$'s. Then a Maurer--Cartan element of $A_i$ for some $i\in I$ defines a Maurer--Cartan element of~$A$.

\section{Almost terminology}\label{section:almost}
Here we recall some ``almost" terminology used in the body of the paper.

\subsection{Almost zero module}
As observed in \cite{KuwNov}, there are certain bugs in the sheaf side in the higher cohomology of equivariant sheaves. To ignore such bugs, we use almost mathematics.
In this section, we set $\bG=\bR$, which does not loss any generality and we set $\Lambda_0\coloneqq \Lambda_0^\bR$.

\begin{definition}
\begin{enumerate}
    \item An object $M\in \Mod(\Lambda_0)$ is \emph{almost zero} if $M\otimes_{\Lambda_0}\Lambda_0^+\cong 0$. We denote the full subcategory of the derived category of $\Lambda_0$-modules $\Mod(\Lambda_0)$ consisting of almost zero modules by $\Sigma$. We denote by $\Sigma$ the subcategory of the almost zero modules.
    \item A morphism $f\colon M\rightarrow N\in \Mod(\Lambda_0)$ is an \emph{almost isomorphism} if $\Cone(f)\in \Sigma$. 
\end{enumerate}
\end{definition}
\begin{definition}
    An element $f\in M\in \Mod(\Lambda_0)$ defines a morphism $e_f \colon \Lambda_0\rightarrow M$. We say $f, g\in M$ are almost the same if $e_f, e_g$ are the same in $\Mod(\Lambda_0)/\Sigma$.
\end{definition}

One way to ignore almost zero modules is by considering the quotient functor: $\fraka\colon \Mod(\Lambda_0)\rightarrow \Mod(\Lambda_0)/\Sigma$, which is called almostization. If $M, N\in \Mod(\Lambda_0)$ are isomorphic after applying $\fraka$, we say they are almost isomorphic. It is equivalent to have a chain of almost isomrophisms $M\leftarrow M_1\rightarrow M_2\leftarrow M_3\rightarrow\cdots \rightarrow N$. 

Let $\cC$ be a category enriched over $\Lambda_0$. We have the Yoneda embedding and the almostization:
\begin{equation}
    \cC\xrightarrow{\cY} \Fun(\cC, \Lambda_0)\xrightarrow{\fraka} \Fun(\cC, \Mod(\Lambda_0)/\Sigma)
\end{equation}
where $\Fun$ means the functor category.

\begin{definition}
    We say $\cE, \cF\in \cC$ are \emph{almost isomorphic} if $\fraka(\cY(\cE))\cong \fraka(\cY(\cF))$. 
\end{definition}

We use the following lemma in the body of the paper.
\begin{lemma}[\cite{KuwNov}]
    Suppose $f,g\colon \cE\rightarrow \cF\in \cC$ are almost the same elements in $\Hom(\cE, \cF)$. Then $\Cone(f)$ and $\Cone(g)$ are almost isomorphic.
\end{lemma}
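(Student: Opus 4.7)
The plan is to unpack the definition of ``almost the same'' and then construct a coherent family of comparison morphisms between the two cones, which together assemble into an almost isomorphism.

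First, I would verify that $f$ and $g$ being almost the same is equivalent to $T^\epsilon(f-g)=0$ in $\Hom(\cE,\cF)$ for every $\epsilon>0$. The inclusion $\Lambda_0^+\hookrightarrow\Lambda_0$ has cone $\bK$, which is almost zero, so it becomes an isomorphism in $\Mod(\Lambda_0)/\Sigma$. Hence $e_{f-g}\colon\Lambda_0\to\Hom(\cE,\cF)$ vanishes in the quotient exactly when its restriction to $\Lambda_0^+$ is the zero map in $\Mod(\Lambda_0)$, which says $T^\epsilon(f-g)=0$ for every $\epsilon>0$.

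Second, I would realise $\Cone(f)$ and $\Cone(g)$ on the common underlying graded object $\cF\oplus\cE[1]$, with differentials agreeing except in the off-diagonal block (which uses $f$ versus $g$). For each $\epsilon>0$ define $\phi_\epsilon\colon\Cone(f)\to\Cone(g)$ and $\psi_\epsilon\colon\Cone(g)\to\Cone(f)$ as $T^\epsilon$ times the identity on the underlying graded object. The chain-map condition collapses to $T^\epsilon f=T^\epsilon g$, which holds. A direct computation then gives $\psi_{\epsilon_1}\circ\phi_{\epsilon_2}=T^{\epsilon_1+\epsilon_2}\cdot\id_{\Cone(f)}$ and $\phi_{\epsilon_2}\circ\psi_{\epsilon_1}=T^{\epsilon_1+\epsilon_2}\cdot\id_{\Cone(g)}$, together with the coherence $\phi_{\epsilon_1+\epsilon_2}=T^{\epsilon_2}\phi_{\epsilon_1}$ and the analogue for $\psi$.

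Third, I would promote this coherent family to an isomorphism of Yoneda functors after almostization. For any $\cG\in\cC$, the maps $\phi_\epsilon^*\colon\Hom(\cG,\Cone(f))\to\Hom(\cG,\Cone(g))$ assemble, via the coherence, into a single $\Lambda_0$-linear map $\Lambda_0^+\otimes_{\Lambda_0}\Hom(\cG,\Cone(f))\to\Hom(\cG,\Cone(g))$, natural in $\cG$. Since $(\Lambda_0^+)^2=\Lambda_0^+$, the natural transformation $\Lambda_0^+\otimes(-)\to\id$ is an isomorphism in $\Mod(\Lambda_0)/\Sigma$, so the assembled family gives the required natural morphism between the almostized Yoneda functors $\fraka(\cY(\Cone(f)))$ and $\fraka(\cY(\Cone(g)))$, and the composition identities make it an isomorphism.

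The hard part is this last step. No single $\phi_\epsilon$ is an almost isomorphism: its cone involves $\Lambda_0/T^\epsilon\Lambda_0$, whose class of $1$ is not annihilated by $T^{\epsilon/2}$, so the cone is not almost zero. The almost isomorphism only materialises from the compatible family, which forces one to work in the almost module category (via $\Lambda_0^+$-linear structures) rather than with any individual $\phi_\epsilon$, and to use the idempotence $\Lambda_0^+\otimes_{\Lambda_0}\Lambda_0^+\xrightarrow{\sim}\Lambda_0^+$ to check that the composites $\psi\circ\phi$ and $\phi\circ\psi$ become identities after almostization.
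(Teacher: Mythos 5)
The paper itself gives no proof of this lemma (it is quoted from \cite{KuwNov}), so there is nothing in-text to compare against; judging your argument on its own terms, the overall architecture is the right one and is the standard almost-mathematics argument: translate ``almost the same'' into $T^{\epsilon}(f-g)=0$ for all $\epsilon>0$, produce a multiplicatively coherent family $\phi_\epsilon,\psi_\epsilon$ between the two cones with $\psi_{\epsilon_1}\circ\phi_{\epsilon_2}=T^{\epsilon_1+\epsilon_2}\id$, and assemble it into a map out of $\Lambda_0^+\otimes\Hom(\cG,\Cone(f))$, using flatness and idempotence of $\Lambda_0^+$ to conclude after almostization. Your observation that no single $\phi_\epsilon$ can be an almost isomorphism (its cone is only $\epsilon$-torsion, e.g.\ $\Lambda_0/T^\epsilon\Lambda_0$, not almost zero) and that the whole coherent family is what carries the content is correct and is indeed the heart of the matter. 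Step 1 is also fine: $e_{f-g}$ vanishes in $\Mod(\Lambda_0)/\Sigma$ exactly when its restriction to $\Lambda_0^+$ vanishes, because $\Lambda_0^+\otimes_{\Lambda_0}\Lambda_0^+\cong\Lambda_0^+$.

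The gap is in the passage from the hypothesis to the strict identity you use in Step 2. With the paper's definition, ``almost the same'' only yields that $T^{\epsilon}(f-g)$ is zero as a morphism, i.e.\ zero in $H^0\Hom(\cE,\cF)$ (null-homotopic), not zero on the nose in the hom complex. Consequently $T^{\epsilon}$ times the identity of the underlying graded object $\cF\oplus\cE[1]$ is in general not a chain map; one must correct it by an off-diagonal homotopy term $h_\epsilon$ with $dh_\epsilon=T^{\epsilon}(f-g)$. The composites are then still exactly $T^{2\epsilon}\id$ (the correction terms cancel since $T^{\epsilon}$ is central), but the multiplicative coherence $\phi_{\epsilon+\delta}=T^{\delta}\phi_{\epsilon}$ --- precisely what the well-definedness of your map on $\Lambda_0^+\otimes\Hom(\cG,\Cone(f))\cong\varinjlim_{\epsilon\to 0}T^{\epsilon}\Hom(\cG,\Cone(f))$ requires --- now holds only if the null-homotopies are chosen compatibly, $h_{\epsilon+\delta}=T^{\delta}h_{\epsilon}$, and nothing in your argument produces such a compatible system for all $\epsilon>0$ simultaneously (two choices differ by a cycle in $\Hom^{-1}(\cE,\cF)$, and the needed divisibility is not automatic). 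The repair is to use the hypothesis in its sharper form: $e_{f-g}|_{\Lambda_0^+}$ is zero in the derived category, so one can fix a single $\Lambda_0$-linear null-homotopy of this restriction (on a telescope/cofibrant model of $\Lambda_0^+$) and define every $h_\epsilon$ by evaluating it at $T^{\epsilon}$, which restores the coherence; alternatively, state explicitly that you assume $T^{\epsilon}f=T^{\epsilon}g$ at the chain level, in which case your argument is complete. As written, the step you yourself flag as the hard part rests on this unaddressed strictification.
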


We also have the following notion, which we use to formulate our conjectures stated in Introduction.
\begin{definition}
    Let $F\colon \cC\rightarrow \cD$ be $\Lambda_0$-linear functor between categories enriched over $\Lambda_0$.
    \begin{enumerate}
        \item We say $F$ is almost fully faithful (or, almost embedding) if the morphisms between hom-spaces are almost isomorphisms.
        \item We say $F$ is almost essentially surjective if, for any object $\cE\in \cD$, there exists $\cF\in \cC$ such that $F(\cF)$ is almost isomorphic to $\cE$.
        \item We say $F$ is almost equivalence if it is almost fully faithful and almost essentially surjective.
    \end{enumerate}
\end{definition}

\subsection{Almost nonzero part}
There is another way to ignore almost zero part, in the finite situation. To introduce it, we recall some ring-theoretic properties of the Novikov rings, which are easy.

\begin{proposition}
    \begin{enumerate}
        \item $\Lambda_0^\bG$ is an integral domain.
        \item $\Lambda_0^\bG$ is a valuation ring. In particular, a local ring.
        \item If $\bK$ is algebraically closed and $\bG$ is divisible, $\Lambda_0^\bG$ is also algebraically closed.
        \item $\Lambda_0^\bG$ is not Noether for a non-discrete $\bG$. In particular, $\Lambda_0^\bG$ is not principal ideal domain (PID) for a non-discrete $\bG$.
    \end{enumerate}
\end{proposition}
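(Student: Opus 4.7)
The plan is to handle (1) and (2) together by exhibiting a $\bG_{\geq 0}$-valued valuation on $\Lambda_0^\bG$. For a nonzero element $f = \sum_{g} a_g T^g$, set
\begin{equation}
    v(f) \coloneqq \min \lc g \in \bG_{\geq 0} \relmid a_g \neq 0 \rc;
\end{equation}
this minimum exists because every element of the inverse limit has support well-ordered from below. Part~(1) then follows from $v(fg) = v(f) + v(g)$ together with the observation that the leading (lowest-order) coefficient of $fg$ is the product of the leading coefficients of $f$ and $g$, which is nonzero because $\bK$ is an integral domain. For part~(2), the valuation $v$ extends to the fraction field $\Lambda^\bG$, and by construction $\Lambda_0^\bG = \lc x \in \Lambda^\bG \relmid v(x) \geq 0 \rc$, exhibiting $\Lambda_0^\bG$ as a valuation subring of $\Lambda^\bG$. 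The unique maximal ideal is $\lc x \relmid v(x) > 0 \rc = \Lambda_0^+$, so $\Lambda_0^\bG$ is local.

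For (3), the plan is to invoke the classical theorem in valuation theory that a complete non-archimedean valued field with algebraically closed residue field and divisible value group is algebraically closed. Here the residue field of $\Lambda^\bG$ is $\bK$ (algebraically closed by hypothesis), the value group is $\bG$ (divisible by hypothesis), and completeness of $v$ is immediate from the inverse-limit definition of $\Lambda_0^\bG$. To upgrade algebraic closure of the field $\Lambda^\bG$ to the corresponding statement for the ring $\Lambda_0^\bG$, a Newton polygon argument does the work: for a monic polynomial $P(x) = x^n + c_{n-1}x^{n-1} + \ldots + c_0 \in \Lambda_0^\bG[x]$, every $c_i$ has $v(c_i) \geq 0$ while the leading coefficient has valuation $0$, so every edge of the Newton polygon of $P$ has slope $\leq 0$; by the standard correspondence between slopes of the Newton polygon and valuations of roots, each root in $\Lambda^\bG$ has valuation $\geq 0$ and hence lies in $\Lambda_0^\bG$. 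The main obstacle here is the classical background theorem on algebraic closure of complete valued fields, which I would cite rather than reprove in the Mal'cev--Neumann setting.

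For (4), the plan is to exhibit an explicit non-terminating ascending chain of principal ideals using the non-discreteness hypothesis. Since $\bG \subset \bR$ is non-discrete, $0$ is an accumulation point of $\bG_{>0}$, so we may choose a strictly decreasing sequence $a_1 > a_2 > a_3 > \cdots$ in $\bG_{>0}$ with $a_n \to 0$. Then $(T^{a_n}) \subsetneq (T^{a_{n+1}})$: the inclusion holds because $T^{a_n} = T^{a_n - a_{n+1}} \cdot T^{a_{n+1}}$ with $a_n - a_{n+1} \in \bG_{>0}$, while the reverse inclusion would require $T^{a_{n+1} - a_n} \in \Lambda_0^\bG$, which fails since $a_{n+1} - a_n < 0$. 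This chain never stabilizes, so $\Lambda_0^\bG$ is not Noetherian; since every PID is Noetherian, $\Lambda_0^\bG$ cannot be a PID either.
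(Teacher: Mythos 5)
The paper records this proposition without proof (``we recall some ring-theoretic properties of the Novikov rings, which are easy''), so the only real question is whether your argument is correct. Parts (1) and (4) are fine, and part (2) is the standard argument, but the step you dismiss with ``by construction'' is where the actual content sits: the equality $\Lambda_0^\bG=\{x\in\Lambda^\bG\mid v(x)\geq 0\}$ needs the fact that every element of valuation $0$ is a unit of $\Lambda_0^\bG$ (write $u=a_0(1+h)$ with $a_0\neq 0$, $v(h)>0$, and invert $1+h$ by a $T$-adically convergent geometric series). This is precisely where one uses that $\bK$ is a field rather than the integral commutative ring of the Notation section; for $\bK=\bZ$ the element $2/3$ of the fraction field, with $3/2$ also outside the ring, shows that (2) is false without that hypothesis, so you should make it explicit.

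Part (3) has a genuine gap: the ``classical theorem'' you invoke is not true as stated. Completeness of a rank-one valued field with divisible value group and algebraically closed residue field does \emph{not} imply algebraic closedness; the correct classical hypotheses are either maximal (spherical) completeness (Kaplansky, Poonen) --- which $\Lambda^\bG$ fails when $\bG$ is dense --- or residue characteristic zero, where completeness gives henselianity and every finite extension is defectless and tame, forcing $[L:\Lambda^\bG]=ef=1$ once $\bG$ is divisible and $\bK$ algebraically closed. The distinction matters because the statement itself fails in positive characteristic: if $\bK$ is algebraically closed of characteristic $p>0$ and $a\in\bG_{>0}$, the $p$ distinct Hahn series $\sum_{n\geq 1}T^{-a/p^n}+i$ (with $i$ in the prime field) are roots of the Artin--Schreier equation $x^p-x=T^{-a}$ in $\bK((T^{\bG}))$, hence exhaust its roots there; since their supports accumulate at $0$, none lies in the subfield $\Lambda^\bG$, and equivalently the monic polynomial $y^p-T^{(p-1)a/p}y-1\in\Lambda_0^\bG[y]$ has no root in $\Lambda_0^\bG$. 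So your proof of (3) --- and the proposition as stated, which carries no characteristic hypothesis --- only works under $\mathrm{char}\,\bK=0$: with that assumption, replace the citation by the henselian/tame argument just sketched (or by a direct Newton--Puiseux iteration, checking that the exponents produced accumulate only at $+\infty$, so the root stays in the Novikov field). Your Newton-polygon reduction from $\Lambda^\bG$ to $\Lambda_0^\bG$ is then correct.
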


Although, $\Lambda_0^\bG$ is not PID, we can develop a kind of elementary divisor theory.
\begin{proposition}
    \begin{enumerate}
        \item Any ideal of $\Lambda_0^\bG$ is isomorphic to $\Lambda_0$ or $\Lambda_0^+$ as a $\Lambda_0^\bG$-module.
        \item Any finitely generated module over $\Lambda_0^\bG$ is isomorphic to a module of the form
        \begin{equation}
           \bigoplus_{i=1}^j(\Lambda_0/T^{c_i}\Lambda_0)^{k_i}\oplus (\Lambda_0/\Lambda_0^+)^{k_{j+1}}
        \end{equation}
        where $c_i\in \bR_{> 0}\cup\{+\infty\}$.
    \end{enumerate}
\end{proposition}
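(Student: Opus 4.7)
The strategy exploits the valuation-theoretic structure of $\Lambda_0^\bG$. Concretely, the map $v \colon \Lambda_0^\bG \setminus \{0\} \to \bG_{\geq 0}$ defined by $v\bigl(\sum_c a_c T^c\bigr) = \min\{c : a_c \neq 0\}$ (the minimum exists because the support of any nonzero element is discrete and bounded below) is a valuation, so $\Lambda_0^\bG$ is a valuation domain whose ideal lattice is totally ordered by inclusion, and in particular every finitely generated ideal is principal.

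For part (1), I would attach to each nonzero ideal $I$ the upward-closed subset $v(I \setminus \{0\}) \subset \bG_{\geq 0}$ and its infimum $c_0 \in \bR_{\geq 0}$. When the infimum is attained by some $x_0 \in I$, every $y \in I$ satisfies $v(y/x_0) \geq 0$, hence $y/x_0 \in \Lambda_0^\bG$, so $I = x_0 \Lambda_0^\bG$ and multiplication by $x_0$ gives an isomorphism $\Lambda_0 \xrightarrow{\sim} I$. When the infimum is not attained, I would present $I$ as a filtered colimit $\varinjlim_n x_n \Lambda_0^\bG$ along a sequence $x_n \in I$ with $v(x_n) \downarrow c_0$ and transition maps given by multiplication by elements of valuation $v(x_n) - v(x_{n+1}) > 0$; matching this with the parallel presentation $\Lambda_0^+ = \varinjlim_{\epsilon \in \bG_{>0},\epsilon \downarrow 0} T^\epsilon \Lambda_0^\bG$ then yields the isomorphism $I \cong \Lambda_0^+$.

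For part (2), the plan is induction on the minimal number of generators of $M$. The cyclic case $M \cong \Lambda_0^\bG / I$ follows from part (1): the annihilator $I$ is either zero (giving the free summand $\Lambda_0$, formally $c_i = \infty$), of the principal type ``$T^{c_i}\Lambda_0$'' (giving $\Lambda_0/T^{c_i}\Lambda_0$), or of the non-attained type (giving $\Lambda_0/\Lambda_0^+$ under the identification in part (1)). For the inductive step, one chooses a generator $m_1$ of $M$ whose annihilator is maximal in the total order on ideals, shows that $\Lambda_0^\bG m_1 \hookrightarrow M$ admits a retraction, and inducts on the smaller module $M/\Lambda_0^\bG m_1$.

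The main obstacle is constructing the retraction, especially when the cyclic summand to split off has annihilator $\Lambda_0^+$ and is therefore not finitely presented; the classical Smith-normal-form argument over the Bezout domain $\Lambda_0^\bG$ covers only principally-annihilated summands. I would treat the $\Lambda_0/\Lambda_0^+$-torsion first by identifying the $\bK$-vector subspace $M[\Lambda_0^+] \coloneqq \{m \in M : \Lambda_0^+ \cdot m = 0\}$ of $\Lambda_0^+$-annihilated elements, showing it is finite-dimensional by finite generation of $M$, and lifting a basis to a direct summand of $M$ isomorphic to $(\Lambda_0/\Lambda_0^+)^{k_{j+1}}$. After splitting off these summands, the resulting quotient has all cyclic subquotients principally annihilated, and the standard structure theorem for finitely presented modules over Bezout domains applies to produce the remaining terms $\Lambda_0/T^{c_i}\Lambda_0$ and $\Lambda_0$.
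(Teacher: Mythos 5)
Your part (1) is essentially fine in the case the paper actually uses, $\bG=\bR$: there the infimum $c_0$ of the valuation on a non-principal ideal $I$ lies in the value group, and multiplication by $T^{-c_0}$ identifies $I=\{x\mid v(x)>c_0\}$ with $\Lambda_0^+$ directly; your colimit matching is a longer route to the same isomorphism, but note it silently uses $c_0\in\bG$ (the exponents $\epsilon_n$ must satisfy $\epsilon_n=v(x_n)-c_0$), and for a dense proper subgroup $\bG$ both your argument and the statement itself fail, since $\{x\mid v(x)>c_0\}$ with $c_0\notin\bG$ is isomorphic to neither $\Lambda_0$ nor $\Lambda_0^+$. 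The paper's own proof of (1) is only the word ``easy,'' so there is nothing further to compare there.

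Part (2) has a genuine gap, concentrated exactly where you flag the difficulty, and already the cyclic case is wrong. Part (1) gives an \emph{abstract} module isomorphism $I\cong\Lambda_0^+$, not an equality of submodules of $\Lambda_0$, so it does not identify $\Lambda_0/I$ with $\Lambda_0/\Lambda_0^+$. Concretely, take $c>0$ and $I=T^c\Lambda_0^+\coloneqq\{x\mid v(x)>c\}$: the cyclic module $M=\Lambda_0/I$ has annihilator $I$, whereas any finite direct sum of the displayed form has annihilator $0$, $T^{c'}\Lambda_0$, or $\Lambda_0^+$; so $M$ is isomorphic to none of them, and the literal statement of (2) fails on a cyclic module---no strategy can close this without reinterpreting the statement (finitely presented modules, or almost isomorphism as in the rest of this appendix, under which $\Lambda_0/T^c\Lambda_0^+$ and $\Lambda_0/T^c\Lambda_0$ are identified). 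The same $M$ defeats your proposed repair: $M$ is indecomposable (it is a cyclic module over the local ring $\Lambda_0$, so $\End(M)\cong\Lambda_0/I$ has only trivial idempotents), yet $M[\Lambda_0^+]=T^c\Lambda_0/T^c\Lambda_0^+\cong\bK$ is a nonzero proper submodule; hence the $\Lambda_0^+$-torsion is not a direct summand and cannot be split off first. Finally, your last step invokes the structure theorem for finitely presented modules, but finitely generated does not imply finitely presented over this non-Noetherian valuation ring, and that theorem (Kaplansky's elementary-divisor argument, which is what the paper's one-line ``transplant the usual elementary divisor theory'' actually delivers) yields only summands $\Lambda_0/(a)$ with principal annihilator---in particular never a $\Lambda_0/\Lambda_0^+$ factor. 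For genuinely finitely generated modules, decomposition into cyclics is known (Gill) to force the valuation ring to be almost maximal, and $\Lambda_0$ is not: the pseudo-convergent sequence $x_n=\sum_{k\le n}T^{1-1/k}$ has nonzero breadth ideal but no pseudo-limit, since a limit would need support accumulating below $1$. So to make (2) and your proof work you must either restrict to finitely presented modules or prove the decomposition only up to almost isomorphism.
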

\begin{proof}
    1. Easy. 2. One can transplant the usual proof of the elementary divisor theory in this setup using 1.
\end{proof}
Let $M$ be a finite generated $\Lambda_0$-module. Note that the second direct summand (the almost zero part) in the above proposition can be written as 
\begin{equation}
    M_0 \coloneqq \lc m\in M\relmid \Lambda_0^+m=0\rc.
\end{equation}
We set
\begin{equation}
    M_a \coloneqq M/M_0
\end{equation}
for any $\Lambda_0$-module. Of course, $(M_a)_0=0$.

\printbibliography

\noindent Yuichi Ike:
Graduate School of Mathematical Sciences, The University of Tokyo, 3-8-1 Komaba Meguro-ku Tokyo 153-8914, Japan.

\noindent
\textit{E-mail address}: \texttt{ike[at]ms.u-tokyo.ac.jp}, \texttt{yuichi.ike.1990[at]gmail.com}
\medskip

\noindent Tatsuki Kuwagaki: 
Department of Mathematics, Kyoto University, Kitashirakawa Oiwake-cho, Sakyo-ku, Kyoto 606-8502, Japan.

\noindent 
\textit{E-mail address}: \texttt{tatsuki.kuwagaki.a.gmail.com}

\end{document}